\theoremstyle{plain}
\newtheorem{thm}{Theorem}[section]
\newtheorem{cor}[thm]{Corollary}
\newtheorem{lem}[thm]{Lemma}
\newtheorem{prop}[thm]{Proposition}
\theoremstyle{definition}
\newtheorem{defn}[thm]{Definition}
\newtheorem{rem}[thm]{Remark}
\newcommand{\df}{\ensuremath{\partial}}
\newcommand{\rr}{\ensuremath{\mathbb{R}}}
\newcommand{\zz}{\ensuremath{\mathbb{Z}}}
\newcommand{\aac}{\ensuremath{\mathcal{A}}}
\newcommand{\bbc}{\ensuremath{\mathcal{B}}}
\newcommand{\xxc}{\ensuremath{\mathcal{X}}}
\newcommand{\sMCS}{\ensuremath{\mathcal{C}}}
\newcommand{\sbMCS}{\ensuremath{[\mathcal{C}]}}
\newcommand{\sDMCSeq}{\ensuremath{\widehat{MCS}_b(\Sigma)}}
\newcommand{\IIinv}{\ensuremath{\mbox{II}^{-1}}}
\newcommand{\sHSM}{\ensuremath{\mathcal{H}}}
\newcommand{\sdmatrix}{\ensuremath{\mathcal{D}}}
\newcommand{\sFMCSeq}{\ensuremath{\widehat{MCS}(\Sigma)}}
\begin{document}

\title[Connections between Legendrian Knot Invariants]{Connections between Floer-type Invariants and Morse-type Invariants of Legendrian Knots}

\author[Michael B. Henry]{Michael B. Henry} 
\address{The University of Texas at Austin, Austin, TX 78712} 
\email{mbhenry@math.utexas.edu}

\begin{abstract}

We define an algebraic/combinatorial object on the front projection $\Sigma$ of a Legendrian knot called a Morse complex sequence, abbreviated MCS. This object is motivated by the theory of generating families and provides new connections between generating families, normal rulings, and augmentations of the Chekanov-Eliashberg DGA. In particular, we place an equivalence relation on the set of MCSs on $\Sigma$ and construct a surjective map from the equivalence classes to the set of chain homotopy classes of augmentations of $L_\Sigma$, where $L_\Sigma$ is the Ng resolution of $\Sigma$. In the case of Legendrian knot classes admitting representatives with two-bridge front projections, this map is bijective. We also exhibit two standard forms for MCSs and give explicit algorithms for finding these forms. The definition of an MCS, the equivalence relation, and the statements of some of the results originate from unpublished work of Petya Pushkar.

\end{abstract}

\maketitle

\section{Introduction}
\label{ch:intro}

Legendrian knot theory is a rich refinement of smooth knot theory with deep connections to low-dimensional topology, symplectic and contact geometry, and singularity theory. In this article, we investigate connections between Legendrian knot invariants derived from Symplectic Field Theory and from the theory of generating families. Specifically, we relate augmentations, derived from Symplectic Field Theory, and Morse complex sequences, derived from generating families. A Legendrian knot $\sK$ in $\rr^3$ is a smooth knot whose tangent space sits in the standard contact structure $\xi$ on $\rr^3$, where $\xi$ is the kernel of the 1-form $ dz - ydx$. Legendrian knot theory is the study of Legendrian knots up to isotopy through Legendrian knots. 

We begin by recalling existing connections between Legendrian knot invariants. The Chekanov-Eliashberg differential graded algebra (abbreviated CE-DGA) of a Legendrian knot $\sK$ is a differential graded algebra $(\aac(\sLagr), \df)$ associated to the $xy$-projection $\sLagr$ of $\sK$. The CE-DGA is derived from the Symplectic Field Theory of \cite{Eliashberg,Eliashberg2000} and is developed in \cite{Eliashberg2000} and \cite{Chekanov2002a}. The homology of $(\aac(\sLagr), \df)$ is a Legendrian invariant and, if we consider $(\aac(\sLagr), \df)$ up to a certain algebraic equivalence, then the resulting DGA class is also a Legendrian invariant. Geometrically, the CE-DGA is Floer theoretic in nature. We refer the interested reader to \cite{Chekanov2002} and \cite{Etnyre2002} for a more detailed introduction.

An \emph{augmentation} is a type of algebra homomorphism from the CE-DGA of a Legendrian knot to the base field $\zz_2$. We denote the set of augmentations of $(\aac(\sLagr), \df)$ by $\sAugL$. There is a natural algebraic equivalence relation on $\sAugL$ and we denote the set of equivalence classes of augmentations of $\sLagr$ by $Aug^{ch}(\sLagr)$. The cardinality of $Aug^{ch}(\sLagr)$ is a Legendrian isotopy invariant. 

A second source of Legendrian invariants is the theory of generating families; see any of \cite{Chekanov2005,Jordan2006,L.Traynor2004,Traynor1997,Traynor2001}. A \emph{generating family} for a Legendrian knot $\sK$ encodes the $xz$-projection of $\sK$ as the Cerf diagram of a one-parameter family of functions $\sF_x$. In particular, let $W$ be a smooth manifold and $F: \rr \times W \to \rr$ be a smooth function. Let $\mathcal{C}_x \subset \{x\} \times W$ denote the set of critical points of $F_x = F(x, \cdot)$ and $\mathcal{C}_F = \bigcup_{x \in \rr}\mathcal{C}_x$. If the rank of the matrix of second derivatives of $F$ is maximal at all points in $\mathcal{C}_F$, then $\sfront_F =  \{(x,F(x,w)) | (x,w) \in \mathcal{C}_F \}$ is the $xz$-projection of an immersed Legendrian submanifold in $(\rr^3, \xi)$ and we say $F$ is a generating family for this submanifold; see Figure~\ref{f:gen-fam-example2}. If we restrict our attention to generating families that are sufficiently nice outside a compact set of the domain, then it can be shown that the existence of a generating family for a Legendrian knot is a Legendrian isotopy invariant; see \cite{Jordan2006}. 

Throughout this article, we will let $\sfront$ denote the $xz$-projection of a Legendrian knot and call it the \emph{front projection} of $\sK$. Every Legendrian knot can be Legendrian isotoped in an arbitrarily small neighborhood of itself so that the singularities of $\sfront$ are left cusps, right cusps, and transverse double points and the $x$-coordinates of these singularities are all distinct. We say a front is \emph{$\sigma$-generic} if its singularities are arranged in this manner. 

\begin{figure}[t]
\labellist
\small\hair 2pt
\pinlabel {$\sfront$} [br] at 170 270
\pinlabel {$z$} [br] at 16 318
\pinlabel {$x$} [tl] at 419 118
\pinlabel {$F_x$} [tl] at 442 107
\endlabellist
\centering
\includegraphics[scale=.35]{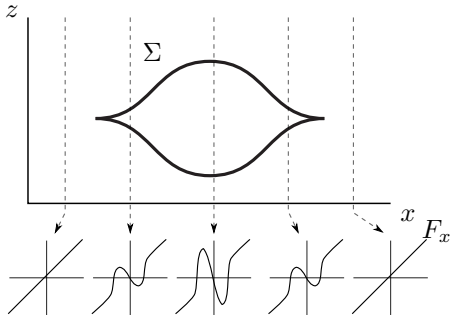}
\caption[Generating family example.]{A generating family for a Legendrian unknot.}
\label{f:gen-fam-example2}
\end{figure}

From a generating family, we may derive a combinatorial object called a graded normal ruling. Suppose a Legendrian knot $\sK$ with $\sigma$-generic front $\sfront$ admits a generating family $\sF : \rr \times W \to \rr$. The generating family may be chosen so that for all but finitely many values of $x$, $F_x$ is a Morse function whose critical points have distinct critical values. By placing an appropriate metric $g$ on $\rr \times W$, we may construct the Morse-Smale chain complex $(C_x, \df_x, g_x)$ of the pair $(F_x, g_x)$. A \emph{graded normal ruling} is a combinatorial object on $\sfront$ that encodes a certain pairing of the generators of $(C_x, \df_x, g_x)$ as $x$ varies. In \cite{Chekanov2005}, Chekanov and Pushkar work with a more general object called a pseudo-involution. Section 12 of \cite{Chekanov2005} provides a detailed explanation of the connection between generating families and pseudo-involutions, including the restrictions placed on the types of generating families considered and on the metric $g$. 

Many connections exist between augmentations, generating families, and graded normal rulings; see any of \cite{Chekanov2005,Fuchs2003,Fuchs2004,Fuchs2008,Kalman2006,Ng2006,Sabloff2005}. For a fixed Legendrian knot $\sK$ with Lagrangian projection $\sLagr$ and front projection $\sfront$, the following results are known.

\begin{thm}[\cite{Fuchs2003,Fuchs2004,Ng2006,Sabloff2005}]
\label{thm:many-to-one}
The CE-DGA (\aac(\sLagr), \df) admits a graded augmentation if and only if $\sfront$ admits a graded normal ruling. In particular, there exists a many-to-one map from the set of graded augmentations of $(\aac(\sLagr), \df)$ to the set of graded normal rulings of $\sfront$. 
\end{thm}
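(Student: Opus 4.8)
The plan is to prove both directions of the equivalence and then construct the explicit many-to-one map claimed in the last sentence of Theorem~\ref{thm:many-to-one}. This result is attributed to several papers, so I expect the proof to assemble existing combinatorial correspondences rather than to invent new machinery; my job is to describe how those pieces fit together.

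\textbf{From augmentations to rulings.} First I would fix a graded augmentation $\saug$ of $(\aac(\sLagr),\df)$. The strategy is to read off, from $\saug$, a pairing of the crossings and cusps of the front $\sfront$ that satisfies the defining normality conditions of a graded ruling. Concretely, each generator of the DGA corresponds to a crossing of $\sLagr$, hence (via the front--to--Lagrangian correspondence) to a crossing or right cusp of $\sfront$; the set of crossings $c$ with $\saug(c)=1$ should determine the \emph{switched} crossings of the ruling. I would then verify that this assignment is a well-defined graded normal ruling by checking the three local conditions: that the switches occur only at crossings where the two strands of a ruling path meet, that the companion paths behave normally (no forbidden crossing patterns among the two paired strands near a switch), and that the grading condition forces switches to occur at degree-$0$ crossings so the ruling is graded. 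The essential input here is that the vanishing of $\df$ evaluated through $\saug$ on the degree-$1$ generators encodes exactly the disk-counting relations that translate into the normality of the ruling; this is where the augmentation equation $\saug\circ\df=0$ does the work.

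\textbf{From rulings to augmentations.} For the converse I would start with a graded normal ruling $\sgnr$ and construct a graded augmentation realizing it, thereby proving the ``if'' direction of the biconditional and simultaneously showing the map is surjective onto its image. The natural construction sets $\saug(c)=1$ precisely on the switched crossings of $\sgnr$ and $\saug=0$ on all other generators, and then I would check $\saug\circ\df=0$ by a local analysis of the differential near each crossing. A cleaner route, if available in the cited literature, is to use the existence of a generating family or a ruling-compatible choice of disks to guarantee that all the nontrivial terms in $\df$ pair up or are killed by the grading, so that the augmentation equation holds term by term.

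\textbf{The many-to-one map and its fibers.} With both directions established, the map itself is the assignment $\saug \mapsto \sgnr_\saug$ from the first step, and I would argue it is well-defined and surjective, with the surjectivity coming from the second step. The claim that it is \emph{many-to-one} (rather than bijective) then needs an explicit count or an exhibited collapse: distinct augmentations that differ only on generators not recorded by the switch data must map to the same ruling. I expect this to be the main obstacle, since it requires understanding precisely which augmentation data is forgotten when passing to a ruling --- typically the values $\saug(c)$ on crossings of negative degree, or augmentations related by the chain-homotopy equivalence, collapse to a single ruling. I would make this precise by identifying, for a fixed ruling $\sgnr$, the affine space of augmentations lying over it (governed by the number of degree-$0$ generators not forced by the switches), and conclude that each nonempty fiber generically contains more than one augmentation, which gives the many-to-one statement. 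The delicate bookkeeping of gradings and of which crossings are genuinely free is where the real care is needed; the rest is local verification.
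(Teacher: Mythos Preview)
The paper does not give its own proof of Theorem~\ref{thm:many-to-one}; it is quoted as background, with the forward direction attributed to \cite{Fuchs2004,Sabloff2005}, the backward direction to \cite{Fuchs2003}, and the many-to-one map to \cite{Ng2006}. The paper does, however, \emph{reprove} the many-to-one correspondence later (Corollary~\ref{cor:gnr-aug-SR-form}) via its MCS machinery: an augmentation on $\sNgres$ determines an MCS (Lemma~\ref{lem:psi-surjectivity}), an MCS determines a graded normal ruling via Barannikov's simple form (Lemma~\ref{lem:MCS-ruling}), and the fiber over a ruling with $m$ graded returns has size $2^m$ coming from the choice of marked returns in the $S\bar{R}$-form.

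Your proposal has a genuine gap in both directions. The assertion that ``the set of crossings $c$ with $\saug(c)=1$ should determine the switched crossings of the ruling'' is false: the augmented crossings of $\saug$ are \emph{not} the switches of the associated ruling. Likewise, defining $\saug(c)=1$ exactly on the switches of a ruling and $0$ elsewhere does not in general satisfy $\saug\circ\df=0$; the differential of a right-cusp generator, for instance, has a constant term $1$ that must be cancelled, and the cancellation need not come from a switch. The actual constructions in the cited papers (and in this paper's reproof) pass through a dipped or splashed diagram: one extends $\saug$ to the dipped diagram so that the augmentation values on the $a$-lattices encode ordered chain complexes, and the ruling is then read off from the \emph{simple forms} of those complexes, not from the original augmented crossings. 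The freedom in the fiber over a fixed ruling comes from the graded \emph{returns} (degree-$0$ unswitched crossings), not from negative-degree generators or chain homotopy as you suggest. Without the dipping step and the chain-complex interpretation, neither direction of your argument goes through.
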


The backward direction of the first statement was proved by Fuchs in \cite{Fuchs2003}. Fuchs and Ishkhanov prove the forward direction in \cite{Fuchs2004}. Sabloff independently proves the forward direction in \cite{Sabloff2005}. Ng and Sabloff prove the second statement in \cite{Ng2006}.

\begin{thm}[\cite{Chekanov2005,Fuchs2008}]
A Legendrian knot $\sK$ admits a generating family if and only if $\sfront$ admits a graded normal ruling.
\end{thm}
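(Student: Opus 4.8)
The plan is to prove the two implications separately, since each has a quite different flavor. The forward direction, that a generating family produces a graded normal ruling, follows from the Morse-theoretic construction already sketched above and is due to Chekanov and Pushkar. The backward direction, that a graded normal ruling can be integrated up to an honest generating family, is the substantial one and follows the explicit local-to-global construction of Fuchs and Rutherford.

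For the forward direction, suppose $\sK$ has $\sigma$-generic front $\sfront$ and admits a generating family $\sF : \rr \times W \to \rr$. Choosing $\sF$ and the metric $g$ so that $F_x$ is Morse with distinct critical values away from finitely many $x$, I would track the Morse--Smale chain complexes $(C_x, \df_x, g_x)$ as $x$ increases. Between singular values of $x$ the complex is constant, and the non-generic $x$-values are exactly births and deaths, corresponding to left and right cusps of $\sfront$, and handle slides. I would record, for each handle-slide-free interval, the pairing of critical points determined by the gradient trajectories; this is Chekanov and Pushkar's pseudo-involution. The verification then has two parts: first, that the pairing satisfies the combinatorial \emph{normality} condition at each crossing of $\sfront$, which is forced because a handle slide can only alter the Morse--Smale structure in the normal configurations; and second, that the pairing is \emph{graded}, i.e.\ that paired strands are related by the prescribed value of the Maslov potential $\smu$, which follows from index considerations for the paired critical points. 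Section 12 of \cite{Chekanov2005} carries out exactly this analysis.

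For the backward direction I would build a generating family from the data of the ruling $\sgnr$ by assembling local models. Over each region of $x$ in which $\sfront$ has no singularity, $F_x$ will be a function on a fixed fiber $W \cong \rr^n$ whose critical points are in bijection with the strands of $\sfront$ over $x$, arranged so that the critical values realize the $z$-coordinates and the Morse indices are determined by $\smu$. The ruling then prescribes, through its paired and switched strands, how these local models are glued across crossings and cusps: at a switched crossing one inserts a handle slide, at an unswitched crossing the strands pass transversally, and at each cusp one inserts a cancelling birth/death pair. To make $\sF$ a genuine generating family I would take it to be \emph{linear-quadratic}, so that it is controlled at infinity in the fiber and yields a well-defined Legendrian invariant; this is the role of the linear-quadratic generating family $\sF$ isolated in the notation above.

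The main obstacle is this backward construction, and within it the treatment of switches. The difficulty is not any single local model but the global consistency: one must interpolate smoothly between adjacent local models while keeping $F_x$ Morse--Smale off the prescribed singular $x$-values, arrange the handle slides so that exactly the switched crossings of $\sgnr$ appear and in the correct order, and maintain the linear-quadratic normal form throughout. Here the normality condition on $\sgnr$ is precisely what guarantees that the required handle slides can be realized by a family of functions without forcing extra, unwanted critical points, and the grading of $\sgnr$ relative to $\smu$ is what makes the Morse indices consistent across the whole family. Once the construction is complete, checking that the resulting $\sfront_{\sF}$ equals $\sfront$ and that the induced ruling is $\sgnr$ is a matter of inspecting the local models. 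This is carried out by Fuchs and Rutherford in \cite{Fuchs2008}.
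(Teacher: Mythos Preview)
The paper does not prove this theorem; it is stated as a cited result, and the only accompanying text is the attribution: Chekanov and Pushkar prove the forward direction in \cite{Chekanov2005} (and state the converse without proof), while Fuchs and Rutherford supply the backward direction in \cite{Fuchs2008}. There is no argument in the paper to compare against.

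Your proposal goes well beyond this, sketching the actual content of the two cited proofs. The outline for the forward direction is accurate and matches Section~12 of \cite{Chekanov2005}: one extracts the pseudo-involution from the one-parameter family of Morse--Smale complexes and verifies normality and the grading condition. Your outline for the backward direction captures the spirit of the Fuchs--Rutherford construction --- assembling local function models over the strands and gluing across cusps and crossings according to the ruling data --- though the actual paper \cite{Fuchs2008} routes part of the argument through splashed/dipped diagrams and the associated algebra rather than a purely direct geometric gluing, so your sketch is somewhat idealized. In any case, for the purposes of this paper a citation suffices, and your expanded discussion would be out of place here.
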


Chekanov and Pushkar prove the forward direction in \cite{Chekanov2005} and state the backwards direction without proof. Fuchs and Rutherford prove the backwards direction in \cite{Fuchs2008}. 

By encoding the Morse theory data inherent in a generating family, we hope to refine the many-to-one map in Theorem~\ref{thm:many-to-one}. We encode this data in a finite sequence of chain complexes. The resulting algebraic object is called a \emph{Morse complex sequence}. Geometrically, it should be thought of as a sequence from the 1-parameter family of chain complexes $(C_x, \df_x, g_x)$ from a generating family. In this article we will not work explicitly with generating families, though they provide important geometric intuition. We let $\sFMCS$ denote the set of MCSs of $\sfront$ and $\sFMCSeq$ denote the set of MCSs of $\sfront$ up to a natural equivalence. 

In 1999, Petya Pushkar began a program to combinatorialize the Morse theory data coming from a generating family. The work with pseudo-involutions in \cite{Chekanov2005} may be considered the first step in this program. In \cite{Pushkar'a, Pushkar'}, Pushkar outlines his ``Spring Morse theory,'' which encodes the sequence $(C_x, \df_x, g_x)$ coming from a generating family and provides an equivalence relation on the resulting objects. The equivalence relation is the result of understanding the evolution of one-parameter families of functions and metrics. The ideas behind Morse complex sequences and the equivalence relation we define in this article originate with Petya Pushkar.
 
\subsection{Results} 
\label{sec:results}

Given a Legendrian knot $\sK$ with $\sigma$-generic front projection $\sfront$, we form the Ng resolution of $\sfront$, denoted $\sNgres$, by resolving the cusps and crossings as indicated in Figure~\ref{f:Ng-res}. The CE-DGA of $\sNgres$ is equal to the CE-DGA of an $xy$-projection for the Legendrian knot class of $\sK$. Therefore we may use $\sNgres$ to compare objects defined on $\sfront$ with objects derived from the CE-DGA of $\sK$. Given $\sfront$ and $\sNgres$, we have the following results.

\begin{figure}[t]
\labellist
\small\hair 2pt
\pinlabel {(a)} [tl] at 48 7
\pinlabel {(b)} [tl] at 164 7
\pinlabel {(c)} [tl] at 307 7
\endlabellist
\centering
\includegraphics[scale=.7]{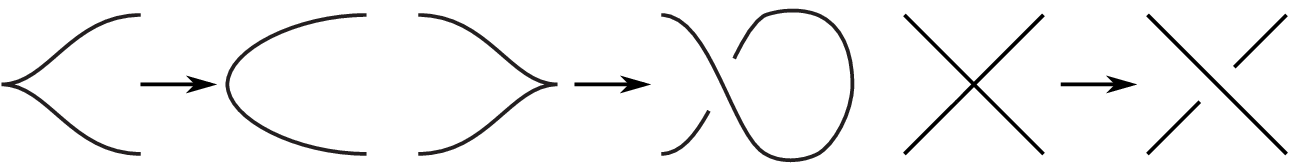}
\caption[Cusps and crossings in the Ng resolution procedure.]{Cusps and crossings in the Ng resolution procedure.}
\label{f:Ng-res}
\end{figure}

\begin{thm}
	\label{thm:intro-main-result}
	For a fixed Legendrian knot $\sK$ with $\sigma$-generic front projection $\sfront$ and Ng resolution $\sNgres$, there exists a surjective map $ \widehat{\Psi} : \sFMCSeq \to \sAugNgresch$.
\end{thm}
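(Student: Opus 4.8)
The plan is to construct $\widehat{\Psi}$ by factoring it through an intermediate diagram, the \emph{dipped diagram} $\sNgresD$, which is obtained from $\sNgres$ by inserting a dip (a standard local tangle producing an $a$-lattice $\sA$ and a $b$-lattice $\sB$) immediately to the left of each singularity of $\sfront$. The first step is to observe that $\sNgresD$ represents the same Legendrian knot class as $\sNgres$, so that their CE-DGAs are stable tame isomorphic; by the standard invariance of augmentations under stable tame isomorphism, this yields a bijection on chain homotopy classes, $\sAugNgresch \cong Aug^{ch}(\sNgresD)$. Passing to $\sNgresD$ is what makes the construction possible: the extra crossings in each dip give exactly enough room to record the full Morse-theoretic data carried by an MCS as augmentation values.

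The second step is to define the map at the level of objects, $\sFMCS \to \sAugsimple$. Given an MCS, which is a finite sequence of chain complexes $(C_x,\df_x)$ together with handleslide marks, I would read off an augmentation $\saug$ of $\sNgresD$ as follows: in each dip the $a$-lattice crossings record the differential $\df_x$ of the chain complex immediately to the right, and the $b$-lattice crossings record the handleslide marks occurring at that $x$-value. The augmentations produced this way are precisely the \emph{occ-simple} augmentations, so the output lands in $\sAugsimple$. The key verification here is that the augmentation equation $\saug\circ\df = 0$ on $\sNgresD$ is equivalent to the MCS axioms: the relation $\df_x^2 = 0$ translates into the augmentation equation on the $a$-crossings, while the chain-isomorphism compatibility between consecutive complexes translates into the augmentation equation on the $b$-crossings and on the original crossings of $\sNgres$. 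This is a direct but bookkeeping-heavy computation using the explicit form of the differential inside a dip.

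The third step is surjectivity. I would first show, using the stable tame isomorphism together with the gauge freedom created by the dips, that every chain homotopy class in $Aug^{ch}(\sNgresD)$ contains an occ-simple representative. I would then show that every occ-simple augmentation is chain homotopic to a \emph{minimal} occ-simple one, an element of $\sAugminsimple$, and that the assignment of the previous paragraph restricts to a bijection between MCSs and minimal occ-simple augmentations. Chaining these together, any class $\sbaug \in \sAugNgresch$ transports to a class in $Aug^{ch}(\sNgresD)$, which has an occ-simple representative, which is chain homotopic to a minimal occ-simple augmentation $\alpha(m)$ coming from some MCS $m$; hence $\widehat\Psi([m]) = \sbaug$ and $\widehat\Psi$ is surjective.

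The final step, and the step I expect to be the main obstacle, is descending the object-level map to $\sFMCSeq$: I must show that MCS-equivalent diagrams map to chain-homotopic augmentations, so that $\widehat\Psi$ is well defined on equivalence classes. This requires analyzing each generating move of the MCS equivalence relation and producing, for each one, an explicit chain homotopy $\sH$ relating the two associated occ-simple augmentations (or recognizing the difference as a move already absorbed by the stable tame isomorphism). I expect this to be the most delicate part of the argument, since one must simultaneously track how a given MCS move alters the chain complexes and handleslides and how that alteration propagates through the dip combinatorics to an honest chain homotopy of augmentations on $\sNgresD$; it is this interaction, rather than the initial definition of $\widehat\Psi$, where the real work lies.
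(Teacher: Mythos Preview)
Your proposal captures the right architecture---dipped diagrams, an MCS-to-augmentation dictionary via the $a$- and $b$-lattices, surjectivity, and then a move-by-move chain homotopy check---and your assessment that the last step is where the real work lies is accurate.  There is, however, a structural mismatch with the paper that constitutes a genuine gap in your plan.

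You propose a \emph{single fixed} dipped diagram $\sNgresD$, with one dip per singularity of $\sfront$, and claim the assignment lands in the occ-simple augmentations.  But an MCS may carry arbitrarily many handleslide marks between two consecutive singularities, while an occ-simple augmentation (as defined in the paper) has $\saug(B_j)$ equal to $0$ or a single $\sHSM_{k,l}$ on each type-(1) insert.  So with a fixed $\sNgresD$ your object-level map does not land where you say it does, and the claimed bijection with $\sAugminsimple$ fails.  The paper resolves this by letting the dipped diagram depend on the MCS: a dip is placed to the right of every crossing, cusp, \emph{and} handleslide mark, so that the resulting augmentation is automatically minimal occ-simple.  Different MCSs therefore live on different dipped diagrams, and the passage to $\sAugNgresch$ is made via explicit ``dipping/undipping paths'' of type II and $\IIinv$ moves; a separate lemma (using K\'alm\'an's results on commuting and cancelling Reidemeister moves up to chain homotopy) shows the induced map on $Aug^{ch}$ is independent of the chosen path.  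This path-independence is what replaces your single bijection $\sAugNgresch\cong Aug^{ch}(\sNgresD)$, and it is also what makes the well-definedness step tractable: to compare the augmentations coming from two MCSs related by a single move, one adds a few extra dips to reach a \emph{common} refinement $\sNgresD$, extends both augmentations by $0$, and then writes down an explicit chain homotopy there.

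Your surjectivity argument also differs from the paper's and would require independent justification.  Rather than showing every class in $Aug^{ch}(\sNgresD)$ has an occ-simple representative, the paper starts from $\saug\in Aug(\sNgres)$ and \emph{builds} the dipped diagram one dip at a time, extending $\saug$ through each stable tame isomorphism either by $0$ or by $\sHSM_{i+1,i}$ according to whether $\saug$ augments the next crossing.  The output is minimal occ-simple by construction, hence corresponds to an MCS, and undoing the dips recovers $[\saug]$.  This is more direct than first transporting to a fixed $\sNgresD$ and then trying to normalize within its chain homotopy class.
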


\begin{thm}
	\label{thm:intro-two-cusps}
	If $\sfront$ has exactly two left cusps, then $ \widehat{\Psi} : \sFMCSeq \to \sAugNgresch$ is a bijection.
\end{thm}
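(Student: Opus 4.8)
The plan is to use Theorem~\ref{thm:intro-main-result} as a black box: since $\widehat{\Psi}$ is already known to be surjective, the entire content of the two-cusp statement is \emph{injectivity}. I would therefore reduce to showing that two MCSs whose images are chain homotopic augmentations must be equivalent under $\sMCSeq$, and prove this by producing a canonical normal form for each chain homotopy class that the MCS side can detect.

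First I would recall the construction of $\widehat{\Psi}$: it is induced by a chain-level map $\Psi$ that sends an MCS to an occ-simple augmentation of the dipped diagram $\sNgresD$, setting up an identification of $\sFMCS$ with $\sAugsimple$. Composing with the passage from augmentations of $\sNgresD$ to augmentations of $\sNgres$, and then descending to chain homotopy classes, produces $\widehat{\Psi}$. Under this identification, injectivity of $\widehat{\Psi}$ becomes the statement that if two occ-simple augmentations are chain homotopic as augmentations of $\sNgres$, then the MCSs returned by $\Psi^{-1}$ are $\sMCSeq$-equivalent.

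The core step is a normal-form result valid in the two-left-cusp setting: every chain homotopy class of augmentations of $\sNgres$ contains a \emph{unique} minimal occ-simple augmentation, i.e. a unique element of $\sAugminsimple$. Granting this, I would organize the argument as a chain of bijections
\[ \sFMCSeq \;\longleftrightarrow\; \sAugminsimple \;\longleftrightarrow\; \sAugNgresch, \]
where the left arrow is supplied by the standard-form (A-algorithm) output, which canonically represents each $\sMCSeq$-class by the MCS whose associated augmentation is minimal occ-simple, and the right arrow is the uniqueness result. To establish uniqueness I would analyze a chain homotopy $\sH$ locally, dip region by dip region: $\sH$ is determined by its values on the $\sA$- and $\sB$-lattice generators of each dip $\sdip$ together with the crossing and cusp generators, and with only two left cusps the number of strands, and hence the size of each lattice, is small enough to classify all such $\sH$ completely and to check that none of them relates two distinct minimal occ-simple augmentations.

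The main obstacle will be exactly this uniqueness, equivalently the claim that $\sMCSeq$ is no finer than augmentation chain homotopy once $\sfront$ has only two left cusps. For general fronts $\sMCSeq$ is strictly finer — this is precisely why $\widehat{\Psi}$ is surjective but not injective in Theorem~\ref{thm:intro-main-result} — so the proof must use the two-cusp hypothesis in an essential way to rule out chain homotopies that change the occ-simple augmentation without arising from a sequence of handleslide and other MCS moves. I expect to settle this by careful bookkeeping of which lattice entries a chain homotopy can alter, showing in the two-cusp case that every admissible alteration is realized by the moves generating $\sMCSeq$, so that no genuinely new identifications occur on the augmentation side.
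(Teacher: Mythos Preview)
Your overall strategy --- surjectivity is given, so prove injectivity via a canonical normal form --- is the right shape, but the specific normal form you propose does not work. You claim that in the two-cusp case every chain homotopy class in $\sAugNgresch$ contains a \emph{unique} element of $\sAugminsimple$. This is false: by Lemma~\ref{lem:mcs-aug-dipped}, $\sAugminsimple$ is in bijection with $\sDMCS$ (not $\sDMCSeq$), and any single MCS move already produces a distinct element of $\sDMCS$ whose minimal occ-simple augmentation lives on a different dipped diagram yet represents the same chain homotopy class. So $\sAugminsimple$ is far too large to be a system of unique representatives. Relatedly, the $A$-form is in bijection with $Aug(\sNgres)$, not $\sAugNgresch$, so it does not give a canonical representative of each MCS class either.

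The paper instead uses a finer normal form tailored to two-bridge fronts: the $S\bar{R}_g$-form, an $S\bar{R}$-form in which marked returns occur only at \emph{graded} departure--return pairs. The two-cusp hypothesis enters not through the lattice size but through a structural fact (Proposition~\ref{prop:dr-pairs}): with only two ruling disks, a return must immediately follow its departure, so departure--return pairs are adjacent crossings with nothing between them. This locality is what makes the chain-homotopy computations in Lemma~\ref{lem:srg-uniqueness} tractable --- one checks, case by case over the five possible configurations of a graded departure--return pair, that two $S\bar{R}_g$-forms differing at such a pair cannot be chain homotopic. Injectivity then follows by showing (Theorem~\ref{thm:two-bridge-srg-form}) that chain-homotopic augmentations force the same ruling and the same marked returns, hence the same $S\bar{R}_g$-form. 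Your plan to ``classify all chain homotopies by small lattice size'' misses this: the number of dips is unbounded even with four strands, so a global classification is not feasible without the adjacency structure of departure--return pairs.
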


In \cite{Pushkar'}, Pushkar announced, without proof, results very similar to Theorem~\ref{thm:intro-main-result}.  

In the case of a front projection with exactly two left cusps, we can explicitly calculate $|\sAugNgresch| = |\sFMCSeq|$. The language in this corollary is defined in Section~\ref{ch:two-bridge}.

\begin{cor}
	\label{cor:intro-two-cusps}
	Suppose $\sfront$ has exactly two left cusps and let $\sSgnr$ denote the set of graded normal rulings on $\sfront$. For each $\sgnr \in \sSgnr$, define $\nu(\sgnr)$ to be the number of graded departure-return pairs in $\sgnr$. Then $|\sAugNgresch| = |\sFMCSeq| = \sum_{N \in \sSgnr} 2^{\nu(\sgnr)}$.
\end{cor}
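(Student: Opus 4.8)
The first equality is immediate from Theorem~\ref{thm:intro-two-cusps}: since $\widehat{\Psi}$ is a bijection under the two-cusp hypothesis, $|\sAugNgresch| = |\sFMCSeq|$. The real content is the second equality, and the plan is to count the classes in $\sFMCSeq$ by organizing them according to the graded normal ruling they determine on $\sfront$.

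First I would attach to each MCS class a well-defined graded normal ruling $\sgnr \in \sSgnr$. Using the SR or A algorithm, every class has a canonical standard-form representative, and the pairing of strands recorded by such a representative is a graded normal ruling; I would check that this assignment is invariant under the equivalence relation $\sMCSeq$, producing a map $\sFMCSeq \to \sSgnr$. Partitioning $\sFMCSeq$ by the fibers of this map then reduces the corollary to showing that the fiber over each $\sgnr$ has cardinality $2^{\nu(\sgnr)}$, after which summing over $\sgnr \in \sSgnr$ gives the result.

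To compute a single fiber I would use the two-cusp hypothesis heavily: at most four strands lie over any vertical slice, so the standard-form MCSs compatible with a fixed $\sgnr$ can be enumerated explicitly. I expect to show that once $\sgnr$ is fixed the only remaining freedom is a single binary choice — the presence or absence of a handleslide mark — at each graded departure-return pair of $\sgnr$, giving a natural parametrization of the fiber by $(\zz_2)^{\nu(\sgnr)}$.

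The crux, and the step I expect to be the main obstacle, is proving this parametrization is a bijection. Surjectivity should follow from the standard-form theorem, which guarantees that every MCS over $\sgnr$ is equivalent to one assembled from such choices. Injectivity is harder: I must show that two distinct choice-vectors in $(\zz_2)^{\nu(\sgnr)}$ never yield $\sMCSeq$-equivalent MCSs. I would approach this either by exhibiting an invariant of MCS classes that recovers the choice at each departure-return pair, or by verifying directly that no move in the equivalence relation carries one decorated standard form to another. Granting injectivity, each fiber has exactly $2^{\nu(\sgnr)}$ elements, and the stated formula follows.
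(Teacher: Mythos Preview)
Your proposal matches the paper's approach closely: partition $\sFMCSeq$ by the associated graded normal ruling, then count each fiber via a standard form parametrized by binary choices at graded departure-return pairs. Two points are worth sharpening. First, the paper inserts an intermediate refinement you do not name: the $S\bar{R}$-form allows a binary choice at every graded \emph{return}, not just at graded departure-return pairs; the two-cusp hypothesis (via the fact that departures and returns are adjacent) is used to unmark any graded return whose paired departure is ungraded, yielding the refined $S\bar{R}_g$-form and the correct exponent $\nu(\sgnr)$. Second, for the injectivity step you correctly flag as the crux, the paper takes your option (a), and the invariant it uses is precisely $\Psi$ itself: if two $S\bar{R}_g$-forms differ at a single departure-return pair, an explicit computation on the dipped diagram shows their associated augmentations are not chain homotopic, so the well-definedness of $\widehat{\Psi}$ forces the MCS classes to differ. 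Your option (b), checking MCS moves directly, is not pursued and would likely be considerably harder.
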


In Section~\ref{sec:std-forms}, we describe two standard forms for MCSs on $\sfront$. The algorithms used to find these forms allow us to understand $\widehat{\Psi}$, while avoiding the lengthy algebra arguments required to prove Theorem~\ref{thm:intro-main-result}. We use the $S\bar{R}$-form to calculate bounds on the number of MCS classes associated to a fixed graded normal ruling. The $\sAugform$-form of an MCS $\sMCS$ allows us to easily compute the augmentation class $\widehat{\Psi}([\sMCS])$. 

\begin{thm}
	\label{thm:intro-std-forms}
	Every MCS is equivalent to an MCS in $S\bar{R}$-form and an MCS in $\sAugform$-form.
\end{thm}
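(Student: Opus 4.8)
The plan is to prove the statement constructively: for each of the two standard forms I will exhibit an explicit algorithm --- the $S\bar{R}$ algorithm and the $A$ algorithm --- each of which takes an arbitrary MCS $\mathcal{C}$ and returns an equivalent MCS in the desired form. Since every step of each algorithm is one of the elementary moves $V$ generating the equivalence relation $\sim$, the output is automatically equivalent to the input, so the class $[\mathcal{C}]$ is preserved throughout. The real content of the proof is then twofold: that each algorithm terminates, and that its terminal output genuinely satisfies the defining conditions of the target form.

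First I would isolate the elementary moves and record their local effect on the chain-complex data carried across $\Sigma$ (births and deaths of handleslide marks, commutations of adjacent marks, handleslides past cusps and crossings, and cancellation moves). Each such move is by construction an instance of $V$, hence any finite composition preserves $[\mathcal{C}]$; the bookkeeping task is to show these moves suffice to reposition and cancel the marks at will, subject only to the constraints imposed by the singularities of $\Sigma$.

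For the $S\bar{R}$-form I would sweep $\Sigma$ in the direction of increasing $x$ and normalize the data singularity by singularity: at each cusp and crossing I apply handleslide and cancellation moves to reduce the local differential to the canonical $\bar{R}$ pattern dictated by the associated normal ruling $N$, pushing any surviving marks rightward past the processed singularity. For the $A$-form I would instead drive all handleslide marks into the canonical positions (adjacent to crossings and right cusps) from which the augmentation class $\widehat{\Psi}([\mathcal{C}])$ can be read off directly, again using commutation and handleslide moves to clear the intervening regions. In both cases I would attach a complexity functional --- for instance a lexicographic measure counting marks weighted by their $x$-position --- and verify that the move selected at each step strictly decreases it, which yields termination.

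The main obstacle is confluence: normalizing the data at one singularity must not disturb the normal form already achieved to its left. I would handle this by committing to the left-to-right sweep and proving an invariant, namely that once a singularity has been processed no subsequent move introduced by the algorithm alters its local data, so that the algorithm makes monotone progress toward a terminal configuration. Verifying this invariant, together with checking that the terminal configuration admits no further algorithm move \emph{precisely} when it is in $S\bar{R}$-form (respectively $A$-form), is where the bulk of the careful case analysis over the singularity types of $\Sigma$ will lie.
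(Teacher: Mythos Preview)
Your high-level strategy---a left-to-right sweep that normalizes the handleslide data at each singularity using the MCS moves---is precisely the paper's approach, and the confluence concern you raise is the right one. However, your sketch defers the genuine technical content to ``careful case analysis'' without identifying what the hard case actually is, and this is where the proposal has a gap.

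The substantive difficulty is not at crossings or left cusps but at \emph{right cusps}: as you sweep rightward, handleslide marks accumulate whose endpoints lie on the two strands entering the cusp, and these cannot simply be pushed through. The paper's mechanism for eliminating them is MCS move 17 (the ``explosion'' move), sometimes in combination with move 15 to introduce an auxiliary mark so that a pair can be cancelled. This is not a routine case check---it requires knowing the simple form of the chain complex just before the cusp and using the pairing to locate the companion strand on which to place the auxiliary mark. Your proposal gives no hint of this, and a generic ``push marks rightward and cancel'' description would stall here.

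Two smaller points. First, in $A$-form the handleslide marks sit only adjacent to crossings, not right cusps; your description misstates the target. Second, and more importantly, the paper's $A$-algorithm does \emph{not} run on an arbitrary MCS: it first applies the $S\bar R$-algorithm and only then sweeps again. The reason is exactly the right-cusp problem---once the MCS is in $S\bar R$-form, the chain complex immediately to the right of the accumulated marks is simple, which is what makes the move-17 elimination tractable. Treating the two algorithms as parallel and independent, as your outline does, would leave you without this structure when you need it.

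Finally, the complexity-functional argument for termination is unnecessary: the sweep processes finitely many singularities in order, and at each one only finitely many marks are handled, so termination is immediate. The work is entirely in correctness, not termination.
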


\subsection{Outline of the rest of the article} 

In Section~\ref{ch:Background}, we provide the necessary background material in Legendrian knot theory. The front and Lagrangian projections of a Legendrian knot are used to develop combinatorial descriptions of the Chekanov-Eliashberg DGA, augmentations, and graded normal rulings. The definition of a Morse complex sequence (MCS) is given in Section~\ref{ch:Defining-MCS}, along with an equivalence relation on MCSs. Section~\ref{ch:Chain-Homotopy} reviews properties of differential graded algebras, DGA morphisms and DGA chain homotopies and applies them to the case of the CE-DGA and augmentations. We also sketch the proof that $Aug^{ch}(\sLagr)$ is a Legendrian knot invariant. In Section~\ref{ch:Dipped-Diagrams} we use a variation of the splash construction first developed in \cite{Fuchs2003} to write down the boundary map of the CE-DGA of a ``dipped'' version of $\sNgres$ as a system of local matrix equations. This gives us local control over augmentations and chain homotopies of augmentations. A number of lemmata are proved involving extending augmentations to dipped diagrams. In Section~\ref{ch:MCS-Aug} we develop the connections between MCSs and augmentations. We use the lemmata on dipped diagrams from Section~\ref{ch:Dipped-Diagrams} and the lemmata concerning chain homotopies from Section~\ref{ch:Chain-Homotopy} to explicitly construct $ \widehat{\Psi}$ and prove Theorem~\ref{thm:intro-main-result}. In Section~\ref{sec:std-forms}, we describe two standard forms for MCSs on $\sfront$ and prove Theorem~\ref{thm:intro-std-forms} using explicit algorithms. In Section~\ref{ch:two-bridge} we prove Theorem~\ref{thm:intro-two-cusps} and Corollary~\ref{cor:intro-two-cusps}.

\subsection{Acknowledgments}

My graduate advisors, Rachel Roberts and Joshua Sabloff, provided invaluable guidance and advice as I completed this work. In addition, I thank Sergei Chmutov, Dmitry Fuchs, Victor Goryunov, Paul Melvin, Dan Rutherford, and Lisa Traynor for many fruitful discussions and the American Institute of Mathematics (AIM), which held the September 2008 workshop conference where many of these discussions took place. 

It was at the AIM workshop that I first learned of the possibility of extending graded normal rulings by including handleslide data. Petya Pushkar had outlined such a program in a personal correspondence with Dmitry Fuchs in 2000 \cite{Pushkar'a}. Fuchs gave me copies of \cite{Pushkar'a} and the more recent \cite{Pushkar'} after the AIM workshop. Along with the work done at AIM, these notes directly influenced the definition of a Morse complex sequence and the equivalence relation on the set of MCSs given in this article. The results claimed by Pushkar in \cite{Pushkar'} also directed my efforts. It is my understanding that proofs of the claims in \cite{Pushkar'a} and \cite{Pushkar'} have not yet appeared.

\section{Background}
\label{ch:Background}

We assume the reader is familiar with the basic concepts in Legendrian knot theory, including front and Lagrangian projections, and the classical invariants. Throughout this article $\sfront$ and $\sLagr$ denote the front and Lagrangian projections of a knot $\sK$ respectively. Legendrian knots with non-zero rotation number do not admit augmentations, generating families, and graded normal rulings. Thus, we will always assume the rotation number of $\sK$ is $0$. Many of the statements of definitions and results in this section come from \cite{Sabloff}. An in-depth survey of Legendrian knot theory can be found in \cite{Etnyre2005}.

\subsubsection{The Ng resolution}
\label{subs:Ng-res}

In \cite{Ng2003}, Ng algorithmically constructs a Legendrian isotopy of $\sK$ so that the Lagrangian projection $\sLagr'$ of the resulting Legendrian knot $\sK'$ is topologically similar to $\sfront$. Definition~\ref{defn:Ng-res} gives a combinatorial description of the Lagrangian projection produced by Ng's resolution algorithm. 

\begin{defn}
	\label{defn:Ng-res}
	Given a Legendrian knot $\sK$ with front projection $\sfront$, we form the \emph{Ng resolution}, denoted $\sNgres$, by smoothing the left cusps as in Figure~\ref{f:Ng-res} (a), smoothing and twisting the right cusps as in Figure~\ref{f:Ng-res} (b), and resolving the double points as in Figure~\ref{f:Ng-res} (c).
\end{defn} 

The projection $\sNgres$ is regularly homotopic to $\sLagr'$ and the CE-DGA of $\sNgres$ is equal to the CE-DGA of $\sLagr'$. Given that $\sfront$ and $\sNgres$ are combinatorially very similar, the Ng resolution algorithm provides a natural first step towards finding connections between generating families and the CE-DGA.

\subsection{The Chekanov-Eliashberg DGA}
\label{sec:CE-DGA}

In \cite{Chekanov2002a} and \cite{Eliashberg2000}, Chekanov and Eliashberg develop a differential graded algebra, henceforth referred to as the CE-DGA, that has led to the discovery of several new Legendrian isotopy invariants. 

\subsubsection{The Algebra}

Label the crossings of a Lagrangian projection $\sLagr$ by $q_1, \hdots, q_n$. Let $A(\sLagr)$ denote the $\zz_2$ vector space freely generated by the elements of $Q = \{q_1, \hdots, q_n \}$. The algebra $\aac(\sLagr)$ is the unital tensor algebra $T A(\sLagr)$. We consider $\aac(\sLagr)$ to be a \emph{based} algebra since the algebra basis $Q$ is part of the data of $\aac(\sLagr)$. An element of $\aac(\sLagr)$ looks like the sum of noncommutative words in the letters $q_i$.

\subsubsection{The Grading}
	\label{sec:grading}

We define a $\zz$-grading $|q_i|$ on the generators $q_i$ and extend it to all monomials in $\aac(\sLagr)$ by requiring $|\prod_{j=1}^{l} q_{i_j}| = \sum_{j=1}^{l} |q_{i_j}|$. We isotope $\sK$ slightly so that the two strands meeting at each crossing of $\sLagr$ are orthogonal. Let $\gamma_i$ be a path in $\sK$ that begins on the overstrand of $\sLagr$ at $q_i$ and follows $\sLagr$ until it first returns to $q_i$. We let $r(\gamma_i)$ denote the fractional winding number of the tangent space of $\gamma_i$ with respect to the trivialization $\{ \partial_x, \partial_y \}$ of the tangent space of $\rr^2$. The \emph{grading of a crossing $q_i$} is defined to be	$|q_i| = 2 r (\gamma_i) - \frac{1}{2}$. The grading is well-defined since we have assumed $\sK$ has rotation number 0.

If $\sNgres$ is the Ng resolution of a front projection $\sfront$, then we can calculate the grading of the crossings of $\sNgres$ from $\sfront$ using a Maslov potential.    

\begin{defn}
	\label{defn:Maslov}	A \emph{strand} in $\sfront$ is a smooth path in $\sfront$ from a left cusp to a right cusp. A \emph{Maslov potential} on $\sfront$ is a map $\smu$ from the strands of $\sfront$ to $ \zz$ satisfying the relation shown in Figure~\ref{f:maslov-reeb} (a).
\end{defn}

\begin{figure}[t]
\labellist
\small\hair 2pt
\pinlabel {i+1} [tl] at -10 30
\pinlabel {i} [tl] at 2 8
\pinlabel {i+1} [tl] at 90 30
\pinlabel {i} [tl] at 90 8
\pinlabel {$+$} [tl] at 125 20
\pinlabel {$+$} [tl] at 157 20
\pinlabel {$-$} [tl] at 141 30
\pinlabel {$-$} [tl] at 141 7
\pinlabel {(a)} [tl] at 39 0
\pinlabel {(b)} [tl] at 138 0
\endlabellist
\centering
\includegraphics[scale=1]{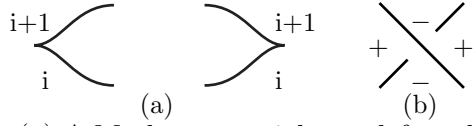}
\caption{(a) A Maslov potential near left and right cusps. (b) The Reeb sign of a crossing.}
\label{f:maslov-reeb}
\end{figure}

Given a crossing $q$ in $\sfront$, the grading of the corresponding resolved crossing $q$ in $\sNgres$ is computed by $|q| = \smu(T) - \smu(B)$ where $T$ and $B$ are the strands crossing at $q$ and $T$ has smaller slope. The crossings created by resolving right cusps have grading $1$.

\begin{figure}[t]
\centering
\includegraphics[scale=.65]{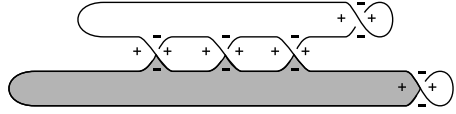}
\caption[A convex immersed polygon for a Lagrangian trefoil.]{A convex immersed polygon contributing $q_3 q_2 q_1$ to $\df q_5$. Crossings are labeled from left to right.}
\label{f:lagr-trefoil-DGA}
\end{figure}

\subsubsection{The Differential}
\label{subs:differential}

The differential of the CE-DGA counts certain disks in the $xy$-plane with polygonal corners at the crossings of $\sLagr$. We begin by decorating each corner of $q_i$ with a $+$ or $-$ sign as in Figure~\ref{f:maslov-reeb} (b). 

\begin{defn}
	\label{defn:convex-poly}
	Let $D$ be the unit disk in $\rr^2$ and let $\xxc = \{ x_0, \hdots, x_n \}$ be a set of distinct points along $\df D$ in counter-clockwise order. A \emph{convex immersed polygon} is a continuous map $f : D \to \rr^2$ so that:
	\begin{enumerate}
		\item $f$ is an orientation-preserving immersion on the interior of $D$;
		\item The restriction of $f$ to $\df D \setminus \xxc$ is an immersion whose image lies in $\sLagr$; and
		\item The image of each $x_i \in \xxc$ under $f$ is a crossing of $\sLagr$, and the image of a neighborhood of $x_i$ covers a convex corner at the crossing. Call a corner of an immersed polygon \emph{positive} if it covers a $+$ sign and \emph{negative} otherwise.
	\end{enumerate}
\end{defn}

\begin{rem}
	\label{rem:height-function}
Given a crossing $q_i$, let $\gamma_i$ be the path in $\rr^3$ that begins and ends on $\sK$, has constant $x$ and $y$ coordinates, and projects to the crossing $q_i$. Such paths are called \emph{Reeb chords}. We define a height function $h : Q \to \rr^{+}$ on the crossings of $\sLagr$ by defining $h(q_i)$ to be length of the path $\gamma_i$.

In the Ng resolution algorithm, we may arrange the Legendrian isotopy so that the height function on $\sLagr'$ is strictly increasing as we move from left to right along the $x$-axis. Thus the height function provides an ordering on the crossings of $\sNgres$ corresponding to the ordering coming from the $x$-axis. 
\end{rem}

\begin{lem}
	\label{lem:height-eq}
	Let $f$ be a convex immersed polygon and let $\gamma_i$ be the Reeb chord that lies over the corner $f(x_i)$. The following relationship holds:
	\begin{equation*}
		\sum_{x_i \text{ positive}} h(\gamma_i) - \sum_{x_j \text{ negative}} h(\gamma_j) = Area(f(D)).
	\end{equation*}
\end{lem}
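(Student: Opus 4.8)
The plan is to interpret the height function as a Liouville-type action functional and recognize the claimed identity as a version of Stokes' theorem applied to the convex immersed polygon. The key geometric fact is that the Reeb chord height $h(\gamma_i)$ measures the $z$-difference between the two strands of $\sK$ lying over the crossing $q_i$, which is exactly the difference in the antiderivatives of $y\,dx$ along the two sheets. Concretely, on the Lagrangian projection the primitive is the function $z$ obtained by integrating $y\,dx$, so I would set up the computation by assigning to each point of $\sLagr$ (locally, on each sheet) the $z$-coordinate of the corresponding point on $\sK$, and then track how $z$ changes as we traverse $\df D$ under $f$.

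First I would establish the local model. Near a crossing $q_i$, the two strands meet transversally, and the height $h(\gamma_i)$ is the gap $z_{\text{top}} - z_{\text{bottom}}$ between the two sheets of $\sK$ over that point. When the immersed polygon rounds a positive corner at $x_i$, the boundary arc of $f(\df D)$ switches from the lower sheet to the upper sheet (or vice versa depending on the sign convention), and the net change in $z$ across that corner equals $\pm h(\gamma_i)$, with the sign dictated precisely by the $+/-$ decoration from Figure~\ref{f:maslov-reeb}~(b). Along the smooth arcs of $\df D \setminus \xxc$, which map into $\sLagr$, the change in $z$ is given by $\int y\,dx$ along that arc, since $dz = y\,dx$ on $\sK$.

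Next I would apply the closed-loop condition. Because $f|_{\df D}$ is a closed loop in $\sLagr$, the total change in $z$ around the boundary must be zero. Decomposing this total change into the contributions from the smooth arcs and the contributions from the corners gives
\begin{equation*}
	\oint_{\df D} y\,dx + \sum_{x_i} (\pm h(\gamma_i)) = 0,
\end{equation*}
where the corner terms carry $+h(\gamma_i)$ for negative corners and $-h(\gamma_i)$ for positive corners (or the reverse, to be fixed by the convention). By Stokes' theorem (or Green's theorem), $\oint_{\df D} y\,dx$ computes the signed area enclosed by the image curve; since $f$ is an orientation-preserving immersion on the interior, this signed area equals $Area(f(D))$. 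Rearranging yields the stated identity.

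**The main obstacle** will be pinning down the corner sign conventions so that the positive corners contribute with the correct sign relative to $Area(f(D))$, and verifying that the orientation-preserving immersion hypothesis makes the boundary integral equal to the \emph{positive} enclosed area rather than its negative. This requires a careful local analysis at each convex corner, matching the $+/-$ decoration to whether the boundary arc ascends or descends between sheets, and confirming consistency with the counter-clockwise ordering of $\xxc$ along $\df D$. A secondary subtlety is handling the possibility that $f$ is not embedded: since it is merely an immersion, the ``enclosed area'' must be understood as the integral of the pullback $f^*(dx\wedge dy)$ over $D$, but the positivity of this integral is guaranteed precisely by the orientation-preserving condition, so no multiplicity issue actually arises once the integral is interpreted correctly.
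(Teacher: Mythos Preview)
The paper states Lemma~\ref{lem:height-eq} without proof; it is a standard fact going back to Chekanov's original paper. Your Stokes-theorem argument is the usual proof: on $\sK$ one has $dz = y\,dx$, so integrating $y\,dx$ along each boundary arc computes the change in $z$ on the lifted arc, the corner jumps are exactly the Reeb-chord heights, and Green's theorem converts the boundary integral into the area. This is correct and is precisely the argument one would give if asked to supply the omitted proof.

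Your identification of the main obstacle is accurate: the only content is the local sign check at each corner, matching the Reeb-sign decoration of Figure~\ref{f:maslov-reeb}(b) to whether the lift jumps up or down by $h(\gamma_i)$, together with the orientation convention so that the boundary integral yields $+\,Area(f(D))$ rather than its negative. Once you fix the counter-clockwise orientation on $\df D$ and observe that at a $+$ quadrant the boundary passes from the overstrand to the understrand (so the $z$-jump is $-h(\gamma_i)$), the signs fall out. Your remark that the immersion hypothesis handles the non-embedded case via $\int_D f^*(dx\wedge dy)$ is also the right way to phrase it.
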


As a consequence, every convex immersed polygon has at least one positive corner and in the case of an immersed polygon with a single positive corner, the height of the positive corner is greater than the height of each of the negative corners. The differential $\df q_i$ of the generator $q_i$ is a mod 2 count of convex immersed polygons with a single positive corner at $q_i$. 

\begin{defn}
	\label{defn:Delta-set}
	Let $\widetilde{\Delta} (q_i; q_{j_1}, \hdots, q_{j_k})$ be the set of convex immersed polygons with a positive corner at $q_i$ and negative corners at $q_{j_1}, \hdots, q_{j_k}$. The negative corners are ordered by the counter-clockwise order of the marked points along $\df D$. Let $\Delta (q_i; q_{j_1}, \hdots, q_{j_k})$ be $\widetilde{\Delta} (q_i; q_{j_1}, \hdots, q_{j_k})$ modulo smooth reparameterization.
\end{defn}

\begin{defn}
	\label{defn:DGA-boundary}
	The differential $\df$ on the algebra $\aac(\sLagr)$ is defined on a generator $q_i \in Q$ by the formula:
	\begin{equation*}
		\label{eq:DGA-diff}
		\df q_i = \sum_{\Delta (q_i; q_{j_1}, \hdots, q_{j_k})} \# (\Delta (q_i; q_{j_1}, \hdots, q_{j_k})) q_{j_1} \hdots q_{j_k}
	\end{equation*}
	\noindent where $\# (\Delta (\hdots))$ is the mod 2 count of the elements in $\Delta (\hdots)$. We extend $\df$ to all of $\aac(\sLagr)$ by linearity and the Leibniz rule. 
\end{defn}

The convex immersed polygon contributing the monomial $q_3 q_2 q_1$ to $\df q_5$ is given in Figure~\ref{f:lagr-trefoil-DGA}. Given the differential and grading defined above, $(\aac(\sLagr), \df)$ is a differential graded algebra.  

\begin{thm}[\cite{Chekanov2002a}]
	\label{thm:CE-DGA-diff}
	The differential $\df$ satisfies:
	\begin{enumerate}
		\item $| \df q | = |q| - 1 $ modulo $2 r(K)$, and 
		\item $\df \circ \df = 0$.
	\end{enumerate}
\end{thm}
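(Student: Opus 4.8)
The plan is to treat the two assertions separately. Claim (1) is a bookkeeping computation with rotation numbers along the boundary of an immersed polygon, while claim (2) is the standard ``boundary of a one-dimensional moduli space'' argument adapted to the combinatorial disks of Definition~\ref{defn:convex-poly}.

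For (1), I would fix a convex immersed polygon $f \in \Delta(q_i; q_{j_1}, \ldots, q_{j_k})$ and compute the total turning of the tangent vector as one traverses $\df(f(D))$ counter-clockwise. Because $f$ is an orientation-preserving immersion on the interior, the turning number of the boundary loop is a fixed integer $W$ independent of the particular polygon. I would decompose $2\pi W$ into the turning accumulated along the smooth arcs, which follow $\sLagr$ and hence contribute the fractional winding numbers $r(\gamma_i), r(\gamma_{j_l})$ entering the grading, together with the exterior angles at the corners, each of which is $\pm \pi/2$ since the two strands at a crossing are orthogonal, the sign being governed by whether the corner is positive or negative as in Figure~\ref{f:maslov-reeb}(b). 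The $-\tfrac{1}{2}$ normalization in $|q| = 2r(\gamma) - \tfrac{1}{2}$ is chosen precisely so that collecting these terms yields $|q_i| - 1 = \sum_l |q_{j_l}|$. The winding number $r(\gamma)$ is only well defined once a capping convention is fixed, and changing the convention shifts the count by a multiple of the total rotation $2r(K)$, which produces the claimed reduction modulo $2r(K)$. Since we assume $r(K) = 0$ throughout, this is an honest equality.

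For (2), I would show that for each monomial $w = q_{m_1} \cdots q_{m_p}$ the coefficient of $w$ in $\df^2 q_i$ is even. Expanding $\df(\df q_i)$ via the Leibniz rule, each contribution to this coefficient arises from a pair $(f_1, f_2)$: a polygon $f_1$ with positive corner at $q_i$ and a distinguished negative corner at some $q_m$, together with a polygon $f_2 \in \Delta(q_m; \ldots)$ whose negative corners fill in the gap so that the concatenated word is $w$. Such a word has total grading $|q_i| - 2$, so by the index computation underlying part (1) the relevant moduli space $\mathcal{M}^1$ of convex immersed polygons with a single positive corner at $q_i$ and negative corners spelling $w$ has formal dimension one. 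I would argue that each pair $(f_1, f_2)$ is exactly an end of $\mathcal{M}^1$, while Lemma~\ref{lem:height-eq} guarantees that the relevant areas are positive and bounded, yielding compactness of the interior.

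The hard part is the gluing and compactness analysis at the ends of $\mathcal{M}^1$. I must show that the only degenerations of a one-parameter family of such polygons are the once-broken configurations in which an interior chord shrinks and the disk splits into precisely such a pair $(f_1, f_2)$, and conversely that every such pair arises as a unique end that can be glued back into a one-parameter family. No other boundary phenomena, such as boundary bubbling or the collision of corners into higher-valence vertices, can occur, because the disks live in the plane $\rr^2$ and their corners are constrained to convex quadrants at orthogonal crossings. Granting this, $\mathcal{M}^1$ is a compact one-manifold whose boundary points are in bijection with the pairs contributing $w$ to $\df^2 q_i$; since a compact one-manifold has an even number of boundary points, that coefficient vanishes modulo $2$. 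As $w$ was arbitrary, $\df \circ \df = 0$.
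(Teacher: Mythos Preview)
The paper does not actually prove this theorem: it is stated with a citation to \cite{Chekanov2002a} and used as background, so there is no ``paper's own proof'' to compare against. Your outline is the standard argument from Chekanov's original paper and is correct in substance.

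A couple of minor remarks on your phrasing. In part~(1), the ambiguity modulo $2r(K)$ is not really a matter of a capping convention for a single $r(\gamma)$; each $\gamma_i$ is a fixed path and $r(\gamma_i)$ is well defined. Rather, when you assemble the boundary of the polygon from the arcs of the capping paths $\gamma_i, \gamma_{j_1}, \ldots, \gamma_{j_k}$, the concatenation differs from the actual boundary loop by some number of full traversals of $K$, and each such traversal contributes $2r(K)$ to the turning count. That is the source of the reduction. In part~(2), Chekanov's published proof is in fact purely combinatorial rather than analytic: he shows directly that once-broken polygons pair up by explicitly describing how to glue two pieces along the shared corner and, conversely, how a one-parameter family of immersed polygons in the plane degenerates by an edge collapsing to a crossing. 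Your moduli-space language is the SFT point of view and is equivalent, but since everything lives in $\rr^2$ one can carry out the ``gluing and compactness'' step by hand without invoking any genuine analysis; you might note that this is what makes the combinatorial theory self-contained.
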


\begin{rem}
In the case of an Ng resolution $\sNgres$, we noted in Remark~\ref{rem:height-function} that the heights of the crossings increase as we move from left to right along the $x$-axis. Thus the negative corners of a convex immersed polygon contributing to $\df$ in $(\aac(\sNgres), \df)$ always appear to the left of the positive corner. In Section~\ref{ch:Dipped-Diagrams}, this fact will allow us to find all convex immersed polygons contributing to $\df$. 
\end{rem}

In \cite{Chekanov2002a}, Chekanov defines an algebraic equivalence on DGAs, called \emph{stable tame isomorphism}, and proves that, up to this equivalence, the CE-DGA is a Legendrian isotopy invariant. In general, it is difficult to determine if two CE-DGAs are stable tame isomorphic.

\begin{defn}
	\label{defn:elem-iso}
	Given two algebras $\aac$ and $\aac'$, a grading-preserving identification of their generating sets $Q \leftrightarrow Q'$, and a generator $q_j$ for $\aac$, an \emph{elementary isomorphism} $\phi$ is an graded algebra map defined by:
	
	\begin{equation*} 
  \phi(q_i) = \begin{cases}
    q_i' & i \neq j \\
    q_j' + u &\mbox{ $i=j$, $u$ a term in $\aac'$ not containing $q_j'$}.
  \end{cases}
\end{equation*}
A composition of elementary isomorphisms is called a \emph{tame isomorphism}. A \emph{tame isomorphism of DGAs} between $(\aac, \df)$ and $(\aac', \df')$ is a tame isomorphisms $\Phi$ that is also a chain map, i.e. $\Phi \circ \df = \df' \circ \Phi$.
\end{defn}

\begin{defn}
	\label{defn:stabilization}
	Given a DGA $(\aac, \df)$ with generating set $Q$, the \emph{degree $i$ stabilization} $S_i (\aac, \df)$ is the differential graded algebra generated by the set $Q \cup \{e_1, e_2\}$, where $	|e_1| = i$ and $|e_2| = i-1$ and with the differential extended by $\df e_1 = e_2$ and $\df e_2 = 0.$
\end{defn}

\begin{defn}
	\label{defn:stable-tame-iso}
	Two DGAs $(\aac, \df)$ and $(\aac', \df')$ are \emph{stable tame isomorphic} if there exists stabilizations $S_{i_1}, \hdots, S_{i_m}$ and $S_{j_1}', \hdots, S_{j_n}'$ and a tame isomorphism of DGAs
	\begin{equation*}
	\psi : S_{i_1} ( \hdots S_{i_m} (\aac) \dots ) \to S_{j_1}' ( \hdots S_{j_n}' (\aac') \dots )
	\end{equation*}
	\noindent so that the composition of maps is a chain map.
\end{defn}

A stable tame isomorphism preserves the homology of the DGA and so the homology of $(\aac(\sLagr), \df)$ is also a Legendrian isotopy invariant, called the \emph{Legendrian contact homology} of $\sK$. 

\subsection{Augmentations}
\label{sec:Augs}

In \cite{Chekanov2002a}, Chekanov implicitly defines a class of DGA chain maps called augmentations. 

\begin{defn}
	\label{defn:aug}
	An \emph{augmentation} is an algebra map $\saug : (\aac(\sLagr), \df) \to \zz_2$ satisfying $\saug(1) = 1$, $\saug \circ \df = 0$, and if $\saug (q_i) = 1$ then $ |q_i| = 0 $. We let $Aug(\sLagr)$ denote the set of augmentations of $(\aac(\sLagr), \df)$.
\end{defn}

A tame isomorphism between DGAs induces a bijection on the corresponding sets of augmentations. Stabilizing a DGA may double the number of augmentations, depending on the grading of the new generators. It is possible to normalize the number of augmentations by an appropriate power of two and obtain an integer Legendrian isotopy invariant; see \cite{Mishachev2003,Ng2006}. 

\subsection{Graded normal rulings}
\label{sec:Rulings}

The geometric motivation for a graded normal ruling on $\sfront$ comes from examining the one-parameter family of Morse-Smale chain complexes from a suitably generic generating family $\sF$ for $\sfront$. Section 12 of \cite{Chekanov2005} provides a detailed explanation of the connection between generating families and graded normal rulings, including the restrictions placed on the types of generating families considered. In the language of \cite{Chekanov2005}, a graded normal ruling is a positive Maslov pseudo-involution. As combinatorial objects, graded normal rulings are defined as follows.

\begin{defn}
	\label{defn:ruling}
	A \emph{ruling} on the front diagram $\sfront$ is a one-to-one correspondence between the set of left cusps and the set of right cusp and, for each corresponding pair of cusps, two paths in $\sfront$ that join them. The paths satisfy:	
\begin{enumerate}
	\item Any two paths in the ruling meet only at crossings or at cusps; and
	\item The two paths joining corresponding cusps meet only at the cusps, hence their interiors are disjoint.
\end{enumerate}
\end{defn}

The two paths joining corresponding cusps are called \emph{companions} of one another. Together the two paths bound a disk in the plane called the \emph{ruling disk}. At a crossing, two paths either pass through each other or one path lies entirely above the other. In the latter case, we call the crossing a \emph{switch}.  

\begin{defn}
	\label{defn:graded-normal-ruling}
	We say a ruling is \emph{graded} if each switched crossing has grading 0, where the grading comes from a Maslov potential as defined in Section~\ref{sec:grading}. A ruling is \emph{normal} if at each switch the two paths at the crossing and their companion strands are arranged as in Figure~\ref{f:normal-switches}(a). Let $\sSgnr$ denote the set of graded normal ruling of a Legendrian knot with front projection $\sfront$.
\end{defn}

Figure~\ref{f:normal-switches}(b) gives a graded normal ruling of the standard Legendrian trefoil. In \cite{Chekanov2005}, Chekanov and Pushkar prove the following. 

\begin{thm}[\cite{Chekanov2005}]
	\label{thm:ruling-invt}
	If $\sK$ and $\sK'$ are Legendrian isotopic and $\sfront$ and $\sfront'$ are $\sigma$-generic, then $\sfront$ and $\sfront'$ admit the same number of graded normal rulings.
\end{thm}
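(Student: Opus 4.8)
The plan is to show that the number of graded normal rulings is invariant under each of the Legendrian Reidemeister moves, since any two $\sigma$-generic fronts of Legendrian isotopic knots are related by a finite sequence of such moves (the front versions: type I, type II, and type III moves, together with the moves that shift the relative $x$-coordinates of distinct singularities). Because graded normal rulings are purely combinatorial objects defined on $\sfront$, invariance under each elementary move will establish the theorem. First I would set up a correspondence: given a front $\sfront$ and a front $\sfront'$ differing by a single move inside a small disk $U$, I want to exhibit a bijection between $\sSgnr(\sfront)$ and $\sSgnr(\sfront')$. Outside $U$ the two fronts are identical, so any ruling of $\sfront$ determines a partial ruling (a pairing of cusps with companion paths) that agrees with a partial ruling of $\sfront'$ away from $U$; the content of the proof is to check that each such partial ruling extends uniquely, or in a controlled finite-to-finite manner, across $U$ in a way compatible with the grading and normality conditions.

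The key steps, in order, are as follows. First, handle the harmless moves: shifting the $x$-coordinate of one singularity past another changes neither the cusp correspondence, the switch set, nor the gradings (the Maslov potential is unaffected), so these give the identity bijection. Second, handle the type I move (which introduces or removes a cusp pair together with a crossing): here one checks that the new crossing cannot be a switch for grading or normality reasons, or that exactly one local configuration of paths is consistent, yielding a canonical bijection. Third, handle the type II move, where the analysis of the two strands entering and leaving $U$ must be matched against the normality picture in Figure~\ref{f:normal-switches}(a); I would enumerate the finitely many ways the companion paths can pass through $U$ and verify the count is preserved. The decisive and most delicate step is the type III move, where three strands interact: I would carefully enumerate all local ruling configurations on the three strands before and after the move, partition them according to which of the three crossings are switches, and construct an explicit bijection between the two enumerations that respects both the graded condition (only grading-$0$ crossings switch) and the normal condition at each switch.

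The main obstacle I expect is the type III move. Here the bijection is not simply the identity on the local data: a configuration that switches one crossing before the move may correspond to a configuration switching a different crossing afterward, and one must track how the normality condition in Figure~\ref{f:normal-switches}(a) constrains which strand lies above which on each of the three arcs. The grading hypothesis is what makes this tractable—only crossings of grading $0$ are eligible to switch, and the gradings of the three crossings in a type III move are related by the Maslov potential in a way that restricts the admissible switch patterns. I would organize this by fixing the Maslov potential values on the three strands, computing the three local gradings, and then casework on how many are $0$; in each case the normal switch condition pins down the correspondence. Once the bijection is verified move-by-move, composing the bijections along an isotopy sequence yields equality of the cardinalities $|\sSgnr(\sfront)| = |\sSgnr(\sfront')|$, completing the proof.
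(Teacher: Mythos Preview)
The paper does not supply its own proof of this theorem: it is stated as a result of Chekanov and Pushkar and attributed to \cite{Chekanov2005}, with no argument given in the present article. So there is no ``paper's own proof'' to compare against here.

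That said, your proposal is the correct strategy and is essentially the one Chekanov and Pushkar carry out in \cite{Chekanov2005}: one checks that the set of graded normal rulings (in their language, positive Maslov pseudo-involutions) is preserved under each of the front Legendrian Reidemeister moves, together with the $\sigma$-genericity moves that exchange the $x$-order of independent singularities. Your identification of the triple-point (type~III) move as the delicate case is accurate; the enumeration of local switch configurations before and after, constrained by the normality pictures of Figure~\ref{f:normal-switches}(a) and the grading condition, is exactly where the work lies. One small caution: for the cusp-creation move your claim that ``the new crossing cannot be a switch for grading or normality reasons'' is not quite the full story---one must also verify that the two new cusps are forced to pair with each other (so the ruling extends uniquely), which uses that the new strands bound a tiny disk disjoint from the rest of the front. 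With that addition, your outline matches the original argument.
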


In a normal ruling, there are two types of unswitched crossings. A \emph{departure} is an unswitched crossing in which, to the left of the crossing, the two ruling disks are either disjoint or one is nested inside the other. A \emph{return} is an unswitched crossing in which the two ruling disks partially overlap to the left of the crossing. 

\begin{figure}[t]
\labellist
\small\hair 2pt
\pinlabel {(a)} [tl] at 88 7
\pinlabel {(b)} [tl] at 309 7
\endlabellist
\centering\includegraphics[scale=.7]{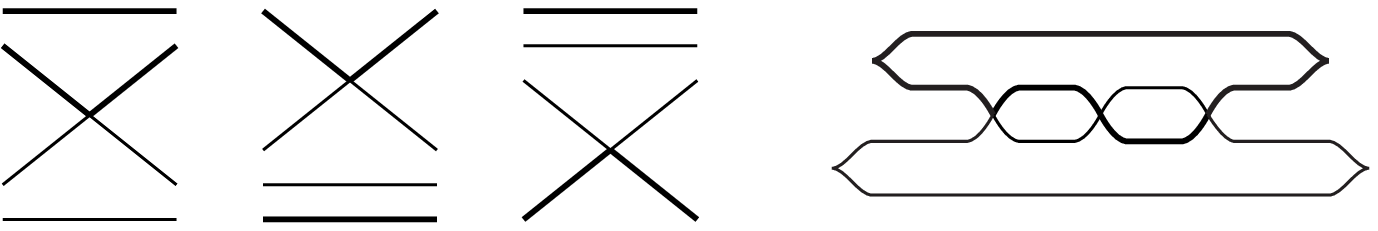}
\caption[Configurations of a normal switch and a graded normal ruling on a Legendrian trefoil.]{(a) The three possible configurations of a normal switch. (b) A graded normal ruling on the standard Legendrian trefoil.}
\label{f:normal-switches}
\end{figure}

\section{Defining Morse Complex Sequences}
\label{ch:Defining-MCS}

As indicated in the Introduction, the ideas behind \emph{Morse complex sequences} and the equivalence relation we define in this section originate with Petya Pushkar. In the language of \cite{Pushkar'}, equivalence classes of Morse complex sequences correspond to Pushkar's ``combinatorial generating families.''

A \emph{Morse complex sequence} is a finite sequence of chain complexes and a set of chain maps relating consecutive chain complexes. Its definition is geometrically motivated by the sequence of Morse-Smale chain complexes $(C_x, \df_x, g_x)$ coming from a suitably generic generating family $\sF$ and metric $g$. The local moves used to define an equivalence relation are found by considering two-parameter families of function/metric pairs $(F_x^t, g_x^t)$. In \cite{Hatcher1973}, Hatcher and Wagoner describe possible relationships between the sequences $(C_x, \df_x, g^0_x)$ and $(C_x, \df_x, g^1_x)$. In \cite{Pushkar'a}, Pushkar identifies the relationships that are necessary for working with Legendrian front projections. 

\subsection{Ordered chain complexes}

We begin by defining the chain complexes that comprise a Morse complex sequence.  

\begin{defn}
	\label{defn:occ}
	An \emph{ordered chain complex} is a $ \zz_2 $ vector space $C$ with ordered basis $y_1 < y_2 < \hdots < y_m$, a $\zz$ grading on $y_1, \hdots, y_m$, denoted $|y_j|$ and a linear map $\df : C \to C$, that satisfies:
	
	\begin{enumerate}
		\item $ \df \circ \df = 0 $, 
		\item $ | \df y_j | = | y_j | - 1 $, and
		\item $ \df y_j = \sum_{i<j} a_{j,i} y_i $, where $a_{j,i} \in \zz_2$.
	\end{enumerate}	
We denote an ordered chain complex by $(C, \df)$ when the ordered basis and grading are understood. We let $\langle \df y_j | y_i \rangle$ denote the contribution of the generator $y_i$ to $\df y_j$, i.e. $\langle \df y_j | y_i \rangle = a_{j,i}$.  
\end{defn}

\begin{rem}
	\label{rem:matrix-rep}
The $m \times m$ lower triangular matrix $\sdmatrix$ defined by $(\sdmatrix)_{j,i} = a_{j,i}$ for $j > i$ is a matrix representative of the map $\df$. Indeed, $\df^2 = 0$ implies $\sdmatrix^2 = 0$ and with respect to the basis $\{y_m\}$ the $\zz_2$ coefficients of $\df y_j$ are given by $e_j \sdmatrix$, where $e_j$ is the $j^{\mbox{th}}$ standard basis row vector. In Section~\ref{ch:MCS-Aug}, we use the matrix representatives of a sequence of ordered chain complexes to associate an augmentation to an MCS.
\end{rem}

In Figure~\ref{f:occ-example} we give an example of the graphical encoding of a ordered chain complex $(C, \df)$ used in \cite{Barannikov1994}. The vertical lines indicate the gradings of the generators $y_1, \hdots, y_6$, the height of the vertices on the vertical lines indicates the ordering of the generators, and the sloped lines connecting vertices represent the boundary map $\df$. 

\begin{figure}
\labellist
\small\hair 2pt
\pinlabel {3} [tl] at 1 4
\pinlabel {2} [tl] at 26 4
\pinlabel {1} [tl] at 50 4
\pinlabel {0} [tl] at 74 4
\pinlabel {$y_6$} [bl] at 4 63
\pinlabel {$y_5$} [bl] at 29 54
\pinlabel {$y_4$} [bl] at 29 42
\pinlabel {$y_3$} [bl] at 53 31
\pinlabel {$y_2$} [tl] at 53 17
\pinlabel {$y_1$} [bl] at 78 13
\endlabellist
\centering
\includegraphics[scale=1]{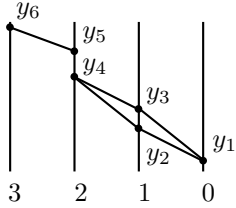}
\caption[An ordered chain complex.]{The graphical presentation of an ordered chain complex. The sloped lines from $y_4$ to $y_3$ and $y_2$ indicate that $\df y_4 = y_2 + y_3$.}
\label{f:occ-example}
\end{figure}

\subsubsection{Matrix Notation}
\label{sec:matrices}

All of the matrices in this article have entries in $\zz_2$ and matrix operations are done mod $2$. For $k > l$, we let $\sHSM_{k,l}$ denote a square matrix with 1 in the $(k,l)$ position and zeros everywhere else. We let $E_{k,l} = I + \sHSM_{k,l}$ where $I$ denotes the identity matrix.	We let $P_{i+1,i}$ denote the square permutation matrix obtained by interchanging rows $i$ and $i+1$ of the identity matrix. Finally, we let $J_{i-1}$ denote the matrix obtained by inserting two columns of zeros after column $i-1$ in the identity matrix. The matrix $J_{i-1}^{T}$ is the transpose of $J_{i-1}$.

\subsection{Morse complex sequences}

Consecutive ordered chain complexes in a Morse complex sequence are related by one of the following four chain maps.

\begin{defn}
	\label{defn:MCS-maps}
Suppose $(C, \df)$ and $(C', \df')$ are ordered chain complexes with ordered generating sets $y_1 < \hdots < y_{n}$ and $y'_1 < \hdots < y'_{m}$ respectively. We define four types of chain isomorphisms. 

\begin{enumerate}
	\item Suppose $n=m$, $1 \leq l < k \leq n$, and $|y_k| = |y_l|$. We say $(C, \df)$ and $(C', \df')$ are \emph{related by a handleslide between $k$ and $l$} if and only if the linear extension of the map on generators defined by 
				
			\begin{equation*}
			\phi_1 ( y_i ) =
			\begin{cases}
 y'_i &\mbox{ if $i \neq k $} \\
  y'_k + y'_l &\mbox{ if $i = k $}
      \end{cases}
      \end{equation*}
      
     \noindent is a chain isomorphism from $(C, \df)$ to $(C', \df')$. As matrices $\sdmatrix = E_{k,l} \sdmatrix' E^{-1}_{k,l}$.
     
	\item Suppose $n=m$ and $1 \leq k < n$. We say $(C, \df)$ and $(C', \df')$ are \emph{related by interchanging critical values at $k$} if and only if the linear extension of the map on generators defined by 
				
			\begin{equation*}
			\phi_2 ( y_i ) =
			\begin{cases}
  y'_i &\mbox{ if $i \notin \{k, k+1\} $} \\
  y'_{k+1} &\mbox{ if $i = k $} \\
  y'_{k} &\mbox{ if $i = k+1 $} \\
      \end{cases}
      \end{equation*}
      
      \noindent is a chain isomorphism from $(C, \df)$ to $(C', \df')$. As matrices $\sdmatrix = P_{k+1} \sdmatrix' P^{-1}_{k+1}$.
	
	\item Suppose $n = m-2$, $1 \leq k < m$ and $\langle \df' y'_{k+1} | y'_k \rangle = 1$. We say $(C, \df)$ and $(C', \df')$ are \emph{related by the birth of two generators at $k$} if and only if $(C, \df)$ is chain isomorphic to the quotient of $(C', \df')$ by the acyclic subcomplex generated by $\{ y'_{k+1}, \df' y'_{k+1} \}$ by the map 
			\begin{equation*}
			\phi_3 ( y_i ) =
			\begin{cases}
  [y'_i] &\mbox{ if $i < k $} \\
  [y'_{i+2}] &\mbox{ if $i > k+1 $}.
      \end{cases}
      \end{equation*}
The matrix $\sdmatrix$ is computed explicitly as follows. 
\begin{enumerate}
		\item Let $y'_{k+1} < y'_{u_1} < y'_{u_2} < \hdots < y'_{u_s}$ denote the generators of $C'$ satisfying  $\langle \df' y' | y'_{k} \rangle = 1$.  
		\item Let  $y'_{v_r} <  \hdots < y'_{v_1} < y'_k$ denote the generators of $C'$ satisfying $\langle \df' y'_{k+1} | y' \rangle = 1$.
		\item Let $E = E_{k, v_r} \hdots E_{k, v_1} E_{u_1, k+1} \hdots E_{u_s, k+1}I$.
	\end{enumerate}
	
	Then, as matrices:
      \begin{equation*}
      \label{eq:type-3-MCS}	
	 			\sdmatrix = J_{i-1} E \sdmatrix' E^{-1} J_{i-1}^{T}.
	 		\end{equation*}

We say $(C, \df)$ and $(C', \df')$ are \emph{related by the death of two generators at $k$} if the roles of $(C, \df)$ and $(C', \df')$ are exchanged in the map above. In particular, $n = m+2$, $\langle \df y_{k+1} | y_k \rangle = 1$ for some $1 \leq k < n$, and $(C', \df')$ is chain isomorphic to the quotient of $(C, \df)$ by the acyclic subcomplex generated by $\{ y_{k+1}, \df y_{k+1} \}$ by the map $\phi_4$ given by $y'_i \mapsto [y_i]$ if $i < k $ and $y'_i \mapsto [y_{i+2}]$ otherwise. 	 			
\end{enumerate}
\end{defn}      

In the matrix equation in (3), $E \sdmatrix' E^{-1}$ represents a series of handleslide moves on the chain complex $(C', \df')$. In $E \sdmatrix' E^{-1}$, $y_{k+1}$ and $y_{k}$ form a trivial acyclic subcomplex and $\sdmatrix = J_{i-1} E \sdmatrix' E^{-1} J_{i-1}^{T}$ is the result of quotienting out this subcomplex.

A Morse complex sequence encodes possible algebraic changes resulting from a generic deformation between two Morse functions without critical points. A generating family for a Legendrian knot is such a deformation.

\begin{defn}
	\label{defn:MCS}
	A \emph{Morse complex sequence $\sMCS$} is a finite sequence of ordered chain complexes $(C_1, \df_1) \hdots (C_m, \df_m)$ with ordered generating sets $y_1^j < \hdots < y_{n_j}^j$ for each $(C_j, \df_j)$, $1 \leq j \leq m$, and $\tau_j \in \zz \oplus \zz$ for $1\leq j < m$ satisfying:	
	
\begin{enumerate}
		\item For $(C_1, \df_1)$ and $(C_m, \df_m)$, $n_1 = n_m = 2$, $\langle \df_1 y_2^1 | y_1^1 \rangle = 1$ and $\langle \df_m y_2^m | y_1^m \rangle = 1$. In particular, both $(C_1, \df_1)$ and $(C_m, \df_m)$ have trivial homology.
		\item For each $1 \leq j < m$, $| n_{j+1} - n_j | \in \{ 0, 2 \}$.
		\item If $n_j = n_{j+1} - 2$, then $\tau_j = (k,0)$ for some $k$ and $(C_j, \df_j)$ and $(C_{j+1}, \df_{j+1})$ are related by the birth of two generators at $k$.
		\item If $n_j = n_{j+1} + 2$, then $\tau_j = (k,0)$ for some $k$ and $(C_j, \df_j)$ and $(C_{j+1}, \df_{j+1})$ are related by the death of two generators at $k$.
		\item If $n_j = n_{j+1}$, then either:
			\begin{enumerate}
				\item $\tau_j = (k,0)$ for some $k$ and $(C_j, \df_j)$ and $(C_{j+1}, \df_{j+1})$ are related by interchanging critical values at $k$, or
					\item $\tau_j = (k,l)$ for some $1 \leq l < k \leq n_j$ and $(C_j, \df_j)$ and $(C_{j+1}, \df_{j+1})$ are related by a handleslide between $k$ and $l$.
			\end{enumerate}		
\end{enumerate}

\end{defn}

\subsection{Associating a Marked Front Projection to an MCS}
\label{sec:mcs-marked-front}

We may encode an MCS graphically using a front projection and certain vertical line segments. The vertical marks encode the handleslides occurring in the chain isomorphisms of type (1) and (3).

\begin{defn}
	\label{defn:handleslide-mark}
	A \emph{handleslide mark} on a $\sigma$-generic front projection $\sfront$ with Maslov potential $\smu$ is a vertical line segment in the $xz$-plane with endpoints on $\sfront$. We require that the line segment not intersect the crossings or cusps of $\sfront$ and the endpoints sit on strands of $\sfront$ with the same Maslov potential. A \emph{marked front projection} is a $\sigma$-generic front projection with a collection of handleslide marks; see Figure~\ref{f:MCS-example2}.
\end{defn}

\begin{figure}
\labellist
\small\hair 2pt
\pinlabel {$x_1$} [tr] at 90 105
\pinlabel {$x_2$} [tr] at 140 105
\pinlabel {$x_3$} [tr] at 230 105
\pinlabel {$x_4$} [tr] at 310 105
\pinlabel {$x_5$} [tr] at 350 105
\pinlabel {$x_6$} [tr] at 400 105
\pinlabel {$x_7$} [tr] at 470 105
\endlabellist\centering
\includegraphics[scale=.5]{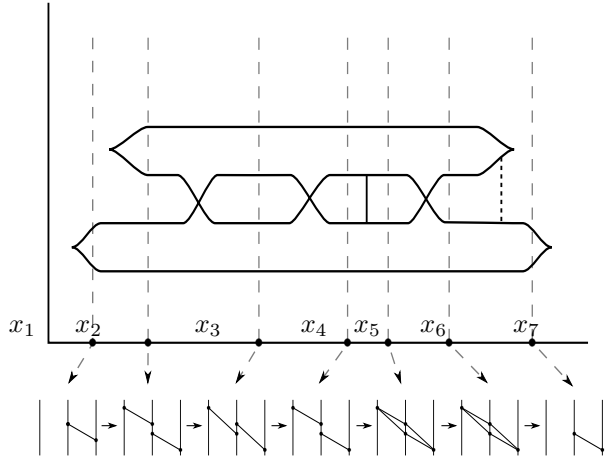}
\caption[An MCS on a Legendrian trefoil.]{An example of an MCS and its associated marked front projection.}
\label{f:MCS-example2}
\end{figure}

\begin{figure}
\labellist
\small\hair 2pt
\pinlabel {(a)} [tl] at 26 3
\pinlabel {(b)} [tl] at 119 3
\pinlabel {(c)} [tl] at 210 3
\pinlabel {(d)} [tl] at 295 3
\endlabellist
\centering
\includegraphics[scale=.6]{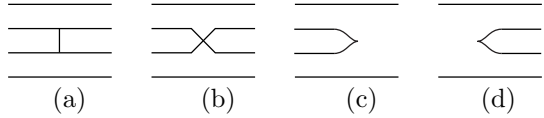}
\caption{The four possible tangles used in Section~\ref{sec:mcs-marked-front}.}
\label{f:mcs-tangle}
\end{figure}

Let $(C_1, \df_1) \hdots (C_m, \df_m)$ and $\tau_1, \hdots, \tau_{m-1} \in \zz \times \zz$ be an MCS $\sMCS$. We build the marked front projection for $\sMCS$ inductively beginning with a single left cusp and building to the right by adjoining tangles of the types in Figure~\ref{f:mcs-tangle}. 

For each $1 \leq j < m$, we adjoin a tangle from Figure~\ref{f:mcs-tangle}. At each step, we assume the strands of our tangle are numbered from top to bottom.  If $(C_j, \df_j)$ and $(C_{j+1}, \df_{j+1})$ are related by a handleslide between $k$ and $l$, then we adjoin a tangle of type (a) with a handleslide mark between strands $k$ and $l$. If $(C_j, \df_j)$ and $(C_{j+1}, \df_{j+1})$ are related by interchanging critical values at $k$, then we adjoin a tangle of type (b) with a crossing between strands $k$ and $k+1$. If $(C_j, \df_j)$ and $(C_{j+1}, \df_{j+1})$ are related by the death of two generators at $k$, then we adjoin a tangle of type (c) where the right cusp connects strands $k$ and $k+1$. If $(C_j, \df_j)$ and $(C_{j+1}, \df_{j+1})$ are related by the birth of two generators at $k$, then we adjoin a tangle of type (d) where the left cusp sits above strand $k-1$.

We will refer to the marks arising from handleslides between consecutive chain complexes as \emph{explicit handleslide marks}. At the cusps corresponding to births or deaths, we also place marks in a small neighborhood of the cusp. These marks correspond to the composition of handleslides represented by the matrix product $E$ in (3) of Definition~\ref{defn:MCS-maps}. We call these \emph{implicit handleslide marks} and use dotted vertical lines to distinguish them from the explicit handleslide marks.

The front projection we have constructed is $\sigma$-generic and has a natural Maslov potential coming from the index of the generators in $(C_1, \df_1) \hdots (C_m, \df_m)$. In addition, the handleslide marks connect strands of the same Maslov potential. Hence, we have constructed a marked front projection. Figure~\ref{f:MCS-example2} gives an MCS and its associated marked front projection.

\begin{rem}
\label{rem:mcs-defn}
We note the following:
\begin{enumerate}
	\item If a marked front projection is associated to $\sMCS$, then, by reversing the construction defined above, $(C_1, \df_1) \hdots (C_m, \df_m)$ and $\tau_1, \hdots, \tau_{m-1}$ are uniquely reconstructed from the cusps, crossings and handleslide marks of the marked front projection using part (1) of Definition~\ref{defn:MCS} and the chain isomorphisms from Definition~\ref{defn:MCS-maps}. Thus, a marked front projection may be associated with at most one MCS.
	\item It follows from Definition~\ref{defn:MCS-maps} that if $(C_j, \df_j)$ and $(C_{j+1}, \df_{j+1})$ are related by the death of two generators at $i$, then $\langle \df_{j}y_{i+1}^j | y_{i}^j \rangle = 1$ and if $(C_j, \df_j)$ and $(C_{j+1}, \df_{j+1})$ are related by interchanging critical values at $i$, then $\langle \df_{j}y_{i+1}^j | y_{i}^j \rangle = 0$.
	\item In an MCS $\sMCS$, the homologies of $(C_j, \df_j)$ and $(C_{j+1}, \df_{j+1})$ are isomorphic. Thus, by (1) in Definition~\ref{defn:MCS}, all of the homologies in $\sMCS$ are trivial.
\end{enumerate}
\end{rem}

\begin{defn}
Given a $\sigma$-generic front projection $\sfront$, an MCS $\sMCS$ is in the set $\sFMCS$ if and only if the marked front projection associated to $\sMCS$ has a front projection that is planar isotopic to $\sfront$ by a planar isotopy through $\sigma$-generic front projections. Such an isotopy pulls back to a Legendrian isotopy in $\rr^3$.
\end{defn}

In \cite{Pushkar'a} and \cite{Pushkar'}, Pushkar defines a ``spring sequence'' to be a sequence of ordered chain complexes over a commutative ring $\mathbb{E}$ with consecutive complexes connected by maps similar to those defined in Definition~\ref{defn:MCS-maps}. Originally defined in Section~12 of \cite{Chekanov2005}, these chain complexes are called $M$-complexes. Pushkar encodes a spring sequence by adding vertical marks, called ``springs,'' to the front projection of an associated Legendrian knot.

\subsection{An Equivalence Relation on $\sFMCS$}

In this section, we describe a set of local moves used to define an equivalence relation on $\sFMCS$. Since an MCS is completely determined by its associated marked front projection, these moves are defined as graphical changes in the handleslide marks. The equivalence relation encodes local algebraic changes resulting from a generic deformation of a 1-parameter family of Morse functions. Such deformations are studied extensively in \cite{Hatcher1973}.

Figures~\ref{f:MCS-equiv-2}, \ref{f:MCS-equiv-3}, and \ref{f:MCS-equiv-explosion} describe local changes in the handleslide marks of a marked front projection. We call these \emph{MCS moves}. Other strands may appear in a local neighborhood of an MCS move, although we assume no other crossings or cusps appear.  The ordering of implicit handleslide marks at a birth or death is irrelevant, thus we will not consider analogues of moves 1 - 6 for implicit handleslide marks. In moves 11 - 14, there may be other implicit marks that the indicated explicit handleslide mark commutes past without incident. Additional implicit marks may also appear at the birth or death in moves 15 and 16.

MCS move 17 requires explanation. Let $\sMCS \in \sFMCS$ and suppose $(C, \df)$ is an ordered chain complex in $\sMCS$ with generators $y_1 < \hdots < y_m$  and a pair of generators $y_l < y_k$ such that $|y_l| = |y_k| + 1$. Let $y_{u_1} < y_{u_2} < \hdots < y_{u_s}$ denote the generators of $C$ satisfying  $\langle \df y | y_{k} \rangle = 1$; see the left three arrows in Figure~\ref{f:MCS-equiv-explosion}. Let  $y_{v_r} <  \hdots < y_{v_1} < y_i$ denote the generators of $C$ appearing in $\df y_{l}$; see the right two arrows in Figure~\ref{f:MCS-equiv-explosion}. Let $E = E_{k, v_r} \hdots E_{k, v_1} E_{u_1, l} \hdots E_{u_s, l}$. Then over $\zz_2$, the following formula holds:

\begin{equation}
	\label{eq:explosion}
\sdmatrix = E \sdmatrix E^{-1}.
\end{equation}

The MCS move in Figure~\ref{f:MCS-equiv-explosion} says that we may either introduce or remove the handleslides represented by $E$ and this move is local in the sense that it does not change any of the other chain complexes in $\sMCS$. The next proposition shows that all of the MCS moves are local in this sense. 

\begin{figure}
\labellist
\small\hair 2pt
\pinlabel {1} [br] at 115 365
\pinlabel {3} [br] at 115 286
\pinlabel {5} [br] at 115 206
\pinlabel {7} [br] at 115 126
\pinlabel {9} [br] at 115 45
\pinlabel {2} [br] at 345 365
\pinlabel {4} [br] at 345 286
\pinlabel {6} [br] at 345 206
\pinlabel {8} [br] at 345 126
\pinlabel {10} [br] at 347 45
\endlabellist
\centering
\includegraphics[scale=.5]{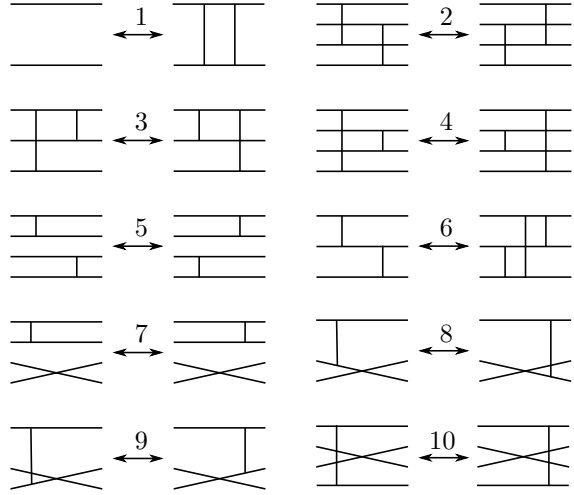}
\caption[MCS moves 1 - 10]{MCS moves 1 - 10. We also allow reflections about the horizontal axis.}
\label{f:MCS-equiv-2}
\end{figure}

\begin{figure}
\labellist
\small\hair 2pt
\pinlabel {11} [br] at 117 185
\pinlabel {12} [br] at 345 185
\pinlabel {13} [br] at 117 110
\pinlabel {14} [br] at 345 110
\pinlabel {15} [br] at 117 30
\pinlabel {16} [br] at 345 37
\endlabellist
\centering
\includegraphics[scale=.5]{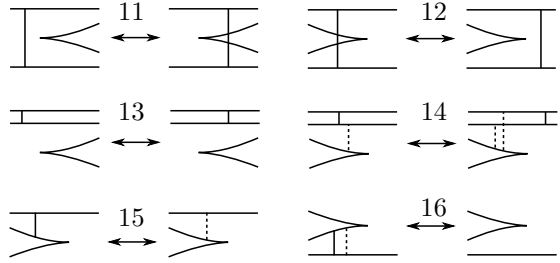}
\caption[MCS moves 11 - 16]{MCS moves 11 - 16. We also allow reflections about the horizontal and vertical axes.}
\label{f:MCS-equiv-3}
\end{figure}

\begin{figure}
\labellist
\small\hair 2pt
\pinlabel {17} [br] at 220 70
\pinlabel {$y_{u_3}$} [br] at 5 120
\pinlabel {$y_{u_2}$} [br] at 5 100
\pinlabel {$y_{u_1}$} [br] at 5 80
\pinlabel {$y_k$} [br] at 5 60
\pinlabel {$y_l$} [br] at 5 40
\pinlabel {$y_{v_1}$} [br] at 5 20
\pinlabel {$y_{v_2}$} [br] at 5 0
\endlabellist
\centering
\includegraphics[scale=.6]{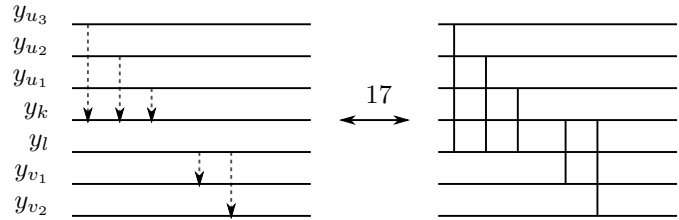}
\caption[MCS moves 17.]{MCS move 17.}
\label{f:MCS-equiv-explosion}
\end{figure}

\begin{prop}
	\label{p:MCS-equiv}
Suppose $\sMCS = (C_1, \df_1) \hdots (C_m, \df_m)$ is an MCS on $\sfront$ and that in the interval $[a,b]$ of the $x$-axis we modify the marked front projection of $\sMCS$ by one of the MCS moves in Figures~\ref{f:MCS-equiv-2}, \ref{f:MCS-equiv-3}, or \ref{f:MCS-equiv-explosion}. Then the resulting marked front projection $\sMCS'$ determines an MCS. In addition, the ordered chain complexes of $\sMCS$ and $\sMCS'$ agree outside of the interval $[a,b]$.
\end{prop}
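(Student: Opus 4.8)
The plan is to use the fact, recorded in Remark~\ref{rem:mcs-defn}(1), that the ordered chain complexes of an MCS are reconstructed deterministically by reading the marked front projection from left to right: starting from the trivial complex at the leftmost cusp, each handleslide mark, crossing, and cusp prescribes the next complex via the inverse of the corresponding chain isomorphism of Definition~\ref{defn:MCS-maps}. Since every MCS move is supported in $[a,b]$, the marked front projections of $\sMCS$ and $\sMCS'$ coincide to the left of $a$; moreover the moves are designed to join strands of equal Maslov potential, so $\sMCS'$ is a legitimate marked front projection. The reconstructions therefore agree to the left of $a$ and in particular produce the same ordered chain complex, say $(C_L,\df_L)$, at the left edge of $[a,b]$. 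Thus both assertions reduce to two local claims: (i) starting from $(C_L,\df_L)$, the reconstruction of $\sMCS'$ inside $[a,b]$ succeeds, i.e.\ every prescribed map is a genuine chain isomorphism; and (ii) the ordered chain complex $(C_R,\df_R)$ produced at the right edge of $[a,b]$ is the same for $\sMCS$ and $\sMCS'$. Granting (ii), the marks to the right of $b$ agree and the reconstruction proceeds identically, giving the second assertion; granting (i) as well, $\sMCS'$ reconstructs to a bona fide MCS, giving the first.

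To verify (i) and (ii) I would translate each mark into its action on the matrix representative $\sdmatrix$ of Remark~\ref{rem:matrix-rep}: a handleslide between $k$ and $l$ conjugates by $E_{k,l}$, an interchange at $k$ conjugates by $P_{k+1}$, and a birth or death applies the $J$- and $E$-operations of Definition~\ref{defn:MCS-maps}(3). The net transformation carrying $\sdmatrix_L$ to $\sdmatrix_R$ is then a word in these matrices, and (ii) becomes the statement that the words read off from $\sMCS$ and from $\sMCS'$ act identically on $\sdmatrix_L$. For the validity claim (i), the grading condition $|y_k|=|y_l|$ needed for each handleslide is automatic, since a handleslide mark joins strands of equal Maslov potential; and one checks once and for all that conjugation by $E_{k,l}$ (with $l<k$ and equal gradings) always returns a strictly lower-triangular, grading-lowering, square-zero matrix, using $E_{k,l}^2=I$ over $\zz$ and the vanishing of the would-be upper-triangular entry $\langle \df y_l\,|\,y_k\rangle$. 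The only further structural conditions are those in Remark~\ref{rem:mcs-defn}(2): an interchange at $k$ is legal precisely when $\langle\df y_{k+1}\,|\,y_k\rangle=0$ and a death at $k$ precisely when $\langle\df y_{k+1}\,|\,y_k\rangle=1$, which I would confirm from the local form of $\sdmatrix$ at each crossing or right cusp occurring in the move.

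With this framework the proof becomes a move-by-move check of matrix identities. For moves 1--10 (rearrangements of explicit and implicit handleslide marks) the content is a short list of relations among the $E_{k,l}$: commutation $E_{k,l}E_{k',l'}=E_{k',l'}E_{k,l}$ when the index pairs do not chain together, cancellation $E_{k,l}^2=I$, and the triangle relation $E_{k,l}E_{l,m}=E_{l,m}E_{k,l}E_{k,m}$, all valid over $\zz$. The remaining moves involving a crossing (11--14) or a cusp (15--16) each reduce to a relation expressing how an explicit handleslide mark commutes past the crossing or cusp, which in matrix terms relates a conjugation by $P_{k+1}$ (respectively an application of the $J$- and $E$-operations) to a conjugation by a relabeled $E_{k',l'}$; the freedom to reorder implicit marks noted in the text is used here. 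In every case the two words act identically on $\sdmatrix_L$, giving (ii), while the intermediate matrices remain valid differentials, giving (i).

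The main obstacle is move 17, the explosion. Here validity (i) is immediate from the grading bookkeeping, since each factor $E_{k,v_j}$ joins strands of potential $|y_k|$ and each $E_{u_i,l}$ joins strands of potential $|y_l|$, but claim (ii) is exactly formula~\eqref{eq:explosion}, $\sdmatrix=E\sdmatrix E^{-1}$, i.e.\ that the full block $E=E_{k,v_r}\cdots E_{k,v_1}E_{u_1,l}\cdots E_{u_s,l}$ commutes with $\sdmatrix$. This is the one genuinely nontrivial computation: one expands the commutator $E\sdmatrix+\sdmatrix E$ and shows the apparent new terms cancel, the cancellation being an instance of $\df\circ\df=0$ together with the grading relation $|y_l|=|y_k|+1$, which forces the only surviving contributions to $\langle\df\circ\df\, y_{u_i}\,|\,y_{v_j}\rangle$ to pass through the indices $k$ and $l$. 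I expect this identity, rather than the bookkeeping for the other sixteen moves, to require the most care.
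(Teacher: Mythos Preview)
Your approach is essentially the paper's: reduce to showing the ordered chain complex at the right edge of $[a,b]$ is unchanged, then verify this move by move via matrix identities among the $E_{k,l}$, $P_{k+1,k}$, and the birth/death operations, with move~17 handled by equation~\eqref{eq:explosion}. One correction to your bookkeeping: moves 7--10 are the ones that pass a handleslide mark across a \emph{crossing} (so they involve the permutation matrices $P_{i+1,i}$, not just relations among the $E_{k,l}$), while moves 11--16 all concern \emph{cusps}; your grouping of the cases is off, though the substance of each check is as you describe.
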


\begin{proof}
As marked front projections, $\sMCS$ and $\sMCS'$ agree outside of $[a,b]$ and so by Remark~\ref{rem:mcs-defn}~(1) the chain complexes they determine agree to the left of $[a,b]$. The chain complexes to the right of $[a,b]$ depend on the handleslide marks of each MCS and the chain complex to the immediate right of $[a,b]$, so it suffices to show that $\sMCS$ and $\sMCS'$ determine the same chain complex to the immediate right of $[a,b]$. This follows from the matrix equations in Definition~\ref{defn:MCS-maps} and the following matrix equations:

\begin{description}
	\item[Move 1] For $k>l$, $E_{k, l}E_{k, l} = I $.
	\item[Moves 2 - 5] For $k_1 > l_1$ and $k_2 > l_2$ with $k_1 \neq l_2$ and $k_2 \neq l_1$,  $E_{k_1, l_1}E_{k_2, l_2} = E_{k_2, l_2}E_{k_1, l_1} $.
	\item[Move 6] For $a>b>c$, $E_{a, b}E_{b, c} = E_{b, c}E_{a, c}E_{a,b} $.
	\item[Moves 7 and 10] For $k > l$ with $k, l \notin \{i+1,i\}$, $E_{k, l}P_{i+1, i} = P_{i+1,i}E_{k, l} $.
	\item[Moves 8 and 9] For $k > i+1$, $E_{k, i}P_{i+1, i} = P_{i+1,i}E_{k, i+1}$.	
	\item[Moves 15 and 16] Suppose the explicit handleslide mark occurs between strands $k$ and $i+1$. If $E$ (respectively $F$) represents the sequence of implicit handleslide moves in the formula for the differential of the chain complex of $\sMCS$ (respectively $\sMCS'$) occurring after the right cusp, then $F = E E_{k,i+1}$.
\end{description}

In moves 11 - 13, the explicit handleslide mark does not involve the generators of the acyclic subcomplex that is quotiented out, so the chain complexes of $\sMCS$ and $\sMCS'$ to the right of $[a,b]$ are equal. Move 14 is a composition of moves 6, 13, and 15. Indeed, moves 15 and 6 allow us to commute the explicit mark past the implicit mark and then move 13 pushes the explicit mark past the cusp. Finally, the localness of move 17 was detailed in the discussion surrounding Equation~\ref{eq:explosion}.
\end{proof}

\begin{defn}
	\label{defn:MCS-equiv}
We say $\sMCS$ and $\sMCS'$ in $\sFMCS$ are \emph{equivalent}, denoted $\sMCS \sMCSeq \sMCS'$, if there exists a finite sequence $\sMCS = \sMCS_1, \sMCS_2, \hdots, \sMCS_n = \sMCS' \in \sFMCS$ where for each $1 \leq i < n$ the marked front projections of $\sMCS_i$ and $\sMCS_{i+1}$ are related by an MCS move. 

We let $\sFMCSeq$ denote the equivalence classes of $\sFMCS \diagup \sim$ and let $\sbMCS$ denote an equivalence class in $\sFMCSeq$. 
\end{defn}

\subsection{Associating a Normal Ruling to an MCS}

\begin{defn}[\cite{Barannikov1994}]
	\label{defn:simple-complex}
		An ordered chain complex $(C, \df)$ with generators $y_1, \hdots, y_m$ is in \emph{simple form} if:
\begin{enumerate}
	\item For all $i$, either $\df y_i = 0 $ or $\df y_i = y_j$ for some $j$; and
	\item If $\df y_i = \df y_k = y_j$, then $k = i$.
\end{enumerate}
\end{defn}

\begin{lem}[\cite{Barannikov1994}]
	\label{lem:Barannikov}
		Let $(C, \df)$ be an ordered chain complex with generators $y_1, \hdots, y_m$. Then after a series of handleslide moves (see Definition~\ref{defn:MCS-maps}~(1))  we can reduce $(C, \df)$ to simple form. In addition, this simple form is unique. 
\end{lem}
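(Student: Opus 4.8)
The plan is to treat this as the reduction of a filtered complex to its Barannikov canonical form, splitting the proof into existence (an explicit reduction algorithm) and uniqueness (invariance of the resulting matching). The organizing observation is that the subspaces $C_s := \langle y_1, \ldots, y_s \rangle$ form an increasing filtration of $C$ by subcomplexes, since strict lower-triangularity gives $\partial C_s \subseteq C_{s-1}$; ``simple form'' then says precisely that $\partial$ realizes a partial matching of the basis, with each source $y_i$ (where $\partial y_i = y_{m(i)} \ne 0$) joined to a distinct earlier target $y_{m(i)}$, all other generators being unpaired cycles. I first record the two effects of a handleslide between $k$ and $l$ (with $l<k$): it replaces $\partial y_k$ by $\partial y_k + \partial y_l$, and simultaneously adds $y_l$ to $\partial y_j$ for every $j$ with $y_k \in \partial y_j$. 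Note that the grading hypothesis $|y_k|=|y_l|$ required for a handleslide is automatic here, because $\partial$ drops grading by exactly $1$.

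Existence I would prove by induction on $m$, peeling off the top generator $y_m$. Since nothing maps to $y_m$ (column $m$ vanishes), the second effect of any handleslide with top index $m$ is vacuous, so such moves alter only the row $\partial y_m$ and leave $C_{m-1}$ untouched. Applying the inductive hypothesis to the subcomplex $C_{m-1}$ puts it in simple form; then $\partial^2 y_m = 0$ together with the distinctness of targets forces $\partial y_m$ to be a sum of targets and cycles only. I would remove each target term $y_b$ (with $\partial y_a = y_b$) by the handleslide between $m$ and $a$, and remove all but the top cycle term $y_b$ appearing in $\partial y_m$ by handleslides between that top cycle and the lower ones; here the second effect is exactly what deletes the surplus terms from $\partial y_m$ while disturbing nothing else. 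This leaves $\partial y_m$ equal to $0$ or a single generator, completing the simple form.

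For uniqueness the key point is that a handleslide is a filtered chain isomorphism: the map $\phi_1$ of Definition~\ref{defn:MCS-maps}(1) satisfies $\phi_1(C_s) = C_s'$ for every $s$ (immediate, since $y_l \in C_s$ whenever $k \le s$), so the whole filtered complex is preserved up to filtered chain-isomorphism. Consequently every invariant of the filtered chain-isomorphism type of $(C,\partial,\{C_s\})$ is unchanged by the MCS moves. I would then show that the matching of any simple-form representative is recovered from such invariants, namely the persistent Betti numbers $P_{s,t} = \operatorname{rank}\big(H_*(C_s) \to H_*(C_t)\big)$ induced by inclusion: a direct count in a simple-form complex shows that a pair (target $b$, source $a$) contributes to $P_{s,t}$ exactly when $b \le s \le t < a$, and the fundamental identity $\mu_{b,a} = (P_{b,a-1}-P_{b,a}) - (P_{b-1,a-1}-P_{b-1,a})$ extracts each pair. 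Since the $P_{s,t}$ depend only on the filtered complex, two simple forms reached from the same $(C,\partial)$ have identical matchings.

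I expect the main obstacle to be uniqueness, specifically isolating the basis-independent quantities that pin down the matching and verifying that handleslides genuinely preserve the filtration rather than merely the homology. Existence, by contrast, is essentially mechanical once one observes that the second effect of a handleslide is precisely the tool needed to clear surplus cycle terms; the only care required there is the order of operations, which the top-down induction makes transparent.
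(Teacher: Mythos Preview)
Your proposal is correct. The paper does not give its own proof of this lemma; it simply cites Barannikov and remarks that his argument ``uses an inductive construction to find the simple forms of each of the subcomplexes $(C_k,\partial)$ generated by $y_1,\ldots,y_k$ for $k\le m$.'' Your existence argument is exactly this induction, carried out carefully: peel off the top generator, put $C_{m-1}$ in simple form by induction, then clean up $\partial y_m$ using the two effects of a handleslide. The grading checks you flag are all automatic for the reasons you give.

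Your uniqueness argument, extracting the matching from the persistent Betti numbers $P_{s,t}$ of the filtration $C_1\subset C_2\subset\cdots\subset C_m$, is a clean and correct route that the paper does not spell out. The key observation that $\phi_1$ preserves each $C_s$ (so handleslides are filtered isomorphisms) is exactly what makes the $P_{s,t}$ invariant, and your formula $\mu_{b,a}=(P_{b,a-1}-P_{b,a})-(P_{b-1,a-1}-P_{b-1,a})$ correctly recovers the pairing from a simple-form representative. This is the standard Barannikov/persistence decomposition argument, so while the paper gives no details to compare against, your approach is certainly in the spirit of the cited reference.
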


The proof of Lemma~\ref{lem:Barannikov} is given in Lemma~2 of \cite{Barannikov1994} where Barannikov uses an inductive construction to find the simple forms of each of the subcomplexes $(C_k, \df)$ generated by $y_1, \hdots, y_k$ for $k \leq m$.

\begin{rem}
\label{rem:simple-form} 
The following two observations follow directly from Lemma~\ref{lem:Barannikov}.
\begin{enumerate}
	\item If $(C, \df)$ and $(C', \df')$ are chain isomorphic by a handleslide move, i.e. $\sdmatrix = E_{k,l} \sdmatrix' E_{k,l}^{-1}$, then they have the same simple form. 
	\item For consecutive generators in $(C, \df)$, a handleslide move does not change the value of $\langle \df y_{j+1} | y_j \rangle$. Thus, if $\langle \df y_{j+1} | y_j \rangle = 1$, then $\df y_{j+1} = y_j$ in the simple form for $(C, \df)$.
\end{enumerate}
\end{rem}

\begin{defn}
	\label{defn:pairing}
	Suppose $(C, \df)$ is an ordered chain complex with generators $y_1, \hdots, y_m$ and trivial homology. By Lemma~\ref{lem:Barannikov}, there exists a fixed-point free involution $\tau : \{1,\hdots,m\} \to \{1,\hdots,m\}$ so that: in the simple form of $(C, \df)$, for all $i$ either $\df y_i = y_{\tau(i)}$ or $\df y_{\tau(i)} = y_{i}$. We call $\tau$ the \emph{pairing} of $(C, \df)$. 
\end{defn}	

The following lemma assigns a normal ruling to an MCS. In Section 12.4 of \cite{Chekanov2005}, Chekanov and Pushkar prove this lemma for a generating family using the language of pseudo-involutions.

\begin{lem}
	\label{lem:MCS-ruling}
		The pairings determined by the simple forms of the ordered chain complexes in an MCS $\sMCS = (C_1, \df_1) \hdots (C_m, \df_m)$ determine a graded normal ruling $\sgnr_{\sMCS}$ on $\sfront$.  
\end{lem}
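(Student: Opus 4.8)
The plan is to read the ruling off the sequence of pairings $\tau_1,\ldots,\tau_m$ supplied by Definition~\ref{defn:pairing}. This is legitimate because, by Remark~\ref{rem:mcs-defn}~(3), every ordered chain complex $(C_j,\df_j)$ in $\sMCS$ has trivial homology, so Lemma~\ref{lem:Barannikov} produces a unique simple form and hence a fixed-point-free involution $\tau_j$. First I would fix the correspondence between the generators $y_1^j < \cdots < y_{n_j}^j$ of $(C_j,\df_j)$ and the strands of $\sfront$ meeting a vertical line in the $j$-th region of the marked front projection, ordered by height. The involution $\tau_j$ then pairs these strands; I declare two strands to be \emph{companions} in the $j$-th region when they are paired by $\tau_j$, and I declare a crossing to be a \emph{switch} exactly when the simple-form pairing is not merely relabeled as the two crossing strands exchange heights. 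The candidate ruling $\sgnr_{\sMCS}$ is obtained by following companion strands between the cusps at which they are born and die.

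Next I would verify that the companion relation behaves correctly across each of the four moves of Definition~\ref{defn:MCS-maps}, which is what makes $\sgnr_{\sMCS}$ a well-defined ruling in the sense of Definition~\ref{defn:ruling}. Across a handleslide mark the pairing is unchanged, since by Remark~\ref{rem:simple-form}~(1) handleslide-equivalent complexes share a simple form, so $\tau_{j+1}=\tau_j$. At a left cusp the two newly born generators $y_k,y_{k+1}$ satisfy $\langle \df y_{k+1} | y_k\rangle = 1$, so by Remark~\ref{rem:simple-form}~(2) they are paired in the simple form; thus the two strands emanating from a left cusp are companions, born together, and a dual argument shows the two strands meeting at a right cusp are companions that die together. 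In each birth and death I must also check, from the explicit matrix formula in Definition~\ref{defn:MCS-maps}~(3), that inserting or deleting this trivial pair does not disturb the pairing of the remaining strands. Combining these facts with the consistency of $\tau_j$ across crossings shows that companions meet only at their two shared cusps and that distinct ruling paths meet only at crossings, yielding a one-to-one correspondence between left and right cusps.

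The heart of the argument, and the step I expect to be the main obstacle, is the analysis of the interchange-of-critical-values move, where I must pin down exactly how Barannikov's unique simple form transforms when two adjacent generators $y_k,y_{k+1}$ swap order, and extract both the grading and normality conditions. Conjugating $\sdmatrix$ by $P_{k+1}$ merely relabels the pairing, but the simple form itself can genuinely reconnect, and this reconnection is precisely a switch. I would carry out a case analysis on the companions $\tau_j(k)$ and $\tau_j(k+1)$ and their heights relative to $k,k+1$, showing (i) that because each generator is paired with one whose grading differs by exactly one, a reconnection can only swap the partners of $y_k$ and $y_{k+1}$ when $|y_k| = |y_{k+1}|$, so that every switch lies at a grading-$0$ crossing and the ruling is \emph{graded} in the sense of Definition~\ref{defn:graded-normal-ruling}; and (ii) that the reconnection always produces one of the three configurations of Figure~\ref{f:normal-switches}(a), so that the ruling is \emph{normal}. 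Point (ii) is where the order structure does the real work: since $\tau_j$ pairs strands by the nesting dictated by Barannikov's inductive construction on the subcomplexes generated by $y_1,\ldots,y_k$, the companions of the two switched strands are forced into a nested or side-by-side arrangement, which is exactly normality.

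Finally I would assemble these local verifications into the global statement: the companion involution is defined consistently along the entire front, and the grading and normality conditions have been checked at every crossing, so the pairings determine a graded normal ruling $\sgnr_{\sMCS}$ on $\sfront$. The only genuinely delicate computation is the transformation of the simple form under an order interchange; everything else reduces to bookkeeping with the matrix identities already recorded in Definition~\ref{defn:MCS-maps} and Remark~\ref{rem:simple-form}.
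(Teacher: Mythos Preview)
Your proposal is correct and follows essentially the same approach as the paper: read off the pairing from the Barannikov simple form of each $(C_j,\df_j)$, use Remark~\ref{rem:simple-form}(1) to show the pairing is constant across handleslides, use Remark~\ref{rem:simple-form}(2) to show cusp strands are paired, and then analyze the interchange move for gradedness and normality. The only notable difference is that the paper outsources the normality verification to Lemma~4 of \cite{Barannikov1994} rather than carrying out the case analysis you sketch in point~(ii); your direct approach would work but is more labor, and Barannikov's lemma is exactly the statement that the simple-form pairing under an adjacent transposition can only reconnect in one of the three normal configurations.
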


\begin{proof}
Each $(C_j, \df_j)$ has trivial homology and, thus, a pairing $\tau_i$. Given a pair of consecutive singularities $x_i$ and $x_{i+1}$ of $\sfront$, we define a pairing on the strands of the tangle $\sfront \cap (x_i,x_{i+1})$ using the pairing of a chain complex of $\sMCS$ between $x_i$ and $x_{i+1}$. There may be several chain complexes of $\sMCS$ between $x_i$ and $x_{i+1}$. However, these chain complexes differ by handleslide moves and so by Remark~\ref{rem:simple-form}~(1) their pairings are identical. Remark~\ref{rem:simple-form}~(2) justifies that two strands entering a cusp are paired. 

It remains to check that the switched crossings are graded and normal. Normality follows from Lemma~4 in \cite{Barannikov1994}. The two strands meeting at a switch exchange companion strands and the Maslov potentials of two companioned strands differ by $1$. Hence, two strands meeting at a switch must have the same Maslov potential and so the grading of a switched crossing is $0$.
\end{proof}

If $\sMCS_1$ and $\sMCS_2$ are equivalent by a single MCS move, then by Proposition~\ref{p:MCS-equiv} the chain complexes in $\sMCS_1$ and $\sMCS_2$ are equal outside of a neighborhood of the MCS move. Thus, we may use chain isomorphic chain complexes in $\sMCS_1$ and $\sMCS_2$ to determine the graded normal rulings $\sgnr_{\sMCS_1}$ and $\sgnr_{\sMCS_1}$. As a consequence we have the following. 

\begin{prop}
	\label{prop:gnr-uniq}
If $\sMCS_1 \sim \sMCS_2$ then $\sgnr_{\sMCS_1} = \sgnr_{\sMCS_2}$. 
\end{prop}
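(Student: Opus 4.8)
The plan is to reduce to the case of a single MCS move and then compare pairings tangle-by-tangle. Since $\sim$ is generated by MCS moves, transitivity of equality reduces us to showing the claim when $\sMCS_1$ and $\sMCS_2$ are related by a single MCS move carried out in some interval $[a,b]$ of the $x$-axis. Every MCS move alters only handleslide marks, leaving the cusps and crossings of the front untouched; hence $\sMCS_1$ and $\sMCS_2$ share the same front $\sfront$ and the same collection of tangles $\sfront \cap (x_i,x_{i+1})$ between consecutive singularities. By Lemma~\ref{lem:MCS-ruling}, each ruling is recovered from the pairings assigned to these tangles, and by Definition~\ref{defn:pairing} a pairing depends only on the simple form of a chain complex lying on the tangle. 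So it suffices to prove that, for each tangle, the simple forms (equivalently, the pairings) determined by $\sMCS_1$ and $\sMCS_2$ coincide.

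For a tangle lying entirely outside $[a,b]$, Proposition~\ref{p:MCS-equiv} gives that the chain complexes of $\sMCS_1$ and $\sMCS_2$ on that tangle are literally equal, so their pairings agree. The remaining case is a tangle $T$ that meets $[a,b]$. Here I would use that a single MCS move involves at most one singularity, so $T$ extends beyond $[a,b]$ on at least one side—say to the right, where by Proposition~\ref{p:MCS-equiv} the two MCSs share a common chain complex $(C',\df')$ on $T$. Within $T \cap [a,b]$ only handleslide marks occur, so every chain complex that either MCS places on $T$ is related to $(C',\df')$ by a sequence of handleslide moves. By Remark~\ref{rem:simple-form}~(1), handleslide-equivalent complexes have identical simple form, so $\sMCS_1$ and $\sMCS_2$ assign the same pairing to $T$; the symmetric argument applies if $T$ extends instead to the left of $[a,b]$.

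Combining these cases, $\sMCS_1$ and $\sMCS_2$ assign the same pairing to every tangle, hence switch exactly the same crossings and pair the same companion strands, and therefore determine the same graded normal ruling $\sgnr_{\sMCS_1} = \sgnr_{\sMCS_2}$. Chaining this equality along the finite sequence of MCS moves witnessing $\sMCS_1 \sim \sMCS_2$ then proves the proposition in full generality.

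The main obstacle is the bookkeeping for tangles meeting $[a,b]$: one must confirm that, whichever of the seventeen moves is applied, the chain complexes on such a tangle in the two MCSs are genuinely related by handleslides, rather than by an interchange, birth, or death that would reposition a singularity, so that Remark~\ref{rem:simple-form}~(1) applies. This is precisely where the localization furnished by Proposition~\ref{p:MCS-equiv}—that the ordered chain complexes agree at the immediate left and right of $[a,b]$—does the essential work, anchoring the handleslide comparison to a complex genuinely common to both MCSs.
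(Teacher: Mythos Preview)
Your proposal is correct and follows essentially the same approach as the paper's own argument: reduce to a single MCS move, invoke Proposition~\ref{p:MCS-equiv} to conclude that the chain complexes agree outside the move's support, and then use the handleslide-invariance of the simple form (Remark~\ref{rem:simple-form}(1)) to see that the pairings on every tangle coincide. The paper compresses all of this into a single sentence preceding the proposition, while you have carefully unpacked the one nontrivial point---that every tangle meeting $[a,b]$ still contains a chain complex common to both MCSs because an MCS move involves at most one singularity---which the paper leaves implicit.
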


We let $\sgnr_{\sbMCS} \in N(\sfront)$ denote the graded normal ruling associated to the MCS class $\sbMCS$.

\subsection{MCSs with simple births}

A \emph{simple birth} in an MCS is a birth with no implicit handleslides. In the language of Definition~\ref{defn:MCS-maps}~(3), this says $E = I$. From now on we will restrict our attention to MCSs with only simple births and MCSs equivalence moves that do not involve implicit handleslide marks at births. The connection between MCSs and augmentations is clearer under this assumption. For the sake of simplicity, the results stated in the Introduction were given in terms of $\sFMCSeq$. The corresponding results proven in the remainder of the article are given in terms of MCSs with simple births. In light of Proposition~\ref{p:simple-mcs-eq} below, there is no harm in this ambiguity. 

\begin{defn}
We define $\sDMCS$ to be set of MCSs in $\sFMCS$ which have only simple births. Two MCSs $\sMCS, \sMCS' \in \sDMCS$ are equivalent if they are equivalent in $\sFMCS$ by a sequence of MCS moves $\sMCS = \sMCS_1, \sMCS_2, \hdots, \sMCS_n = \sMCS'$ such that $\sMCS_i \in \sDMCS$ for all $i$. 

We let $\sDMCSeq$ denote the equivalence classes of $\sDMCS \diagup \sim$ and let $\sbMCS$ denote an equivalence class in $\sDMCSeq$. 
\end{defn}

\begin{prop}
	\label{p:simple-mcs-eq}
The inclusion map $i : \sDMCS \to \sFMCS$ determines a bijection $\widehat{i} : \sDMCSeq \to \sFMCSeq$ by $\sbMCS \mapsto [i(\sMCS)]$.
\end{prop}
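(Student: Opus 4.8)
The plan is to show $\widehat{i}$ is well-defined, injective, and surjective. Well-definedness requires that if $\sMCS \sim \sMCS'$ within $\sDMCS$ then certainly $i(\sMCS) \sim i(\sMCS')$ in $\sFMCS$, which is immediate since the defining sequence of MCS moves for $\sDMCS$-equivalence is by hypothesis a sequence of MCS moves in $\sFMCS$. Injectivity likewise requires that $\sDMCS$-equivalence is no coarser than $\sFMCS$-equivalence restricted to $\sDMCS$, i.e.\ that if two simple-birth MCSs are equivalent through a path of MCS moves that may pass through non-simple-birth diagrams, then they are already equivalent through a path staying inside $\sDMCS$. Surjectivity requires that every class in $\sFMCSeq$ contains a simple-birth representative. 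So the two genuine tasks are injectivity and surjectivity, and both reduce to the same underlying fact: any implicit handleslide at a birth can be removed by MCS moves.

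First I would establish the key lemma: given an MCS $\sMCS$ with a birth at $k$ whose implicit handleslide data is recorded by a nontrivial matrix $E$ (the product from Definition~\ref{defn:MCS-maps}~(3)), there is a finite sequence of MCS moves converting $\sMCS$ into an MCS $\sMCS_0$ in which that birth is simple. The idea is to ``march'' each implicit handleslide mark rightward away from the cusp using moves 15 and 16 (which trade an implicit mark at a birth for an explicit mark just to its right, up to the relation $F = E E_{k,i+1}$ recorded in the proof of Proposition~\ref{p:MCS-equiv}) together with the commutation and cancellation moves 1--14 and move~6. Since $E$ is a finite product of elementary matrices $E_{u,l}$ and $E_{k,v}$, each factor can be peeled off and pushed out of the birth neighborhood one at a time, leaving an MCS with a simple birth at that cusp and some newly created explicit handleslide marks to its right. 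Iterating over all births yields a globally simple-birth MCS $\sMCS_0$ equivalent to $\sMCS$. This simultaneously gives surjectivity: starting from an arbitrary $\sMCS \in \sFMCS$, the lemma produces a simple-birth representative in its $\sFMCS$-class, so $\widehat{i}$ is onto.

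For injectivity, suppose $\sMCS, \sMCS' \in \sDMCS$ with $i(\sMCS) \sim i(\sMCS')$ in $\sFMCS$, witnessed by a path $\sMCS = \sMCS_1, \dots, \sMCS_n = \sMCS'$ of MCS moves. I would process this path to eliminate every intermediate diagram lying outside $\sDMCS$. The only moves that can introduce an implicit handleslide at a birth are the birth/death moves and moves 14--16; using the key lemma locally, each excursion of the path into non-simple-birth territory can be replaced by a detour consisting of moves that keep all births simple, at the cost of trading implicit marks for explicit ones exactly as above. Because the endpoints $\sMCS$ and $\sMCS'$ already have simple births, the modified path has both endpoints and all intermediate diagrams in $\sDMCS$, so $\sMCS \sim \sMCS'$ in $\sDMCS$ and $\widehat{i}$ is injective.

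The main obstacle is the bookkeeping in the key lemma and, more delicately, in the injectivity step: when an implicit mark is pushed out of a birth neighborhood it becomes an explicit mark whose interaction with the rest of the diagram (and with subsequent moves in the path) must be tracked, and one must verify that the relation $F = E E_{k,i+1}$ together with moves 6, 13, and 15 really does let every factor of $E$ commute past the cusp without altering the chain complex to the right, as in the treatment of move~14 in Proposition~\ref{p:MCS-equiv}. Ensuring that the rerouted path in the injectivity argument stays inside $\sDMCS$ at \emph{every} intermediate step, rather than merely at its endpoints, is where the argument is most likely to require care.
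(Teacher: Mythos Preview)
Your proposal is correct and follows essentially the same approach as the paper's own argument: surjectivity via move~15 to convert implicit birth-handleslides into explicit marks, and injectivity by rerouting a given $\sFMCS$-path so that every intermediate MCS has simple births. The paper's sketch is slightly more concrete about the rerouting recipe---it specifies replacing each occurrence of move~16 at a left cusp by move~1, deleting each move~15 at a left cusp, and rewriting each move~14 as a composition of moves~6 and~13 (with possible auxiliary moves~2--5)---whereas you phrase this through a general ``key lemma'' about peeling off factors of $E$; but the underlying mechanism is the same.
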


We sketch the argument for injectivity, but leave the details to the reader. Suppose $[i(\sMCS)] = [i(\sMCS')]$ in $\sFMCS$ and $\mathcal{Z} = \{ i(\sMCS) = \sMCS_1, \sMCS_2, \hdots, \sMCS_{n-1}, i(\sMCS') = \sMCS_n \}$ is a sequence of MCS moves. Since $i(\sMCS)$ and $i(\sMCS')$ have simple births, each implicit handleslide appearing at a birth in an MCS in $\mathcal{Z}$ is introduced and eventually eliminated using one of moves 14, 15, or 16. We may modify the sequence of MCS moves in $\mathcal{Z}$ inductively beginning with $\sMCS_1$ and $\sMCS_2$ so that $\sMCS_i \in \sDMCS$ for all $i$. We do this by replacing occurrences of move 16 at lefts cusps with move 1 and removing the occurrences of move 15 at left cusps. As a result, we must also change the occurrences of move 14 to a composition of moves 6 and 13 and possibly include additional moves 2 - 5. The map $\widehat{i}$ is surjective since MCS move 15 may be used to find a representative of $\sbMCS \in \sFMCSeq$ with simple births. 

\begin{rem}
If we assume all births are simple, then we do not need to indicate the implicit handleslide marks at deaths in the associated marked front projection, since they can be determined by reconstructing the ordered chain complexes as in Remark~\ref{rem:mcs-defn}~(1). Thus, from now on, marked front projections will include only explicit handleslide marks and we will no longer indicate implicit handleslide marks in the MCS moves. 
\end{rem}

\section{Chain Homotopy Classes of Augmentations}
\label{ch:Chain-Homotopy}

We begin by defining DGA morphisms and chain homotopy on arbitrary DGAs and then restrict to the case of CE-DGAs. Definition~\ref{defn:dga-morphism} and Lemma~\ref{lem:chain-htpy} follow directly from Section 2.3 of \cite{Kalman2005}.

\begin{defn}
	\label{defn:dga-morphism}
	Let $(\aac, \df)$ and $(\bbc, \df')$ be differential graded algebras over a commutative ring $R$ and graded by a cyclic group $\Gamma$. A \emph{DGA morphism} $\varphi : (\aac, \df) \to (\bbc, \df')$ is a grading-preserving algebra homomorphism satisfying $\varphi \circ \df = \df' \circ \varphi$. Given two DGA morphisms $\varphi, \psi : (\aac, \df) \to (\bbc, \df')$, a \emph{chain homotopy between $\varphi$ and $\psi$} is a linear map $H: (\aac, \df) \to (\bbc, \df') $ satisfying:
	
	\begin{enumerate}
		\item For all $a \in \aac$, $|H(a)| = |a| + 1$,
		\item For all $a, b \in \aac$, $H(ab) = H(a) \psi(b) + (-1)^{|a|}\varphi(a) H(b)$, and
		\item $\varphi - \psi = H \circ \df + \df' \circ H$ .
	\end{enumerate}
We refer to Condition 2 as the \emph{derivation product property}. 	
\end{defn}

Augmentations are DGA morphisms between the CE-DGA and the DGA whose only nonzero chain group is a copy of $\zz_2$ in grading 0. By Lemma~2.18 in \cite{Kalman2005}, a chain homotopy between augmentations is determined by its action on the generators of the CE-DGA.

\begin{lem}
	\label{lem:chain-htpy}
	Let $(\aac (\sLagr), \df)$ be the CE-DGA of the Lagrangian projection $\sLagr$ with generating set $Q$ and let $\epsilon_1, \epsilon_2 \in \sAugL$. If a map $H: Q \to \zz_2$ has support on generators of grading $-1$, then $H$ can be uniquely extended by linearity and the derivation product property to a map, also denoted $H$, on all of $(\aac (\sLagr), \df)$ that has support on elements of grading $-1$. Moreover, if the extension satisfies $\epsilon_1(q) - \epsilon_2(q) = H \circ \df(q)$ for all $q \in Q$, then $\epsilon_1 - \epsilon_2 = H \circ \df$ on all of $\aac (\sLagr)$ and, thus, $H$ is a chain homotopy between $\epsilon_1$ and $\epsilon_2$.
\end{lem}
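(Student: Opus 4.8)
The plan is to treat the two assertions separately but drive both with the same inductive argument on word length; throughout we work over $\zz_2$, so all signs vanish and the derivation product property specializes to $H(ab) = H(a)\epsilon_2(b) + \epsilon_1(a)H(b)$ with $\varphi = \epsilon_1$ and $\psi = \epsilon_2$. First I would establish existence and uniqueness of the extension by writing down the explicit formula on a monomial $w = q_{i_1}\cdots q_{i_k}$, namely
\[
H(w) = \sum_{j=1}^{k}\epsilon_1(q_{i_1})\cdots\epsilon_1(q_{i_{j-1}})\,H(q_{i_j})\,\epsilon_2(q_{i_{j+1}})\cdots\epsilon_2(q_{i_k}),
\]
extended by linearity. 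Since $\aac(\sLagr)$ is the free tensor algebra on $Q$, each monomial is a unique word in the $q_i$, so this is unambiguous. Splitting the defining sum for a product $w = uv$ according to whether the distinguished index $j$ lands in $u$ or in $v$, and using that $\epsilon_1,\epsilon_2$ are algebra homomorphisms, shows directly that this $H$ obeys the product rule; conversely, any map satisfying the product rule and agreeing with the given $H$ on generators must equal this formula, by induction on word length (peel off the leftmost generator). This yields both existence and uniqueness.

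Next I would verify the grading support by induction on word length. If $H(w) \neq 0$ for $w = q\,w'$, then in the product rule either $H(q)\epsilon_2(w') \neq 0$ or $\epsilon_1(q)H(w') \neq 0$. In the first case $|q| = -1$ and, since $\epsilon_2(w') = 1$ forces each letter of $w'$ to have grading $0$ (by the augmentation condition that $\epsilon_2(q)=1$ implies $|q|=0$), we get $|w'| = 0$; in the second case $|q| = 0$ and $|w'| = -1$ by induction. Either way $|w| = -1$, as claimed.

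For the second assertion I would show that both $\epsilon_1 - \epsilon_2$ and $H\circ\df$ satisfy the same $(\epsilon_1,\epsilon_2)$-derivation product rule and then invoke the same induction. For $\epsilon_1 - \epsilon_2$ this is the algebraic identity
\[
\epsilon_1(a)\epsilon_1(b) - \epsilon_2(a)\epsilon_2(b) = (\epsilon_1-\epsilon_2)(a)\,\epsilon_2(b) + \epsilon_1(a)\,(\epsilon_1-\epsilon_2)(b),
\]
which holds because the $\epsilon_i$ are homomorphisms. For $H\circ\df$ I would expand $\df(ab) = \df(a)\,b + a\,\df(b)$, apply the product rule for $H$ to each piece, and then use the augmentation identities $\epsilon_1\circ\df = 0$ and $\epsilon_2\circ\df = 0$ to kill the cross terms $\epsilon_1(\df a)H(b)$ and $H(a)\epsilon_2(\df b)$; what remains is exactly $(H\circ\df)(a)\epsilon_2(b) + \epsilon_1(a)(H\circ\df)(b)$. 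Since the hypothesis gives agreement on generators, induction on word length then forces $\epsilon_1 - \epsilon_2 = H\circ\df$ on all monomials, hence on all of $\aac(\sLagr)$ by linearity.

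The step I expect to be the crux is the derivation of the product rule for $H\circ\df$: it is the only place that genuinely uses the defining properties of augmentations, and the cancellation of the two cross terms via $\epsilon_i\circ\df = 0$ is precisely what promotes $H$ from a mere $(\epsilon_1,\epsilon_2)$-derivation to an honest chain homotopy. The well-definedness and grading bookkeeping in the first part, and the closing induction shared by both parts, are routine once the explicit formula is in hand.
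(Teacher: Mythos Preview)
Your argument is correct. The explicit formula for the extension, the induction on word length for the grading claim, and the verification that both $\epsilon_1-\epsilon_2$ and $H\circ\df$ are $(\epsilon_1,\epsilon_2)$-derivations (with the cross terms killed by $\epsilon_i\circ\df=0$) are all sound, and together they prove the lemma.

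The paper does not supply its own proof here: it simply remarks that the lemma follows from Section~2.3 of K\'alm\'an's work (specifically Lemma~2.18 there) and moves on. Your write-up is thus more detailed than what the paper provides; it is in fact the standard direct argument that the citation is pointing to. So there is no real divergence in method---you have unpacked what the paper leaves as a reference.
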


We say $\epsilon_1$ and $\epsilon_2$ are \emph{chain homotopic} and write $\epsilon_1 \schequiv \epsilon_2$ if a chain homotopy $H$ exists between $\epsilon_1$ and $\epsilon_2$. 

\begin{lem}[\cite{YvesFelix1995}]
	\label{lem:ch-equiv-rel}
	The relation $\schequiv$ is an equivalence relation.
\end{lem}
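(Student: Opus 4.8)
The plan is to verify the three axioms of an equivalence relation for $\schequiv$ on $\sAugL$, using Lemma~\ref{lem:chain-htpy} throughout to reduce every existence claim to specifying a map on grading $-1$ generators and checking condition (3) of Definition~\ref{defn:dga-morphism} there. \emph{Reflexivity} is immediate: for any $\epsilon \in \sAugL$ take $H = 0$. Since the target $\zz_2$ is concentrated in grading $0$, its differential vanishes, so condition (3) reads $\epsilon - \epsilon = H \circ \df$, which holds trivially, and conditions (1) and (2) hold vacuously. Thus $\epsilon \schequiv \epsilon$.

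The substance lies in \emph{symmetry} and \emph{transitivity}, and the main obstacle in both is the derivation product property (condition (2)), which is asymmetric in $\varphi$ and $\psi$. Suppose $H$ is a chain homotopy from $\epsilon_1$ to $\epsilon_2$. Working over $\zz_2$, $H$ itself satisfies $H \circ \df = \epsilon_1 - \epsilon_2 = \epsilon_2 - \epsilon_1$ on generators, so it meets condition (3) for a homotopy in the reverse direction, but it is an $(\epsilon_1,\epsilon_2)$-derivation rather than an $(\epsilon_2,\epsilon_1)$-derivation and so fails condition (2). Expanding $H$ on a grading $-1$ monomial $q_{j_1}\cdots q_{j_k}$ exposes the problem: a nonzero value forces exactly one factor $q_{j_{i_0}}$ of grading $-1$ with all others of grading $0$, whence
\[
H(q_{j_1}\cdots q_{j_k}) = \Big(\prod_{r<i_0}\epsilon_1(q_{j_r})\Big)\, H(q_{j_{i_0}})\, \Big(\prod_{r>i_0}\epsilon_2(q_{j_r})\Big).
\]
Consequently the $(\epsilon_2,\epsilon_1)$-derivation extension of $H$ restricted to grading $-1$ generators, which satisfies condition (2) by construction, changes these product values (it carries $\epsilon_2$ on the left and $\epsilon_1$ on the right), so its value on $\df q$ need no longer equal $\epsilon_1(q)-\epsilon_2(q)$ once $\epsilon_1 \neq \epsilon_2$ on some grading $0$ generator. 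The identical computation shows that for transitivity the naive sum $H_{12}+H_{23}$ of an $(\epsilon_1,\epsilon_2)$-homotopy and an $(\epsilon_2,\epsilon_3)$-homotopy matches $\epsilon_1-\epsilon_3$ on generators but is not an $(\epsilon_1,\epsilon_3)$-derivation, while the correctly typed extension again alters the product values through the mismatched appearances of $\epsilon_2$.

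To overcome this I would not attempt to fix the homotopy by a single closed formula but instead invoke the conceptual framework that makes the difficulty disappear: since $\aac(\sLagr)$ is a \emph{free} tensor algebra, it is cofibrant, and chain homotopy of DGA morphisms out of it is a right-homotopy that can be presented via a path object. In that language \emph{symmetry} is the reversal of a path (the involution $t \mapsto 1-t$) and \emph{transitivity} is the concatenation of two paths, both of which respect the product structure automatically and thereby reconcile the asymmetry of condition (2) that defeats the naive candidates. The general statement that such homotopy of morphisms from a (semi)free DGA is an equivalence relation is established in \cite{YvesFelix1995}, and I would cite it rather than reproduce the model-categorical bookkeeping; translating back through Lemma~\ref{lem:chain-htpy} then yields the explicit chain homotopies on generators. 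The hardest part is precisely this reconciliation step, where the asymmetry of the derivation product property must be matched against the differing augmentations in the monomial expansion above; I expect the cleanest route is the path-object argument of \cite{YvesFelix1995} rather than a direct generator-by-generator construction.
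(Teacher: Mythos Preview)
The paper offers no proof of this lemma at all: it is stated with a bracketed citation to \cite{YvesFelix1995} and nothing further. Your proposal is therefore not so much to be compared against the paper's proof as to be compared against the absence of one. In that light, your write-up is correct and strictly more informative than what the paper provides: you pin down exactly why the naive candidates for symmetry and transitivity fail (the $(\varphi,\psi)$-asymmetry in the derivation product property), and you correctly identify that the clean resolution passes through the cofibrancy of the free tensor algebra and a path-object argument, which is indeed the content of the cited reference. Since you ultimately defer to \cite{YvesFelix1995} for the model-categorical bookkeeping, your approach and the paper's coincide at the level of what is actually proved on the page; the added value is your diagnosis of the obstruction, which the paper omits entirely.
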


  We let $Aug^{ch}(\sLagr)$ denote $Aug(\sLagr) \diagup \schequiv$ and $[\saug]$ denote an augmentation class in $Aug^{ch}(\sLagr)$. The following lemma will be necessary for our later work connecting augmentation classes and MCS classes. 
  
\begin{lem}
	\label{lem:aug-ch}
	Suppose $L$ and $L'$ are Lagrangian projections with CE-DGAs $(\aac(\sLagr), \df)$ and $(\aac(\sLagr'), \df')$ respectively. Let $f : (\aac(\sLagr), \df) \to (\aac(\sLagr'), \df')$ be a DGA morphism. Then $f$ induces a map $F : Aug^{ch}(\sLagr') \to Aug^{ch}(\sLagr)$ by $[\saug] \mapsto [\saug \circ f]$. If $g : (\aac(\sLagr), \df) \to (\aac(\sLagr'), \df')$ is also a DGA morphism and $\sH$ is a chain homotopy between $f$ and $g$, then $F = G$.
\end{lem}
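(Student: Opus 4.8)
The plan is to verify the two assertions of Lemma~\ref{lem:aug-ch} in turn: first that $F$ is well-defined on chain homotopy classes, and then that chain-homotopic DGA morphisms induce the same map. Both reduce to checking that composing with a DGA morphism respects the chain homotopy relation, which is the standard naturality of chain homotopy under pre-composition.

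For the first statement, I would start with an augmentation $\saug \in \sAugL[']$ and check that $\saug \circ f$ is genuinely an augmentation of $(\aac(\sLagr), \df)$. It is a grading-preserving algebra map (a composition of such), it sends $1$ to $1$, and $(\saug \circ f) \circ \df = \saug \circ (f \circ \df) = \saug \circ (\df' \circ f) = (\saug \circ \df') \circ f = 0$ using that $f$ is a chain map and $\saug \circ \df' = 0$. The grading condition on generators also transfers since $f$ is grading-preserving. Then I must show the class $[\saug \circ f]$ depends only on $[\saug]$: if $\saug \schequiv \saug'$ via a chain homotopy $K$, I claim $K \circ f$ is a chain homotopy between $\saug \circ f$ and $\saug' \circ f$. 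One checks the three conditions of Definition~\ref{defn:dga-morphism}: the degree shift is immediate since $f$ preserves grading; the derivation product property for $K \circ f$ follows by expanding $K(f(ab)) = K(f(a)f(b))$ and applying the product property of $K$ together with the fact that $f$ is multiplicative and grading-preserving; and the key identity $\saug \circ f - \saug' \circ f = (K \circ f) \circ \df + \df_{\zz_2} \circ (K \circ f)$ follows by composing the relation $\saug - \saug' = K \circ \df'$ with $f$ on the right and using $f \circ \df = \df' \circ f$. Since the target DGA is $\zz_2$ concentrated in degree $0$, its differential vanishes, simplifying the last term.

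For the second statement, suppose $\sH$ is a chain homotopy between $f$ and $g$, so $f - g = \sH \circ \df + \df' \circ \sH$. Given $[\saug] \in Aug^{ch}(\sLagr')$, I want $\saug \circ f \schequiv \saug \circ g$, which gives $F([\saug]) = G([\saug])$ and hence $F = G$. The natural candidate for the chain homotopy is $\saug \circ \sH$. Post-composing the chain homotopy identity with $\saug$ yields $\saug \circ f - \saug \circ g = \saug \circ \sH \circ \df + \saug \circ \df' \circ \sH$. Since $\saug$ is an augmentation, $\saug \circ \df' = 0$, so the second term drops out and we obtain $\saug \circ f - \saug \circ g = (\saug \circ \sH) \circ \df$, which by Lemma~\ref{lem:chain-htpy} exhibits $\saug \circ \sH$ as a chain homotopy between the two augmentations once I confirm it has the right support and satisfies the product property.

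The main obstacle I anticipate is bookkeeping rather than conceptual: one must confirm that $\saug \circ \sH$ (and likewise $K \circ f$) genuinely satisfies the derivation product property of a chain homotopy, paying attention to the sign $(-1)^{|a|}$ in Condition~2 and to the interaction with the grading-preserving map $f$ and the augmentation $\saug$. Over $\zz_2$ the signs are harmless, but I would still verify the product property explicitly by substituting $\psi = g$, $\varphi = f$ and using that $\saug$ is an algebra map so that $\saug(\varphi(a)) = (\saug \circ f)(a)$ and $\saug(\psi(a)) = (\saug \circ g)(a)$. Once these routine verifications are in place, Lemma~\ref{lem:chain-htpy} and Lemma~\ref{lem:ch-equiv-rel} do the remaining work of guaranteeing the extensions are well-defined and that $\schequiv$ descends to equivalence classes.
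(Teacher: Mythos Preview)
Your proposal is correct and follows essentially the same approach as the paper's proof: verify that $\saug \circ f$ is an augmentation, use $K \circ f$ to show well-definedness on classes, and use $\saug \circ \sH$ to show $F = G$. The paper is terser about the derivation product property checks, but the argument is the same.
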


\begin{proof}
We begin by checking $F$ is well-defined, i.e. $\saug \circ f$ is an augmentation and $\saug \schequiv \saug'$ implies $\saug \circ f \schequiv \saug' \circ f$. Since $f$ is a degree preserving chain map and $\saug \circ \df' = 0$, $\saug \circ f (q) = 1$ implies $|q| = 1$ and $\saug \circ f \circ \df = \saug \circ \df' \circ f = 0$. Therefore $\saug \circ f$ is an augmentation. If $\sH$ is a chain homotopy between $\saug$ and $\saug'$, then $\sH \circ f$ is a chain homotopy between $\saug \circ f$ and $\saug' \circ f$.

Suppose $g : (\aac(\sLagr), \df) \to (\aac(\sLagr'), \df')$ is also a DGA morphism and $\sH$ is a chain homotopy between $f$ and $g$. The map $\sH' = \saug \circ \sH : (\aac(\sLagr), \df) \to \zz_2$ is a chain homotopy between $\saug \circ f$ and $\saug \circ g$. Thus, $F = G$ since

	\begin{equation*}
	F([\saug]) = [\saug \circ f] = [\saug \circ g] = G([\saug]).
	\end{equation*}	
	
\end{proof}
  
In the case of the CE-DGA, the number of chain homotopy classes is a Legendrian invariant.

\begin{prop}
	\label{prop:chain-homotopy-inv}
	If $L$ and $L'$ are Lagrangian projections of Legendrian isotopic knots $\sK$ and $K'$, then there is a bijection between $Aug^{ch}(\sLagr)$ and $Aug^{ch}(\sLagr')$. Thus, $|Aug^{ch}(\sLagr)|$ is a Legendrian invariant of the Legendrian isotopy class of $\sK$.
\end{prop}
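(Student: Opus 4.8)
The plan is to reduce the invariance of $|Aug^{ch}(\sLagr)|$ to the invariance of the CE-DGA up to stable tame isomorphism, and then to show that the relevant DGA equivalences induce \emph{bijections} (not merely maps) on the set of chain homotopy classes of augmentations. By Chekanov's theorem (Section~\ref{sec:CE-DGA}), if $\sK$ and $K'$ are Legendrian isotopic, then $(\aac(\sLagr),\df)$ and $(\aac(\sLagr'),\df')$ are stable tame isomorphic. So it suffices to handle each building block of a stable tame isomorphism separately: a tame isomorphism of DGAs, and a stabilization. Since Lemma~\ref{lem:aug-ch} already tells us how a DGA morphism induces a map $F : Aug^{ch}(\sLagr') \to Aug^{ch}(\sLagr)$ on chain homotopy classes, the core of the argument is to verify that these induced maps are bijections in the two basic cases.

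First I would treat the tame isomorphism case. If $\psi : (\aac(\sLagr),\df) \to (\aac(\sLagr'),\df')$ is a tame isomorphism of DGAs, it is in particular an invertible DGA morphism with inverse $\psi^{-1}$ also a DGA morphism. By Lemma~\ref{lem:aug-ch} these induce maps $\Psi : Aug^{ch}(\sLagr') \to Aug^{ch}(\sLagr)$ and $\Psi^{-1}_\ast : Aug^{ch}(\sLagr) \to Aug^{ch}(\sLagr')$ by precomposition, and the functoriality of precomposition (together with $\saug \circ \psi \circ \psi^{-1} = \saug$ and $\saug' \circ \psi^{-1} \circ \psi = \saug'$) shows these are mutually inverse. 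Hence $\Psi$ is a bijection. The key input here is just that precomposition is contravariantly functorial and that it respects chain homotopy, both of which follow from Lemma~\ref{lem:aug-ch}.

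The main obstacle is the stabilization step, where the underlying sets $Aug(\sLagr)$ and $Aug(S_i\sLagr)$ genuinely differ in cardinality, so the bijection can only occur after passing to chain homotopy classes. Recall the stabilization $S_i(\aac,\df)$ adjoins generators $e_1, e_2$ with $|e_1| = i$, $|e_2| = i-1$, and $\df e_1 = e_2$. An augmentation $\saug'$ of $S_i(\aac,\df)$ restricts to an augmentation of $(\aac,\df)$, giving a restriction map; conversely an augmentation of $(\aac,\df)$ extends to $S_i$ by setting $\saug'(e_1) = \saug'(e_2) = 0$. I would show that the restriction map descends to a \emph{bijection} on chain homotopy classes. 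The point is that the value of an augmentation on $e_1$ is forced to be $0$ (it has grading $i \neq 0$ unless $i = 0$), while its value on $e_2$, when $i = 1$ so that $|e_2| = 0$, is the only freedom; and this freedom is killed by chain homotopy, since the constraint $\saug' \circ \df = 0$ forces $\saug'(e_2) = \saug'(\df e_1) = 0$ anyway. So in fact every augmentation of $S_i$ already restricts uniquely, and the map is a bijection even before quotienting. The genuinely delicate case, and the one I expect to require care, is when the grading of the new generators is $0$ modulo the cyclic grading group, so that $\saug'(e_j)$ can be nonzero: there one must build an explicit chain homotopy $H$ with support in grading $-1$ (using Lemma~\ref{lem:chain-htpy}) that identifies two extensions differing only on the stabilization generators, thereby collapsing the doubling of augmentations down to a single chain homotopy class. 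Assembling these, a stable tame isomorphism is a composite of maps each inducing a bijection on chain homotopy classes, so the induced map $Aug^{ch}(\sLagr) \to Aug^{ch}(\sLagr')$ is a bijection, and $|Aug^{ch}(\sLagr)|$ is a Legendrian invariant.
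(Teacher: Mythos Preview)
Your proposal is correct and follows essentially the same approach as the paper: reduce to stable tame isomorphism via Chekanov's theorem, then handle tame isomorphisms and stabilizations separately, with the only nontrivial case being the degree-$0$ stabilization where two extensions must be shown chain homotopic. The paper makes the homotopy explicit---$H(e_2)=1$ and $H=0$ on all other generators---which you anticipate but do not write down; otherwise the arguments coincide.
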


We sketch the proof of Proposition~\ref{prop:chain-homotopy-inv}. Since $\sK$ and $\sK'$ are Legendrian isotopic, $(\aac(\sLagr), \df)$ and $(\aac(\sLagr'), \df')$ are stable tame isomorphic. Thus, we need only consider the case of a single elementary isomorphism and a single stabilization. Suppose $(S_{i} (\aac), \df)$ is an index $i$ stabilization of a DGA $(\aac, \df)$. If $i \neq 0$, then $\saug \in Aug(\aac)$ extends uniquely to an augmentation $\tilde{\saug} \in Aug(S_i (\aac, \df))$ by sending both $e_1$ and $e_2$ to $0$. If $i = 0$, then $|e_1| = 0$ and $\saug \in Aug(\aac)$ extends to two different augmentations in $S_0 (\aac, \df)$; the first, denoted $\tilde{\saug}$, sends both $e_1$ and $e_2$ to $0$ and the second sends $e_1$ to 1 and $e_2$ to $0$. However, these two augmentations are chain homotopic by the chain homotopy $\sH$ that sends $e_2$ to 1 and all of the other generators to 0. Regardless of $i$, the map defined by $ [\saug] \mapsto [\tilde{\saug}]$ is the desired bijection. Finally, if $\phi: (\aac, \df) \to (\bbc, \df')$ is an elementary isomorphism between two DGAs, then the map $\Phi : Aug^{ch}(\bbc) \to Aug^{ch}(\aac)$ defined by $ [\saug'] \mapsto [\saug' \circ \phi]$ is the desired bijection.

In the next section, we will concentrate on understanding the chain homotopy classes of a fixed Lagrangian projection. By using a procedure called ``adding dips,'' we may reformulate the augmentation condition $\saug \circ \df$ and the chain homotopy condition $\saug_1 - \saug_2 = \sH \circ \df$ as a system of matrix equations. Understanding the chain homotopy classes of augmentations then reduces to understanding solutions to these matrix equations.

\section{Dipped Resolution Diagrams}
\label{ch:Dipped-Diagrams}

In \cite{Fuchs2003}, Fuchs modifies the Lagrangian projection of a Legendrian knot so that the differential in the CE-DGA is easy to compute, at the cost of increasing the number of generators. This technique has proved to be very useful; see \cite{Fuchs2004,Fuchs2008,Ng2006,Sabloff2005}. We will use the version of this philosophy implemented in \cite{Sabloff2005}.

\subsection{Adding dips to a Ng resolution diagram}
\label{sec:adding-dips}

By performing a series of Lagrangian Reidemeister type II moves on $\sNgres$, we can limit the types of convex immersed polygon appearing in the differential of the CE-DGA. A \emph{dip}, denoted $\sdip$, is the collection of crossings created by performing type II moves on the strands of $\sNgres$ as in Figure~\ref{f:dip-creation}. At the location of the dip, label the strands of $\sNgres$ from bottom to top with the integers $1, \hdots, n$. For all $k > l$ there is a type II move that pushes strand $k$ over strand $l$. If $k < i$ then $k$ crosses over $l$ before $i$ crosses over any strand. If $l < j < k$ then $k$ crosses over $l$ before $k$ crosses over $j$. The notation $(k,l) \prec (i,j)$ denotes that $k$ crosses over $l$ before $i$ crosses over $j$. 

\begin{figure}
\labellist
\small\hair 2pt
\pinlabel {4} [tl] at 203 116
\pinlabel {3} [tl] at 203 96
\pinlabel {2} [tl] at 203 76
\pinlabel {1} [tl] at 203 56
\pinlabel {$b^{3,1}$} [tl] at 211 22
\pinlabel {$a^{4,2}$} [tl] at 315 22
\endlabellist
\centering
\includegraphics[scale=.5]{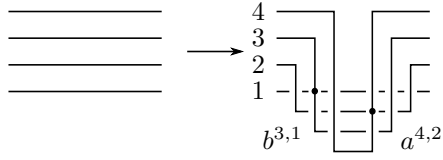}
\caption[Adding a dip to $\sNgres$.]{Adding a dip to $\sNgres$ and labeling the resulting crossings. Crossings $b^{3,1}$ and $a^{4,2}$ are indicated.}
\label{f:dip-creation}
\end{figure}

The \emph{$a$-lattice} of $\sdip$ is comprised of all crossings to the right of the vertical line of symmetry and the \emph{$b$-lattice} is comprised of all crossings to the left. The label $a^{k,l}$ denotes the crossing in the $a$-lattice of strand $k$ over strand $l$ where $k > l$; see Figure~\ref{f:dip-creation}.  The label $b^{k,l}$ is similarly defined. The gradings of the crossings in a dip are computed by $|b^{k,l}| = \smu(k) - \smu(l)$ and $|a^{k,l}| = |b^{k,l}| - 1$ where $\mu$ is a Maslov potential on $\sfront$.

The type II moves may be arranged so when strand $k$ is pushed over strand $l$, a crossing $q$ has height less than $b^{k,l}$ if and only if $q$ appears to the left of the dip, or $q = b^{r,s}$ or $q = a^{r,s}$ where $r-s \leq k-l$. Similarly, a crossing $q$ has height less than $a^{k,l}$ if and only if $q$ appears to the left of the dip, or $q = b^{r,s}$ or $q = a^{r,s}$ where $r-s \leq k-l$.

\begin{figure}
\centering
\includegraphics[scale=.6]{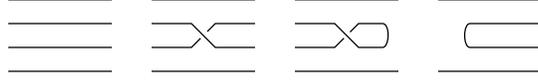}
\caption[Inserts in a sufficiently dipped diagram.]{The four possible inserts in a sufficiently dipped diagram $\sNgres^d$; (1) parallel lines, (2) a single crossing, (3) a resolved right cusp, and (4) a resolved left cusp. In each case, any number of horizontal strands may exist.}
\label{f:inserts}
\end{figure}

\begin{defn}
	\label{defn:dipped-diagram}
	Given a Legendrian knot $\sK$ with front projection $\sfront$ and Ng resolution $\sNgres$, a \emph{dipped diagram}, denoted $\sNgresD$, is the result of adding some number of dips to $\sNgres$. We require that during the process of adding dips, we not allow a dip to be added between the crossings of an existing dip and we not add dips in the loop of a resolved right cusp.
\end{defn}

We let $ \sdip_1$, $\hdots$, $\sdip_{m}$ denote the $m$ dips of $\sNgresD$, ordered from left to right with respect to the $x$-axis. For consecutive dips $\sdip_{j-1}$ and $\sdip_j$ we let $I_j$ denote the tangle between $\sdip_{j-1}$ and $\sdip_j$. We define $I_1$ to be the tangle to the left of $\sdip_1$ and $I_{m+1}$ to be the tangle to the right of $\sdip_m$. We call $I_1, \hdots, I_{m+1}$ the \emph{inserts} of $\sNgresD$. 

\begin{defn}
	\label{defn:suff-dipped}
	We say a dipped diagram $\sNgresD$ is \emph{sufficiently dipped} if each insert $I_1, \hdots, I_{n+1}$ is isotopic to one of those in Figure~\ref{f:inserts}.
\end{defn}

\begin{rem}
\label{rem:sliding-dips}
If we slide a dip of $\sNgresD$ left or right without passing it by a crossing, resolved cusp or another dip, then the resulting dipped diagram is topologically identical to the original. In particular, they determine the same CE-DGA. Thus, we will not distinguish between two dipped diagrams that differ by such a change.
\end{rem}

\subsection{The CE-DGA on a sufficiently dipped diagram}
\label{sec:boundary-dipped}

A convex immersed polygon cannot pass completely through a dip. Therefore, we may calculate the boundary map of $(\aac(\sNgresD), \df)$ by classifying convex immersed polygons that lie between consecutive dips or that sit entirely within a single dip. This classification is aided by our understanding of the heights of the crossings in $\sNgresD$. 

Let $\sNgresD$ be a sufficiently dipped diagram of $\sNgres$ with dips $\sdip_1, \hdots, \sdip_m$ and inserts $I_1, \hdots, I_{m+1}$. Let $q_i, \hdots, q_M$ and $z_j, \hdots, z_N$ denote the crossings found in the inserts of type (2) and (3) respectively. The subscripts on $q$ and $z$ correspond to the subscripts of the inserts in which the crossings appear. 

\begin{rem}
	\label{rem:warning}
In Section~\ref{sec:adding-dips}, we labeled the crossings in the $a$-lattice and $b$-lattice of a dip using a labeling of the strands at the location of the dip. When calculating $\df$ on the crossings in $\sdip_j$, it will make our computations clearer and cleaner if we change slightly the labeling we use for the strands in $\sdip_{j-1}$ and $\sdip_{j}$. 

If $I_j$ is of type (3) and the right cusp occurs between strands $i+1$ and $i$, then we will label the strands of $\sdip_{j-1}$ by $1, \hdots, n$ and the strands of $\sdip_{j}$ by $1, \hdots, i-1, i+2, \hdots, n$. If $I_j$ is of type (4) and the left cusp occurs between strands $i+1$ and $i$, then we will label the strands of $\sdip_{j-1}$ by $1, \hdots, i-1, i+2, \hdots, n$ and the strands of $\sdip_{j}$ by $1, \hdots, n$. If $I_j$ is of type (1) or (2), we do not change the labeling of the strands. These changes are local. In particular, the labeling of the strands in $\sdip_{j}$ may vary depending on whether we are calculating $\df$ on the crossings in $\sdip_j$ or $\sdip_{j+1}$. 
\end{rem}

Fix an insert $I_{j}$, $j \neq m+1$, and label the strands of $\sdip_j$ and $\sdip_{j-1}$ as indicated in Remark~\ref{rem:warning}. For $i \in \{j, j-1 \}$, $A_i$ (resp. $B_i$) is the strictly lower triangular square matrices with rows and columns labeled the same as the strands of $D_i$ and entries given by $(A_i)_{k,l} = a^{k,l}_i$ (resp. $(B_i)_{k,l} = b^{k,l}_i$) for $k > l$. The following formulae define a matrix $\widetilde{A}_{j-1}$ using the $a_{j-1}$-lattice and possible crossings in $I_j$. The subscript ${j-1}$ has been left off the $a$'s to make the text more readable.

\begin{enumerate}
\item For $j = 1$, $\widetilde{A}_{j-1} = \sHSM_{2,1}$ where $\mbox{dim}(\sHSM_{2,1}) = 2$.
\item Suppose $I_j$ is of type (1). Then $\widetilde{A}_{j-1} = A_{j-1}$.
	\item Suppose $I_j$ is of type (2), $A_j$ has dimension $n$ and $q_j$ involves strands $i+1$ and $i$. Then $\widetilde{A}_{j-1}$ is the $n \times n$ matrix with $(u,v)$ entry $\widetilde{a}_{j-1}^{u,v}$ defined by:
	
				\begin{equation*}
				\begin{array}{l}
				\displaystyle  \widetilde{a}^{u,v} = \begin{cases}
				    a^{u,v} & u,v \notin \{i,i+1\} \\
				    0 & u=i+1, v=i \\
				  \end{cases} \\
				\displaystyle \widetilde{a}^{i+1,v} = a^{i,v} \\
				\displaystyle \widetilde{a}^{i,v} = a^{i+1,v} + q_j a^{i,v} \\
				\displaystyle \widetilde{a}^{u,i} = a^{u,i+1} \\
				\displaystyle \widetilde{a}^{u,i+1} = a^{u,i} + a^{u,i+1} q_j. 
				\end{array} 
				\end{equation*}	 
				
	\item Suppose $I_j$ is of type (3), $A_j$ has dimension $n$, and $z_j$ involves strands $i+1$ and $i$. Then $\widetilde{A}_{j-1}$ is an $n \times n$ matrix with rows and columns numbered $1,\hdots, i-1, i+2, \hdots, n+2$ and $(u,v)$ entry $\widetilde{a}_{j-1}^{u,v}$ defined by:
	
				\begin{equation*}  
				  \widetilde{a}^{u,v} = \begin{cases}
				    a^{u,v} & u < i \text{ or } v > i+1 \\
				    a^{u,v} + a^{u,i}a^{i+1,v} + a^{u,i+1} z_j a^{i+1,v} & u > i+1 > i > v. \\
				    \hspace{10 mm} + a^{u,i} z_j a^{i,v} + a^{u,i+1} z_j z_j a^{i,v} & 
				  \end{cases}
				\end{equation*} 				
	
	\item Suppose $I_j$ is of type (4), $A_j$ has dimension $n$ and the resolved left cusp strands are $i+1$ and $i$. Then $\widetilde{A}_{j-1}$ is the $n \times n$ matrix with $(u,v)$ entry  $\widetilde{a}_{j-1}^{u,v}$ defined by:
	
				\begin{equation*}  
				  \widetilde{a}^{u,v} = \begin{cases}
				    a^{u,v} & u,v \notin \{ i,i+1 \} \\
				    1 & u = i+1, v = i \\
				    0 & \mbox{otherwise.}
				  \end{cases}
				\end{equation*}	 

\end{enumerate}

We are now in a position to give matrix equations for the boundary map of $(\aac(\sNgresD), \df)$.

\begin{lem}
	\label{lem:dipped-df}
	The CE-DGA boundary map $\df$ of a sufficiently dipped diagram of a Ng resolution $\sNgres$ is computed by: 

\begin{enumerate}
	\item For a crossings $q_s$ between strands $i+1$ and $i$, $\df q_s = a_{s-1}^{i+1,i}$;
	\item For a crossings $z_r$ between strands $i+1$ and $i$, $\df z_r = 1 + a_{r-1}^{i+1,i}$, where $z_r$ is a crossing between strands $i+1$ and $i$;
	\item $\df A_j = A_j^2$ for all $j$; and
	\item $\df B_j = (I + B_j)A_j + \widetilde{A}_{j-1}(I + B_j)$ for all $j$
\end{enumerate}

\noindent where $I$ is the identity matrix of appropriate dimension and the matrices $\df A_j$ and $\df B_j$ are the result of applying $\df$ to $A_j$ and $B_j$ entry-by-entry.
\end{lem}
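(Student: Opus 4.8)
The plan is to compute $\df$ on each crossing by classifying the convex immersed polygons with a single positive corner at that crossing, using the structural facts about heights established in Section~\ref{sec:adding-dips}. The key observation, recorded before the statement, is that negative corners must lie to the left of (lower height than) the positive corner, and that a convex immersed polygon cannot pass completely through a dip. Thus every disk contributing to $\df q$ for a crossing $q$ is confined to a region bounded on the right by $q$ and is built from the $a$-lattice, $b$-lattice, and insert crossings whose heights are controlled by the ordering $\prec$. I would organize the proof as four separate computations, one for each item, treating the insert crossings (items~1 and 2) first since they are simplest, then the $a$-lattice (item~3), then the $b$-lattice (item~4).

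For items~1 and 2, I would argue that a crossing $q_s$ or $z_r$ between strands $i+1$ and $i$ in an insert is taller than all crossings in the dip $\sdip_{s-1}$ to its immediate left, by the height arrangement in Remark~\ref{rem:warning} and Section~\ref{sec:adding-dips}. A disk with positive corner at $q_s$ must therefore have its negative corners among the crossings of $\sdip_{s-1}$. One shows geometrically that the only such disks are the triangle covering $a^{i+1,i}_{s-1}$ (giving $\df q_s = a^{i+1,i}_{s-1}$), and in the resolved-cusp case an additional empty bigon contributing the constant $1$ (giving $\df z_r = 1 + a^{i+1,i}_{r-1}$). For item~3, $\df A_j = A_j^2$, I would use the standard fact, already present in the dipped-diagram literature, that within the $a$-lattice a disk with positive corner at $a^{k,l}_j$ has negative corners exactly at pairs $a^{k,m}_j, a^{m,l}_j$ with $l < m < k$; the entrywise identity $(\df A_j)_{k,l} = \sum_m (A_j)_{k,m}(A_j)_{m,l} = (A_j^2)_{k,l}$ then follows, where the strict lower-triangularity of $A_j$ makes the matrix square well-defined.

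The main work, and the step I expect to be the principal obstacle, is item~4, the equation $\df B_j = (I + B_j)A_j + \widetilde{A}_{j-1}(I + B_j)$. Here a disk with positive corner at $b^{k,l}_j$ may have negative corners in the $b$-lattice of $\sdip_j$, the $a$-lattice of $\sdip_j$, the $a$-lattice of $\sdip_{j-1}$, and any crossing of the insert $I_j$ lying between them. The four terms arise as follows: $B_j A_j$ and $A_j$ (the portion of $(I+B_j)A_j$ with negative corners to the right), $\widetilde{A}_{j-1} B_j$ and $\widetilde{A}_{j-1}$ (the portion with corners to the left, mediated by the insert). The delicate point is justifying that the effect of passing a disk through the insert $I_j$ is exactly captured by the replacement of $A_{j-1}$ with the modified matrix $\widetilde{A}_{j-1}$ defined case-by-case before the lemma: the polygon may absorb the insert crossing $q_j$ or $z_j$ as additional negative corners, and one must check that the bookkeeping in the definitions of $\widetilde{a}^{u,v}$ for each insert type (1)--(4) matches the count of such disks. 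I would verify this type by type, the type~(3) resolved right cusp being the hardest because a disk can wrap the cusp loop and pick up factors of $z_j$ and $z_j z_j$, precisely reproducing the four-term formula for $\widetilde{a}^{u,v}$ when $u > i+1 > i > v$. Once each insert type is matched against its defining formula, collecting the left and right contributions yields the stated matrix identity, and $\df \circ \df = 0$ provides a useful consistency check on the signs and constants.
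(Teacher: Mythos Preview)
Your proposal is correct and follows essentially the same approach as the paper: the paper likewise treats $q_s$ and $z_r$ first via the single disk (plus the constant bigon at a resolved cusp), then computes $\df A_j = A_j^2$ from the two-negative-corner disks inside the $a$-lattice, and finally splits $\df b^{k,l}_j$ into a right piece $R(\df b^{k,l}_j)$ inside $D_j$ and a left piece $L(\df b^{k,l}_j)$ passing through the insert, handling the latter by a case analysis on the insert type with reference to explicit figures. The paper also notes that the result already appears in \cite{Fuchs2008} in the language of splashed diagrams, so its own justification is at the same level of detail you propose.
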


This result appears in \cite{Fuchs2008} in the language of ``splashed diagrams.'' In the next three sections we justify these formulae.

\subsubsection{Computing $\df$ on $q_s$ and $z_r$}
 
The height ordering on the crossings of $\sNgresD$ ensures that $q_s$ is the right-most convex corner of any non-trivial disk in $\df q_s$. Thus, the disk in Figure~\ref{f:dip-crossing-right-cusp} is the only disk contributing to $\df q_s$. The case of $\df z_r$ is similar.

\begin{figure}
\centering
\includegraphics[scale=.65]{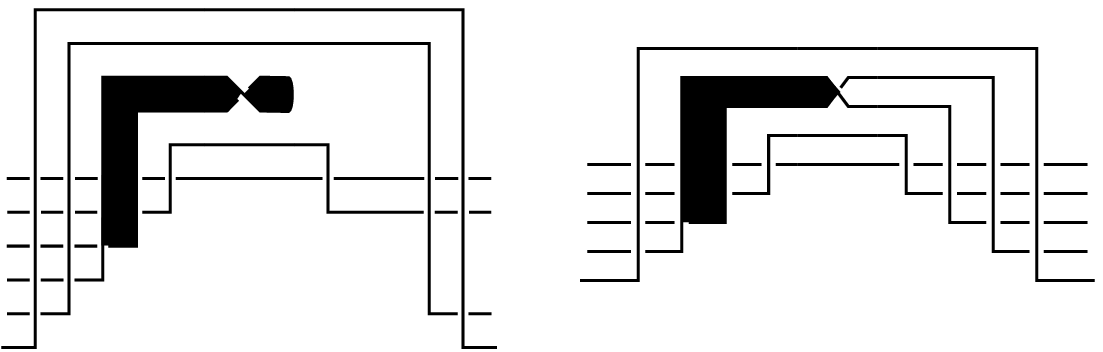}
\caption[Disks contributing to $\df z_r$ and $\df q_s$.]{Disks contributing to $\df z_r$ and $\df q_s$.}
\label{f:dip-crossing-right-cusp}
\end{figure}
  
\subsubsection{Computing $\df$ on the $A_j$ lattice}

The combinatorics of the dip $D_j$ along with the height ordering on the crossings of $\sNgresD$ require that disks in $\df a^{k,l}_j$ sit within the $A_j$ lattice. In addition, a convex immersed polygon sitting within the $A_j$ lattice must have exactly two negative convex corners; see Figure~\ref{f:d-a-d} (a). From this we compute $\df a^{k,l}_j = \sum_{l<i<k} a^{k,i}_j a^{i,l}_j$, which is the $(k,l)$-entry in the matrix $A_j^2$. 

\begin{figure}
\labellist
\small\hair 2pt
\pinlabel {(a)} [tl] at 100 0
\pinlabel {(b)} [tl] at 375 0
\endlabellist
\centering
\includegraphics[scale=.35]{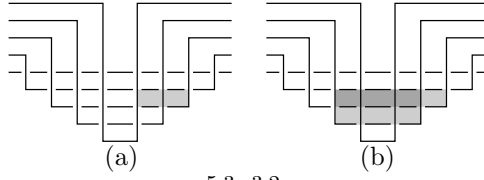}
\caption{(a) The disk $a^{5,3}_j a^{3,2}_j$ appearing in $\partial a^{5,2}_j$. (b) The disks $b^{4,3}_j a^{3,2}_j$ and $a^{4,2}_j $ appearing in $R(\df b^{4,2}_j)$.}
\label{f:d-a-d}
\end{figure}

\subsubsection{Computing $\df$ on the $B_j$ lattice}

The disks in $\df b^{k,l}_j$ are of two types. The first type sit within $D_j$ and include the lower right corner of $b^{k,l}_j$ as their positive convex corner. We denote these disks by $R(\df b^{k,l}_j)$. Figure~\ref{f:d-a-d} (b) shows the two types of convex immersed polygons found in $R(\df b^{k,l}_j)$. From this, we compute:

\begin{equation*}
R(\df b^{k,l}_j) = a^{k,l}_j + \sum_{l<i<k} b^{k,i}_j a^{i,l}_j.
\end{equation*} 

\noindent Disks of the second type include the upper left corner of $b^{k,l}_j$ as their right-most convex corner. Thus these disks sit between the $B_j$ lattice and the $A_{j-1}$ lattice. We let $L(\df b^{k,l}_j)$ denote these disks. The possible disks appear in Figures~\ref{f:d-l-d-d}, \ref{f:d-lc-d-d}, \ref{f:d-c-d-d}, and \ref{f:d-rc-d-d}. Computing $L(\df b^{k,l}_j)$ requires understanding which of these disks appear with a positive convex corner at $b^{k,l}_j$. The entries in the matrix $\widetilde{A}_{j-1}$ are defined to encode these disks. In fact, for a fixed $b^{k,l}_j$:

\begin{equation*}
L(\df b^{k,l}_j) = \widetilde{a}^{k,l}_{j-1} + \sum_{l<i<k} \widetilde{a}^{k,i}_{j-1} b^{i,l}_j.
\end{equation*} 

\noindent Regardless of $I_j$'s type, $R(\df b^{k,l}_j) + L(\df b^{k,l}_j)$ is the $(k,l)$-entry in the matrix $(I + B_j)A_j + \widetilde{A}_{j-1}(I + B_j)$.

\begin{figure}
     \centering
     \subfigure[]{
					\labellist
					\small\hair 2pt
					\pinlabel {$a^{4,2}_{j-1}$} [tl] at 84 18
					\pinlabel {$a^{4,3}_{j-1} b^{3,2}_j$} [tl] at 270 18
					\endlabellist     
          \label{f:d-l-d-d}
          \includegraphics[scale=.5]{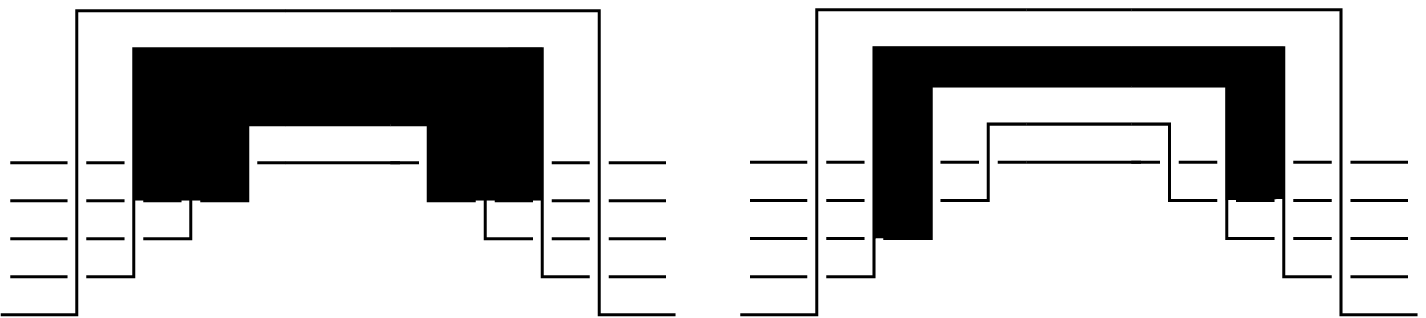}}\\    
     \subfigure[]{
     			\labellist
					\small\hair 2pt
					\pinlabel {$1$} [tl] at 84 18
					\pinlabel {$b^{3,1}_j$} [tl] at 290 18
					\endlabellist
          \label{f:d-lc-d-d}
          \includegraphics[scale=.5]{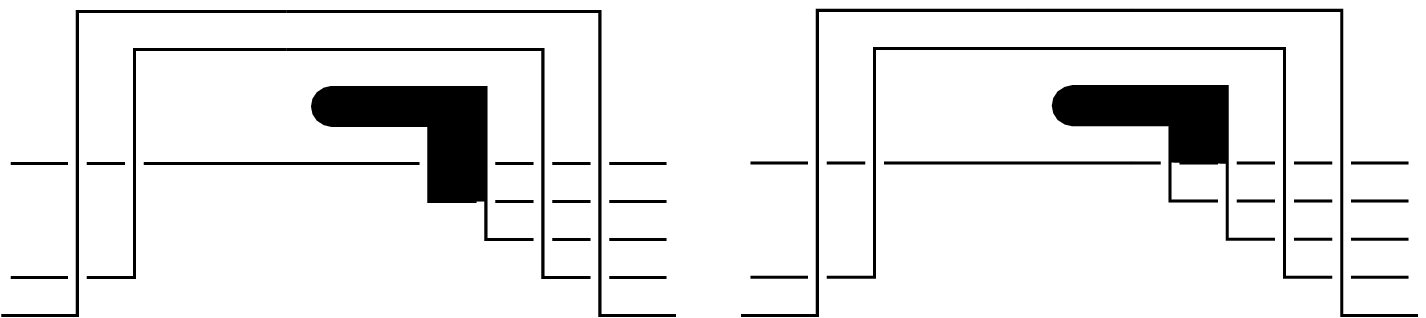}}
     \caption[Disk contributing to $L(\df B_j)$ when $I_{j}$ is of type (1) or (4).]{Disks contributing to $L(\df b^{k,l}_j)$ when $I_{j}$ is of type (1) or (4). The disks in (a) may occur in an insert of any type.}
\end{figure}

\begin{figure}
\centering
\includegraphics[scale=.65]{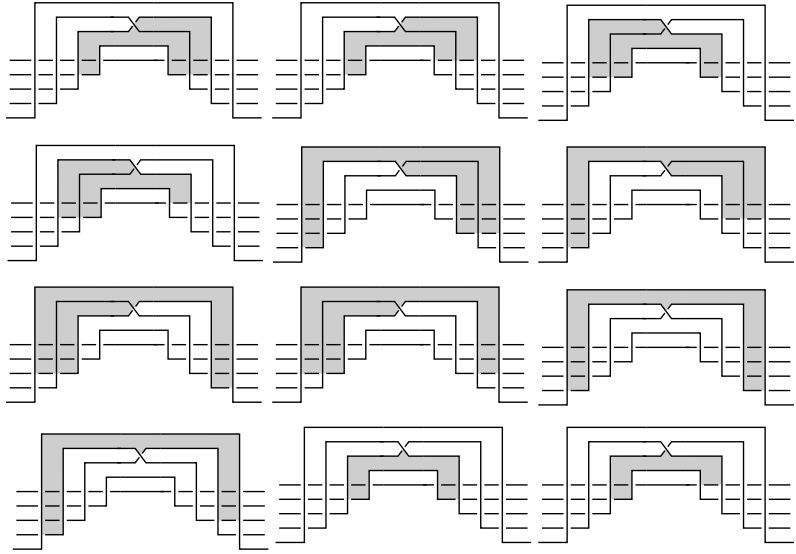}
\caption[Disks contributing to $L(\df B_j)$ when $I_{j}$ is of type (2).]{Disks contributing to $L(\df B_j)$ when $I_{j}$ is of type (2).}
\label{f:d-c-d-d}
\end{figure}

\begin{figure}
\centering
\includegraphics[scale=.6]{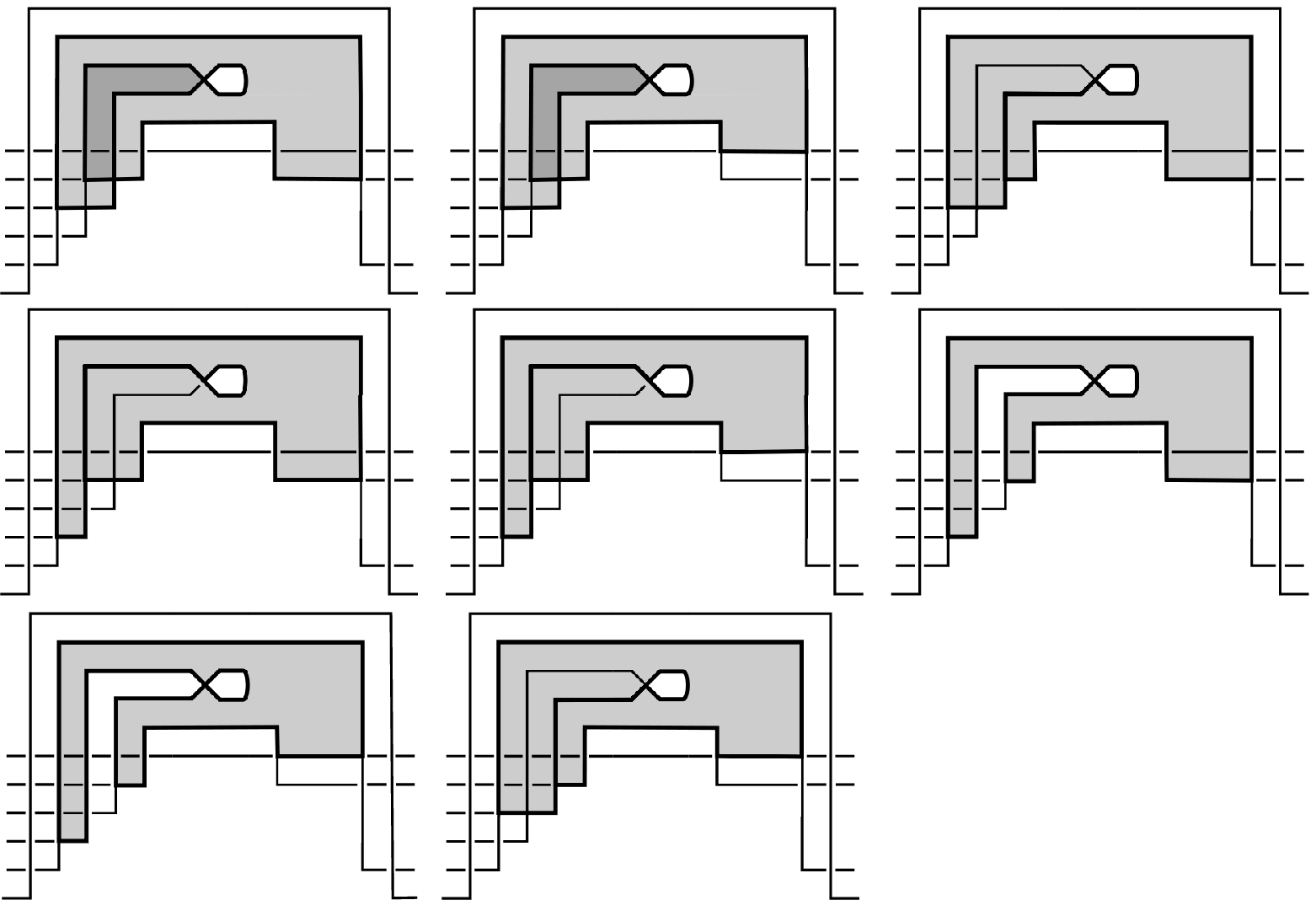}
\caption[Disks contributing to $L(\df B_j)$ when $I_{j}$ is of type (3).]{Disks contributing to $L(\df B_j)$ when $I_{j}$ is of type (3). The boundary of each disk is highlighted.}
\label{f:d-rc-d-d}
\end{figure}

In Section~\ref{sec:tracking-augs}, we study the extension of augmentations across individual type II moves and then across dips and, thus, understand the connections between augmentations on $\sNgres$ and augmentations on a sufficiently dipped diagram $\sNgresD$. In Section~\ref{ch:MCS-Aug}, we use an argument from \cite{Fuchs2008} to associate an augmentation of $\sNgresD$ to an MCS of $\sfront$.

\subsection{Extending an augmentation across a dip}
\label{sec:tracking-augs}

If the Lagrangian projection $\sLagr_2$ is obtained from $\sLagr_1$ by a single type II move introducing new crossings $a$ and $b$ with $|b| = i$ and $|a| = i-1$, then there exists a stable tame isomorphism from $(\aac (\sLagr_2), \df)$ to $(\aac (\sLagr_1), \df')$. As we demonstrated in Proposition~\ref{prop:chain-homotopy-inv}, such a stable tame isomorphism induces a map from $Aug(\sLagr_1)$ to $Aug(\sLagr_2)$. By understanding this map we may keep track of augmentations as we add dips to $\sNgres$. This careful analysis will allow us in Section~\ref{ch:MCS-Aug} to assign an MCS class of $\sfront$ to an augmentation class of $\sNgres$ and, hence, prove the surjectivity of the map $\widehat{\Psi} : \sDMCSeq \to Aug^{ch}(\sNgres)$. 

The stable tame isomorphism from $(\aac (\sLagr_2), \df)$ to $(\aac (\sLagr_1), \df')$ was first explicitly written down in \cite{Chekanov2002a}. We will use the formulation found in \cite{Sabloff2005}. The stable tame isomorphism is a single stabilization $S_i$ introducing generators $\alpha$ and $\beta$ followed by a tame isomorphism $\psi$. The tame isomorphism $\psi: (\aac(\sLagr_2), \df) \to S_i(\aac (\sLagr_1), \df')$ is defined as follows. Let $\{x_1, \ldots, x_N\}$ denote the generators with height less than $h(a)$ and labeled so that $h(x_1) < \hdots < h(x_N)$. Let $\{y_1, \ldots, y_M\}$ denote the generators with height greater than $h(b)$ and labeled so that $h(y_1) < \hdots < h(y_M)$. Recall that \df\ lowers height, so we may write $\df b$ as $\df b = a + v$ where $v$ consists of words in the letters $x_1, \ldots, x_N$. We begin by defining a vector space map $H$ on $S_i(\aac (\sLagr_1), \df')$ by:

\begin{equation*} 
  H(w) = \begin{cases}
    0 & w \in \aac (\sLagr_1) \\
    0 & w=Q\beta R \quad \text{with\ } Q \in \aac (\sLagr_1), R \in S_i(\aac (\sLagr_1)) \\
    Q\beta R & w=Q\alpha R \quad \text{with\ } Q \in \aac (\sLagr_1), R \in S_i(\aac (\sLagr_1)).
  \end{cases}
\end{equation*} 

We build $\psi$ up from a sequence of maps $\psi_i: (\aac(\sLagr_2), \df) \to S_i(\aac (\sLagr_1), \df')$ for $0 \leq i \leq M$: 

\begin{equation*}
  \psi_0 (w) = \begin{cases}
    \beta & w = b \\
    \alpha + v & w = a \\
    w & \text{otherwise}
  \end{cases}
\end{equation*}

and

\begin{equation*}
  \psi_i (w) = \begin{cases}
    y_i + H \circ \psi_{i-1}(\df y_i) & w=y_i \\
    \psi_{i-1}(w) & \text{otherwise.}
  \end{cases}
\end{equation*}

\noindent If we extend the map $\psi_M$ by linearity from a map on generators to a map on all of $(\aac(\sLagr_2), \df)$, then the resulting map $\psi : (\aac(\sLagr_2), \df) \to S_i(\aac (\sLagr_1), \df')$ is the desired DGA isomorphism; see \cite{Chekanov2002a,Sabloff2005}.

If $i \neq 0$, then an augmentation $\saug \in Aug(\sLagr_1)$ is extended to an augmentation $\saug \in Aug(S_i (\sLagr_1))$ by $\saug(\beta) = \saug(\alpha) = 0$. If $i = 0$, then we may choose to extend $\saug \in Aug(\sLagr_1)$ either by $\saug(\beta) = \saug(\alpha) = 0$ or by $\saug(\beta) = 1$ and $\saug(\alpha) = 0$. Given the tame isomorphism $\psi$ defined above, we would like to understand how our choice of an extension from  $Aug(\sLagr_1)$ to $Aug(S_i (\sLagr_1))$ affects the induced map from $Aug(\sLagr_1)$ to $Aug(\sLagr_2)$. Recall $\psi : (\aac(\sLagr_2), \df) \to S_i(\aac (\sLagr_1), \df')$ induces a bijection $\Psi : Aug(S_i (\sLagr_1)) \to Aug(\sLagr_2)$ by $\saug \mapsto \saug \circ \psi$. Let $\saug' = \saug \circ \psi$. From the formulae for $\psi$ we note that $\saug'(x_i) = \saug(x_i)$ for all $x_i$, $\saug' (b) = \saug ( \beta )$, and $\saug' (a) = \saug(v)$ where $\df b = a + v$. The next two lemmata follow from Lemma~3.2 in \cite{Sabloff2005} and detail the behavior of $\saug'$ on $y_1, \hdots, y_M$.  

\begin{lem}
	\label{lem:dip-aug-1}
	If we extend an augmentation $\saug \in Aug(\sLagr_1)$ to an augmentation $\saug \in Aug(S_i (\sLagr_1))$ by $\saug(\beta) = \saug(\alpha) = 0$ then the augmentation $\saug' \in Aug(\sLagr_2)$ given by $\saug' = \saug \circ \psi$ satisfies:
	
\begin{enumerate}
	\item $\saug'(b) = 0$,
	\item $\saug'(a) = \saug(v)$ where $\df b = a + v$, and
	\item $\saug' = \saug$ on all other crossings. 
\end{enumerate}
\end{lem}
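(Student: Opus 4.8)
The plan is to use the explicit formulae for $\psi$ recorded just before the lemma to reduce the whole statement to a single observation about the homotopy operator $H$. The crossings of $\sLagr_2$ are exactly the crossings of $\sLagr_1$ together with the two new generators $a$ and $b$; by genericity of the heights introduced by a single type II move, each crossing of $\sLagr_1$ is either one of $x_1, \ldots, x_N$ (height below $h(a)$) or one of $y_1, \ldots, y_M$ (height above $h(b)$). The discussion preceding the lemma already establishes that $\saug'(x_i) = \saug(x_i)$ for all $i$, that $\saug'(b) = \saug(\beta)$, and that $\saug'(a) = \saug(v)$ where $\df b = a + v$. Specializing to the extension $\saug(\beta) = \saug(\alpha) = 0$ immediately yields conclusions (1) and (2), as well as conclusion (3) for every $x_i$. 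Thus the entire remaining content of the lemma is the claim that $\saug'(y_i) = \saug(y_i)$ for each higher generator $y_i$.

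The key step is the identity $\saug \circ H = 0$, valid whenever $\saug(\beta) = 0$. Examining the three cases in the definition of $H$, one sees that every word of $S_i(\aac(\sLagr_1))$ is (according to its leftmost occurrence of $\alpha$ or $\beta$) either a word of $\aac(\sLagr_1)$ with no $\alpha,\beta$, or of the form $Q\beta R$, or of the form $Q\alpha R$ with $Q \in \aac(\sLagr_1)$; in the first two cases $H$ returns $0$, and in the last case it returns $Q\beta R$. Hence every monomial in the image of $H$ carries a factor of $\beta$. Since $\saug$ is an algebra homomorphism, $\saug(Q\beta R) = \saug(Q)\,\saug(\beta)\,\saug(R) = 0$, so $\saug \circ H$ vanishes on all of $S_i(\aac(\sLagr_1))$.

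To finish, I would recall that the intermediate maps satisfy $\psi_j(y_i) = \psi_{j-1}(y_i)$ whenever $j \neq i$, so that $\psi(y_i) = \psi_M(y_i) = \psi_i(y_i) = y_i + H \circ \psi_{i-1}(\df y_i)$. Applying $\saug$ and using $\saug \circ H = 0$ gives
$$ \saug'(y_i) = \saug(\psi(y_i)) = \saug(y_i) + (\saug \circ H)\bigl(\psi_{i-1}(\df y_i)\bigr) = \saug(y_i), $$
which is conclusion (3) for the $y_i$ and completes the proof.

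The only genuine subtlety — and the point I would verify most carefully — is that the image of $H$ always contains a factor of $\beta$, so that the factorization argument in the step $\saug \circ H = 0$ is legitimate; this is precisely why the homotopy operator was defined to convert the leftmost $\alpha$ into a $\beta$ rather than simply to delete it. Everything else is a direct substitution into the stated formulae for $\psi_0, \ldots, \psi_M$.
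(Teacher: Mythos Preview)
Your proof is correct and follows exactly the same route as the paper's: the only outstanding claim is $\saug'(y_i)=\saug(y_i)$, and this is deduced from the observation that $\saug(\beta)=0$ forces $\saug\circ H$ to vanish (since every monomial in the image of $H$ carries a factor of $\beta$). The paper compresses this into a single sentence, while you have usefully unpacked the reason the image of $H$ always contains $\beta$ and made the chain $\psi(y_i)=\psi_M(y_i)=\psi_i(y_i)=y_i+H\circ\psi_{i-1}(\df y_i)$ explicit.
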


\begin{proof}
We are left to show $\saug' = \saug$ on $y_1, \hdots, y_M$. This follows from observing that $\saug(\beta) = 0$ implies $\saug \circ H \circ \psi (\df y_j) = 0$ for all $j$. 
\end{proof}

\begin{lem}
	\label{lem:dip-aug-2}
	Suppose we extend an augmentation $\saug \in Aug(\sLagr_1)$ to an augmentation $\saug \in Aug(S_i (\sLagr_1))$ by $\saug(\beta) = 1$ and $\saug(\alpha) = 0$. Suppose that in $(\aac(\sLagr_2), \df)$ the generator $a$ appears in the boundary of each of the generators $\{ y_{j_1}, \hdots, y_{j_l} \}$. Suppose that for $y \in \{ y_{j_1}, \hdots, y_{j_l} \}$, each disk contributing $a$ to $\df y$ has the form $QaR$ where $Q, R \in \aac(\sLagr_2)$ and $Q$ and $R$ do not contain $a$ or $b$. Then the augmentation $\saug' \in Aug(\sLagr_2)$ satisfies:
	
\begin{enumerate}
	\item $\saug'(b) = 1$;
	\item $\saug'(a) = \saug(v)$ where $\df b = a + v$;
	\item For each $y \in \{ y_{j_1}, \hdots, y_{j_l} \}$, $\saug' (y) = \saug (y)$ if and only if the generator $a$ appears in an even number of terms in $\df y$ that are of the form $QaR$ with $\saug(Q) = \saug(R) = 1$; and
	\item $\saug' = \saug$ on all other crossings.  
\end{enumerate}

\end{lem}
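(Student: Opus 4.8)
The plan is to evaluate $\saug' = \saug \circ \psi$ on each generator using the explicit formula $\psi(y_j) = \psi_M(y_j) = y_j + H\circ\psi_{j-1}(\df y_j)$, exactly as in the proof of Lemma~\ref{lem:dip-aug-1}, but now tracking the terms that survive precisely because $\saug(\beta) = 1$. Parts (1) and (2) are immediate from the general formulas $\saug'(b) = \saug(\beta)$ and $\saug'(a) = \saug(v)$ recorded just before the lemma. For part (4) I would observe that $\psi$ fixes every generator of height less than $h(a)$, while for any generator $y$ with $a \notin \df y$ the element $\psi_{j-1}(\df y)$ contains no letter $\alpha$ (since $\alpha$ enters only through $\psi_{j-1}(a) = \alpha + v$). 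Hence $H\circ\psi_{j-1}(\df y) = 0$, so $\psi(y) = y$ and $\saug'(y) = \saug(y)$.

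The heart of the argument is part (3), and I would organize it around the fact that $H$ is nonzero only on monomials whose leftmost special letter is $\alpha$. First I would split $\df y$ into the sum $T_a$ of terms containing $a$ and the remaining sum $T_0$; as in part (4), $\psi_{j-1}(T_0)$ carries no $\alpha$, so only $T_a$ contributes. Writing a typical term of $T_a$ as $QaR$ with $Q,R$ free of $a$ and $b$ (the standing hypothesis), I would expand
\[
\psi_{j-1}(QaR) = \psi_{j-1}(Q)\,(\alpha + v)\,\psi_{j-1}(R)
\]
and apply $H$ termwise. The summand containing $v$ has no $\alpha$ and dies under $H$. In $\psi_{j-1}(Q)\,\alpha\,\psi_{j-1}(R)$, the map $\psi_{j-1}$ fixes $Q$ up to correction terms, each of which carries a $\beta$ lying to the left of our distinguished $\alpha$; those monomials therefore have leftmost special letter $\beta$ and are killed by $H$. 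Only the leading monomial $Q\,\alpha\,\psi_{j-1}(R)$, with $Q \in \aac(\sLagr_1)$ free of special letters, survives, and $H$ converts it to $Q\,\beta\,\psi_{j-1}(R)$.

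Applying $\saug$ and using $\saug(\beta)=1$ then gives the contribution $\saug(Q)\,\saug\!\circ\!\psi_{j-1}(R)$ from the term $QaR$. At this point I would argue that $\saug\circ\psi_{j-1}$ agrees with $\saug$ on the factors $Q$ and $R$: every generator occurring there is either fixed outright by $\psi$ or, being free of $a$ and $b$, is one on which the earlier cases already force $\saug' = \saug$, so that $\saug\circ\psi_{j-1}(R) = \saug(R)$ and likewise for $Q$. Summing over $T_a$ and reducing modulo $2$ then yields
\[
\saug'(y) = \saug(y) + \#\{\, QaR \text{ in } \df y : \saug(Q)=\saug(R)=1 \,\} \pmod 2,
\]
which is exactly the criterion in (3).

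The step I expect to be the main obstacle is the bookkeeping inside $H$ — verifying that the correction $\beta$'s produced by $\psi_{j-1}$ always sit to the left of the distinguished $\alpha$, so that precisely one monomial per disk $QaR$ survives — together with the justification that $\saug$ and $\saug\circ\psi_{j-1}$ coincide on $Q$ and $R$. It is in this last point that the hypothesis constraining each $a$-disk to the form $QaR$ with $Q,R$ avoiding $a$ and $b$ is essential, since without it the factors could involve generators on which $\saug'$ and $\saug$ disagree.
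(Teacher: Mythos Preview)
Your proposal is correct and follows essentially the same approach as the paper: compute $\saug'(y) = \saug(y) + \saug\circ H\circ\psi(\df y)$, observe that only disks of the form $QaR$ contribute (since otherwise no $\alpha$ appears), and reduce to the parity count $\saug(Q)\saug(R)$. The paper's proof is terser --- it simply writes $H\circ\psi(QaR) = Q\beta R$ and $\saug\circ H\circ\psi(QaR) = \saug(Q)\saug(R)$ without tracking the correction terms coming from $\psi$ acting on $Q$ and $R$ --- whereas you spell out why those corrections carry a $\beta$ to the left of the distinguished $\alpha$ and hence die under $H$, and why $\saug\circ\psi$ agrees with $\saug$ on $Q$ and $R$; but the skeleton of the argument is the same.
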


\begin{proof}
Cases (1), (2), and (4) follow as in Lemma~\ref{lem:dip-aug-1}. Let $y \in \{ y_1, \hdots, y_M \}$. Note that $\saug' (y) = \saug(y) + \saug \circ H \circ \psi (\df y)$. Suppose $a$ does not appear in $\df y$. Then  $H \circ \psi (\df y) = 0$, so $\saug' (y) = \saug(y)$. Suppose $a$ does appear in $\df y$. By assumption, each disk contributing $a$ to $\df y$ has the form $QaR$ where $Q, R \in \aac(\sLagr_2)$ and $Q$ and $R$ do not contain $a$ or $b$. Thus $H \circ \psi (QaR) = Q \beta R$ for each disk of the form $QaR$ in $\df y$ and $H \circ \psi$ is 0 on all of the other disks in $\df y$. Now $\saug \circ H \circ \psi (QaR) = \saug(Q) \saug(R)$, so $\saug \circ H \circ \psi (\df y) = 0$ if and only if $a$ appears in an even number of terms in $\df y$ that are of the form $QaR$ with $\saug(Q) = \saug(R) = 1$. 
\end{proof}

Now we follow an augmentation through the complete process of adding a dip to $\sNgres$. Suppose $\sNgresD$ is a dipped diagram of $\sNgres$ with dips $D_1, \hdots, D_m$. We would like to add a new dip $D$ to $\sNgresD$ away from $D_1, \hdots, D_m$. We will denote the resulting dipped diagram by $\sNgres^{d'}$. Our goal is to understand how an augmentation in $Aug(\sNgresD)$ extends to an augmentation in $Aug(\sNgres^{d'})$ using the stable tame isomorphisms defined above. This work is motivated by the construction defined in Section 3.3 of \cite{Sabloff2005}. 

We set the following notation so that the arguments below are not as cluttered. Let $L = \sNgresD$ and $L' = \sNgres^{d'}$. We will assume that, with respect to the ordering on dips coming from the $x$-axis, $D$ is not the left-most dip in $L'$. Let $I$ denote the insert in $L'$ bounded on the right by $D$. Let $D_j$ denote the dip bounding $I$ on the left. At the location of the dip $D$, label the strands of $\sNgresD$ from bottom to top with the integers $1, \hdots, n$. The creation of the dip $D$ gives a sequence of Lagrangian projections $L, L_{2,1}, \hdots, L_{n, n-1} = L'$, where $L_{k,l}$ denotes the result of pushing strand $k$ over strand $l$. Let $\df_{k,l}$ denote the boundary map of the CE-DGA of $L_{k,l}$. Let $D_{k,l}$ denote the partial dip in $L_{k,l}$. In each Lagrangian projection $L_{k,l}$, the insert $I$ and dip $D_j$ sit to the left of the partial dip $D_{k,l}$. 

Suppose $(r,s)$ denotes the type II move immediately preceding the type II move $(k,l)$. Then $L_{k,l}$ is the result of pushing strand $k$ over strand $l$ in $L_{r,s}$.

\subsubsection{Extending $\saug \in Aug(\sNgresD)$ by 0.}
\label{subs:extend-by-0}

Let us suppose that we have extended $\saug \in Aug(L_{r,s})$ so that $\saug'(b^{k,l}) = 0$. Lemma~\ref{lem:dip-aug-1} implies $\saug'(a^{k,l}) = \saug(v)$ where $\df_{k,l} b^{k,l} = a^{k,l} + v$, and $\saug' = \saug$ on all other crossings. If the insert $I$ is of one of the four types in Figure~\ref{f:inserts}, then we can describe the disks in $v$. Note that $a^{k,l}$ is the only disk in $\df_{k,l} b^{k,l}$ with the lower right corner of $b^{k,l}$ as its positive convex corner, thus we need only understand the disks with the upper left corner of $b^{k,l}$ as their positive convex corner. In the proof of Lemma~\ref{lem:dipped-df}, we let $L(\df b^{k,l})$ denote the disks in $\df b^{k,l}$ that include the upper left corner of $b^{k,l}$ as their positive convex corner. Here $\df b^{k,l}$ refers to the boundary map in the CE-DGA of $\sNgres^{d'}$. The order in which we perform the type II moves that create $D$ ensures that when the crossing $ b^{k,l}$ is created, all of the disks in $L(\df b^{k,l})$ appear in $\df_{k,l} b^{k,l}$. In fact, the restrictions placed on convex immersed polygons by the height function imply that any disk appearing in $\df_{k,l} b^{k,l}$ must also appear in $\df b^{k,l}$. Thus, we have $v = L(\df b^{k,l})$ and so $\saug'(a^{k,l}) = \saug(L(\df b^{k,l}))$, which is equal to the $(k,l)$-entry in the matrix $\saug(\widetilde{A_j}(I + B))$. Since $\saug$ is an algebra map and $\saug'(B) = 0$, we see that $\saug(\widetilde{A_j}(I + B)) = \saug(\widetilde{A}_{j})$. Thus we have the following:

\begin{lem}
	\label{lem:extend-by-0}
Suppose $\sNgresD$ is a dipped diagram of the Ng resolution $\sNgres$. Suppose we use the dipping procedure to add a dip $D$ between the existing dips $D_j$ and $D_{j+1}$ and thus create a new dipped diagram $\sNgres^{d'}$. Suppose the insert $I$ between $D_j$ and $D$ is of one of the four types in Figure~\ref{f:inserts}. Let $\saug \in Aug(\sNgresD)$. Then if at every type II move in the creation of the dip $D$ we choose to extend $\saug$ so that $\saug'$ is 0 on the new crossing in the $b$-lattice, then the stable tame isomorphism from Section~\ref{sec:tracking-augs} maps $\saug$ to $\saug' \in Aug(\sNgres^{d'})$ satisfying:

\begin{enumerate}
	\item $\saug'(B) = 0$, 
	\item $\saug'(A) = \saug(\widetilde{A}_{j})$, and
	\item $\saug' = \saug$ on all other crossings.
\end{enumerate}
\end{lem}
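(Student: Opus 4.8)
The plan is to prove the lemma by induction along the sequence of type II moves that build the dip $D$, namely the moves producing the chain of Lagrangian projections $L, L_{2,1}, \hdots, L_{n,n-1} = L'$ described in Section~\ref{sec:tracking-augs}. At the move $(k,l)$ we pass from the preceding projection $L_{r,s}$ to $L_{k,l}$ by a single type II move introducing the pair $a^{k,l}, b^{k,l}$, so the stable tame isomorphism of Section~\ref{sec:tracking-augs} applies and induces a map on augmentations. I would extend the current augmentation by setting its value on $b^{k,l}$ to $0$, which places us exactly in the hypotheses of Lemma~\ref{lem:dip-aug-1}. That lemma then yields $\saug'(b^{k,l}) = 0$, $\saug'(a^{k,l}) = \saug(v)$ where $\df_{k,l} b^{k,l} = a^{k,l} + v$, and $\saug' = \saug$ on every older crossing. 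Iterating over all moves gives conclusion (1), namely $\saug'(B) = 0$, immediately by construction, and conclusion (3) because each individual step leaves the values on crossings outside $D$ untouched.

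The heart of the argument is conclusion (2), and it reduces to correctly identifying the word $v$ at each stage. Here I would invoke the disk classification from the proof of Lemma~\ref{lem:dipped-df} to claim that $v = L(\df b^{k,l})$, the collection of disks whose positive corner is the upper-left corner of $b^{k,l}$. This is justified by the height-ordering conventions of Section~\ref{sec:adding-dips}: at the moment $b^{k,l}$ is first created, the only disk with positive corner at the \emph{lower-right} corner of $b^{k,l}$ is $a^{k,l}$ itself, since the crossings $b^{k,i}$ with $l < i < k$ needed for the terms $b^{k,i}a^{i,l}$ are not introduced until the later moves $(k,i)$. Conversely, every disk in $L(\df b^{k,l})$ uses only the $a_j$-lattice crossings of $D_j$, the crossings of the insert $I$, and the crossings $b^{i,l}$ with $l < i < k$, all of which already exist at stage $L_{k,l}$; and the height function forbids any further disk. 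Hence $v = L(\df b^{k,l})$ exactly.

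With this in hand I would conclude using the formula $L(\df b^{k,l}) = \widetilde{a}_j^{k,l} + \sum_{l<i<k} \widetilde{a}_j^{k,i}\, b^{i,l}$ recorded in the proof of Lemma~\ref{lem:dipped-df}. Applying $\saug'$ and using that, by the inductive hypothesis, $\saug'(b^{i,l}) = 0$ for all $b$-crossings of $D$ created so far, the entire sum vanishes and we are left with $\saug'(a^{k,l}) = \saug'(\widetilde{a}_j^{k,l}) = \saug(\widetilde{a}_j^{k,l})$, the last equality holding because $\widetilde{a}_j^{k,l}$ involves only crossings of $D_j$ and $I$, on which $\saug'$ and $\saug$ agree. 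This is precisely the $(k,l)$-entry of $\saug(\widetilde{A}_j)$. Phrased globally, $\saug'(a^{k,l})$ equals the $(k,l)$-entry of $\saug(\widetilde{A}_j(I + B))$, and since $\saug$ is an algebra map with $\saug'(B) = 0$ we have $\saug(\widetilde{A}_j(I + B)) = \saug(\widetilde{A}_j)$, giving conclusion (2).

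The step I expect to be the main obstacle is the identification $v = L(\df b^{k,l})$: one must verify that the set of disks present at the intermediate stage $L_{k,l}$ coincides with $L(\df b^{k,l})$ as computed in the final diagram $L'$, neither gaining nor losing contributions. This bookkeeping is controlled entirely by the prescribed order of the type II moves and the height constraints established in Section~\ref{sec:adding-dips}, which is exactly why setting up those conventions in advance pays off here. Once that combinatorial comparison is secured, the remainder is a routine induction built on Lemma~\ref{lem:dip-aug-1}.
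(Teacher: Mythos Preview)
Your proposal is correct and follows essentially the same approach as the paper's own argument in Section~\ref{subs:extend-by-0}: apply Lemma~\ref{lem:dip-aug-1} at each type~II move, identify $v$ with $L(\df b^{k,l})$ via the ordering and height conventions of Section~\ref{sec:adding-dips}, and then use $\saug'(B)=0$ to reduce $\saug'(\widetilde{A}_j(I+B))$ to $\saug(\widetilde{A}_j)$. If anything, your treatment of the inductive bookkeeping and the vanishing of the $\sum \widetilde{a}_j^{k,i} b^{i,l}$ term is slightly more explicit than the paper's.
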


\begin{defn}
	\label{defn:extend-by-0}
	If $\saug \in Aug(\sNgresD)$ then we say that $\saug$ is \emph{extended by 0} if after each type II move in the creation of $D$, we extend $\saug$ so that $\saug$ sends the new crossing of the $b$-lattice of $D$ to 0.
\end{defn} 

In the next corollary, we investigate $\saug(\widetilde{A}_{j})$ by revisiting the definition of $\widetilde{A}_{j}$ from Section~\ref{sec:boundary-dipped}. The assumptions on $\saug (q)$ and $\saug (z)$ in cases (2) and (3) simplify the matrices $\saug(\widetilde{A}_{j})$ considerably, although verifying case (3) is still a slightly tedious matrix calculation. 

\begin{cor}
	\label{cor:extend-by-0}
	Suppose we are in the setup of Lemma~\ref{lem:extend-by-0}. Let $\saug \in Aug(\sNgresD)$ and let $\saug' \in Aug(\sNgres^{d'})$ be the extension of $\saug$ described in Lemma~\ref{lem:extend-by-0}. Then:

\begin{enumerate}
	\item If $I$ is of type (1), then $\saug'(A) = \saug(A_j)$.
	
	\item Suppose $I$ is of type (2) with crossing $q$ between strands $i+1$ and $i$. If $\saug (q) = 0$, then $\saug'(A) = P_{i+1,i} \saug(A_j) P_{i+1,i}$.
	
	\item Suppose $I$ is of type (3) with crossing $z$ between strands $i+1$ and $i$.  
	
		\begin{enumerate}
			\item Let $i+1 < u_1 < u_2 < \hdots < u_s$ denote the strands at dip $D_j$ that satisfy $\saug(a^{u_1,i}_{j}) = \hdots = \saug(a^{u_s,i}_{j}) = 1$; 
			\item Let  $v_r <  \hdots < v_1 $ denote the strands at dip $D_j$ that satisfy $\saug(a^{i+1,v_1}_{j}) = \saug(a^{i+1,v_2}_{j}) = \hdots = \saug(a^{i+1,v_r}_{j}) = 1$; and
			\item Let $E = E_{i, v_r} \hdots E_{i, v_1} E_{u_1, i+1} \hdots E_{u_s, i+1}$.
		\end{enumerate}

If $\saug (z) = 0$ and $\saug(a^{i+1,i}_{j}) = 1$, then, as matrices, $ \saug'(A) = J_{i-1} E \saug(A_j) E^{-1} J_{i-1}^{T}$.
	
	\item Suppose $I$ is of type (4) and the resolved birth is between strands $i+1$ and $i$. Then, as matrices, $\saug'(A)$ is obtained from $\saug(A_j)$ by inserting two rows (columns) of zeros after row (column) $i-1$ in $\saug(A_j)$ and then changing the $(i+1,i)$ entry to 1.
\end{enumerate}
	
\end{cor}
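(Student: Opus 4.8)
The plan is to reduce everything to the single identity $\saug'(A) = \saug(\widetilde{A}_j)$ furnished by Lemma~\ref{lem:extend-by-0}, and then to evaluate the right-hand side in each of the four cases by substituting the explicit entry formulas for $\widetilde{A}_j$ recorded in Section~\ref{sec:boundary-dipped} and simplifying under the stated hypotheses on $\saug$. Since $\saug$ is an algebra map into $\zz_2$, it acts entrywise on $\widetilde{A}_j$ and turns each polynomial entry into a $\zz_2$-combination of products of the scalars $\saug(a^{k,l}_j)$, $\saug(q)$, $\saug(z)$; the content of the corollary is exactly that these combinations assemble into the stated matrix expressions.

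First I would dispose of cases (1) and (4), which are immediate. For a type (1) insert the definition gives $\widetilde{A}_j = A_j$ verbatim, so $\saug'(A) = \saug(A_j)$. For type (4) the entries of $\widetilde{A}_j$ are already equal to $a^{u,v}$ for $u,v \notin \{i,i+1\}$, to $1$ in position $(i+1,i)$, and to $0$ elsewhere in rows and columns $i$ and $i+1$; applying $\saug$ therefore produces precisely the matrix obtained from $\saug(A_j)$ by inserting two zero rows and columns after index $i-1$ and placing a $1$ in the $(i+1,i)$ slot. Next I would treat case (2): setting $\saug(q)=0$ kills the two $q$-terms in the formulas for $\widetilde{a}^{i,v}$ and $\widetilde{a}^{u,i+1}$, leaving $\widetilde{a}^{i+1,v}=a^{i,v}$, $\widetilde{a}^{i,v}=a^{i+1,v}$, $\widetilde{a}^{u,i}=a^{u,i+1}$, $\widetilde{a}^{u,i+1}=a^{u,i}$, $\widetilde{a}^{i+1,i}=0$, and $\widetilde{a}^{u,v}=a^{u,v}$ otherwise. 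This is exactly the effect of interchanging rows $i\leftrightarrow i+1$ and columns $i\leftrightarrow i+1$, i.e. conjugation by the transposition matrix $P_{i+1,i}$; the entry $\widetilde{a}^{i+1,i}=0$ is consistent because conjugating a strictly lower-triangular matrix reads off its (vanishing) $(i,i+1)$ entry there. Applying $\saug$ yields $\saug'(A)=P_{i+1,i}\saug(A_j)P_{i+1,i}$.

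The main obstacle is case (3), the one genuinely computational step. After setting $\saug(z)=0$ the entry formulas collapse to $\saug(\widetilde{a}^{u,v})=\saug(a^{u,v})$ when $u<i$ or $v>i+1$, and to $\saug(\widetilde{a}^{u,v})=\saug(a^{u,v})+\saug(a^{u,i})\saug(a^{i+1,v})$ when $u>i+1>i>v$. It remains to check that this matrix equals $J_{i-1}E\,\saug(A_j)\,E^{-1}J_{i-1}^{T}$. I would verify this by tracking the conjugation $E\,\saug(A_j)\,E^{-1}$ one elementary factor at a time, using that over $\zz_2$ each $E_{k,l}$ is its own inverse and that conjugation by $E_{k,l}$ adds row $l$ to row $k$ and column $k$ to column $l$. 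With $\saug(a^{i+1,i}_j)=1$, the factors $E_{u_p,i+1}$ are exactly what is needed to clear the entries $\saug(a^{u_p,i}_j)=1$ out of column $i$ in rows $>i+1$, while the factors $E_{i,v_q}$ clear the entries $\saug(a^{i+1,v_q}_j)=1$ out of row $i+1$ in columns $<i$; the price of this clearing is precisely the cross-terms $\saug(a^{u,i})\saug(a^{i+1,v})$ deposited in the block $u>i+1>i>v$, which is what the simplified $\widetilde{A}_j$ predicts.

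Once rows and columns $i$ and $i+1$ have been decoupled in this way, left multiplication by $J_{i-1}$ and right multiplication by $J_{i-1}^{T}$ simply delete those two rows and columns, leaving the claimed matrix. I expect the bookkeeping to run in close parallel to the matrix computation already carried out for the birth map in Definition~\ref{defn:MCS-maps}(3), so that the verification ultimately reduces to matching the two formulas entry by entry; the only real care is in confirming that the spurious terms the $E$-conjugation introduces into rows and columns $i$ and $i+1$ are exactly those annihilated by the $J_{i-1}$ deletion.
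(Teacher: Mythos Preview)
Your proposal is correct and follows essentially the same approach as the paper: both reduce to the identity $\saug'(A)=\saug(\widetilde{A}_j)$ from Lemma~\ref{lem:extend-by-0}, then evaluate $\saug(\widetilde{A}_j)$ case by case using the explicit entry formulas for $\widetilde{A}_j$ from Section~\ref{sec:boundary-dipped}, simplifying under the hypotheses $\saug(q)=0$, $\saug(z)=0$, $\saug(a^{i+1,i}_j)=1$. The paper's own proof is in fact briefer than yours---it simply remarks that the assumptions on $\saug(q)$ and $\saug(z)$ simplify $\saug(\widetilde{A}_j)$ considerably and that case (3) is ``a slightly tedious matrix calculation''---so your writeup supplies exactly the kind of detail the paper elides.
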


Besides Case (1), these matrix equations correspond to the chain maps connecting consecutive ordered chain complexes in an MCS with simple births; see Definition~\ref{defn:MCS-maps}. Thus we see the first hint of an explicit connection between MCSs and augmentations. 

\subsubsection{Extending $\saug \in Aug(\sNgresD)$ by $\sHSM_{i+1,i}$.}

In this section, we consider extending $\saug \in Aug(\sNgresD)$ to an augmentation $\saug' \in Aug(\sNgres^{d'})$ so that $\saug'(B) = \sHSM_{i+1,i}$. Recall $\sHSM_{i+1,i}$ denotes a square matrix with 1 in the $(i+1,i)$ position and zeros everywhere else. During the type II move that pushes strand $i+1$ over strand $i$ we will choose to extend $\saug$ so that $\saug' (b^{i+1,i}) = 1$. By carefully using Lemma~\ref{lem:dip-aug-2}, we are able to determine the extended augmentation $\saug'$.

\begin{defn}
	\label{defn:extend-by-1}
	Let $\saug \in Aug(\sNgresD)$. We say that $\saug$ is \emph{extended by $\sHSM_{i+1,i}$} if $\smu(i+1) = \smu(i)$ and after each type II move in the creation of a new dip $D$, we extend $\saug$ so that the extended augmentation $\saug' \in Aug(\sNgres^{d'})$ satisfies $\saug'(B) = \sHSM_{i+1,i}$.
\end{defn} 

Understanding $\saug'$ when $\saug'(b^{i+1,i}) = 1$ requires that we understand all of the crossings $y$ such that $a^{i+1,i}$ appears in $\df_{i+1,i} y$. If we add the following restrictions on $I$, $D$, and the pair $(i+1,i)$, then we can identify all disks containing $a^{i+1,i}$ as a negative convex corner. Suppose:

\begin{enumerate}
	\item $|b^{i+1,i}| = 0$,
	\item $I$ is of type (1),
	\item $D$ occurs to the immediate left of a crossing $q$ in $\sNgres$ between strands $i+1$ and $i$.
\end{enumerate}

\begin{figure}[t]
\centering
\includegraphics[scale=.3]{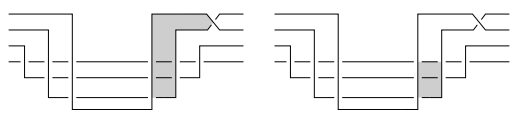}
\caption[Disks in which $a^{i+1,i}$ is a negative convex corner.]{Disks appearing in $\df_{i+1,i}$ with $a^{i+1,i}$ as a negative convex corner.}
\label{f:extend-by-1}
\end{figure}

Given these conditions, $a^{i+1,i}$ only appears in $\df_{i+1,i} q$ and $\df_{i+1,i} a^{i+1,l}$ for $l < i$; see Figure~\ref{f:extend-by-1}. Applying Lemma~\ref{lem:dip-aug-2}, we conclude that after the type II move that pushes strand $i+1$ over strand $i$ the augmentation $\saug'$ satisfies:

\begin{enumerate}
	\item $\saug'(b^{i+1,i}) = 1$,
	\item $\saug'(a^{i+1,i}) = \saug(v)$ where $\df b = a + v$, 
	\item $\saug' (q) \neq \saug(q)$,
	\item For all $l < i$, $\saug'(a^{i+1,l}) = \saug(a^{i+1,l}_j)$ if and only if $\saug'(a^{i,l}) = 0$, and
	\item $\saug' = \saug$ on all other crossings.  
\end{enumerate}

In Section~\ref{subs:extend-by-0} we showed that when $I$ is an insert of type (1) - (4), $v = L(\df b^{i+1,i})$ where $\df$ is the boundary in $\sNgres^{d'}$. Since $I$ is an insert of type (1), we conclude $\saug'(a^{i+1,i}) = \saug(a^{i+1,i}_j)$.

Suppose we continue creating the dip $D$ and with each new type II move we extend the augmentation so that it sends the new crossing in the $b$-lattice to 0. By Lemma~\ref{lem:extend-by-0}, we need only compute the value of the extended augmentation on the new crossing in the $a$-lattice. Consider the type II move pushing strand $k$ over $l$ where $(i+1,i) \prec (k,l)$. By Lemma~\ref{lem:extend-by-0}, we have 

\begin{equation*}
\saug'(a^{k,l}) = \saug(v) = \saug(L(\df b^{k,l})) = \saug(a^{k,l}_{j}) + \sum_{l<p<k} \saug(a^{k,p}_{j}) \saug'(b^{p,l}). 
\end{equation*}

\noindent Since $\saug'(b^{p,l}) = 1$ if and only if $(p,l) = (i+1,i)$, we know that $\saug'(a^{k,l}) = \saug(a^{k,l}_j)$ if and only if $l \neq i$ or $\saug(a^{k,i+1}_j) = 0$. Thus, if we extend $\saug \in Aug(\sNgresD)$ to $\saug' \in Aug(\sNgres^{d'})$ so that $\saug'(B) = \sHSM_{i+1,i}$ then $\saug'$ satisfies:

\begin{equation*} 
  \saug'(a^{k,l}) = \begin{cases}
    \saug(a^{i+1,l}_j) + \saug'(a^{i,l}) & k = i+1 \mbox{, and } l < i \\
    \saug(a^{k,i}_j) + \saug(a^{k,i+1}_j) & k > i+1 \mbox{, and } l = i \\
    \saug(a^{k,l}_j) & \mbox{otherwise.} 
  \end{cases}
\end{equation*}

\noindent This equation is equivalent to $\saug'(A) = E_{i+1,i} \saug(A_j) E_{i+1,i}^{-1}$. Pulling this all together, we have the following lemma.

\begin{lem}
	\label{lem:extend-by-1}
Suppose $\sNgresD$ is a dipped diagram of the Ng resolution $\sNgres$. Let $q$ be a crossing in $\sNgresD$ corresponding to a resolved crossing of $\sfront$ and with $|q| = 0$. Suppose we add a dip $D$ to the right of the existing dip $D_j$ and just to the left  of $q$, thus creating a new dipped diagram $\sNgres^{d'}$. Suppose the insert $I$ between $D$ and $D_j$ is of type (1). Let $\saug \in Aug(\sNgresD)$. If $\saug$ is extended by $\sHSM_{i+1,i}$, then the stable tame isomorphism from Section~\ref{sec:tracking-augs} maps $\saug$ to $\saug' \in Aug(\sNgres^{d'})$ where $\saug'$ satisfies:	

\begin{enumerate}
	\item $\saug'(B) = \sHSM_{i+1,i}$
	\item $\saug'(A) = E_{i+1,i} \saug(A_j) E_{i+1,i}^{-1}$
	\item $\saug'(q) \neq \saug(q)$
	\item $\saug' = \saug$ on all other crossings.
\end{enumerate}

\end{lem}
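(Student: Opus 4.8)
The plan is to track $\saug$ through the sequence of type~II moves $L, L_{2,1}, \hdots, L_{n,n-1} = \sNgres^{d'}$ that create the dip $D$, invoking Lemma~\ref{lem:dip-aug-2} at the single move pushing strand $i+1$ over strand $i$ and Lemma~\ref{lem:extend-by-0} at every other move. Since $\saug$ is extended by $\sHSM_{i+1,i}$, every new $b$-crossing except $b^{i+1,i}$ is sent to $0$, so Lemma~\ref{lem:extend-by-0} applies verbatim away from the $(i+1,i)$ move; conclusion~(1) holds by the very definition of extending by $\sHSM_{i+1,i}$, and conclusion~(4) follows once the $a$-lattice values are pinned down.

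The key step is the move $(i+1,i)$ itself, where we set $\saug'(b^{i+1,i}) = 1$. To apply Lemma~\ref{lem:dip-aug-2} I first need to identify every generator $y$ whose differential $\df_{i+1,i} y$ contains $a^{i+1,i}$ as a negative corner, and to check that each such disk has the factored form $Q\,a^{i+1,i}\,R$ with $Q,R$ free of $a^{i+1,i}$ and $b^{i+1,i}$. This is exactly where the three standing hypotheses enter: because $|b^{i+1,i}| = 0$, the insert $I$ is of type~(1), and $D$ sits immediately to the left of the crossing $q$ between strands $i+1$ and $i$, the height ordering forces the only such $y$ to be $q$ and the crossings $a^{i+1,l}$ with $l < i$ (cf.\ Figure~\ref{f:extend-by-1}). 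Lemma~\ref{lem:dip-aug-2} then yields $\saug'(b^{i+1,i}) = 1$, $\saug'(q) \neq \saug(q)$ (conclusion~(3)), $\saug'(a^{i+1,i}) = \saug(a^{i+1,i}_j)$ since $I$ has type~(1), and the rule governing $\saug'(a^{i+1,l})$ for $l < i$.

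Next I would run Lemma~\ref{lem:extend-by-0} through the remaining moves $(k,l)$ with $(i+1,i) \prec (k,l)$, writing $\saug'(a^{k,l}) = \saug(L(\df b^{k,l})) = \saug(a^{k,l}_j) + \sum_{l<p<k}\saug(a^{k,p}_j)\,\saug'(b^{p,l})$ and using that $\saug'(b^{p,l}) = 1$ precisely when $(p,l) = (i+1,i)$. Collecting the three resulting cases --- the modified row $i+1$, the modified column $i$, and all untouched entries --- produces the piecewise description of $\saug'(A)$ displayed before the lemma. The final step is to recognize this piecewise formula as the matrix identity $\saug'(A) = E_{i+1,i}\,\saug(A_j)\,E_{i+1,i}^{-1}$: since $E_{i+1,i} = I + \sHSM_{i+1,i}$ is its own inverse over $\zz_2$ and $\saug(A_j)$ is strictly lower triangular, expanding the conjugation adds row $i$ into row $i+1$ and column $i+1$ into column $i$, while the cross term $\sHSM_{i+1,i}\saug(A_j)\sHSM_{i+1,i}$ vanishes, matching the three cases exactly and establishing conclusion~(2).

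The main obstacle I anticipate is the disk count at the $(i+1,i)$ move: verifying, from the height ordering on $\sNgres^{d'}$ together with the three hypotheses, that $a^{i+1,i}$ occurs as a negative corner in \emph{only} $\df_{i+1,i} q$ and the $\df_{i+1,i} a^{i+1,l}$, and always in the factored form required by Lemma~\ref{lem:dip-aug-2}. Everything downstream is bookkeeping, but this identification is precisely what makes the clean conjugation formula valid; weakening any one of the three hypotheses would admit further disks and spoil the statement.
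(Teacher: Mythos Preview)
Your proposal is correct and follows essentially the same approach as the paper: the paper's proof is the discussion immediately preceding the lemma, and it too tracks $\saug$ through the type~II moves, applies Lemma~\ref{lem:dip-aug-2} at the $(i+1,i)$ step after identifying (via the three hypotheses and Figure~\ref{f:extend-by-1}) that $a^{i+1,i}$ appears as a negative corner only in $\df_{i+1,i} q$ and $\df_{i+1,i} a^{i+1,l}$, then uses the extend-by-$0$ analysis for all subsequent moves to obtain the piecewise description of $\saug'(A)$ and repackage it as the conjugation $E_{i+1,i}\saug(A_j)E_{i+1,i}^{-1}$. Your identification of the disk count as the crux of the argument is exactly right.
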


We now have sufficient tools to begin connecting MCSs and augmentations. 

\section{Relating MCSs and Augmentations}
\label{ch:MCS-Aug}

In this section, we construct a surjective map $\widehat{\Psi} : \sDMCSeq \to Aug^{ch}(\sNgres)$ and define a simple construction that associates an MCS $\sMCS$ to an augmentation $\saug \in Aug(\sNgres)$ so that $\widehat{\Psi}([\sMCS]) = [\saug]$. We also detail two algorithms that use MCS moves to place an arbitrary MCS in one of two standard forms. 

\subsection{Augmentations on sufficiently dipped diagrams}
\label{sec:aug-on-dipped}

Let $\sNgresD$ be a sufficiently dipped diagram of $\sNgres$ with dips $\sdip_1, \hdots, \sdip_m$ and inserts $I_1, \hdots, I_{m+1}$. Let $q_i, \hdots, q_M$ and $z_j, \hdots, z_N$ denote the crossings found in the inserts of type (2) and (3) respectively. The subscripts on $q$ and $z$ correspond to the subscripts of the inserts in which the crossings appear. The formulae in Lemma~\ref{lem:dipped-df} allow us to write the augmentation condition $\saug \circ \df$ as a system of local equations involving the dips and inserts of $\sNgresD$. 

\begin{lem}
	\label{lem:dipped-aug}
	An algebra homomorphism $\saug : \aac(\sNgresD) \to \zz_2$ on a sufficiently dipped diagram $\sNgresD$ of a Ng resolution $\sNgres$ with $\saug(1)=1$ satisfies $\saug \circ \df = 0$ if and only if:
	
\begin{enumerate}
	\item $\saug(a_{s-1}^{i+1,i})=0 $, where $q_s$ is a crossing between strands $i+1$ and $i$, 
	\item $\saug(a_{r-1}^{k+1,k})= 1$, where $z_r$ is a crossing between strands $i+1$ and $i$,
	\item $\saug(A_j)^2 = 0$, and
	\item $\saug(A_j) = (I + \saug(B_j)) \saug(\widetilde{A}_{j-1})(I + \saug(B_j))^{-1}$.
\end{enumerate}
\end{lem}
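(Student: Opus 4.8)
The plan is to translate the global condition $\saug \circ \df = 0$ into the four local conditions by simply applying $\saug$ to each of the four differential formulae in Lemma~\ref{lem:dipped-df} and using that $\saug$ is an algebra homomorphism. Since $\saug$ is determined by its values on generators, and $\df$ is computed generator-by-generator in Lemma~\ref{lem:dipped-df}, the condition $\saug \circ \df = 0$ holds if and only if it holds on each generator of $\aac(\sNgresD)$, namely on each $q_s$, each $z_r$, and each entry $a_j^{k,l}$ and $b_j^{k,l}$ of the dip lattices. I would organize the proof as four cases corresponding to these four families of generators.

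First I would handle the insert crossings. Applying $\saug$ to $\df q_s = a_{s-1}^{i+1,i}$ (Lemma~\ref{lem:dipped-df}(1)) immediately gives that $\saug \circ \df q_s = 0$ is equivalent to $\saug(a_{s-1}^{i+1,i}) = 0$, which is condition (1). Similarly, applying $\saug$ to $\df z_r = 1 + a_{r-1}^{i+1,i}$ (Lemma~\ref{lem:dipped-df}(2)) and using $\saug(1) = 1$ gives that $\saug \circ \df z_r = 0$ is equivalent to $\saug(a_{r-1}^{i+1,i}) = 1$, which is condition (2). (I note the mild indexing slip between $k$ and $i$ in the stated condition (2), which I would silently correct.)

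Next I would treat the $a$-lattice entries. Applying $\saug$ entrywise to the matrix equation $\df A_j = A_j^2$ (Lemma~\ref{lem:dipped-df}(3)) and using that $\saug$ is an algebra map, so that $\saug(A_j^2) = \saug(A_j)^2$, gives that $\saug \circ \df$ vanishes on all of $A_j$ if and only if $\saug(A_j)^2 = 0$, which is condition (3). For the $b$-lattice entries I would apply $\saug$ entrywise to $\df B_j = (I + B_j)A_j + \widetilde{A}_{j-1}(I + B_j)$ (Lemma~\ref{lem:dipped-df}(4)). The one point requiring care is that this equation is quadratic in the generators, but $\saug$ respects products, so $\saug((I+B_j)A_j + \widetilde{A}_{j-1}(I+B_j)) = (I + \saug(B_j))\saug(A_j) + \saug(\widetilde{A}_{j-1})(I + \saug(B_j))$. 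Setting this to zero over $\zz_2$ yields $(I+\saug(B_j))\saug(A_j) = \saug(\widetilde{A}_{j-1})(I+\saug(B_j))$, and since $I + \saug(B_j)$ is lower triangular with ones on the diagonal it is invertible with inverse again of the form $I + (\text{strictly lower triangular})$; multiplying on the left by $(I+\saug(B_j))^{-1}$ rearranges this into condition (4).

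The main obstacle I anticipate is verifying that $\saug$ genuinely commutes with the matrix products in conditions (3) and (4), i.e. that nothing is lost in passing from the entrywise differential formulae to the matrix identities after applying $\saug$. This is really the observation that the entries of $A_j$, $B_j$, and $\widetilde{A}_{j-1}$ are honest generators (or, for $\widetilde{A}_{j-1}$, polynomials in generators built from the insert data), so that $\saug$ applied entrywise to a matrix product equals the corresponding product of the $\saug$-images; I would state this explicitly as the key bookkeeping step. A secondary subtlety is that $\widetilde{A}_{j-1}$ already packages the disks $L(\df b^{k,l}_j)$ reaching across the insert $I_j$, so one must be careful that no additional disks contribute to $\df B_j$ beyond those tabulated in Lemma~\ref{lem:dipped-df}; but this is exactly the content of that lemma, which I am permitted to assume, so the argument here reduces to linear algebra over $\zz_2$ once the generator-by-generator reduction is in place.
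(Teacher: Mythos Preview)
Your approach is correct and is exactly the (implicit) argument the paper intends: the paper states this lemma without proof, prefacing it only with the remark that ``the formulae in Lemma~\ref{lem:dipped-df} allow us to write the augmentation condition $\saug \circ \df$ as a system of local equations,'' so the entire content is precisely the generator-by-generator application of $\saug$ to the four differential formulae that you carry out.

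One small point worth tightening: from $(I+\saug(B_j))\saug(A_j) = \saug(\widetilde{A}_{j-1})(I+\saug(B_j))$, multiplying on the left by $(I+\saug(B_j))^{-1}$ actually yields $\saug(A_j) = (I+\saug(B_j))^{-1}\,\saug(\widetilde{A}_{j-1})\,(I+\saug(B_j))$, which has the inverse on the opposite side from the stated condition~(4). The two forms are of course equivalent (each is a rearrangement of the same equation), and in every application in the paper $\saug$ is occ-simple so $\saug(B_j)^2=0$ and $(I+\saug(B_j))^{-1}=I+\saug(B_j)$, making the distinction disappear; but you should either multiply on the right instead, or note explicitly that the two conjugations are equivalent statements.
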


We will primarily concern ourselves with the following types of augmentations. 

\begin{defn}
	\label{defn:simple-aug}
	Given a sufficiently dipped diagram $\sNgresD$, we say an augmentation $\saug \in Aug(\sNgresD)$ is \emph{occ-simple} if:
	
\begin{enumerate}
	\item $\saug$ sends all of the crossings of type $q_s$ and $z_r$ to 0, 
	\item For $I_j$ of type (1), either $\saug (B_j) = 0$ or $\saug (B_j) = \sHSM_{k,l}$ for some $k > l$, and
	\item For $I_j$ of type (2), (3), or (4), $\saug (B_j) = 0$. 	
\end{enumerate}
	
	$\sAugsimple$ denotes the set of all such augmentations in $Aug(\sNgresD)$. We say $\saug \in \sAugsimple$ is \emph{minimal occ-simple} if for all $I_j$ of type (1), $\saug (B_j) = \sHSM_{k,l}$ for some $k > l$. We let $\sAugminsimple$ denote the set of all minimal occ-simple augmentations over all possible sufficiently dipped diagrams of $\sNgres$. 
\end{defn}

The matrices $\saug(A_1), \hdots, \saug(A_m)$ of an occ-simple augmentation determine a sequence of ordered chain complexes. This will be made explicit in Lemma~\ref{lem:mcs-aug-dipped}.

The following result is a consequence of Lemma~\ref{lem:dipped-aug}, Definition~\ref{defn:simple-aug} and the definition of $\widetilde{A}_{j-1}$ given in Section~\ref{sec:boundary-dipped}. Recall that the matrices $E_{k,l}$, $P_{i+1,i}$, $J_i$, and $\sHSM_{k,l}$ were defined in Section~\ref{sec:matrices}. The proof of Lemma~\ref{lem:aug-eq} is essentially identical to the proof of Corollary~\ref{cor:extend-by-0}. 

\begin{lem}
	\label{lem:aug-eq}
If $\sNgresD$ is a sufficiently dipped diagram and $\saug \in \sAugsimple$, then for each insert $I_j$: 

\begin{enumerate}
	\item Suppose $I_j$ is of type (1), then either $\saug(A_j) = \saug(A_{j-1})$ or $\saug(A_j) = E_{k,l} \saug(A_{j-1})E^{-1}_{k,l}$ for some $k>l$.	
	\item Suppose $I_j$ is of type (2) with crossing $q$ between strands $i+1$ and $i$, then $\saug(A_j) = P_{i+1,i} \saug(A_{j-1}) P^{-1}_{i+1,i}$.
	\item Suppose $I_j$ is of type (3) with crossing $z$ between strands $i+1$ and $i$.  
	
		\begin{enumerate}
			\item Let $i+1 < u_1 < u_2 < \hdots < u_s$ denote the strands at dip $D_{j-1}$ that satisfy $\saug(a^{u_1,i}_{j-1}) = \hdots = \saug(a^{u_s,i}_{j-1}) = 1$; 
			\item Let  $v_r <  \hdots < v_1 $ denote the strands at dip $D_{j-1}$ that satisfy $\saug(a^{i+1,v_1}_{j-1}) = \saug(a^{i+1,v_2}_{j-1}) = \hdots = \saug(a^{i+1,v_r}_{j-1}) = 1$; and
			\item Let $E = E_{i, v_r} \hdots E_{i, v_1} E_{u_1, i+1} \hdots E_{u_s, i+1}$.
		\end{enumerate}

Then $\saug(A_j) = J_{i-1} E \saug(A_{j-1}) E^{-1} J_{i-1}^{T}$.
	
	\item Suppose $I_j$ is of type (4) and the resolved birth is between strands $i+1$ and $i$. Then, as matrices, $\saug(A_j)$ is obtained from $\saug(A_{j-1})$ by inserting two rows (columns) of zeros after row (column) $i-1$ in $\saug(A_{j-1})$ and then changing the $(i+1,i)$ entry to 1.
\end{enumerate}

\end{lem}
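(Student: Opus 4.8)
The plan is to read the claimed four identities directly off the single augmentation condition
\[
\saug(A_j) = \bigl(I + \saug(B_j)\bigr)\,\saug(\widetilde{A}_{j-1})\,\bigl(I + \saug(B_j)\bigr)^{-1}
\]
supplied by Lemma~\ref{lem:dipped-aug}(4), by evaluating the two unknowns on the right-hand side separately. For the conjugating factor, Definition~\ref{defn:simple-aug} forces $\saug(B_j)=0$ whenever $I_j$ has type (2), (3), or (4), so there the identity collapses to $\saug(A_j)=\saug(\widetilde{A}_{j-1})$; when $I_j$ has type (1) we have either $\saug(B_j)=0$ or $\saug(B_j)=\sHSM_{k,l}$, and since $I+\sHSM_{k,l}=E_{k,l}$, together with $\widetilde{A}_{j-1}=A_{j-1}$ for a type-(1) insert these give precisely the two alternatives $\saug(A_j)=\saug(A_{j-1})$ and $\saug(A_j)=E_{k,l}\saug(A_{j-1})E_{k,l}^{-1}$ of part (1). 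So part (1) is immediate, and for the remaining parts the task is to compute $\saug(\widetilde{A}_{j-1})$.

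I would then evaluate $\saug(\widetilde{A}_{j-1})$ in cases (2) and (4) by substituting $\saug(q_s)=0$ and $\saug(z_r)=0$ (the first clause of Definition~\ref{defn:simple-aug}) into the case-by-case formulas of Section~\ref{sec:boundary-dipped}. In the type-(4) case the defining formula for $\widetilde{A}_{j-1}$ is already explicit and free of any $q$ or $z$, so reading off entries immediately yields the prescribed ``insert two zero rows and columns after $i-1$ and set the $(i+1,i)$ entry to $1$'' description. In the type-(2) case every off-diagonal correction carries a factor $q_j$; killing it leaves $\saug(\widetilde{a}^{i+1,v})=\saug(a^{i,v}_{j-1})$, $\saug(\widetilde{a}^{i,v})=\saug(a^{i+1,v}_{j-1})$ and the two symmetric column statements, which is exactly the interchange of strands $i$ and $i+1$ realized by conjugation with the transposition matrix $P_{i+1,i}$ (recalling $P_{i+1,i}^{-1}=P_{i+1,i}$).

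The substantive step, and the one I expect to be the main obstacle, is case (3), the right-cusp insert, where $\saug(A_j)=\saug(\widetilde{A}_{j-1})$ must be matched with $J_{i-1}E\,\saug(A_{j-1})\,E^{-1}J_{i-1}^{T}$. After setting $\saug(z)=0$, the defining formula collapses to the single quadratic correction $\saug(\widetilde{a}^{u,v})=\saug(a^{u,v}_{j-1})+\saug(a^{u,i}_{j-1})\saug(a^{i+1,v}_{j-1})$ for $u>i+1>i>v$. On the other side I would unwind the conjugation by $E=E_{i,v_r}\cdots E_{i,v_1}E_{u_1,i+1}\cdots E_{u_s,i+1}$, noting that the factors $E_{u_p,i+1}$ add row $i+1$ into the rows $u_p$ where $\saug(a^{u_p,i}_{j-1})=1$, while the factors $E_{i,v_q}$ add column $i$ into the columns $v_q$ where $\saug(a^{i+1,v_q}_{j-1})=1$. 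Using the condition $\saug(a^{i+1,i}_{j-1})=1$, which is itself forced at the right cusp by Lemma~\ref{lem:dipped-aug}(2), these handleslides clear column $i$ below row $i+1$ and clear row $i+1$ to the left of column $i$, leaving the $i,i+1$ generators as a trivial acyclic pair and producing, off that pair, exactly the rank-one corrections $\saug(a^{u,i}_{j-1})\saug(a^{i+1,v}_{j-1})$; the truncation $J_{i-1}(\cdot)J_{i-1}^{T}$, which simply deletes rows and columns $i$ and $i+1$, then recovers $\saug(\widetilde{A}_{j-1})$.

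The delicacy here is purely combinatorial bookkeeping: one must verify that the cross-terms generated when the row-handleslides and column-handleslides interact do not contaminate entries lying outside the deleted block. This is exactly the verification already performed for Corollary~\ref{cor:extend-by-0}(3) and mirrored in the birth/death matrix identity of Definition~\ref{defn:MCS-maps}(3), where the same conjugation $J_{i-1}E(\cdot)E^{-1}J_{i-1}^{T}$ appears. Since that matrix identity has been established in those settings, I would invoke it directly rather than repeat the entry-by-entry expansion, observing only that the hypotheses of occ-simplicity translate the present data $\saug(a^{u_p,i}_{j-1})$, $\saug(a^{i+1,v_q}_{j-1})$ into precisely the indices $u_p$, $v_q$ and matrix $E$ of that earlier computation.
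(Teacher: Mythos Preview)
Your proposal is correct and follows essentially the same route as the paper: you read each case off the augmentation identity of Lemma~\ref{lem:dipped-aug}(4), simplify $I+\saug(B_j)$ using the occ-simple hypothesis, substitute $\saug(q)=\saug(z)=0$ into the explicit formulas for $\widetilde{A}_{j-1}$ from Section~\ref{sec:boundary-dipped}, and defer the only nontrivial matrix identity (case~(3)) to the computation already carried out for Corollary~\ref{cor:extend-by-0}(3). This is exactly the argument the paper sketches in one line.
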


The equations in Lemma~\ref{lem:aug-eq} are identical to the equations found in Definition~\ref{defn:MCS-maps}, with the exception of the first case of a type (1) insert. Indeed, given a front diagram $\sfront$ with resolution $\sNgres$, MCSs on $\sfront$ correspond to minimal occ-simple augmentations of $\sNgres$. We assign a minimal occ-simple augmentation $\saug_{\sMCS}$ to an MCS $\sMCS$ using an argument of Fuchs and Rutherford from \cite{Fuchs2008}. 

\begin{lem}
	\label{lem:mcs-aug-dipped}
	The set $\sDMCS$ of MCSs of $\sfront$ with simple births is in bijection with the set $\sAugminsimple$ of minimal occ-simple augmentations.
\end{lem}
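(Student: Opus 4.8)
The plan is to exhibit the bijection explicitly in both directions, using the matrix dictionary already assembled in Lemma~\ref{lem:dipped-aug} and Lemma~\ref{lem:aug-eq}, and then to check the two constructions are mutually inverse.

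\textbf{Forward map.} First I would send an MCS $\sMCS = (C_1,\df_1)\hdots(C_m,\df_m) \in \sDMCS$ to a sufficiently dipped diagram together with an augmentation on it. Using the marked front projection of $\sMCS$, I build $\sNgresD$ by inserting exactly one dip $\sdip_j$ for each ordered chain complex $(C_j,\df_j)$, arranged so that the event between consecutive complexes (a handleslide mark, a crossing, a left cusp, or a right cusp) occupies the corresponding insert $I_j$ of type (1), (2), (4), or (3) respectively; by Remark~\ref{rem:sliding-dips} the precise dip positions are immaterial. I then define $\saug_{\sMCS}$ by $\saug(A_j) = \sdmatrix_j$, the matrix representative of $\df_j$ from Remark~\ref{rem:matrix-rep}; $\saug(B_j) = \sHSM_{k,l}$ on a type-(1) insert coming from a handleslide between $k$ and $l$ (so $\tau_{j-1} = (k,l)$) and $\saug(B_j) = 0$ otherwise; and $\saug = 0$ on every $q_s$ and $z_r$. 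This is occ-simple and minimal by construction, so I need only verify it is an augmentation via Lemma~\ref{lem:dipped-aug}. Condition (3) is $\sdmatrix_j^2 = 0$, immediate from $\df_j^2 = 0$. Conditions (1) and (2) translate into the $(i+1,i)$-entry of $\saug(A_{j-1})$ being $0$ before a crossing and $1$ before a right cusp; these are exactly the facts $\langle \df y_{i+1}|y_i\rangle = 0$ at an interchange and $=1$ at a death recorded in Remark~\ref{rem:mcs-defn}~(2). Condition (4) is precisely the transition equation attached to $I_j$, which by Lemma~\ref{lem:aug-eq} is the handleslide, interchange, death, or simple-birth formula of Definition~\ref{defn:MCS-maps}; since these are by hypothesis the relations between $\df_{j-1}$ and $\df_j$ in $\sMCS$, condition (4) holds.

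\textbf{Inverse map.} Conversely, given $\saug \in \sAugminsimple$ on some sufficiently dipped diagram $\sNgresD$, I read off the strictly lower-triangular matrices $\saug(A_1),\hdots,\saug(A_m)$. By Lemma~\ref{lem:dipped-aug}~(3) each squares to zero, so each determines an ordered chain complex $(C_j,\df_j)$, graded by the Maslov potential on $\sfront$. By Lemma~\ref{lem:aug-eq}, consecutive complexes are related by exactly one of the four chain isomorphisms of Definition~\ref{defn:MCS-maps}: minimality forces $\saug(B_j) = \sHSM_{k,l}$ on every type-(1) insert, ruling out the exceptional ``identity'' case of Lemma~\ref{lem:aug-eq}~(1) and yielding a genuine handleslide, while type-(2), (3), (4) inserts give an interchange, a death, and a simple birth (occ-simplicity's requirement $\saug(B_j)=0$ on a type-(4) insert is exactly what makes the birth simple). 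The boundary normalizations $n_1 = n_m = 2$ and the trivial homology of the end complexes follow from the leftmost and rightmost dip formulae (e.g. $\widetilde{A}_0 = \sHSM_{2,1}$ in dimension $2$), and Lemma~\ref{lem:dipped-aug}~(1)--(2) supplies the $(i+1,i)$-entry conditions needed for a death at a right cusp and an interchange rather than a switch at a crossing. Recording $\tau_{j-1}=(k,l)$ for each handleslide and $(k,0)$ for the remaining events produces an MCS $\sMCS_{\saug} \in \sDMCS$.

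\textbf{Inverseness and the main obstacle.} Each construction recovers the other's data verbatim: $\saug(A_j)$ is the matrix representative of $\df_j$, and the marked-front events biject with the inserts, so $\sMCS \mapsto \saug_{\sMCS} \mapsto \sMCS$ and $\saug \mapsto \sMCS_{\saug} \mapsto \saug$. The point I expect to require the most care is the quantifier ``over all sufficiently dipped diagrams'' in the definition of $\sAugminsimple$: I must show the underlying dipped diagram is not extra freedom but is forced by the augmentation, so that the correspondence is a bijection rather than many-to-one. This is exactly where minimality does the work: a type-(1) insert with $\saug(B_j)=0$ would leave $\saug(A_j)=\saug(A_{j-1})$ unchanged and is prohibited, so every dip carries a genuine event and the inserts of $\sNgresD$ biject with the events of the marked front projection of $\sMCS$. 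I would make this precise by verifying that sliding or adjoining a dip alters $\saug$ only within its CE-DGA-equivalence class (Remark~\ref{rem:sliding-dips}) and never creates a new minimal occ-simple augmentation, so that the dipped diagram returned by the forward map is the canonical representative.
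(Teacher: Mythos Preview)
Your proof is correct and follows essentially the same approach as the paper's: construct explicit maps in both directions via the matrix dictionary of Lemma~\ref{lem:dipped-aug} and Lemma~\ref{lem:aug-eq}, then observe that they invert one another. The paper presents the two directions in the opposite order and is less explicit about the ``over all dipped diagrams'' quantifier (it simply asserts ``this process is invertible''), while you omit the brief grading verification $\saug(q)=1 \Rightarrow |q|=0$ that the paper does spell out; neither difference is substantive.
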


\begin{proof}
We begin by assigning an MCS to a minimal occ-simple augmentation $\saug \in Aug(\sNgresD)$, where $\sNgresD$ is a sufficiently dipped diagram of $\sNgres$. Let $\saug \in \sAugminsimple$. For each dip $D_j$, we form an ordered chain complex $(C_j, \df_j)$ as follows. Let $t_j$ denote the $x$-coordinate of the vertical lines of symmetry of $D_j$ in $\sNgresD$. Label the $m_j$ points of intersection in $\sNgresD \cap (\{t_j\} \times \rr)$ by $y_1^j, \hdots, y_{m_j}^j$ and let $C_j$ be a $\zz_2$ vector space generated by $y_1^j, \hdots, y_{m_j}^j$. We label the generators $y_1^j, \hdots, y_{m_j}^j$ based on their $y$-coordinate so that $y_1^j > \hdots > y_{m_j}^j$. Each generator is graded by the Maslov potential of its corresponding strand in $\sfront$. The grading condition on $\saug$ and the fact that $\saug(A_j)^2 = 0$ implies that $\saug(A_j)$ is a matrix representative of a differential on $C_j$. Thus $(C_j, \df_j)$ with $\sdmatrix_j = \saug(A_j)$ is an ordered chain complex. Recall that the notation $\sdmatrix_j$ was established in Remark~\ref{rem:matrix-rep}. The relationship between consecutive differentials $\df_{j-1}$ and $\df_j$ is defined in Lemma~\ref{lem:aug-eq}. Since $\saug$ is minimal occ-simple, each insert $I_{j}$ of type (1) satisfies $\sdmatrix_j = E_{k,l}\sdmatrix_{j-1}E_{k,l}^{-1}$ where $\saug(B_j) = \sHSM_{k,l}$ and $k > l$. Thus, $(C_{j-1}, \df_{j-1})$ and $(C_{j}, \df_{j})$ are related by a handleslide between $k$ and $l$. Since the matrix equations in Lemma~\ref{lem:aug-eq} satisfied by $\saug$ are equivalent to the matrix equations in Definition~\ref{defn:MCS-maps}, the sequence $(C_1, \df_1), \hdots, (C_m, \df_m)$ forms an MCS $\sMCS$ on $\sfront$.

\begin{figure}
\labellist
\small\hair 2pt
\pinlabel {$D_1$} [tl] at 82 20
\pinlabel {$D_2$} [tl] at 132 20
\pinlabel {$D_3$} [tl] at 226 20
\pinlabel {$D_4$} [tl] at 295 20
\pinlabel {$D_5$} [tl] at 339 20
\pinlabel {$D_6$} [tl] at 385 20
\pinlabel {$D_7$} [tl] at 424 20
\pinlabel {$D_8$} [tl] at 465 20
\pinlabel {$\sfront$} [tl] at 490 517
\pinlabel {$\sNgres$} [tl] at 490 220
\pinlabel {$x$} [tl] at 530 44
\pinlabel {$y$} [tl] at 23 300
\pinlabel {$x$} [tl] at 530 341
\pinlabel {$z$} [tl] at 23 590
\endlabellist
\centering
\includegraphics[scale=.4]{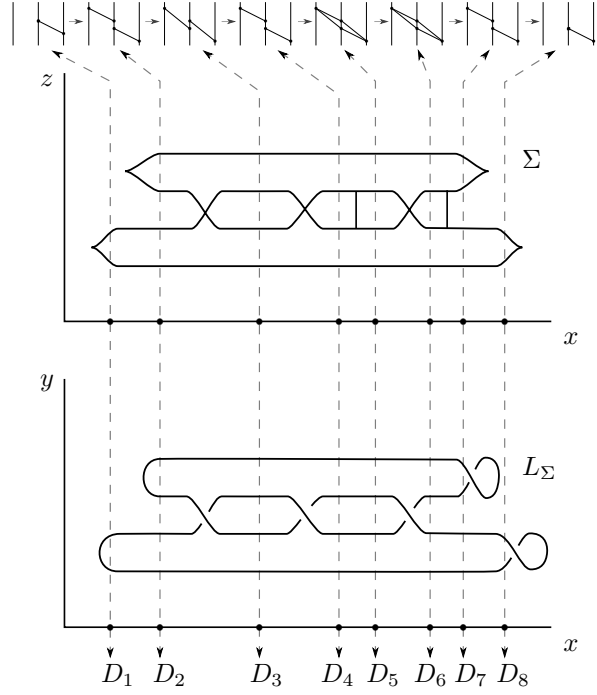}
\caption[Assigning an augmentation to an MCS.]{Assigning an augmentation to an MCS.}
\label{f:mcs-aug-example}
\end{figure}

This process is invertible. Let $\sMCS \in \sDMCS$ be a Morse complex sequence of the front projection $\sfront$ with chain complexes $(C_1, \df_1) \hdots (C_m, \df_m)$. We will use the marked front projection associated to $\sMCS$ to define the placement of dips creating $\sNgresD$. Afterwards, we define an algebra homomorphism $\saug_{\sMCS} : \aac(\sNgresD) \to \zz_2$ and show that it is an augmentation. Figure~\ref{f:mcs-aug-example} gives an example of this process.

Add a dip to $\sNgres$ just to the right of the corresponding location of each resolved cusp, resolved crossing or handleslide mark. The resulting dipped diagram $\sNgresD$ is sufficiently dipped with $m$ dips. We define a $\zz_2$-valued map $\saug_{\sMCS}$ on the crossings of $\sNgresD$ by:

\begin{enumerate}
	\item $\saug_{\sMCS}(q_s) = 0 $ for all crossings $q_s$ coming from a resolved crossing of $\sfront$;
	\item $\saug_{\sMCS}(z_r) = 0 $ for all crossings $z_r$ coming from a resolved right cusp;
	\item $\saug_{\sMCS}(A_j) = \sdmatrix_j$ for all $j$;
	\item If $I_j$ is of type (1), then there exists $k>l$ such that $\sMCS$ has a handleslide mark in $I_j$ between strands $k$ and $l$. Let $\saug_{\sMCS}(B_j) = \sHSM_{k,l}$; and
	\item If $I_j$ is of type (2), (3), or (4), then let $\saug_{\sMCS}(B_j) = 0$.
\end{enumerate}

We define $\saug_{\sMCS} (1) = 1$ and extend $\saug_{\sMCS}$ by linearity to an algebra homomorphism on $\aac(\sNgresD)$. If $\saug_{\sMCS}$ is an augmentation, then it is minimal occ-simple by construction. We must show $\saug_{\sMCS} \circ \df = 0$ and, for all crossings $q$ of $\sNgresD$, $\saug_{\sMCS} (q) = 1$ implies $|q| = 0$.

If $\saug_{\sMCS}(a_j^{k,l}) = 1$, then $\langle \df_j y_k^j | y_l^j \rangle = 1$ where the notation for the generators of $(C_j, \df_j)$ corresponds to the notation in Definition~\ref{defn:MCS}. Thus $\smu(k) = \smu(l) + 1$, where $\smu(i)$ denotes the Maslov potential of the strand in $\sfront$ corresponding to the generator $y_i^j$. Recall $|a_j^{k,l}| = \smu(k) - \mu(l) - 1$ and so $\smu(k) = \smu(l) + 1$ implies $|a_j^{k,l}| = 0$. If $\saug_{\sMCS}(b_j^{k,l}) = 1$, then in the marked front projection of $\sMCS$, a handleslide mark occurs in $I_j$ between strands $k$ and $l$. Thus $\smu(k) = \smu(l)$ and, since $|b_j^{k,l}| = \smu(k) - \mu(l)$, $|b_j^{k,l}| = 0$.

It remains to show that $\saug_{\sMCS} \circ \df = 0$. Let $q_r$ denote a crossing corresponding to a resolved crossing of $\sfront$ between strands $i+1$ and $i$. By Remark~\ref{rem:mcs-defn}~(2), the $(i+1,i)$ entry of the matrix $\df_{r-1}$ is 0, hence $\saug_{\sMCS}(a_{r-1}^{i+1,i}) = 0$. Thus, $\saug_{\sMCS}(\df q_r) = \saug_{\sMCS}(a_{r-1}^{i+1,i}) = 0$. Let $z_s$ denote a crossing corresponding to a resolved right cusp of $\sfront$ between strands $i+1$ and $i$. By Remark~\ref{rem:mcs-defn}~(2), the $(i+1,i)$ entry of the matrix $\df_{s-1}$ is 1, hence $\saug_{\sMCS}(a_{s-1}^{i+1,i}) = 1$. Thus, $\saug_{\sMCS}(\df z_s) = 1 + \saug_{\sMCS}(a_{s-1}^{i+1,i}) = 0$. Since each $\sdmatrix_j$ is the matrix representative of a differential, we see that $\saug_{\sMCS}(\df A_j) = \saug_{\sMCS}(A_j)^2 = 0$. Verifying $\saug_{\sMCS} \circ \df = 0$ on each $b$-lattice $B_j$ is equivalent to verifying (4) in Lemma~\ref{lem:dipped-aug} and this can be done with Lemma~\ref{lem:aug-eq}. Indeed, the matrix equations in Lemma~\ref{lem:aug-eq} correspond to the matrix equations in Definition~\ref{defn:MCS-maps} relating consecutive chain complexes in $\sMCS$. Thus, $\saug_{\sMCS}(\df B_j) = 0$ for all $j$ and so $\saug_{\sMCS}$ is a minimal occ-simple augmentation on $\sNgresD$.
\end{proof}

\subsection{Defining $\Psi : \sDMCS \to Aug^{ch}(\sNgres)$} 

Given $\sMCS \in \sDMCS$, Lemma~\ref{lem:mcs-aug-dipped} gives an explicit construction of an augmentation $\saug_{\sMCS}$ on a sufficiently dipped diagram $\sNgresD$. In this section, we will use this construction to build a map $\Psi : \sDMCS \to Aug^{ch}(\sNgres)$. In order to do so, we must understand the possible stable tame isomorphisms between $\aac(\sNgresD)$ and $\aac(\sNgres)$ coming from sequences of Lagrangian Reidemeister moves. In \cite{Kalman2005}, Kalman proves that two sequences of Lagrangian Reidemeister moves may induce inequivalent maps between the respective contact homologies. However, if we restrict our sequences of moves to adding and removing dips, then the we can avoid this problem and give a well-defined map $\Psi : \sDMCS \to Aug^{ch}(\sNgres)$.

The following results from \cite{Kalman2005} allow us to modify a sequence of Lagrangian Reidemeister moves by removing canceling pairs of moves and commuting pairs of moves that are far away from each other. These modifications do not change the resulting bijection on augmentation chain homotopy classes. 

\begin{prop}[\cite{Kalman2005}]
	\label{prop:II-IIinv}
If we perform a type II move followed by a type $\IIinv$ move that creates and then removes two crossings $b$ and $a$, then the induced DGA morphism from $(\aac(\sLagr), \df)$ to $(\aac(\sLagr), \df)$ is equal to the identity. If we perform a type $\IIinv$ move followed by a type II move that removes and then recreates two crossings $b$ and $a$, then the induced DGA morphism from $(\aac(\sLagr), \df)$ to $(\aac(\sLagr), \df)$ is chain homotopic to the identity. 
\end{prop}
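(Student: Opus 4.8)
The strategy is to factor the type II morphism through a stabilization and to locate the entire asymmetry between the two assertions in one elementary algebraic fact: destabilization is a one-sided inverse of stabilization \emph{on the nose}, but only a two-sided inverse \emph{up to chain homotopy}. First I would fix notation as in Section~\ref{sec:tracking-augs}: let $\sLagr$ be the diagram lacking the extra crossings and $\sLagr'$ the diagram obtained from it by the type II move, so that $\aac(\sLagr')$ is freely generated by the generators of $\aac(\sLagr)$ together with $a,b$, and $\df b = a + v$ with $v\in\aac(\sLagr)$. (The first assertion concerns the endomorphism of $\aac(\sLagr)$; the second concerns the endomorphism of $\aac(\sLagr')$, which the proposition writes as $\aac(\sLagr)$ after relabeling.) The move is witnessed by the tame isomorphism $\psi:(\aac(\sLagr'),\df)\to S_i(\aac(\sLagr),\df')$ built in Section~\ref{sec:tracking-augs}. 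Write $\iota:\aac(\sLagr)\hookrightarrow S_i(\aac(\sLagr))$ for the stabilization inclusion and $\pi:S_i(\aac(\sLagr))\to\aac(\sLagr)$ for the projection sending $\alpha,\beta\mapsto 0$; both are DGA morphisms. The DGA morphism attached to the type II move is $\pi\circ\psi:\aac(\sLagr')\to\aac(\sLagr)$, and the morphism attached to the inverse move $\IIinv$ is the inverse construction $\psi^{-1}\circ\iota:\aac(\sLagr)\to\aac(\sLagr')$.

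Second, I would record the two facts about (de)stabilization. On the one hand $\pi\circ\iota=\mathrm{id}_{\aac(\sLagr)}$ holds exactly. On the other hand $\iota\circ\pi\neq\mathrm{id}$ on $S_i(\aac(\sLagr))$, but it is chain homotopic to the identity: the operator $\sH$ already defined in Section~\ref{sec:tracking-augs} (namely $\sH(Q\alpha R)=Q\beta R$, and $\sH=0$ on $\aac(\sLagr)$ and on words $Q\beta R$) satisfies $\mathrm{id}-\iota\pi=\df\sH+\sH\df$. A direct check, of the kind in Lemma~\ref{lem:chain-htpy} and working over $\zz_2$, shows $\sH$ obeys the grading and derivation product conditions of Definition~\ref{defn:dga-morphism}, so it is a genuine DGA chain homotopy $\iota\pi\schequiv\mathrm{id}$.

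Third comes the composition, where I would observe that typechecking alone pins down the composite. Of the two morphisms $\pi\psi:\aac(\sLagr')\to\aac(\sLagr)$ and $\psi^{-1}\iota:\aac(\sLagr)\to\aac(\sLagr')$, the only composite that is an endomorphism of $\aac(\sLagr)$ is $(\pi\psi)\circ(\psi^{-1}\iota)=\pi\iota=\mathrm{id}$, and the only composite that is an endomorphism of $\aac(\sLagr')$ is $(\psi^{-1}\iota)\circ(\pi\psi)=\psi^{-1}(\iota\pi)\psi$. The former is the II-then-$\IIinv$ case and equals the identity on the nose, proving the first statement. For the latter, $\IIinv$-then-II case, I would conjugate the homotopy: since $\psi$ and $\psi^{-1}$ are DGA isomorphisms (in particular chain maps and algebra maps), the map $\psi^{-1}\sH\psi$ satisfies $\mathrm{id}-\psi^{-1}\iota\pi\psi=\df(\psi^{-1}\sH\psi)+(\psi^{-1}\sH\psi)\df$ and inherits the grading and derivation product properties from $\sH$, so it is a chain homotopy exhibiting $\psi^{-1}\iota\pi\psi\schequiv\mathrm{id}$, proving the second statement.

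The main obstacle I anticipate is not the conceptual skeleton, which is essentially forced, but two pieces of bookkeeping: justifying that the move-induced morphisms really are $\pi\psi$ and $\psi^{-1}\iota$ (i.e.\ that $\IIinv$ is assigned the inverse stable tame isomorphism of the one assigned to II), and the routine-but-fiddly verification that $\sH$, and then its conjugate $\psi^{-1}\sH\psi$, satisfies the derivation product property of a DGA chain homotopy rather than merely the linear homotopy identity. Once those are settled, the asymmetry asserted by the proposition is exactly the asymmetry between $\pi\iota=\mathrm{id}$ and $\iota\pi\schequiv\mathrm{id}$.
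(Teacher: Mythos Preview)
The paper does not supply its own proof of this proposition: it is quoted verbatim from \cite{Kalman2005} and invoked as a black box in Section~\ref{sec:dip-undip-paths}. So there is no in-paper argument to compare against; I can only assess your proposal on its own merits.

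Your skeleton is the correct one and is essentially the standard proof. The key algebraic asymmetry is precisely that $\pi\iota=\mathrm{id}$ exactly while $\iota\pi$ is only chain homotopic to the identity via the operator $H$ from Section~\ref{sec:tracking-augs}, and conjugating that homotopy by the tame isomorphism $\psi$ transports it to $\aac(\sLagr')$. That is the whole content of the result.

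One point of bookkeeping you should tighten: your assignment of directions is slightly tangled. You declare that the DGA morphism attached to the type~II move (geometrically $\sLagr\to\sLagr'$) is $\pi\psi:\aac(\sLagr')\to\aac(\sLagr)$, which is contravariant; but then when you compose ``II then $\IIinv$'' you write $(\pi\psi)\circ(\psi^{-1}\iota)$, which is applying the $\IIinv$-map first. With the standard covariant convention---a move $\sLagr\to\sLagr'$ induces a DGA map $\aac(\sLagr)\to\aac(\sLagr')$---the type~II map is $\psi^{-1}\iota$ and the $\IIinv$ map is $\pi\psi$, and then ``II then $\IIinv$'' is $(\pi\psi)(\psi^{-1}\iota)=\pi\iota=\mathrm{id}$ while ``$\IIinv$ then II'' is $(\psi^{-1}\iota)(\pi\psi)=\psi^{-1}(\iota\pi)\psi$. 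You arrive at the right answer because your two sign errors cancel, but you should state the convention once and stick to it rather than appeal to ``typechecking alone.'' The second piece of bookkeeping you flag---that $H$ and its conjugate satisfy the $(\varphi,\psi)$-derivation identity of Definition~\ref{defn:dga-morphism}(2)---is genuine but routine; for $H$ it follows by checking on monomials $Q\alpha R$ and $Q\beta R$, and conjugation by an algebra isomorphism preserves the derivation property.
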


\begin{prop}[\cite{Kalman2005}]
	\label{prop:commute-II}
Suppose $\sLagr_1$ and $\sLagr_2$ are related by two consecutive moves of type II or $\IIinv$. We will call these moves A and B. Suppose the crossings involved in A and B form disjoint sets. Then the composition of DGA morphisms constructed by performing move A and then move B is chain homotopic to the composition of DGA morphisms constructed by performing move B and then move A.
\end{prop}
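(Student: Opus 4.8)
The plan is to compare the two composite DGA morphisms directly, using the explicit stable tame isomorphisms $\psi$ recalled in Section~\ref{sec:tracking-augs} together with the hypothesis that the crossings of $A$ and $B$ lie in disjoint regions. Write $\{a_A, b_A\}$ and $\{a_B, b_B\}$ for the two (disjoint) pairs of crossings that $A$ and $B$ create or destroy, and let $\sLagr_1^A$ and $\sLagr_1^B$ denote the intermediate projections obtained from $\sLagr_1$ by performing only $A$ or only $B$. Because the two moves are supported in disjoint regions, performing them in either order yields literally the same projection $\sLagr_2$ and the same generating set. My first step is to reduce to the case in which both $A$ and $B$ are type II moves: the morphism induced by a type $\IIinv$ move is a chain-homotopy inverse of the morphism induced by the corresponding type II move by Proposition~\ref{prop:II-IIinv}, so once commutation is established for a pair of type II moves it transfers to the mixed and doubly-inverse cases by inverting the appropriate factors up to chain homotopy.

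Assume then that $A$ and $B$ are both type II moves. Performing $A$ and then $B$ yields the composite $\aac(\sLagr_2) \xrightarrow{\psi_B} S_B\, \aac(\sLagr_1^A) \xrightarrow{S_B \psi_A} S_B S_A\, \aac(\sLagr_1)$, while performing $B$ and then $A$ yields $\aac(\sLagr_2) \xrightarrow{\psi_A} S_A\, \aac(\sLagr_1^B) \xrightarrow{S_A \psi_B} S_A S_B\, \aac(\sLagr_1)$. After the canonical identification of the doubly stabilized algebras $S_A S_B \cong S_B S_A$, it suffices to prove the equality $S_B \psi_A \circ \psi_B = S_A \psi_B \circ \psi_A$. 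For this I would invoke the inductive definition of each $\psi$ from Section~\ref{sec:tracking-augs}: the map $\psi_A$ sends $b_A \mapsto \beta_A$ and $a_A \mapsto \alpha_A + v_A$, where $\df b_A = a_A + v_A$, and modifies a generator $y_i$ lying above $b_A$ only by the correction term produced by the operator $\sH_A$, which replaces $\alpha_A$ by $\beta_A$ inside the disks counted by $\df$. Since no disk counted by $\df$ has corners at crossings of both moves, the word $v_A$ and all of the $A$-correction terms are free of $a_B, b_B, \alpha_B, \beta_B$, and symmetrically for $B$; hence $\psi_A$ fixes $a_B, b_B$, the map $\psi_B$ fixes $a_A, b_A$, and the two families of corrections act on disjoint sets of generators.

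The step I expect to be the main obstacle is verifying that these two families of correction terms genuinely do not interfere, so that the inductive formulas for $\psi_A$ and $\psi_B$ commute. Concretely, one must check that applying $\psi_B$ leaves unchanged each differential $\df y_i$ that feeds the recursion defining $\psi_A$---a consequence of disjointness, since the disks governing $\df$ in a neighborhood of $A$ are unaffected by the presence of $B$---and conversely, and then confirm that $\sH_A$ commutes with $\psi_B$, and $\sH_B$ with $\psi_A$, on the relevant monomials. Once these non-interference statements are in hand, a routine induction on height gives the equality $S_B \psi_A \circ \psi_B = S_A \psi_B \circ \psi_A$. Equality is in particular a chain homotopy, which is the form of conclusion needed so that the reduction in the first paragraph disposes of the remaining $\IIinv$ cases.
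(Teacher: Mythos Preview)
The paper does not prove this proposition itself; immediately following the statement it records only that it ``follows from Case~1 of Theorem~3.7 in \cite{Kalman2005}.'' Your proposal therefore goes beyond the paper by sketching a direct argument, and the strategy you outline---reduce to two type~II moves via Proposition~\ref{prop:II-IIinv} and then show that the explicit stable tame isomorphisms from Section~\ref{sec:tracking-augs} literally commute---is the natural one and is in essence what K\'alm\'an carries out for the far-apart case.

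Two of your intermediate assertions are stronger than what actually holds and would need to be weakened in a complete proof. First, the hypothesis that the crossings of $A$ and $B$ form disjoint sets is, read literally, automatic for two distinct type~II moves; what you really use, and what holds in every application in Section~\ref{sec:dip-undip-paths}, is that the moves are supported in disjoint disks in the plane. Second, even under that geometric hypothesis it is not true that ``no disk counted by $\df$ has corners at crossings of both moves,'' nor that the corrections act on disjoint sets of generators: a single generator $y$ may have both $a_A$ and $a_B$ appearing in $\df y$ through different disks and hence receive corrections from both $\psi_A$ and $\psi_B$. The induction still goes through, but for the reason you isolate in your final paragraph rather than the one in your second: the operators $H_A$ and $H_B$ commute because they substitute for different letters, and the differential feeding the recursion for one $\psi$ is transformed by the other $\psi$ in a way that exactly matches the change of ambient diagram. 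With those sharper formulations your target equality holds, and the reduction in your first paragraph then handles the remaining $\IIinv$ cases.
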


Proposition~\ref{prop:commute-II} follows from Case 1 of Theorem~3.7 in \cite{Kalman2005}.

\subsubsection{Dipping/undipping paths for $\sNgresD$} 
\label{sec:dip-undip-paths}

Suppose $\sNgresD$ has dips $D_1, \hdots, D_m$. Let $t_1, \hdots, t_m$ denote the $x$-coordinates of the vertical lines of symmetry of the dips $D_1, \hdots, D_m$ in $\sNgresD$.

\begin{defn}
	\label{defn:dip-path}
A \emph{dipping/undipping path for $\sNgresD$} is a finite-length monomial $w$ in the elements of the set $\{s^{\pm}_1, \hdots, s^{\pm}_n, t^{\pm}_1, \hdots, t^{\pm}_m \}$. We require that $w$ satisfies:

\begin{enumerate}
	\item Each $s_i$ in $w$ denotes a point on the $x$-axis away from the dips $D_1, \hdots, D_m$; and
	\item As we read $w$ from left to right, the appearances of $s_i$ alternate between $s^{+}_i$ and $s^{-}_i$, beginning with $s^{+}_i$ and ending with $s^{-}_i$. The appearances of $t_i$ alternate between $t^{-}_i$ and $t^{+}_i$, beginning with $t^{-}_i$ and ending with $t^{-}_i$ and each $t_i$ is required to appear at least once.
\end{enumerate}
\end{defn}

Each dipping/undipping path $w$ is a prescription for adding and removing dips from $\sNgresD$. In particular, $s^{+}_i$ tells us to introduce a dip in a small neighborhood of $s_i$. The letter $s^{-}_i$ tells us to remove the dip that sits in a small neighborhood of $s_i$. The order in which these type $\IIinv$ moves occur is the opposite of the order used in $s^{+}_i$. The elements $t^{+}_i$ and $t^{-}_i$ work similarly. We perform these moves on $\sNgresD$ by reading $w$ from left to right. The conditions we have placed on $w$ ensure that we are left with $\sNgres$ after performing all of the prescribed dips and undips.

Let $w_0 = t^{-}_m t^{-}_{m-1} \hdots t^{-}_1$. Then $w_0$ tells us to undip $D_1, \hdots, D_m$ beginning with $D_m$ and working to $D_1$. Each $w$ determines a stable tame isomorphism $\psi_{w}$ from $\aac(\sNgresD)$ to $\aac(\sNgres)$ which determines a bijection $\Psi_w : Aug^{ch}(\sNgresD) \to Aug^{ch}(\sNgres)$. We are now in a position to define a map $\Psi : \sDMCS \to Aug^{ch}(\sNgres)$.

\begin{defn}
	\label{defn:mcs-aug-ch}
	Define $\Psi : \sDMCS \to Aug^{ch}(\sNgres)$ by $\Psi(\sMCS) = \Psi_{w_0}([\saug_{\sMCS}])$. 
\end{defn}

The following result implies that the definition of $\Psi$ is independent of dipping/undipping paths.

\begin{lem}
	\label{lem:dip-path}
If $w$ is a dipping/undipping paths for $\sNgresD$, then $\Psi_w = \Psi_{w_0}$ and, thus, $\Psi_{w}([\saug_{\sMCS}]) = \Psi_{w_0}([\saug_{\sMCS}]) = \Psi(\sMCS)$.   
\end{lem}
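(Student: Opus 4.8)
The plan is to prove the stronger statement that $\psi_w$ and $\psi_{w_0}$ are chain homotopic as DGA morphisms; the equality $\Psi_w = \Psi_{w_0}$ then follows at once from Lemma~\ref{lem:aug-ch}, since chain homotopic morphisms induce the same map on chain homotopy classes of augmentations. I view a dipping/undipping path $w$ as a word whose letters are the individual type II and type $\IIinv$ Reidemeister moves making up each dip creation ($s_i^+$, $t_i^+$) or dip removal ($s_i^-$, $t_i^-$), and I introduce two elementary modifications of such words: \emph{cancellation}, meaning the deletion of an adjacent pair consisting of a type II move immediately followed by the type $\IIinv$ move undoing it, or a type $\IIinv$ move immediately followed by the type II move recreating the same crossing; and \emph{commutation}, meaning the transposition of two adjacent moves whose crossing sets are disjoint. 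By Proposition~\ref{prop:II-IIinv} each cancellation replaces the induced composite morphism by a chain homotopic one (equal in the II-then-$\IIinv$ case, chain homotopic in the $\IIinv$-then-II case), and by Proposition~\ref{prop:commute-II} each commutation does likewise. Since pre- and post-composition by a DGA morphism sends chain homotopic maps to chain homotopic maps (the same computation as in the proof of Lemma~\ref{lem:aug-ch}), applying any finite sequence of these modifications alters $\psi_w$ only within its chain homotopy class.

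The core of the argument is then combinatorial: I will show that any valid $w$ reduces to $w_0$ under cancellations and commutations. Because each symbol $t_i$ occurs in the alternating pattern $t_i^-, t_i^+, t_i^-, \ldots$ beginning and ending with $t_i^-$, any two consecutive appearances of $t_i$ can be brought together: the moves lying between them act at locations other than $D_i$ (where, once $t_i^-$ has been performed, no crossings remain), so they have crossing sets disjoint from the recreation $t_i^+$ and may be commuted past it. The resulting adjacent removal-recreation pair then cancels, telescoping innermost-first through the crossings of the dip. Repeating this reduces each $t_i$ to a single $t_i^-$. An identical argument, now invoking the II-then-$\IIinv$ cancellation, eliminates every $s_i^+ \ldots s_i^-$ pair in its entirety, since each $s_i$ sits at a location disjoint from all dips $D_1,\ldots,D_m$ and from every other $s_j$.

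After these reductions the word consists of exactly one removal $t_i^-$ for each $i$, in some order. Each such removal is a type $\IIinv$ move on the crossings of a distinct dip $D_i$, so any two of them have disjoint crossing sets and may be transposed; hence the removals can be reordered into the right-to-left sequence $t_m^- t_{m-1}^- \cdots t_1^- = w_0$. This yields $\psi_w \simeq \psi_{w_0}$ and completes the proof. I expect the main obstacle to be the bookkeeping in this combinatorial reduction — in particular, verifying at each stage that the intervening moves genuinely act on disjoint crossing sets, so that Proposition~\ref{prop:commute-II} applies, and that a paired dip creation/removal cancels crossing-by-crossing in the correct nested order. The geometric inputs that make this go through are that dips at distinct $x$-coordinates involve disjoint crossings and that Definition~\ref{defn:dipped-diagram} forbids placing a dip among the crossings of another dip.
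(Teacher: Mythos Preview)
Your proposal is correct and follows essentially the same approach as the paper's proof: reduce $w$ to $w_0$ by a sequence of commutations (Proposition~\ref{prop:commute-II}) and cancellations (Proposition~\ref{prop:II-IIinv}), invoking Lemma~\ref{lem:aug-ch} to conclude the induced maps on augmentation classes agree. The paper cancels the $s_i^{\pm}$ pairs first and then the $t_i^{\pm}$ pairs, whereas you reverse this order, and you are slightly more explicit about the nested crossing-by-crossing cancellation within a single dip; these are cosmetic differences.
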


\begin{proof}
By Proposition~\ref{prop:commute-II}, we may reorder type II and $\IIinv$ moves that are ``far apart'' without changing the chain homotopy type of the resulting map from $\aac(\sNgresD)$ and $\aac(\sNgres)$. If $s^{+}_i$ appears in $w$, then the next appearance of $s_i$ is the letter $s^{-}_i$ to the right of $s^{+}_i$. The letters between $s^{-}_i$ and $s^{+}_i$ represent type II and $\IIinv$ moves that are far away from $s^{-}_i$ and $s^{+}_i$. Thus, we may commute $s^{-}_i$ past the other letters so that $s^{-}_i$ immediately follows $s^{+}_i$. By our ordering of the type II moves in $s^{+}_i$ and the type $\IIinv$ moves in $s^{-}_i$, we may remove the type II and $\IIinv$ moves in pairs. By Proposition~\ref{prop:II-IIinv}, this does not change the chain homotopy type of the resulting map from $\aac(\sNgresD)$ and $\aac(\sNgres)$. In this manner, we remove all pairs of letters of the form $s^{+}_i$ and $s^{-}_i$ from $w$. By the same argument we remove pairs of letters of the form $t^{+}_i, t^{-}_i$. The resulting dipping/undipping path, denoted $w'$, only contains the letters $t^{-}_1, \hdots, t^{-}_m$. We may reorder these letters so that $w' = w_0$. By Proposition~\ref{prop:commute-II}, this rearrangement does not change the chain homotopy type of the resulting map from $\aac(\sNgresD)$ and $\aac(\sNgres)$, thus, by Lemma~\ref{lem:aug-ch}, $\Psi_w = \Psi_{w_0}$.
\end{proof}

\subsection{Associating an MCS to $\saug \in Aug(\sNgres)$}
\label{sec:aug-mcs-alg}

In this section, we prove that $\Psi : \sDMCS \to Aug^{ch}(\sNgres)$ is surjective. We do so by giving an explicit construction that assigns to each augmentation in $Aug(\sNgres)$ an MCS in $\sDMCS$. The construction follows from our work in Lemma~\ref{lem:extend-by-1} and Corollary~\ref{cor:extend-by-0}.

\begin{lem}
	\label{lem:psi-surjectivity}
	The map $\Psi : \sDMCS \to Aug^{ch}(\sNgres)$ is surjective.
\end{lem}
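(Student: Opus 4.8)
The plan is to produce, for an arbitrary $\saug \in Aug(\sNgres)$, a minimal occ-simple augmentation $\saug'$ on a sufficiently dipped diagram $\sNgresD$ that undips back to $\saug$, that is, $\Psi_{w_0}([\saug']) = [\saug]$. By Lemma~\ref{lem:mcs-aug-dipped} such a $\saug'$ is exactly the augmentation $\saug_{\sMCS}$ attached to a unique $\sMCS \in \sDMCS$, so that $\Psi(\sMCS) = \Psi_{w_0}([\saug_{\sMCS}]) = \Psi_{w_0}([\saug']) = [\saug]$ by Definition~\ref{defn:mcs-aug-ch}, which gives surjectivity. Thus the whole problem reduces to building $\saug'$ by hand, dip by dip, using the two extension results of Section~\ref{sec:tracking-augs}.

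First I would fix a dip placement making the diagram sufficiently dipped and adapted to $\saug$, processing $\sNgres$ from left to right. Just to the right of every left cusp, right cusp, and resolved crossing I place a dip, producing inserts of types (4), (3), and (2) respectively; and for every grading-$0$ crossing $q$ with $\saug(q) = 1$ I insert one extra dip immediately to the left of $q$, so that the region between this dip and the preceding dip is an insert of type (1). I extend $\saug$ across these dips in left-to-right order. At the dips to the right of cusps, and to the right of crossings $q$ whose current value is $0$, I extend by $0$: Corollary~\ref{cor:extend-by-0} records the resulting $a$-lattice transformation ($P_{i+1,i}$-conjugation at a crossing, the $J_{i-1}E(\cdot)E^{-1}J_{i-1}^{T}$ formula at a right cusp, and the row/column insertion at a left cusp), which is precisely the chain map attached to that insert in an MCS. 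At each extra dip placed to the left of a crossing $q$ with $\saug(q)=1$, where $q$ lies on strands $i+1$ and $i$, I instead extend by $\sHSM_{i+1,i}$; since $|q|=0$ forces $\smu(i+1)=\smu(i)$, the hypotheses of Lemma~\ref{lem:extend-by-1} hold, and the lemma yields $\saug'(B) = \sHSM_{i+1,i}$, $\saug'(A) = E_{i+1,i}\saug(A_j)E_{i+1,i}^{-1}$, and $\saug'(q) \neq \saug(q)$. This step toggles $\saug(q)$ from $1$ to $0$ and records a genuine handleslide on the type-(1) insert to its left.

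The outcome of this sweep is an augmentation $\saug'$ on $\sNgresD$ that sends every insert crossing $q_s$ and $z_r$ to $0$ (the $z_r$ automatically, since they have grading $1$; the $q_s$ because we toggled the $1$-valued ones), has $\saug'(B_j)=0$ on every type-(2), -(3), and -(4) insert, and has $\saug'(B_j) = \sHSM_{k,l}$ on every type-(1) insert; by Definition~\ref{defn:simple-aug} this is minimal occ-simple. Since the matrix relations of Lemma~\ref{lem:aug-eq} coincide with the chain-map relations of Definition~\ref{defn:MCS-maps}, $\saug'$ determines an $\sMCS \in \sDMCS$ with $\saug_{\sMCS} = \saug'$. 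It remains to check $\Psi_{w_0}([\saug']) = [\saug]$. Each extension above is a choice of lift of $\saug$ across a single type-II stabilization, and the two lifts at a grading-$0$ crossing (the ``extend by $0$'' and ``extend by $\sHSM$'' choices) differ by a chain homotopy, exactly as in the proof of Proposition~\ref{prop:chain-homotopy-inv}; hence dipping induces a well-defined map $Aug^{ch}(\sNgres) \to Aug^{ch}(\sNgresD)$ independent of the toggling choices. By Proposition~\ref{prop:II-IIinv} a dip immediately followed by its undip induces the identity up to chain homotopy, and by Lemma~\ref{lem:dip-path} the class map $\Psi_{w_0}$ is independent of path; together these show the dipping map is a two-sided inverse of $\Psi_{w_0}$, so $[\saug'] = \Psi_{w_0}^{-1}([\saug])$ and therefore $\Psi(\sMCS)=[\saug]$.

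I expect the main obstacle to be the bookkeeping in the second paragraph: verifying at each toggling step that the local configuration around $q$ really satisfies the three hypotheses of Lemma~\ref{lem:extend-by-1} (in particular that $a^{i+1,i}$ occurs as a negative corner only in $\df q$ and in $\df a^{i+1,l}$ for $l<i$), and arranging the dip placement so that no type-(1) insert is left without a handleslide, which is what secures minimality rather than mere occ-simplicity. The concluding chain-homotopy step is conceptually the more delicate one, but it is already packaged in Proposition~\ref{prop:II-IIinv}, Lemma~\ref{lem:dip-path}, and the invariance argument of Proposition~\ref{prop:chain-homotopy-inv}, so it should go through without new ideas.
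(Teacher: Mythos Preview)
Your proposal is correct and follows essentially the same approach as the paper: both construct a minimal occ-simple augmentation on a sufficiently dipped diagram by processing singularities left to right, extending by $0$ at cusps and unaugmented crossings and by $\sHSM_{i+1,i}$ just before augmented crossings, then invoking Lemma~\ref{lem:mcs-aug-dipped}. Your description of the dip placement is given all at once rather than iteratively, but the resulting dipped diagram and extension are identical; your final verification that $\Psi_{w_0}([\saug'])=[\saug]$ is slightly more elaborate than necessary (the paper simply observes that $w_0$ reverses the dipping sequence, so Proposition~\ref{prop:II-IIinv} gives the identity directly), but it is correct.
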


\begin{proof}
Let $[\saug] \in Aug^{ch}(\sNgres)$ and let $\saug$ denote a representative of $[\saug]$. We will find an MCS mapping to $[\saug]$ by constructing a minimal occ-simple augmentation and applying Lemma~\ref{lem:mcs-aug-dipped}. We add dips to $\sNgres$ beginning at the left-most resolved left cusp and working to the right. As we add dips, we extend $\saug$ using Corollary~\ref{cor:extend-by-0} and Lemma~\ref{lem:extend-by-1}. In a slight abuse of notation, we will let $\saug$ also denote the extended augmentation at every step.

Begin by adding the dip $D_1$ just to the right of the left-most resolved left cusp. This requires a single type II move. We extend $\saug$ by requiring $\saug(b_1^{2,1}) = 0$. In the dipped projection, $\df (b_1^{2,1}) = 1 + a_1^{2,1}$. Hence, $\saug(a_1^{2,1})=1$. 

Suppose we have added dips $D_1, \hdots, D_{j-1}$ to $\sNgres$ and let $x_j$ denote the next resolved cusp or crossing appearing to the right of $D_{j-1}$. We introduce the dip $D_j$ and extend $\saug$ as follows. 

\begin{enumerate}
	\item If $x_j$ is a resolved left cusp, right cusp, or crossing with $\saug(x_j)=0$, then we introduce $D_j$ just to the right of $x_j$ and extend $\saug$ by $0$ as in Definition~\ref{defn:extend-by-0}. 
	\item  If $x_j$ is a resolved crossing between strands $i+1$ and $i$ and $\saug(x_j)=1$, then we introduce $D_j$ just to the left of $x_j$ and extend $\saug$ by $\sHSM_{i+1,i}$ as in Definition~\ref{defn:extend-by-1}. Note that after the extension $\saug(x_j) = 0$. 
\end{enumerate}

\noindent In the first case, Corollary~\ref{cor:extend-by-0} details the relationship between $\saug(A_j)$ and $\saug(A_{j-1})$. In the second case, Lemma~\ref{lem:extend-by-1} details the relationship between $\saug(A_j)$ and $\saug(A_{j-1})$. The resulting augmentation is minimal occ-simple by construction. Let $\widetilde{\saug}$ denote the resulting augmentation on $\sNgresD$. By Lemma~\ref{lem:mcs-aug-dipped}, $\widetilde{\saug}$ has an associated MCS $\sMCS_{\saug}$. By definition, $\Psi (\sMCS_{\saug}) =  \Psi_{w_0}([\widetilde{\saug}])$. The dipping/undipping path $w_0$ tells us to undip $t_1, \hdots, t_n$ in reverse order. In particular, this is the inverse of the process we used to create $\widetilde{\saug}$ on $\sNgresD$. Thus $\Psi (\sMCS_{\saug}) =  \Psi_{w_0}([\widetilde{\saug}]) = [\saug]$ and so $\Psi : \sDMCS \to Aug^{ch}(\sNgres)$ is surjective.
\end{proof} 

\begin{rem}
The marked front projection for $\sMCS_{\saug}$ is constructed as follows. Let $q_{j_1}, \hdots, q_{j_l}$ denote the resolved crossings of $\sNgres$ satisfying $\saug(q)=1$. Then $\sMCS_{\saug}$ has a handleslide mark just to the left of each of the crossings $q_{j_1}, \hdots, q_{j_l}$ in $\sfront$; see Figure~\ref{f:mcs-aug-trefoil}. Each handleslide mark begins and ends on the two strands involved in the crossing.
\end{rem}

\begin{figure}
\labellist
\small\hair 2pt
\endlabellist
\centering
\includegraphics[scale=.3]{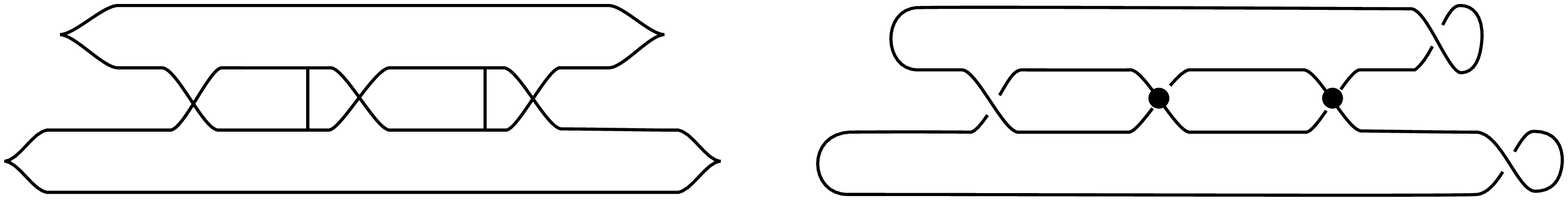}
\caption[Assigning an MCS to an augmentation.]{The dots in the right figure indicate the augmented crossings of $\saug \in Aug(\sNgres)$. The left figure shows the resulting MCS $\sMCS_{\saug}$ from the construction in Lemma~\ref{lem:psi-surjectivity}.}
\label{f:mcs-aug-trefoil}
\end{figure}

\subsection{Defining $\widehat{\Psi} : \sDMCSeq \to Aug^{ch}(\sNgres)$} 

In this section, we prove Theorem~\ref{thm:intro-main-result} from Section~\ref{ch:intro}. In particular,

\begin{thm}
	\label{thm:big-map}
	The map $\widehat{\Psi} : \sDMCSeq \to Aug^{ch}(\sNgres)$ defined by $\widehat{\Psi} ([\sMCS]) = \Psi(\sMCS)$ is a well-defined, surjective map.
\end{thm}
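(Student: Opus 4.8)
The surjectivity of $\widehat{\Psi}$ is immediate: since $\widehat{\Psi}([\sMCS]) = \Psi(\sMCS)$ and every class in $\sDMCSeq$ has a representative in $\sDMCS$, the image of $\widehat{\Psi}$ equals the image of $\Psi$, which is all of $Aug^{ch}(\sNgres)$ by Lemma~\ref{lem:psi-surjectivity}. Thus the entire content of the theorem is well-definedness, namely that $\sMCS \sMCSeq \sMCS'$ implies $\Psi(\sMCS) = \Psi(\sMCS')$. Since the relation $\sMCSeq$ is generated by the individual MCS moves, it suffices to treat the case in which the marked front projections of $\sMCS$ and $\sMCS'$ differ by a single MCS move supported in an interval $[a,b]$ of the $x$-axis; by Proposition~\ref{p:MCS-equiv} the two MCSs then agree outside $[a,b]$.

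The plan is to compute both $\Psi(\sMCS)$ and $\Psi(\sMCS')$ on a single common dipped diagram. First I would form the dipped diagrams of $\sMCS$ and $\sMCS'$ as in Lemma~\ref{lem:mcs-aug-dipped}, using identical dips outside a neighborhood of $[a,b]$; by Remark~\ref{rem:sliding-dips} dips may be slid freely so long as no crossing, cusp, or other dip is passed, so such a choice exists. When the move creates or destroys handleslide marks the two diagrams carry different numbers of dips inside $[a,b]$, and I would enlarge the sparser one by inserting the missing dips and extending its augmentation by $0$ in the sense of Definition~\ref{defn:extend-by-0}; by Lemma~\ref{lem:extend-by-0} this is precisely the augmentation induced by the stable tame isomorphism of adding a dip, so it changes neither the chain homotopy class nor the value of $\Psi$. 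After this normalization $\saug_{\sMCS}$ and $\saug_{\sMCS'}$ live on a single dipped diagram $\widehat{L}$, and by path-independence of $\Psi$ (Lemma~\ref{lem:dip-path}) both $\Psi$-values are computed by the same bijection $\Psi_{w} : Aug^{ch}(\widehat{L}) \to Aug^{ch}(\sNgres)$ associated to an undipping path $w$ for $\widehat{L}$. Since $\Psi_w$ is a bijection, it is now \emph{necessary and sufficient} to prove that $\saug_{\sMCS}$ and $\saug_{\sMCS'}$ determine the same class in $Aug^{ch}(\widehat{L})$, i.e. that $\saug_{\sMCS} \schequiv \saug_{\sMCS'}$ on $\widehat{L}$.

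I would build this chain homotopy out of a single elementary piece, applied once for each handleslide mark altered by the move. The key local fact is that at a dip whose $b$-lattice crossing $b^{k,l}$ has grading $0$, the two ways of extending an augmentation across that crossing — by $0$ and by $\sHSM_{k,l}$, governed by Lemmata~\ref{lem:extend-by-0} and~\ref{lem:extend-by-1} — yield augmentations that are chain homotopic via the map $H$ sending $a^{k,l}$ to $1$ and every other generator to $0$. This is exactly the mechanism used for index-$0$ stabilizations in the proof of Proposition~\ref{prop:chain-homotopy-inv}, and the fact that such an $H$ extends uniquely and satisfies $\saug_{\sMCS} - \saug_{\sMCS'} = H \circ \df$ is guaranteed by Lemma~\ref{lem:chain-htpy}, since $H$ has support in grading $-1$. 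For moves that permute handleslides past one another or past a crossing (moves $2$–$14$), the discrepancy between $\saug_{\sMCS}$ and $\saug_{\sMCS'}$ on $\widehat{L}$ is read off from the matrix relations recorded in the proof of Proposition~\ref{p:MCS-equiv} (the commutation identities for the $E_{k,l}$ and their interaction with $P_{i+1,i}$), and is resolved by a sum of such elementary homotopies at the affected $b$-lattices; for move $1$ the two equal handleslides flip a single $b^{k,l}$ twice and a single $H$ suffices.

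I expect the main obstacle to be the explosion move $17$ together with the cusp moves $15$ and $16$. In move $17$ an entire product $E = E_{k,v_r}\cdots E_{k,v_1}E_{u_1,l}\cdots E_{u_s,l}$ of handleslides is introduced at once, so the change in $\saug(B_j)$ and $\saug(A_j)$ is a composite rather than a single $\sHSM_{k,l}$, and one must verify that the iterated chain homotopy still obeys the derivation product property and the grading constraint of Lemma~\ref{lem:chain-htpy} after several lattice entries are modified simultaneously. Likewise, in moves $15$ and $16$ the explicit handleslide interacts with the generators born or killed at the cusp, so the homotopy must be matched against the quotient by the acyclic subcomplex appearing in Definition~\ref{defn:MCS-maps}(3); checking that $H$ descends compatibly through this quotient, and that the resulting identity on $\widehat{L}$ holds term by term, is the most delicate bookkeeping and is where the dip analysis of Lemmata~\ref{lem:extend-by-0} and~\ref{lem:extend-by-1} does the real work.
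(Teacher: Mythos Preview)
Your overall architecture is exactly the paper's: reduce to a single MCS move, pass to a common sufficiently dipped diagram by inserting the missing dips and extending by $0$ (Lemma~\ref{lem:extend-by-0}), invoke path-independence (Lemma~\ref{lem:dip-path}), and then exhibit a chain homotopy between the two occ-simple augmentations on that common diagram. Surjectivity via Lemma~\ref{lem:psi-surjectivity} is likewise identical.

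Where the paper differs is in how the chain homotopy is actually built. Rather than assembling it from ``elementary pieces'' coming from the index-$0$ stabilization mechanism, the paper first rewrites the condition $\saug_1-\saug_2=H\circ\df$ on a sufficiently dipped diagram as a small system of matrix equations in the $H(A_j)$ and $H(B_j)$ (Lemma~\ref{lem:dipped-ch-htpy} and, for $H(B_j)=0$, Corollary~\ref{cor:dipped-ch-htpy}), and then simply tabulates a solution for each MCS move: for moves $1$--$16$ one takes $H(B_j)=0$ everywhere and $H(A_j)$ equal to a short expression in $\saug_1(B_j),\saug_2(B_j)$ (e.g.\ $H(A_j)=\saug_2(B_j)$ for move~$1$), with verification left as routine matrix algebra. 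Your ``send $a^{k,l}\mapsto 1$'' is exactly what these formulas specialize to in the simplest cases, so your intuition is correct, but the paper's matrix framework is what makes the checks uniform.

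The one place your template would need genuine modification is move~$17$: there the paper's homotopy is \emph{not} supported only on $a$-lattice crossings. One has $H(A_{j+i})=\sum_{k=1}^{i}\saug_{\sMCS_2}(B_{j+k})$ for $1\le i\le r+s-1$ together with $H(B_{j+r+s})=\sHSM_{k,l}$, so the full Lemma~\ref{lem:dipped-ch-htpy} (not the $H(B)=0$ corollary) is required. Your anticipation that move~$17$ is the delicate case is right, but the resolution is this nonzero $H(B)$ term rather than an iterated product of your elementary homotopies.
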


Given $\sMCS_1 \sMCSeq \sMCS_2$ we construct explicit chain homotopies between $\Psi(\sMCS_1)$ and $\Psi(\sMCS_2)$. We begin by restating the chain homotopy property $\saug_1 - \saug_2 = \sH \circ \df$ as a system of local equations.

Let $\saug_1, \saug_2 \in Aug(\sNgresD)$ where $\sNgresD$ is a sufficiently dipped diagram of the Ng resolution $\sNgres$. Let $Q$ denote the set of crossings of $\sNgresD$. By Lemma~\ref{lem:chain-htpy}, a map $\sH : Q \to \zz_2$ that has support on the crossings of grading $-1$ may be extended by linearity and the derivation product property to a map $\sH : (\aac(\sNgresD), \df) \to \zz_2$ with support on $\aac_{-1}(\sNgresD)$. Then $\sH$ is a chain homotopy between $\saug_1$ and $\saug_2$ if and only if $\saug_1 - \saug_2 = \sH \circ \df$ on $Q$.

\begin{lem}
	\label{lem:dipped-ch-htpy}
	Let $\saug_1, \saug_2 \in Aug(\sNgresD)$ where $\sNgresD$ is a sufficiently dipped diagram of the Ng resolution $\sNgres$. Suppose the linear map $\sH : (\aac(\sNgresD), \df) \to \zz_2$ satisfies the derivation product property and has support on crossings of grading $-1$. Then $\sH$ is a chain homotopy between $\saug_1$ and $\saug_2$ if and only if:

\begin{enumerate}
	\item For all $q_r$ and $z_s$, $\saug_1 - \saug_2 = \sH \circ \df$,
	\item For all $A_j$, 
		\begin{equation*}
		\saug_1 (A_j) = (I + \sH (A_j)) \saug_2(A_j) (I + \sH (A_j))^{-1},
		\end{equation*}
	\item For all $B_j$,
		\begin{eqnarray*}
			\sH (B_j) \saug_2 (A_j) + \saug_1 (A_{j-1})\sH (B_j) &=& \saug_1(B_j) + (I + \saug_1(B_j)) \sH (A_j) \nonumber \\ 
			&+&  \sH (\widetilde{A}_{j-1})(I + \saug_2(B_j)) + \saug_2 (B_j).		
		\end{eqnarray*}		
	
\end{enumerate}
\end{lem}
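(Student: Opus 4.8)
The plan is to reduce the chain-homotopy condition to a finite check on the algebra generators of $\aac(\sNgresD)$, and then translate that check, one class of crossings at a time, into the three displayed matrix identities by feeding the boundary formulae of Lemma~\ref{lem:dipped-df} into the derivation product property. Because $\zz_2$ carries the zero differential, $\sH$ is a chain homotopy between $\saug_1$ and $\saug_2$ precisely when $\saug_1 - \saug_2 = \sH\circ\df$ as maps on $\aac(\sNgresD)$; and since $\sH$ already satisfies the derivation product property and is supported in grading $-1$, Lemma~\ref{lem:chain-htpy} guarantees it suffices to verify this identity on the generating set $Q$ of crossings, namely $q_s$, $z_r$, and the entries $a^{k,l}_j$, $b^{k,l}_j$ of the $A_j$ and $B_j$ lattices. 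The crossings $q_s$ and $z_r$ reproduce condition (1) verbatim, so the real work lies in the two lattices.

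\textbf{The computational device.} The one tool used throughout is the matrix Leibniz rule: for matrices $M$ and $N$ with entries in $\aac(\sNgresD)$, applying the derivation product property entrywise gives $\sH(MN) = \sH(M)\,\saug_2(N) + \saug_1(M)\,\sH(N)$, where $\sH$ and $\saug_i$ act entry by entry; note also $\sH(I)=0$, as $\sH$ annihilates constants. For the $A_j$ lattice, apply $\sH$ to $\df A_j = A_j^2$ to obtain $\sH(\df A_j) = \sH(A_j)\saug_2(A_j) + \saug_1(A_j)\sH(A_j)$, and equate this with the entrywise difference $\saug_1(A_j)+\saug_2(A_j)$. Since $\sH(A_j)$ is strictly lower triangular, $I+\sH(A_j)$ is invertible over $\zz_2$; multiplying the resulting relation on the right by $I+\sH(A_j)$ and collecting terms shows it is equivalent to the conjugation identity of condition (2).

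\textbf{The main obstacle.} The $B_j$ lattice is the only genuinely involved step. Applying $\sH$ to $\df B_j = (I+B_j)A_j + \widetilde{A}_{j-1}(I+B_j)$ by the matrix Leibniz rule -- grouping the factors as $(I+B_j)$, $A_j$, and $\widetilde{A}_{j-1}$, and using $\sH(I+B_j)=\sH(B_j)$ together with $\saug_i(I+B_j)=I+\saug_i(B_j)$ -- produces four matrix products. Setting $\sH(\df B_j)$ equal to $\saug_1(B_j)+\saug_2(B_j)$ and moving terms across the equality (all signs trivial over $\zz_2$) yields the matrix identity recorded as condition (3), where the block paired on the left of $\sH(B_j)$ is $\saug_1(\widetilde{A}_{j-1})$, the matrix of the same size as $B_j$. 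I expect the difficulty here to be purely bookkeeping: one manipulates noncommutative matrix identities entry by entry, so the care lies in confirming that the entrywise expansion of each matrix product agrees with the Leibniz expansion of $\sH$ applied to the corresponding entry of $\df B_j$, and in keeping $\saug_1$ on the left and $\saug_2$ on the right exactly as the derivation product property prescribes.

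\textbf{Conclusion.} Each of the three reductions is reversible -- for (2) because $I+\sH(A_j)$ is invertible, for (3) because the rearrangement used only addition over $\zz_2$ -- so the conjunction of (1)--(3) is equivalent to $\saug_1-\saug_2=\sH\circ\df$ on $Q$, which by the first paragraph establishes the lemma.
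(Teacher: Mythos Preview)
Your proof is correct and follows essentially the same approach as the paper's own proof: reduce to generators via Lemma~\ref{lem:chain-htpy}, then apply the derivation product property to the boundary formulae of Lemma~\ref{lem:dipped-df} for each class of crossings and rearrange. Your write-up is in fact more explicit than the paper's---you spell out the entrywise matrix Leibniz rule $\sH(MN)=\sH(M)\saug_2(N)+\saug_1(M)\sH(N)$ and note that $I+\sH(A_j)$ is invertible because $\sH(A_j)$ is strictly lower triangular, whereas the paper simply says ``by rearranging terms''; you also correctly observe that the factor multiplying $\sH(B_j)$ on the left in condition~(3) is $\saug_1(\widetilde{A}_{j-1})$ (as the paper's own derivation confirms), which is the matrix of the correct size for the product to make sense.
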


\begin{proof}

For each $A_j$, $\df A_j = A_j^2$, so the derivation property of $H$ implies $\saug_1 - \saug_2(A_j) = \sH \circ \df (A_j)$ is equivalent to $\saug_1 - \saug_2 (A_j) = \sH (A_j) \saug_2(A_j) + \saug_1(A_j) \sH (A_j)$. By rearranging terms, we obtain part (2) of the Lemma.

For each $B_j$, $\df B_j = (I + B_j)A_j + \widetilde{A}_{j-1}(I + B_j)$, so the derivation property of $H$ implies $\saug_1 - \saug_2(B_j) = \sH \circ \df (B_j)$ is equivalent to:

\begin{eqnarray*}
	\saug_1 - \saug_2 (B_j) &=& (I + \saug_1(B_j)) \sH (A_j) + \sH (\widetilde{A}_{j-1})(I + \saug_2(B_j)) + \nonumber \\
		&+& \sH (B_j) \saug_2 (A_j) + \saug_1 (\widetilde{A}_{j-1}) \sH (B_j). 			
\end{eqnarray*}

\noindent By rearranging terms, we obtain part (3) of the Lemma.

\end{proof}

We will use the matrix equations of Lemma~\ref{lem:dipped-ch-htpy} to show that $\Psi$ maps equivalent MCSs to the same augmentation class. We will restrict our attention to occ-simple augmentations and in nearly every situation build chain homotopies that satisfy $\sH(B_j) = 0$ for all $j$. These added assumptions simplify the task of constructing chain homotopies. The next Corollary follows immediately from Lemma~\ref{lem:dipped-ch-htpy} and Definition~\ref{defn:simple-aug}.

\begin{cor}
	\label{cor:dipped-ch-htpy}
	Suppose $\sNgresD, \saug_1, \saug_2$ and $H$ are as in Lemma~\ref{lem:dipped-ch-htpy}. If $\saug_1$ and $\saug_2$ are occ-simple and $H(B_j) = 0$ for all $j$, then $\sH$ is a chain homotopy between $\saug_1$ and $\saug_2$ if and only if:

\begin{enumerate}
	\item For all $q_r$ and $z_s$, $\saug_1 - \saug_2 = \sH \circ \df$,
	\item For all $A_j$, 
		\begin{equation*}
		\saug_1 (A_j) = (I + \sH (A_j)) \saug_2(A_j) (I + \sH (A_j))^{-1},
		\end{equation*}
	\item For $B_j$ with $I_j$ of type (1),
		\begin{equation*}
			\saug_1(B_j) + (I + \saug_1(B_j)) \sH (A_j) = \sH (A_{j-1})(I + \saug_2(B_j)) + \saug_2 (B_j),	
		\end{equation*}		
	
	\item For $B_j$ with $I_j$ of type (2), (3), or (4),
		\begin{equation*}
		\sH (A_j) = \sH (\widetilde{A}_{j-1}).
		\end{equation*}	
\end{enumerate}
\end{cor}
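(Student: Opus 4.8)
The plan is to specialize each of the three matrix conditions of Lemma~\ref{lem:dipped-ch-htpy} using the two additional hypotheses, namely that $\saug_1$ and $\saug_2$ are occ-simple and that $\sH(B_j)=0$ for every $j$. Conditions (1) and (2) of the Lemma make no reference to the matrices $B_j$, so they carry over verbatim as conditions (1) and (2) of the Corollary. All of the content therefore lies in simplifying condition (3) of the Lemma, and this simplification depends on the type of the insert $I_j$.

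First I would impose $\sH(B_j)=0$. The left-hand side of condition (3) of Lemma~\ref{lem:dipped-ch-htpy}, namely $\sH(B_j)\saug_2(A_j) + \saug_1(\widetilde{A}_{j-1})\sH(B_j)$, then vanishes identically, leaving the single relation
\begin{equation*}
\saug_1(B_j) + (I + \saug_1(B_j))\sH(A_j) + \sH(\widetilde{A}_{j-1})(I + \saug_2(B_j)) + \saug_2(B_j) = 0,
\end{equation*}
which over $\zz_2$ I rearrange to
\begin{equation*}
\saug_1(B_j) + (I+\saug_1(B_j))\sH(A_j) = \sH(\widetilde{A}_{j-1})(I+\saug_2(B_j)) + \saug_2(B_j).
\end{equation*}

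Next I would split on the type of $I_j$. When $I_j$ is of type (1), the definition of $\widetilde{A}_{j-1}$ in Section~\ref{sec:boundary-dipped} gives $\widetilde{A}_{j-1}=A_{j-1}$, hence $\sH(\widetilde{A}_{j-1})=\sH(A_{j-1})$, and the displayed relation becomes exactly condition (3) of the Corollary. When $I_j$ is of type (2), (3), or (4), the occ-simple hypothesis (Definition~\ref{defn:simple-aug}) forces $\saug_1(B_j)=\saug_2(B_j)=0$; substituting these into the same relation collapses it to $\sH(A_j)=\sH(\widetilde{A}_{j-1})$, which is condition (4) of the Corollary.

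Because the equivalence in Lemma~\ref{lem:dipped-ch-htpy} is stated as an if-and-only-if and each step above is a reversible substitution of the hypotheses, no genuine obstacle arises, consistent with the claim that the Corollary follows immediately. The only points requiring care are bookkeeping the correct form of $\widetilde{A}_{j-1}$ for each insert type, correctly reading the left-hand side of the Lemma's condition (3) as $\saug_1(\widetilde{A}_{j-1})\sH(B_j)$ (so that it too is annihilated by $\sH(B_j)=0$), and remembering that all arithmetic is performed mod $2$ so that the rearrangement incurs no sign changes.
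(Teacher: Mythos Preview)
Your proposal is correct and is exactly the specialization the paper intends: the paper gives no proof beyond remarking that the Corollary follows immediately from Lemma~\ref{lem:dipped-ch-htpy} and Definition~\ref{defn:simple-aug}, and your argument carries out precisely that substitution. Your observation that the second term on the left of Lemma~\ref{lem:dipped-ch-htpy}(3) should read $\saug_1(\widetilde{A}_{j-1})\sH(B_j)$ (matching the Lemma's proof) rather than $\saug_1(A_{j-1})\sH(B_j)$ is a good catch, though as you note it is immaterial here since $\sH(B_j)=0$ kills the term either way.
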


We are now in a position to prove that $\widehat{\Psi} : \sDMCSeq \to Aug^{ch}(\sNgres)$ defined by $\widehat{\Psi} ([\sMCS]) = \Psi(\sMCS)$ is well-defined.

\begin{figure}
\labellist
\small\hair 2pt
\pinlabel {1} [br] at 114 366
\pinlabel {3} [br] at 114 287
\pinlabel {5} [br] at 114 207
\pinlabel {7} [br] at 114 127
\pinlabel {9} [br] at 114 46
\pinlabel {2} [br] at 344 366
\pinlabel {4} [br] at 344 287
\pinlabel {6} [br] at 344 207
\pinlabel {8} [br] at 344 127
\pinlabel {10} [br] at 346 46
\pinlabel {$j$} [tl] at 44 331
\pinlabel {$j$} [tl] at 44 251
\pinlabel {$j$} [tl] at 44 171
\pinlabel {$j$} [tl] at 28 91
\pinlabel {$j$} [tl] at 28 11
\pinlabel {$j$} [tl] at 164 331
\pinlabel {$j$} [tl] at 164 251
\pinlabel {$j$} [tl] at 164 171
\pinlabel {$j$} [tl] at 150 91
\pinlabel {$j$} [tl] at 150 11
\pinlabel {$j$} [tl] at 273 331
\pinlabel {$j$} [tl] at 273 251
\pinlabel {$j$} [tl] at 263 171
\pinlabel {$j$} [tl] at 258 91
\pinlabel {$j$} [tl] at 258 11
\pinlabel {$j$} [tl] at 394 331
\pinlabel {$j$} [tl] at 394 251
\pinlabel {$j$} [tl] at 386 171
\pinlabel {$j$} [tl] at 380 91
\pinlabel {$j$} [tl] at 380 11
\endlabellist
\centering
\includegraphics[scale=.55]{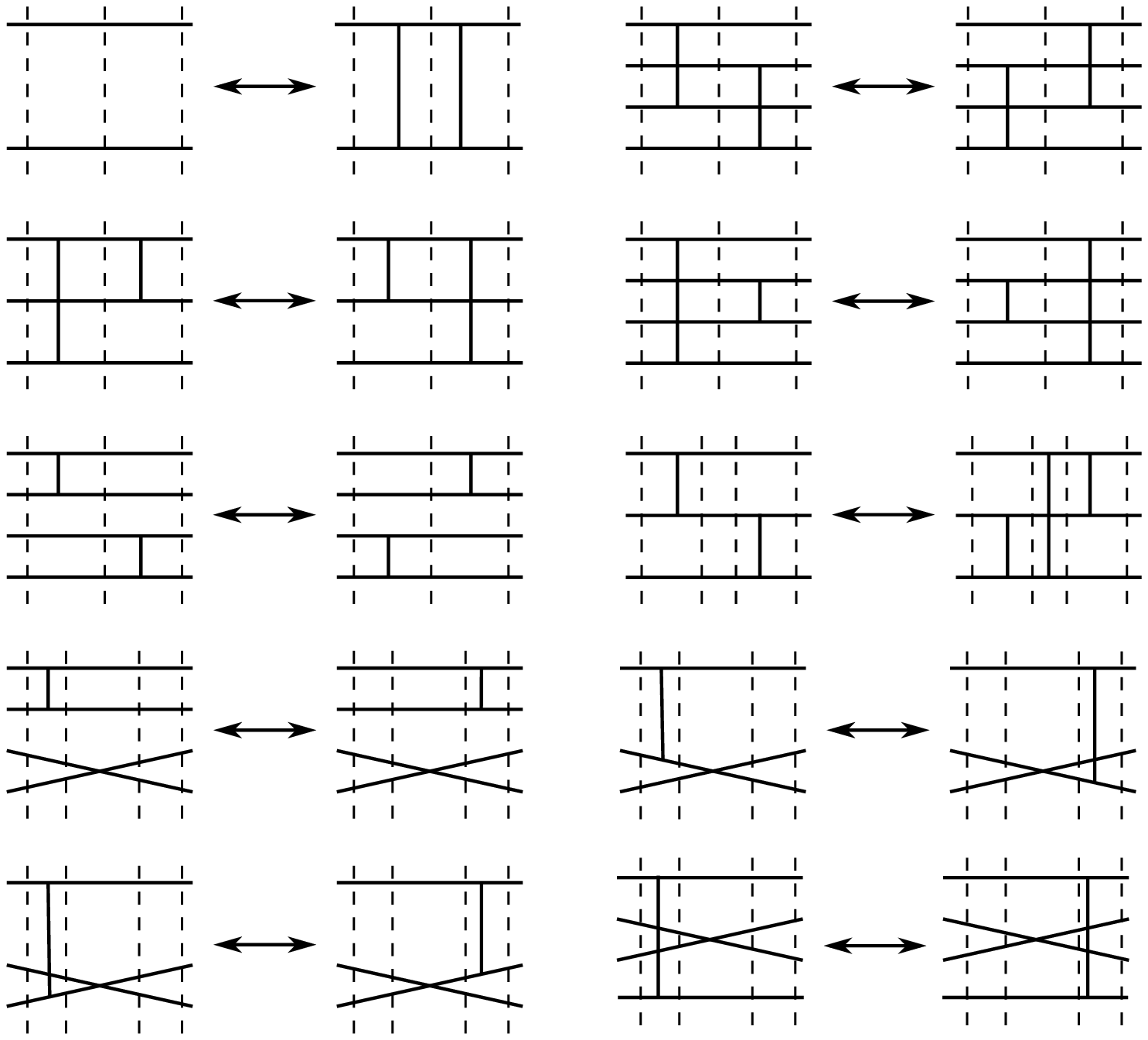}
\caption[MCS equivalence moves 1 - 10 with dips indicated.]{MCS equivalence moves 1 - 10 with dip locations indicated.}
\label{f:MCS-equiv-2-dotted}
\end{figure}

\begin{figure}
\labellist
\small\hair 2pt
\pinlabel {11} [br] at 117 185
\pinlabel {12} [br] at 345 185
\pinlabel {13} [br] at 117 110
\pinlabel {14} [br] at 345 110
\pinlabel {15} [br] at 117 30
\pinlabel {16} [br] at 345 37
\pinlabel {$j$} [tl] at 26 150
\pinlabel {$j$} [tl] at 26 70
\pinlabel {$j$} [tl] at 34 5
\pinlabel {$j$} [tl] at 148 150
\pinlabel {$j$} [tl] at 148 70
\pinlabel {$j$} [tl] at 156 5
\pinlabel {$j$} [tl] at 270 150
\pinlabel {$j$} [tl] at 270 70
\pinlabel {$j$} [tl] at 265 5
\pinlabel {$j$} [tl] at 391 150
\pinlabel {$j$} [tl] at 391 70
\pinlabel {$j$} [tl] at 388 5
\endlabellist
\centering
\includegraphics[scale=.55]{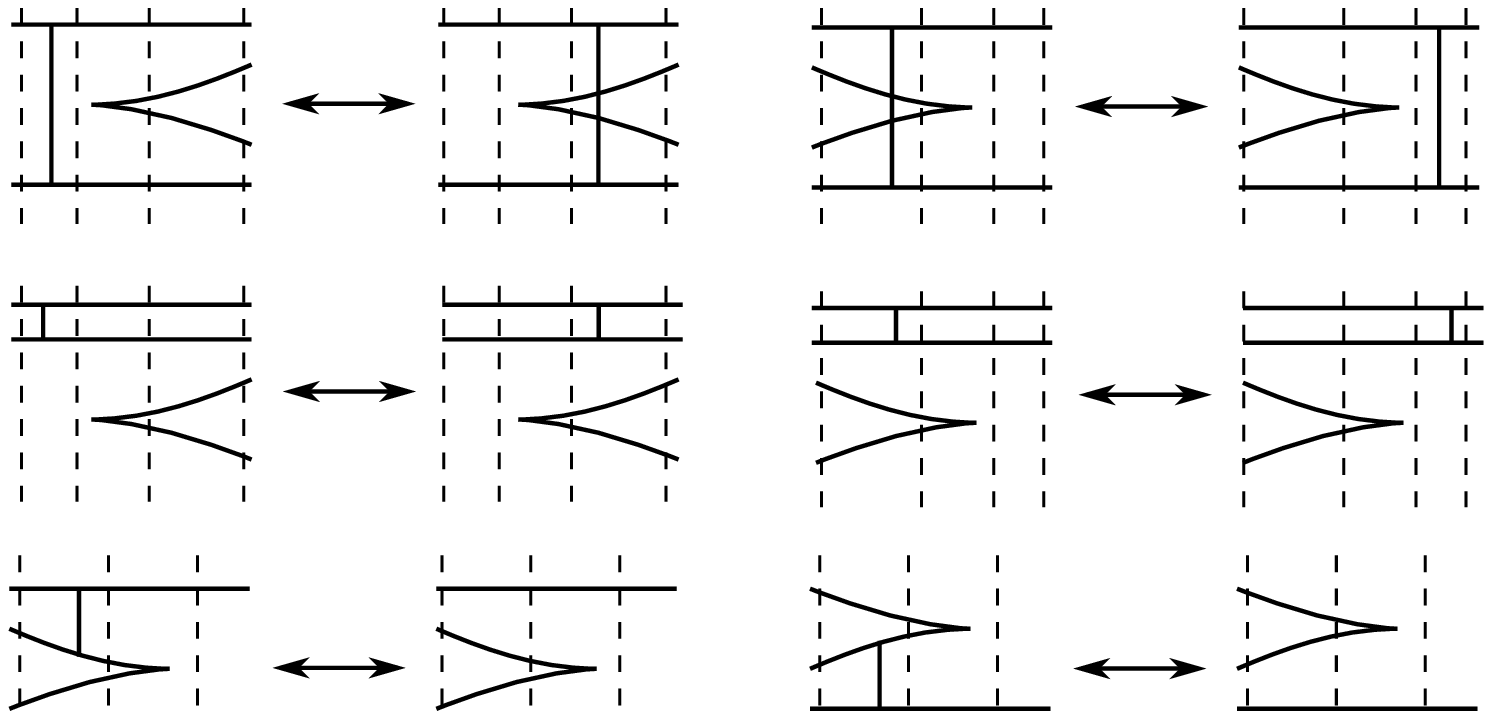}
\caption[MCS equivalence moves 11 - 16 with dips indicated.]{MCS equivalence moves 11 - 16 with dip locations indicated.}
\label{f:MCS-equiv-3-dotted}
\end{figure}

\begin{figure}
\labellist
\small\hair 2pt
\pinlabel {17} [tl] at 190 112
\pinlabel {$j$} [tl] at 11 18
\pinlabel {$j$} [tl] at 242 18
\endlabellist
\centering
\includegraphics[scale=.4]{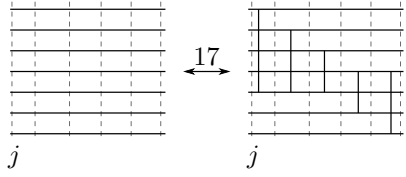}
\caption[MCS move 17 with dips indicated.]{MCS move 17 with dip locations indicated.}
\label{f:MCS-equiv-explosion-dotted}
\end{figure}

\begin{lem}
	\label{lem:mcs-equiv-aug-equiv}
If $\sMCS_1$ and $\sMCS_2$ are equivalent, then $\Psi(\sMCS_1) = \Psi(\sMCS_2)$.
\end{lem}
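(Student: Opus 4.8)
The plan is to reduce to a single MCS move and then produce an explicit chain homotopy between the two associated augmentations on a common dipped diagram. Since $\schequiv$ is an equivalence relation (Lemma~\ref{lem:ch-equiv-rel}) and the relation $\sMCSeq$ is generated by the MCS moves, it suffices to treat the case in which the marked front projections of $\sMCS_1$ and $\sMCS_2$ differ by a single move of Figures~\ref{f:MCS-equiv-2}, \ref{f:MCS-equiv-3}, or~\ref{f:MCS-equiv-explosion}, supported in an interval $[a,b]$ of the $x$-axis. I would then fix a single sufficiently dipped diagram $\sNgresD$ that places a dip just to the right of every handleslide mark, crossing, and cusp of \emph{both} $\sMCS_1$ and $\sMCS_2$, inserting extra type~(1) dips with $B$-lattice augmented to $0$ wherever only one of the two has a mark. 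On this common diagram, Lemma~\ref{lem:mcs-aug-dipped} together with extension-by-$0$ (Corollary~\ref{cor:extend-by-0}) produces occ-simple augmentations $\saug_1,\saug_2 \in Aug(\sNgresD)$, and by the same rearrangement argument used in Lemma~\ref{lem:dip-path} (Propositions~\ref{prop:II-IIinv} and~\ref{prop:commute-II}, which let the extra trivial dips be added and then cancelled in $\mbox{II}/\IIinv$ pairs) one has $\Psi(\sMCS_i) = \Psi_{w_0}([\saug_i])$ regardless of the choice of compatible diagram. As $\Psi_{w_0}$ descends to a bijection $Aug^{ch}(\sNgresD) \to Aug^{ch}(\sNgres)$, it therefore suffices to prove $\saug_1 \schequiv \saug_2$ on $\sNgresD$.

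Next I would construct the chain homotopy locally. Because the two marked fronts agree outside $[a,b]$, the augmentations satisfy $\saug_1 = \saug_2$ on every dip and insert lying outside $[a,b]$. I seek $H$ with $H(B_j)=0$ for all $j$, so the chain homotopy conditions reduce to those of Corollary~\ref{cor:dipped-ch-htpy}. Setting $H(A_j)=0$ at the dip immediately to the left of $[a,b]$, I propagate $H$ rightward through the dips of $[a,b]$: at each dip the conjugation equation $\saug_1(A_j) = (I + H(A_j))\,\saug_2(A_j)\,(I + H(A_j))^{-1}$ prescribes $H(A_j)$ as a sum of the off-diagonal matrices $\sHSM_{k,l}$ dictated by the move's underlying matrix identity (their entries automatically sit in grading $-1$, since each couples strands of equal Maslov potential), while the type-(1) and type-(2)/(3)/(4) $B$-lattice equations of Corollary~\ref{cor:dipped-ch-htpy} relate $H(A_j)$ to $H(A_{j-1})$. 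The essential closing requirement is that $H(A_j)=0$ again at the first dip to the right of $[a,b]$, so that $H$ glues to the zero homotopy on the rest of $\sNgresD$; the remaining condition on the $q_r,z_s$ crossings (where $\saug_1=\saug_2$) is then checked directly against the explicit $H$.

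Finally I would verify the construction move by move. For the handleslide manipulations (moves~1--6) and the crossing commutations (moves~7--10) this follows from the very $E$- and $P$-matrix identities used in the proof of Proposition~\ref{p:MCS-equiv}: for instance move~1 ($E_{k,l}^2 = I$) is handled by $H(A)=\sHSM_{k,l}$ at the single intermediate dip and $H=0$ elsewhere, and both type-(1) $B$-equations close off using $\sHSM_{k,l}^2 = 0$ over $\zz_2$. The moves threading a handleslide through a cusp (moves~11--16) invoke the relations $H(A_j)=H(\widetilde{A}_{j-1})$, so here $H$ must be carried through the $J_{i-1}$ insertion and the quadratic expression defining $\widetilde{A}_{j-1}$, and the birth/death moves~15--16 additionally change the number of marks; I expect this cusp analysis to be the most delicate part. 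The hardest case is the explosion move~17, where the introduced handleslides form the product $E = E_{i,v_r}\cdots E_{u_s,l}$: here I would set $H(A) = I + (\text{the relevant partial product of the }E\text{-factors})$ on each intermediate dip and use $\sdmatrix = E \sdmatrix E^{-1}$ from Equation~\ref{eq:explosion} to confirm that the conjugation and $B$-lattice equations hold simultaneously and that $H$ returns to $0$ on the far side. Assembling these per-move homotopies and invoking transitivity of $\schequiv$ then gives $\saug_1 \schequiv \saug_2$, hence $\Psi(\sMCS_1) = \Psi(\sMCS_2)$.
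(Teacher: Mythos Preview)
Your overall architecture matches the paper's proof exactly: reduce to a single MCS move, pass to a common sufficiently dipped diagram, extend both augmentations by~$0$ to occ-simple augmentations there, and build an explicit chain homotopy locally using the matrix equations of Lemma~\ref{lem:dipped-ch-htpy}/Corollary~\ref{cor:dipped-ch-htpy}, then appeal to dipping/undipping path independence. For moves~1--16 your sketch is essentially the paper's Table~\ref{t:MCS-moves}.

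There is, however, a genuine gap in your handling of move~17. You commit globally to $H(B_j)=0$ and therefore to Corollary~\ref{cor:dipped-ch-htpy}. Propagating rightward from $H(A_j)=0$ through the $r+s$ type-(1) inserts with $\saug_1(B_{j+i})=0$ and $\saug_2(B_{j+i})=\sHSM_i$, the $B$-equation forces
\[
I + H(A_{j+i}) \;=\; (I + H(A_{j+i-1}))\,(I + \saug_2(B_{j+i})),
\]
so that $I + H(A_{j+r+s}) = E$ and hence $H(A_{j+r+s}) = E - I = \sum_i \sHSM_i \neq 0$. Thus $H$ does \emph{not} ``return to~$0$ on the far side'' as you assert, and once $H(A)\neq 0$ escapes the interval it must be propagated through every subsequent dip, crossing, and cusp of $\sNgresD$; there is no mechanism in your argument to ensure the remaining equations (including $H\circ\df(q_r)=0$ and the type-(2)/(3)/(4) relations $H(A_j)=H(\widetilde A_{j-1})$) are satisfied.

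The paper avoids this by abandoning the hypothesis $H(B_j)=0$ precisely at the last dip of the move~17 region: it sets $H(B_{j+r+s})=\sHSM_{k,l}$ and $H(A_{j+r+s})=0$, then checks the full equation of Lemma~\ref{lem:dipped-ch-htpy}(3) (not the simplified Corollary) at that one dip. The extra term $H(B_{j+r+s})\,\saug_2(A_{j+r+s}) + \saug_1(A_{j+r+s-1})\,H(B_{j+r+s})$ supplies exactly the handleslide sum $\sum_i \sHSM_i$ that cancels $H(A_{j+r+s-1})$, which is why the homotopy can close off. You should drop the blanket assumption $H(B_j)=0$ for move~17 and use Lemma~\ref{lem:dipped-ch-htpy} directly there.
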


\begin{proof}
It is sufficient to suppose $\sMCS_1$ and $\sMCS_2$ differ by a single MCS move. Let $\saug_{\sMCS_1} \in Aug(\sNgres^{d_1})$ and $\saug_{\sMCS_2} \in Aug(\sNgres^{d_2})$ be as defined in Lemma~\ref{lem:mcs-aug-dipped}. Since each MCS move is local, we may assume $\sNgres^{d_1}$ and $\sNgres^{d_2}$ have identical dips outside of the region of the MCS move. 

We compare the chain homotopy classes of $\saug_{\sMCS_1}$ and $\saug_{\sMCS_2}$ by extending $\saug_{\sMCS_1}$ and $\saug_{\sMCS_2}$ to augmentations on a third dipped diagram $\sNgres^{d}$. In the case of MCS moves 1-16, we form $\sNgres^{d}$ by adding 0, 1, or 2 additional dips to $\sNgres^{d_1}$ and $\sNgres^{d_2}$. MCS move 17 may require the addition of many more dips. The dotted lines in Figures~\ref{f:MCS-equiv-2-dotted}, \ref{f:MCS-equiv-3-dotted} and \ref{f:MCS-equiv-explosion-dotted} indicate the locations of dips in $\sNgres^{d_1}$ and $\sNgres^{d_2}$ and the additional dips needed to form $\sNgres^{d}$. The index $j$ denotes the dip $D_j$. Table~\ref{t:add-dips} indicates which dotted lines in Figures~\ref{f:MCS-equiv-2-dotted} and \ref{f:MCS-equiv-3-dotted} represent new dips added to form $\sNgres^{d}$. 

\begin{table}
\centering
\begin{tabular}{|c|l|l|}
\hline
\multicolumn{3}{|c|}{Adding dips to $\sNgres^{d_1}$ and $\sNgres^{d_2}$.} \\
\hline
Move & $\sNgres^{d_1}$ & $\sNgres^{d_2}$ \\ \hline
1 & $j, j+1$ & -- \\
6 & $j+1$ & -- \\
7 - 14 & $j+2$ & $j$ \\
15 & -- & $j$ \\
16 & -- & $j$ \\
\hline
\end{tabular}
\caption{Entries indicate the dotted lines in Figure~\ref{f:MCS-equiv-2-dotted} and Figure~\ref{f:MCS-equiv-3-dotted} that correspond to dips added to form $\sNgresD$.}
\label{t:add-dips}
\end{table}

As we add dips to $\sNgres^{d_1}$ and $\sNgres^{d_2}$, we extend $\saug_{\sMCS_1}$ and $\saug_{\sMCS_2}$ by 0. By Corollary~\ref{lem:extend-by-0} we can keep track of the extensions of $\saug_{\sMCS_1}$ and $\saug_{\sMCS_2}$ after each dip. In fact, the extensions will be occ-simple. We let $\widetilde{\saug}_{\sMCS_1}, \widetilde{\saug}_{\sMCS_2} \in Aug(\sNgres^d)$ denote the extensions of $\saug_{\sMCS_1}$ and $\saug_{\sMCS_2}$. We use the matrix equations in Corollary~\ref{cor:dipped-ch-htpy} and Lemma~\ref{lem:dipped-ch-htpy} to construct chain homotopies between $\widetilde{\saug}_{\sMCS_1}$ and $\widetilde{\saug}_{\sMCS_2}$. In particular, for all MCS moves except move 6 and 17, the chain homotopy $H$ is given in Table~\ref{t:MCS-moves}. The chain homotopy $H$ for MCS move 6 sends all of the crossings to $0$ except in the case of $A_{j-1}$ and $A_j$, where:

\begin{equation*}
\sH(A_{j-1}) = \saug_1 (B_{j-1}) +  \saug_2 (B_{j-1}) +  \saug_1 (B_{j-1}) \widetilde{\saug_2} (B_{j-1}) \mbox{, and }
\end{equation*}
\begin{equation*}
\sH(A_j) = \sH(A_{j-1}) +  \saug_2 (B_{j-1}) + \sH(A_{j-1}) \saug_2 (B_{j-1}).
\end{equation*}

The case of MCS move 17 is slightly more complicated. In particular, just to the left of the region of the MCS move, the chain complex $(C_j, \df_j)$ in $\sMCS_1$ and $\sMCS_2$ has a pair of generators $y_l < y_k$ such that $|y_l| = |y_k| + 1$. Let $y_{u_1} < y_{u_2} < \hdots < y_{u_s}$ denote the generators of $C_{j}$ satisfying  $\langle \df y | y_{k} \rangle = 1$. Let  $y_{v_r} <  \hdots < y_{v_1} < y_i$ denote the generators of $C_{j}$ appearing in $\df_j y_{l}$. MCS move 17 introduces the handleslide marks $E_{k, v_r}, \hdots E_{k, v_1}, E_{u_1, l}, \hdots, E_{u_s, l}$. We assume that $\sMCS_2$ includes these marks and $\sMCS_1$ does not. Then we introduce $r + s$ dips to the right of $D_j$ in $\sNgres^{d_1}$. As we add dips to $\sNgres^{d_1}$, we extend $\saug_{\sMCS_1}$ by 0. By Corollary~\ref{lem:extend-by-0} we can keep track of the extensions of $\saug_{\sMCS_1}$ after each dip. The chain homotopy $H$ between $\widetilde{\saug}_{\sMCS_1}$ and $\widetilde{\saug}_{\sMCS_2}$ is defined as follows. For $1 \leq i \leq r + s - 1$, $\sH(A_{j+ i}) = \sum_{k=1}^{i} \saug_{\sMCS_2}(B_{j+k})$, $H (B_{j+r+s}) = \sHSM_{k,l}$, and all of the other crossings are sent to $0$.

Since each $\sH$ defined above has few non-zero entries, it is easy to check that $H$ has support on generators with grading $-1$. Thus we need only check that the extension of $H$ by linearity and the derivation product property satisfies $\widetilde{\saug}_{\sMCS_1} - \widetilde{\saug}_{\sMCS_2} = \sH \circ \df$. In the case of MCS moves 1-16, this is equivalent to checking that $H$ solves the matrix equations in Corollary~\ref{cor:dipped-ch-htpy}. In the case of MCS move 17, we must check that $H$ solves the matrix equations in Lemma~\ref{lem:dipped-ch-htpy}. We leave this task to the reader. 

The maps $\sH$ given above and in Table~\ref{t:MCS-moves} were constructed using the following process. The augmentations $\widetilde{\saug}_{\sMCS_1}$ and $\widetilde{\saug}_{\sMCS_2}$ are equal on crossings outside of the region of the MCS move, thus $\widetilde{\saug}_{\sMCS_1} - \widetilde{\saug}_{\sMCS_2} = 0$ on these crossings. Hence, $\sH = 0$ satisfies $\widetilde{\saug}_{\sMCS_1} - \widetilde{\saug}_{\sMCS_2} = \sH \circ \df$ on these crossings. Within the region of the MCS moves we can write down explicit matrix equations relating $\widetilde{\saug}_{\sMCS_1}$ and $\widetilde{\saug}_{\sMCS_2}$. In particular, $\widetilde{\saug}_{\sMCS_1} = \widetilde{\saug}_{\sMCS_2}$ to the left of the MCS move and, within the region of the move, $\widetilde{\saug}_{\sMCS_1}$ and $\widetilde{\saug}_{\sMCS_2}$ are related by the matrix equations in Lemma~\ref{lem:aug-eq}. From these equations, we are able to define $\sH$.

We now show that $[\widetilde{\saug}_{\sMCS_1}] =  [\widetilde{\saug}_{\sMCS_2}]$ implies $\Psi(\sMCS_1) = \Psi(\sMCS_2)$. Let $v$ denote any dipping/undipping path from $\sNgres^{d}$ to $\sNgres$. By Lemma~\ref{lem:dip-path}, the definition of $\Psi : \sDMCS \to Aug^{ch}(\sNgres)$ is independent of dipping/undipping paths. We calculate $\Psi(\sMCS_1)$ (resp. $\Psi(\sMCS_2)$) using the path that travels from $\sNgres^{d_1}$ (resp. $\sNgres^{d_2}$) to $\sNgres$ by adding dips as specified above to create $\sNgres^{d}$ and then traveling along $v$. The first segment of this path maps $\saug_{\sMCS_1}$ (resp. $\saug_{\sMCS_2}$) to $\widetilde{\saug}_{\sMCS_1}$ (resp. $ \widetilde{\saug}_{\sMCS_2}$). Since $\widetilde{\saug}_{\sMCS_1} \schequiv \widetilde{\saug}_{\sMCS_2}$, the stable-tame isomorphism associated to the path $v$ maps $\widetilde{\saug}_{\sMCS_1}$ and $ \widetilde{\saug}_{\sMCS_2}$ to the same chain homotopy class. Thus, $\Psi(\sMCS_1) = \Psi(\sMCS_1)$, as desired. 
\end{proof}

Finally, note that Lemma~\ref{lem:mcs-equiv-aug-equiv} and Theorem~\ref{lem:psi-surjectivity} imply that the map $\widehat{\Psi} : \sDMCSeq \to Aug^{ch}(\sNgres)$ defined by $\widehat{\Psi} ([\sMCS]) = \Psi(\sMCS)$ is a well-defined, surjective map. Thus we have proven Theorem~\ref{thm:big-map}.

\begin{table}
\centering
\begin{tabular}{|c|l|l|}
\hline
\multicolumn{3}{|c|}{Chain Homotopy $\sH$ between $\saug_1$ and $\saug_2$.} \\
\hline
Move & $\sH (A_j)$ & $\sH(A_{j+1})$ \\ \hline
1 & $\saug_2(B_j)$ & 0 \\
2 & $\saug_1(B_j) + \saug_2(B_j)$ & 0 \\
3 & $\saug_1(B_j) + \saug_2(B_j)$ & 0 \\
4 & $\saug_1(B_j) + \saug_2(B_j)$ & 0 \\
5 & $\saug_1(B_j) + \saug_2(B_j)$ & 0 \\
7 & $\saug_1(B_j)$ & $\saug_1(B_j)$ \\
8 & $\saug_1(B_j)$ & $\saug_2(B_{j+2})$ \\
9 & $\saug_1(B_j)$ & $\saug_2(B_{j+2})$ \\
10 & $\saug_1(B_j)$ & $\saug_1(B_j)$ \\
11 & $\saug_1(B_j)$ & $\saug_2(B_{j+2})$ \\
12 & $\saug_1(B_j)$ & $\saug_2(B_{j+2})$ \\
13 & $\saug_1(B_j)$ & $\saug_2(B_{j+2})$ \\
14 & $\saug_1(B_j)$ & $\saug_2(B_{j+2})$ \\
15 & $\saug_1(B_j)$ & 0 \\
16 & $\saug_1(B_j)$ & 0 \\
\hline
\end{tabular}
\caption{In all cases, $\sH = 0$ on all other crossings.}
\label{t:MCS-moves}
\end{table}

\subsection{Two standard forms for MCSs}
\label{sec:std-forms}

An MCS $\sMCS$ encodes both a graded normal ruling $\sgnr_{\sMCS}$ on $\sfront$ and an augmentation $\saug_{\sMCS}$ on $\sNgresD$. In this section, we formulate two algorithms using MCS moves that highlight these connections. When combined the algorithms provide a map from $\sDMCS$ to $Aug(\sNgres)$ which, when passed to equivalence classes, corresponds to $\widehat{\Psi}$. The upside is that this map does not require dipped diagrams or keeping track of chain homotopy classes of augmentations. We are also able to reprove the many-to-one correspondence between augmentations and graded normal rulings found in \cite{Ng2006}.

\subsubsection{Sweeping collections of handleslide marks}
\label{sec:sweeping}

The algorithms we define sweep handleslide marks from left to right in $\sfront$. During this process, we let $V$ denote the collection of handleslide marks being swept. Suppose $V$ sits away from the crossings and cusps of $\sfront$. Near $V$, label the strands of $\sfront$ from bottom to top with the integers $1, \hdots, n$. We define $v_{k,l} \in \zz_2$ to be $1$ if and only if the collection $V$ includes a handleslide mark between strands $k$ and $l$, $k > l$. We abuse notation slightly by allowing $v_{k,l}$ to denote the handleslide mark as well. We order the marks in $V$ as follows. If $k' < k$ and $v_{k,l} = v_{k',l'} = 1$, then $v_{k',l'}$ appears to the left of $v_{k,l}$ in $V$. If $l' < l$ and $v_{k,l} = v_{k,l'} = 1$, then $v_{k,l'}$ appears to the left of $v_{k,l}$ in $V$. The following moves are used in the $S\bar{R}$ and $\sAugform$-algorithms and describe interactions between $V$ and handleslide marks, cusps, and crossings. 

\begin{figure}
\centering
\includegraphics[scale=.4]{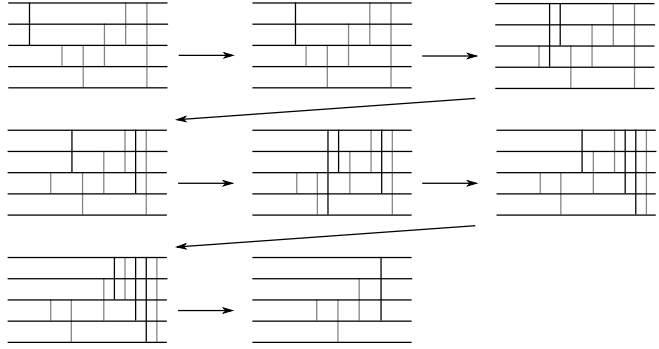}
\caption[Sweeping into $V$ from the left.]{Sweeping a handleslide mark into $V$ from the left. The grey marks are those contained in the original $V$.}
\label{f:SR-algorithm-left-hs}
\end{figure}

{\bf Move I:} (Sweeping a handleslide from the left of $V$ into/out of $V$.) Suppose a handleslide mark $h$ sits to the left of $V$ between strands $k$ and $l$, with $k > l$. We use MCS moves 2 - 6 to commute $h$ past the handleslides in $V$. In order to commute $h$ past a handleslide of the form $v_{l, i}$, we must use MCS move 6 and, thus, create a new handleslide mark $h'$ between strands $k$ and $i$. The ordering on $V$ allows us to commute $h'$ to the right so that it becomes properly ordered with the other marks in $V$. If $v_{k,i}=1$, $h'$ cancels the handleslide $v_{k,i}$ by MCS move 1. We continue this process of commuting $h$ past each of the strands that begin on strand $l$. Once we have done so, we can commute $h$ past the other marks in $V$, without introducing new marks, until it becomes properly ordered in $V$; see Figure~\ref{f:SR-algorithm-left-hs}. Since each of MCS moves 1 - 6 is reversible, we are able to sweep an existing handleslide mark $v_{k,l}$ out of $V$ to the left using the same process.

{\bf Move II:} (Sweeping a handleslide from the right of $V$ into/out of $V$.) The process to sweep a handleslide from the right of $V$ into/out of $V$ is analogous to the process in move I.
	
{\bf Move III:} (Sweeping $V$ past a crossing $q$ between strands $i+1$ and $i$ assuming $v_{i+1,i} = 0$.) Sweep all of the handleslides of $V$ past $q$ using MCS moves 7 - 10 and, if necessary, reorder the handleslide marks so that $V$ is properly ordered.
 
{\bf Move IV:} (Sweeping $V$ past a right cusp between strands $i+1$ and $i$ assuming there are no handleslide marks in $V$ beginning or ending on $i+1$ or $i$.) The handleslides in $V$ sweep past the right cusp using MCS moves 12 and 14. $V$ stays properly ordered during this process. 

\subsubsection{The $S\bar{R}$-form of an MCS}
\label{sec:SR-form}

Given an MCS $\sMCS$, the $S\bar{R}$-algorithm results in an MCS with handleslide marks near switched crossings and some graded returns of $N_{\sMCS}$ and, away from these crossings, the chain complexes are in simple form. The resulting MCS is called the \emph{$S\bar{R}$-form of $\sMCS$}. The $S \bar{R}$-form was inspired by discussions at the September 2008 AIM workshop, the work of Fuchs and Rutherford in \cite{Fuchs2008}, and the work of Ng and Sabloff in \cite{Ng2006}.

\begin{defn}
Given an MCS $\sMCS$, we say a switch or return in $N_{\sMCS}$ is \emph{simple} if the ordered chain complexes of $\sMCS$ are in simple form before and after the switch or return and in a neighborhood of the crossing the MCS is arranged as in Figure~\ref{f:marked-switches-occ} or Figure~\ref{f:marked-returns}. A simple return is \emph{marked} if it corresponds to one of the three arrangements in the top row of Figure~\ref{f:marked-returns}.

An MCS is in \emph{$S\bar{R}$-form} if all of its switched crossings and graded returns are simple, and, besides the handleslides near simple switches and returns, no other handleslides appear in $\sMCS$
\end{defn}

\begin{figure}
\centering
\includegraphics[scale=.5]{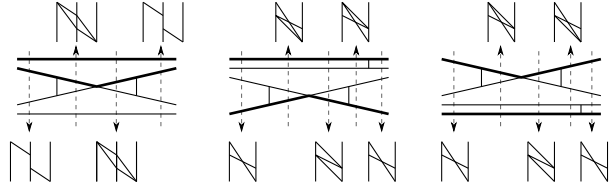}
\caption[Simple switches.]{Simple switches.}
\label{f:marked-switches-occ}
\end{figure}

\begin{figure}
\centering
\includegraphics[scale=.5]{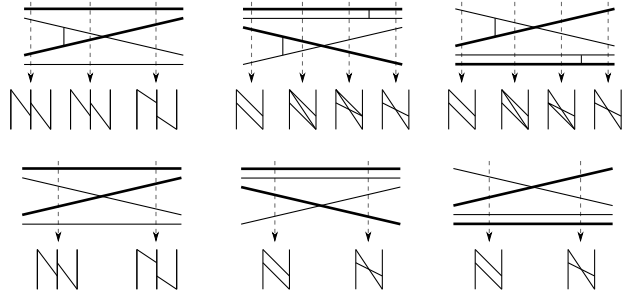}
\caption[Simple returns.]{Simple returns.}
\label{f:marked-returns}
\end{figure}

\begin{thm}
	\label{thm:SR-form}
	Every $\sMCS \in \sDMCS$ is equivalent to an MCS in $S\bar{R}$-form.
\end{thm}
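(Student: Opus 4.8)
The plan is to prove Theorem~\ref{thm:SR-form} by an explicit left-to-right sweeping algorithm assembled from the MCS moves, using the handleslide-sweeping operations (Moves I--IV) introduced in Section~\ref{sec:sweeping}. The guiding principle is Barannikov's Lemma~\ref{lem:Barannikov}: each ordered chain complex in $\sMCS$ has a \emph{unique} simple form reachable by handleslides, so the target $S\bar R$-form should be the equivalent MCS whose complexes are simple everywhere except in small neighborhoods of the switched crossings and certain graded returns of the ruling $\sgnr_{\sMCS}$ (Lemma~\ref{lem:MCS-ruling}), where the mandated local pictures of Figures~\ref{f:marked-switches-occ} and~\ref{f:marked-returns} appear. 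Since each of Moves I--IV is built from the MCS moves, Proposition~\ref{p:MCS-equiv} guarantees that every intermediate diagram is a legitimate MCS equivalent to $\sMCS$.

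First I would set up a sweep of $\sfront$ from left to right and maintain the invariant that, to the left of the sweep line, the MCS is already in $S\bar R$-form with its final complex in simple form, while all handleslides not yet accounted for are gathered into a single ordered collection $V$ sitting just to the right, chosen so that the complex immediately to the left of $V$ is simple. At the leftmost cusp $(C_1,\partial_1)$ is already simple with no handleslides, which gives the base case. In a region of parallel strands I use Moves I and II to absorb any stray explicit handleslide marks into $V$ and Lemma~\ref{lem:Barannikov} to keep the complex simple away from $V$.

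The inductive step is a case analysis on the type of the next singularity $x_j$, read off from the pairing of the simple form via the proof of Lemma~\ref{lem:MCS-ruling}. At a \emph{birth}, the simple-births hypothesis means the two new strands enter in simple form and $V$ is unaffected. At a \emph{departure} I sweep $V$ across with Move III; this requires the obstructing mark $v_{i+1,i}$ to be absent, which I would isolate as a separate lemma deduced from the fact that at a departure the two crossing strands are not $\partial$-paired to one another in the simple form. At a \emph{switch} (necessarily of grading $0$) I use Moves I--III to rearrange the marks of $V$ involving the two crossing strands into the simple-switch configuration of Figure~\ref{f:marked-switches-occ}, sweeping the remaining marks past the crossing. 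At a \emph{return} I likewise arrange a simple return as in Figure~\ref{f:marked-returns}, \emph{marking} it precisely when returning the post-crossing complex to simple form forces a handleslide between the crossing strands. At a \emph{death} I first clear from $V$ any handleslides on the two cusp strands and then sweep the remainder past the right cusp with Move IV. After each step I re-simplify with Lemma~\ref{lem:Barannikov} to restore the invariant.

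The main obstacle will be the discharge of $V$ at switches and returns: one must verify that the accumulated collection can always be massaged into exactly the finite list of allowed configurations in Figures~\ref{f:marked-switches-occ} and~\ref{f:marked-returns}, with no residual handleslides surviving elsewhere, and that the clean-passage claim $v_{i+1,i}=0$ genuinely holds at every departure. Both points reduce to the matrix identities underlying Moves 1--17 together with the precise correspondence between the pairing of the simple form and the departure/return/switch trichotomy; checking these identities case by case, and confirming that the process terminates after the finitely many singularities of $\sfront$, is where the real work lies. I would close by noting that, since births are never disturbed, each intermediate diagram remains in $\sDMCS$, and that the output satisfies the defining conditions of $S\bar R$-form by construction.
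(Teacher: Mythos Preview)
Your overall strategy matches the paper's: sweep a collection $V$ of handleslides left to right and leave $S\bar R$-form behind. The case analyses at births, departures, and returns are close in spirit, but your organization by ruling-type rather than by the value of $v_{i+1,i}$ forces you to prove the auxiliary claim that $v_{i+1,i}=0$ at departures. This claim is true, but not for the reason you give: the crossing strands are never $\partial$-paired at \emph{any} crossing (Remark~\ref{rem:mcs-defn}(2) together with Remark~\ref{rem:simple-form}(2)), so that fact alone cannot distinguish departures. The correct reason is that if $v_{i+1,i}=1$, ejecting it to the left and examining the resulting simple picture always produces one of the six configurations of Figure~\ref{f:marked-switches-returns}, which are exactly the three switches and three returns---never a departure. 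The paper sidesteps this entirely by casing on $v_{i+1,i}$ directly and \emph{discovering} whether the crossing is a switch or marked return after the fact.

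The genuine gap is at right cusps. You write that you would ``clear from $V$ any handleslides on the two cusp strands'' and afterward ``re-simplify with Lemma~\ref{lem:Barannikov}.'' But Barannikov's lemma concerns the differential of a single complex; it does not remove handleslide \emph{marks} from the diagram. Once you eject the marks touching strands $i,i+1$ to the left of $V$ and sweep $V$ past the cusp with Move IV, those ejected marks are stranded between the previous singularity and the cusp, and the MCS is no longer in $S\bar R$-form to your left. The paper spends the bulk of its proof eliminating these stranded marks one by one using MCS move 17: for each mark $h_1$ ending on strand $i$, either it is directly killed by move 17, or a companion mark must first be pushed in via move 15, commuted into position via moves 2--6, and then the pair is killed by move 17---and this may spawn further marks that must be absorbed back into $V$. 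This elimination procedure at right cusps, not the handling of switches and returns, is where the real technical work lies, and your sketch does not address it. A smaller omission: at switches the paper does not merely rearrange $V$ but \emph{introduces} new canceling pairs of handleslides to the right of the crossing (via move 1) and absorbs one of each pair into $V$; this is how the simple-switch picture of Figure~\ref{f:marked-switches-occ} is actually produced while keeping the complex simple on both sides.
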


\begin{proof}

We define an algorithm to sweep handleslide marks of $\sMCS$ from left to right in the front projection, beginning at the left-most cusp and working to the right. As we sweep handleslides marks to the right, we ensure that the MCS we leave behind is in $S\bar{R}$-form. By definition, the first chain complex of $\sMCS$ is in simple form and so the collection $V$ begins empty.

We consider each case of $V$ encountering a handleslide mark, cusp, or crossing. In the case of a handleslide mark, we incorporate the mark into $V$ using move II. In the case of a left cusp, we sweep $V$ past the cusp using MCS moves 11 and 13. In either of these cases, if the MCS is in $S\bar{R}$-form to the left of $V$ before the handleslide mark or left cusp, then it is also in $S\bar{R}$-form after we sweep past. 

Moves III and IV ensure that in certain situations we may sweep $V$ past a crossing or right cusp so that the resulting MCS is in $S\bar{R}$-form to the left of the singularity. Suppose we arrive at a crossing $q$ between strands $i+1$ and $i$ and $v_{i+1,i} = 1$. We use the following algorithm to ensure that $q$ is a simple switch or simple return after we push $V$ past $q$. We sweep the handleslide $v_{i+1,i}$ to the left of $V$ using move I. We let $h$ denote this handleslide. Now $v_{i+1,i} = 0$ and so we sweep $V$ past $q$ using move III. If we suppose the ordered chain complex of $\sMCS$ is in simple form just before $h$, then around $h$, $\sMCS$ looks like one of the 6 cases in Figure~\ref{f:marked-switches-returns}. The top three cases indicate that $q$ is a switch and the bottom three cases indicate that $q$ is a return. 

\begin{enumerate}
	\item[\textbf{Switches:}] 
	If $q$ is a switch, we use MCS move 1 to introduce two handleslides just to the right of the crossing between strands $i+1$ and $i$. If $q$ is a switch of type (2) or (3), we also introduce two handleslides between the companion strands of strands $i+1$ and $i$. Finally, we move the right-hand mark of each new pair of marks into $V$ using move I. The resulting MCS now has a simple switch at $q$; see Figure~\ref{f:marked-switches-occ}. 	
	\item[\textbf{Returns:}] If $q$ is a return of type (1), then $q$ is a simple return as in Figure~\ref{f:marked-returns}. If $q$ is a return of type (2) or (3), we use MCS move 1 to introduce two handleslides just to the right of the crossing between the companion strands of strands $i+1$ and $i$ and use move I to sweep the right-hand mark into $V$. The resulting MCS now has a simple return at $q$; Figure~\ref{f:marked-returns}. 			
\end{enumerate}

\begin{figure}
\centering
\includegraphics[scale=.5]{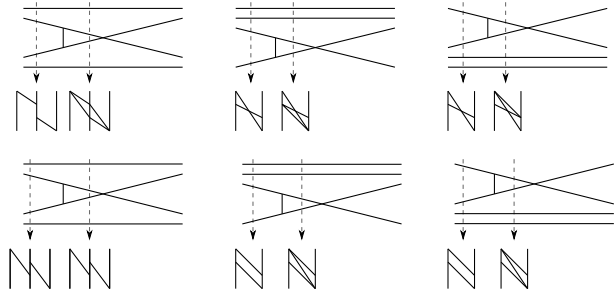}
\caption[Sweeping $V$ past a crossing with a handleslide.]{Sweeping $V$ past a crossing with a handleslide immediately to the left. The top row will be switches and the bottom will be marked returns. In each row, we label the cases (1), (2), (3) from left to right.}
\label{f:marked-switches-returns}
\end{figure} 

Suppose $V$ arrives at a right cusp between strands $i+1$ and $i$ and there are handleslide marks in $V$ beginning or ending on $i+1$ or $i$. Suppose also that the MCS is in $S \bar{R}$-form to the left of $V$. By applying move I possibly many times, we sweep the marks that end on strand $i$, begin on strand $i+1$, begin on strand $i$, or end on strand $i+1$ out of $V$. The order in which these moves occur corresponds to the order given in the previous sentence. The Maslov potentials on strands $i+1$ and $i$ differ by one, thus $v_{i+1,i}=0$. As a consequence, these moves do not introduce new handleslides ending or beginning on $i+1$ or $i$. Next we use move IV to sweep $V$ past the right cusp. 

Now we remove the marks that have accumulated at the right cusp. We remove the marks ending on $i + 1$ and beginning on $i$ using MCS moves 15 and 16. Let $h_1, \hdots, h_n$ denote the handleslides ending on strand $i$, ordered from left to right, and let $g_1, \hdots, g_m$ denote the handleslides beginning on strand $i+1$. By assumption, the MCS is simple just to the left of $h_1$. Thus just before and after $h_1$ the pairing and MCS must look like one of the three cases in Figure~\ref{f:SR-algorithm-death-1} (a). If $h_1$ is of type 1 or 2, we can eliminate it using MCS move 17. Suppose $h_1$ is of type 3, $h_1$ begins on strand $l$, and generator $l$ is paired with generator $k$ in the ordered chain complex just before $h$. Introduce a new handleslide, denoted $h'$, between strands $k$ and $i+1$ using MCS move 15. Move $h_1'$ past each of $g_1, \hdots, g_m$ using MCS move 6. Each time we perform such a move, we create a new handleslide mark which is then swept past the cusp using MCS move 12 and incorporated into $V$ using move I. Commute $h_1'$ past $h_2, \hdots, h_m$ using MCS moves 2 - 4. Now $h_1$ and $h_1'$ look like Figure~\ref{f:SR-algorithm-death-1} (b) and we can remove both using MCS move 17.

\begin{figure}
\labellist
\small\hair 2pt
\pinlabel {(a)} [tl] at 207 9
\pinlabel {(b)} [tl] at 580 9
\endlabellist
\centering
\includegraphics[scale=.5]{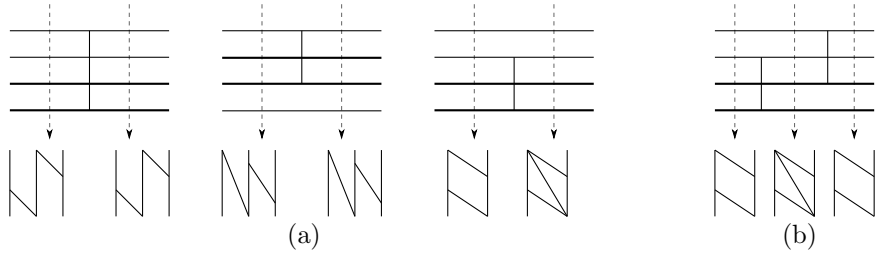}
\caption[Removing a handleslide at a cusp.]{(a) The 3 possible local neighborhoods of the handleslide $h_1$. (b) MCS move 17 at a right cusp after introducing a new handleslide mark. In both (a) and (b), the two dark lines correspond to the strands entering the right cusp.}
\label{f:SR-algorithm-death-1}
\end{figure}

Using this procedure, we eliminate all of $h_1, \hdots, h_n$. The argument to eliminate $g_1, \hdots, g_m$ is essentially identical. Either we can eliminate $g_1$ using MCS move 17 or we can push on a handleslide using MCS move 15 and then eliminate the pair with MCS move 17. After eliminating $h_1, \hdots, h_n$ and $g_1, \hdots, g_m$, the MCS is in simple form just before and just after the right cusp. Hence, the resulting MCS is in $S\bar{R}$-form to the left of $V$. 

As we carry out this algorithm from left to right, each time we encounter a handleslide, crossing or cusp we are able to sweep $V$ past so that the MCS we leave behind is in $S\bar{R}$-form.
\end{proof}

As a corollary, we have the following:

\begin{cor}
	\label{cor:gnr-SR-form}
	Let $\sgnr$ be a graded normal ruling on $\sfront$ with switched crossings $q_1, \hdots, q_n$ and graded returns $p_1, \hdots, p_m$. Then $\sgnr$ is the graded normal ruling of $2^{m}$ MCSs in $S\bar{R}$-form. Hence, $\sgnr$ is the graded normal ruling of at most $2^{m}$ MCS classes in $\sDMCSeq$. 
\end{cor}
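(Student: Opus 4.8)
The plan is to set up a bijection between the set of MCSs in $S\bar{R}$-form whose associated graded normal ruling is $\sgnr$ and the power set $2^{\{p_1,\ldots,p_m\}}$, where the subset records which of the graded returns are marked. By Remark~\ref{rem:mcs-defn}~(1) an MCS is uniquely recovered from its marked front projection, so it suffices to understand which handleslide configurations on $\sfront$ can arise from an $S\bar{R}$-form MCS with ruling $\sgnr$. Throughout, the underlying front $\sfront$ and its singularities are fixed, so only the handleslide data varies.

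First I would show that the forward map $\Phi$, from the set of $S\bar{R}$-form MCSs with ruling $\sgnr$ to $2^{\{p_1,\ldots,p_m\}}$, sending an MCS to its set of marked returns, is injective; equivalently, the marked front projection is determined by $\sgnr$ together with the marking data. The key point is that, in $S\bar{R}$-form, away from the switches and the marked returns every ordered chain complex is in simple form, and by Lemma~\ref{lem:Barannikov} and Definition~\ref{defn:pairing} a simple-form complex is determined by its pairing. As in Lemma~\ref{lem:MCS-ruling}, the pairing on each tangle $\sfront \cap (x_i, x_{i+1})$ is exactly the companion-strand pairing read off from $\sgnr$, so these complexes are fixed by $\sgnr$ alone (using also Remark~\ref{rem:simple-form}). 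The remaining handleslide marks are, by the definition of $S\bar{R}$-form, confined to neighborhoods of the switches, whose locations and configurations are determined by $\sgnr$ (matching Figure~\ref{f:marked-switches-occ}), and of the marked graded returns, whose configurations match the top row of Figure~\ref{f:marked-returns} and are likewise determined by $\sgnr$ once the return type is known. Hence two $S\bar{R}$-form MCSs with ruling $\sgnr$ that mark the same subset of returns have identical marked front projections, so $\Phi$ is injective.

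Next I would establish surjectivity of $\Phi$, i.e. that every subset $S \subseteq \{p_1,\ldots,p_m\}$ is realized. For this I would build the marked front projection explicitly: place on $\sfront$ the simple-form complexes dictated by the pairing of $\sgnr$ between consecutive singularities, the forced switch marks at each switch, and the marking configuration of Figure~\ref{f:marked-returns} at each return in $S$ (and no marks at the graded returns outside $S$, at the departures, or at the non-graded crossings). Reading left to right and invoking Definitions~\ref{defn:MCS-maps} and \ref{defn:MCS}, one checks that this marked front projection reconstructs a genuine MCS: the cusps give births and deaths, the unswitched and switched crossings give interchanges of critical values combined with the prescribed handleslides, and since switches have grading $0$ and the simple forms glue consistently across each crossing, all chain-isomorphism and grading constraints are satisfied. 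By construction its ruling is $\sgnr$ and its marked returns are exactly $S$. Combining injectivity and surjectivity yields exactly $|2^{\{p_1,\ldots,p_m\}}| = 2^m$ MCSs in $S\bar{R}$-form with ruling $\sgnr$.

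For the final assertion, Theorem~\ref{thm:SR-form} shows that every MCS class in $\sDMCSeq$ with ruling $\sgnr$ contains a representative in $S\bar{R}$-form, and by Proposition~\ref{prop:gnr-uniq} this representative still has ruling $\sgnr$. Thus choosing such a representative gives a surjection from the $2^m$ MCSs in $S\bar{R}$-form with ruling $\sgnr$ onto the set of MCS classes with ruling $\sgnr$, so there are at most $2^m$ such classes. I expect the main obstacle to be the injectivity (determination) step: carefully justifying that the simple-form complexes between singularities, the switch marks, and the marked-return marks are each rigidly fixed by $\sgnr$ and the marking data, with no hidden freedom, which relies on the uniqueness of simple form in Lemma~\ref{lem:Barannikov} and the invariance statements in Remark~\ref{rem:simple-form}.
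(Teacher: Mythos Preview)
Your proposal is correct and follows essentially the same approach as the paper: set up the correspondence between $S\bar{R}$-form MCSs with ruling $\sgnr$ and subsets of $\{p_1,\ldots,p_m\}$ via the set of marked returns, argue it is a bijection, and then invoke Theorem~\ref{thm:SR-form} and Proposition~\ref{prop:gnr-uniq} for the bound on classes. The paper's proof is terser---it simply asserts that any subset $R\subset\{p_1,\ldots,p_m\}$ \emph{determines} an $S\bar{R}$-form MCS by placing the handleslide marks dictated by Figures~\ref{f:marked-switches-occ} and~\ref{f:marked-returns}---whereas you spell out more carefully why the remaining data (the simple-form complexes between singularities and the switch configurations) is rigidly fixed by $\sgnr$, using Lemma~\ref{lem:Barannikov} and Remark~\ref{rem:simple-form}; this extra care is warranted and fills in exactly what the paper leaves implicit.
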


\begin{proof}
By Lemmata~\ref{lem:MCS-ruling} and \ref{prop:gnr-uniq}, each MCS equivalence class has an associated graded normal ruling and, by Theorem~\ref{thm:SR-form}, each MCS equivalence class has at least one representative in $S\bar{R}$-form. Thus, it is sufficient to show there are exactly $2^{m}$ MCSs in $S\bar{R}$-form with graded normal ruling $\sgnr$. The set of marked returns of any MCS in $S\bar{R}$-form with graded normal ruling $\sgnr$ gives a subset of $\{ p_1, \hdots, p_m \}$. Conversely, by adding handleslide marks to $\sfront$ as dictated by $\sgnr$, Figure~\ref{f:marked-switches-occ}, and Figure~\ref{f:marked-returns}, any $ R \subset \{ p_1, \hdots, p_m \}$ will determine an MCS in $S\bar{R}$-form with graded normal ruling $\sgnr$ and marked returns corresponding to $R$.
\end{proof}

\subsubsection{The $\sAugform$-form of an MCS}

The $\sAugform$-algorithm is similar to the sweeping process in the $S\bar{R}$-algorithm. However, we no longer introduce new handleslide marks after crossings. The resulting MCS, called the $\sAugform$-form, has marks to the left of some graded crossings and nowhere else. The set of MCSs in $\sAugform$-form are in bijection with the augmentations on $\sNgres$. In the algorithm, we still have to address the issue of mark accumulating at right cusps. If we begin with an MCS in $S\bar{R}$-form, then this is easy to do.

\begin{defn}
An MCS $\sMCS \in \sDMCS$ is in $\sAugform$-form if:
\begin{enumerate}
	\item Outside a small neighborhood of the crossings of $\sfront$, $\sMCS$ has no handleslide marks; and
	\item Within a small neighborhood of a crossings $q$, either $\sMCS$ has no handleslide marks or it has a single handleslide mark to the left of $q$ between the strands crossings at $q$.
\end{enumerate}
The left figure in Figure~\ref{f:mcs-aug-trefoil} is in $\sAugform$-form.
\end{defn}

\begin{thm}
	\label{thm:C-form}
	Every $\sMCS \in \sDMCS$ is equivalent to an MCS in $\sAugform$-form.
\end{thm}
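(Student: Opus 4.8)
The plan is to run a second left-to-right sweep of handleslide marks, closely modeled on the sweep used to prove Theorem~\ref{thm:SR-form}, but governed by a different rule at crossings: we never introduce new marks to the right of a crossing, and the only marks left behind are single marks sitting immediately to the left of a crossing between its two strands. To keep the sweep under control it is cleanest to start from a normal form, so the first step is to invoke Theorem~\ref{thm:SR-form} and assume $\sMCS$ is already in $S\bar{R}$-form. Every handleslide of $\sMCS$ then belongs to one of the standard local pictures of a simple switch (Figure~\ref{f:marked-switches-occ}) or a marked simple return (top row of Figure~\ref{f:marked-returns}), and away from these the ordered chain complexes are in simple form.

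Next I would sweep a collection $V$ of handleslide marks from the left-most cusp rightward, using Moves I--IV of Section~\ref{sec:sweeping}. At a left cusp we push $V$ across using MCS moves 11 and 13, and any handleslide met along the way is absorbed into $V$ by Move II. When $V$ reaches a crossing $q$ between strands $i+1$ and $i$, we first use Move I to pull the mark $v_{i+1,i}$ (if present) out of $V$ so that it comes to rest immediately to the left of $q$, and then use Move III to commute the remaining marks of $V$ past $q$; in contrast to the $S\bar{R}$-algorithm we do not reintroduce any mark to the right of $q$ or on the companion strands. Thus the only residue left at $q$ is a single mark between its two strands, exactly as the definition of $\sAugform$-form permits, and the auxiliary marks that the $S\bar{R}$-form records to the right of a switch or return are simply carried onward inside $V$.

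The delicate step, and the one I expect to be the main obstacle, is the passage of $V$ across a right cusp: before applying Move IV we must eliminate every mark in $V$ that begins or ends on one of the two strands $i+1,i$ entering the cusp, and a priori such marks could accumulate without bound. This is precisely where beginning in $S\bar{R}$-form pays off. Since the complex immediately to the left of the cusp is in simple form, the two cusp strands are paired (Remark~\ref{rem:simple-form}~(2)) and the local picture at each offending mark is one of the three cases of Figure~\ref{f:SR-algorithm-death-1}(a); hence the clearing argument already carried out in the proof of Theorem~\ref{thm:SR-form} applies verbatim. Each such mark is either removed directly by MCS move 17, or first paired with a new mark introduced by MCS move 15 and then removed by move 17, with the extra marks created in the process commuting past the cusp by move 12 and re-entering $V$ via Move I. One checks that this procedure terminates and leaves the complex simple on both sides of the cusp, so no handleslides survive outside crossing neighborhoods.

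Finally, once the sweep passes the right-most cusp the collection $V$ is empty, and the only handleslides remaining are the single marks deposited immediately to the left of crossings. Because every step was an MCS move, the resulting marked front projection is an MCS equivalent to the original, and by construction it satisfies both conditions in the definition of $\sAugform$-form. I would close by noting that the promised bijection with $Aug(\sNgres)$ then matches this output against the augmentation construction of Lemma~\ref{lem:psi-surjectivity}, whose associated marked front (Figure~\ref{f:mcs-aug-trefoil}) is manifestly in $\sAugform$-form.
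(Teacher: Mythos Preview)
Your overall architecture is right and matches the paper: first pass to $S\bar{R}$-form, then a second left-to-right sweep that deposits a single mark $v_{i+1,i}$ to the left of each crossing and carries the rest of $V$ onward. The gap is precisely at the right-cusp step, and it is not a matter of detail.

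You claim that the clearing argument from the proof of Theorem~\ref{thm:SR-form} applies verbatim because ``the complex immediately to the left of the cusp is in simple form.'' But in the $S\bar{R}$-algorithm that argument works because the MCS to the \emph{left of $V$} is kept in $S\bar{R}$-form throughout the sweep, so after you push the offending marks out of $V$ to the left (Move~I) the complex just before $h_1$ is simple and the trichotomy of Figure~\ref{f:SR-algorithm-death-1}(a) is available. In the $\sAugform$-sweep you have deliberately abandoned this invariant: once you leave behind a mark at a switch without reinstating the companion marks to its right, the chain complexes to the left of $V$ are no longer simple, and Figure~\ref{f:SR-algorithm-death-1}(a) does not apply. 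So the verbatim transplant fails.

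The paper's fix is to exploit the other side: everything to the \emph{right} of $V$ is still in untouched $S\bar{R}$-form. One therefore sweeps the offending marks out of $V$ to the \emph{right} using Move~II, so that the complex between the rightmost such mark $h_1$ and the cusp is simple, and then runs a modified elimination (Figure~\ref{f:C-algorithm-death-1}): in the type~(3) case one uses move~17 to introduce two marks to the left of $h_1$, cancels $h_1$ against one of them by move~1, and removes the other by move~15. No extra marks are fed back into $V$. This is a genuinely different (and cleaner) procedure from the one you invoke, and it is exactly what the loss of left-side simplicity forces.
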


\begin{proof}

Let $\sMCS \in \sDMCS$ and use the $S\bar{R}$-algorithm to put $\sMCS$ in $S\bar{R}$-form. The $\sAugform$-algorithm sweeps handleslide marks from left to right in the marked front of $\sMCS$. As in the $S\bar{R}$-algorithm, we will use $V$ to keep track of handleslide marks and the moves I - IV to sweep $V$ past handleslides, crossings and cusps. In the case of handleslides and left cusps, the $\sAugform$-algorithm works the same as in the $S\bar{R}$-algorithm.

Suppose $V$ arrives at a crossing $q$ between strands $i+1$ and $i$. If $v_{i+1,i} = 0$, then we sweep $V$ past $q$ as described in move III. If $v_{i+1,i} = 1$, then we sweep the handleslide $v_{i+1,i}$ to the left of $V$ and then sweep $V$ past $q$ using move III.

Suppose $V$ arrives at a right cusp $q$ between strands $i+1$ and $i$. If no handleslides begin or end on $i+1$ or $i$, then we sweep $V$ past the right cusp using move IV. Otherwise, we sweep the handleslides that end on strand $i$, begin on strand $i+1$, begin on strand $i$, or end on strand $i+1$ out of $V$ to the right using move II. The order in which these moves occur corresponds to the order given in the previous sentence. Let $h_1, \hdots, h_n$ denote the handleslides ending on strand $i$, ordered from right to left, and let $g_1, \hdots, g_m$ denote the handleslides beginning on strand $i+1$, also ordered from right to left. Since the MCS is in $S\bar{R}$-form to the right of $h_1$, we know that the chain complex between $h_1$ and the right cusp is simple. Thus, just before and after $h_1$ the pairing and MCS must look like one of the three cases in Figure~\ref{f:C-algorithm-death-1} (a). If $h_1$ is of type (1) or (2) in Figure~\ref{f:C-algorithm-death-1} (a), we can eliminate it using MCS move 17. Suppose $h_1$ is of type (3), $h_1$ begins on strand $l$ and in the ordered chain complex just after $h_1$ generator $l$ is paired with generator $k$. Use MCS move 17 to introduce two new handleslides to the left of $h_1$; see Figure~\ref{f:C-algorithm-death-1} (b). One new handleslide is between strands $l$ and $i$. We remove $h_1$ and this new handleslide using MCS move 1. The second handleslide is between strands $k$ and $i+1$ and can be removed using MCS move 15. We eliminate all of $h_1, \hdots, h_n$ using this process. The argument to eliminate $g_1, \hdots, g_m$ is essentially identical. After eliminating $h_1, \hdots, h_n$ and $g_1, \hdots, g_m$, we use MCS move 15 and 16 to remove all of the handleslides ending on $i + 1$ and beginning on $i$. We sweep $V$ past the right cusp using move IV and continue to the right. 

As we progress from left to right in $\sfront$, marks remain to the immediate left of some graded crossings. Hence, after sweeping $V$ past the right-most cusp, we are left with an MCS in $\sAugform$-form.
		
\begin{figure}
\labellist
\small\hair 2pt
\pinlabel {(a)} [tl] at 200 9
\pinlabel {(b)} [tl] at 610 9
\endlabellist
\centering
\includegraphics[scale=.4]{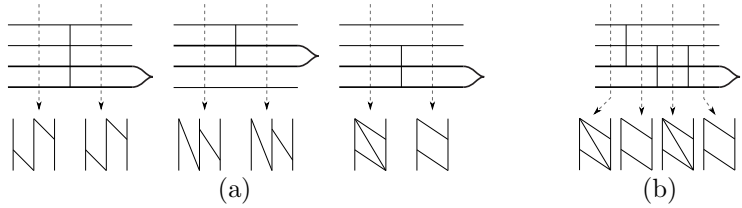}
\caption[Removing a handleslide at a cusp.]{(a) The 3 possible local neighborhoods of the handleslide $h_1$. (b) MCS move 17 at a right cusp introducing two new handleslides. The two dark lines correspond to the strands entering the right cusp.}
\label{f:C-algorithm-death-1}
\end{figure}
		
\end{proof}

The construction in the proof of Lemma~\ref{lem:psi-surjectivity} assigns to an augmentation $\saug \in Aug(\sNgres)$ an MCS $\sMCS_{\saug}$. The MCS $\sMCS_{\saug}$ is in $\sAugform$-form. In fact, the crossings of $\sMCS_{\saug}$ with handleslide marks to their immediate left correspond to the resolved crossings in $\sNgres$ that are augmented by $\saug$. This process is invertible. Suppose $\sMCS$ is in $\sAugform$-form with handleslide marks to the immediate left of crossings $p_1, \hdots, p_k$. From Lemma~\ref{lem:mcs-aug-dipped}, we have an associated augmentation $\saug_{\sMCS}$ in a sufficiently dipped diagram with dips $D_1, \hdots, D_m$. We may undip $D_1, \hdots, D_m$, beginning with $D_m$ and working to the left, so that the resulting augmentation on $\sNgres$ only augments the crossings corresponding to $p_1, \hdots, p_k$. As a result, we have the following corollary.

\begin{cor}
The set of MCSs in $\sAugform$-form are in bijection with the augmentations on $\sNgres$ and if $\sMCS \in \sDMCS$ is in $A$-form, then $\Psi(\sMCS) = [\saug]$ where $\saug(p)=1$ if and only if $p$ is a marked crossing in $\sMCS$.
\end{cor}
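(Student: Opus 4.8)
The plan is to exhibit explicit maps in both directions between the set of MCSs in $\sAugform$-form and $Aug(\sNgres)$, verify they are mutually inverse, and then read off the identity $\Psi(\sMCS)=[\saug]$ from Definition~\ref{defn:mcs-aug-ch}. First I would use Lemma~\ref{lem:psi-surjectivity} to define a map $\saug \mapsto \sMCS_{\saug}$ from $Aug(\sNgres)$ into $\sDMCS$. The construction there places a handleslide mark immediately to the left of a crossing $p$ precisely when $\saug(p)=1$ and leaves no other marks, so $\sMCS_{\saug}$ is in $\sAugform$-form and its marked crossings are exactly the augmented crossings of $\saug$. For the reverse direction, given $\sMCS$ in $\sAugform$-form with marks to the left of crossings $p_1,\dots,p_k$, I would apply Lemma~\ref{lem:mcs-aug-dipped} to produce the minimal occ-simple augmentation $\saug_{\sMCS}$ on a sufficiently dipped diagram $\sNgresD$, and then undip $D_m,\dots,D_1$ (that is, follow the path $w_0$ of Definition~\ref{defn:mcs-aug-ch}) to land on an augmentation $\saug$ of $\sNgres$.

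The heart of the argument is to show that this undipping recovers $\saug(p)=1$ exactly on the marked crossings. Here I would run the stable tame isomorphisms of the undipping in the order reverse to the extensions of Lemma~\ref{lem:psi-surjectivity}: the dip sitting just to the left of a marked crossing $p$ satisfies $\saug_{\sMCS}(B)=\sHSM_{i+1,i}$, so removing it reverses the extend-by-$\sHSM_{i+1,i}$ step of Lemma~\ref{lem:extend-by-1}, and the flip $\saug'(q)\neq\saug(q)$ recorded there turns $\saug_{\sMCS}(p)=0$ back into $\saug(p)=1$. Every remaining dip has $\saug_{\sMCS}(B)=0$, so its removal reverses an extend-by-$0$ step (Corollary~\ref{cor:extend-by-0}) and leaves the unmarked crossings unaugmented.

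This local matrix bookkeeping is the step I expect to be the main obstacle, since one must check that the effect of each undip matches exactly the reversed extension, and that the two dip placements in play (one dip to the left of $p$ in Lemma~\ref{lem:psi-surjectivity} versus the separate handleslide-dip and crossing-dip produced by Lemma~\ref{lem:mcs-aug-dipped}) yield the same augmentation class. The latter I would handle by appealing to path-independence, Lemma~\ref{lem:dip-path}, which lets me slide, add, and cancel dips without changing the chain homotopy class of the induced map.

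Finally I would assemble the pieces. The two maps are mutually inverse: starting from $\saug$, forming $\sMCS_{\saug}$, and undipping returns $\saug$ by the computation above, while starting from an $\sAugform$-form MCS, undipping to get $\saug$, and reapplying Lemma~\ref{lem:psi-surjectivity} returns the same marked front, since the marked crossings of an $\sAugform$-form MCS are determined by, and determine, the augmented crossings. This gives the claimed bijection. For the second assertion, Definition~\ref{defn:mcs-aug-ch} gives $\Psi(\sMCS)=\Psi_{w_0}([\saug_{\sMCS}])$, and the undipping computation identifies $\Psi_{w_0}([\saug_{\sMCS}])$ with $[\saug]$ where $\saug(p)=1$ if and only if $p$ is a marked crossing of $\sMCS$, completing the proof.
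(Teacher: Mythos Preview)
Your proposal is correct and follows essentially the same approach as the paper: construct the map $\saug\mapsto\sMCS_{\saug}$ from Lemma~\ref{lem:psi-surjectivity}, construct the reverse via Lemma~\ref{lem:mcs-aug-dipped} followed by undipping along $w_0$, and argue these are mutually inverse because undipping reverses the extend-by-$0$ and extend-by-$\sHSM_{i+1,i}$ steps. Your treatment is in fact more careful than the paper's on one point: you correctly notice that the dipped diagram produced by Lemma~\ref{lem:mcs-aug-dipped} for an $\sAugform$-form MCS carries an extra dip at each marked crossing (one for the handleslide mark and one for the crossing itself) compared with the diagram built in Lemma~\ref{lem:psi-surjectivity}, and you resolve this via Lemma~\ref{lem:dip-path}, whereas the paper leaves this implicit.
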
 

The following corollary uses the $S\bar{R}$-form and the $A$-algorithm to reprove the many-to-one relationship between augmentations and graded normal ruling first noted in \cite{Ng2006}.

\begin{cor}
	\label{cor:gnr-aug-SR-form}
	Let $\sgnr$ be a graded normal ruling on $\sfront$ with switched crossings $q_1, \hdots, q_n$ and graded returns $p_1, \hdots, p_m$. Then the $2^{m}$ MCSs in $S\bar{R}$-form with graded normal ruling $\sgnr$ correspond to $2^{m}$ different augmentations on $\sNgres$.
\end{cor}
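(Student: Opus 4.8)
The plan is to realize the claimed correspondence as a composite of maps and then prove that composite is injective. By Corollary~\ref{cor:gnr-SR-form}, the $2^m$ MCSs in $S\bar{R}$-form with graded normal ruling $\sgnr$ are indexed by the subsets $R \subseteq \{p_1, \hdots, p_m\}$ recording which graded returns are marked; write $\sMCS_R$ for the corresponding MCS. Applying the $A$-algorithm of Theorem~\ref{thm:C-form} to $\sMCS_R$ produces an equivalent MCS in $A$-form, and by the corollary immediately preceding the present one that $A$-form MCS corresponds to a single augmentation $\saug_R \in Aug(\sNgres)$ satisfying $\saug_R(p) = 1$ if and only if $p$ is a marked crossing of that $A$-form MCS. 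Since distinct $A$-form MCSs correspond to distinct augmentations, it suffices to show that the assignment $R \mapsto \saug_R$ is injective.

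The key step is to identify the augmented crossings of $\saug_R$ in terms of $\sgnr$ and $R$. I claim that $\saug_R$ augments precisely the switched crossings $q_1, \hdots, q_n$ together with exactly the graded returns lying in $R$, and augments no departure. To verify this I would run the $A$-algorithm and track the swept collection $V$ of handleslide marks crossing by crossing, recording at each crossing between strands $i+1$ and $i$ whether the inter-strand mark $v_{i+1,i}$ is present when $V$ arrives, since by Move III this is exactly the condition under which the algorithm leaves a mark immediately to the left of the crossing, i.e.\ augments it. Near a switch the handleslide of Figure~\ref{f:marked-switches-occ} forces $v_{i+1,i}=1$, so every switch is augmented; near a return $p_j$ the marks of Figure~\ref{f:marked-returns} give $v_{i+1,i}=1$ exactly when $p_j$ is marked, that is, when $p_j \in R$; and at a departure or an unmarked return no handleslide of $\sMCS_R$ sits between the crossing strands, so $V$ must pass through via Moves III and IV without leaving a mark.

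Granting this identification, the values of $\saug_R$ on the return crossings recover $R$, so $R \neq R'$ forces $\saug_R \neq \saug_{R'}$; the $2^m$ subsets therefore produce $2^m$ distinct augmentations on $\sNgres$, as claimed, and the many-to-one relationship with $\sgnr$ is recovered.

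I expect the main obstacle to be the bookkeeping in the second paragraph: one must show that the sweeping of $V$ --- which via MCS move $6$ can spawn new handleslide marks as it passes switches and marked returns --- never deposits an inter-strand mark at a departure or at an unmarked return. The cleanest way to control this is to use that, away from switches and marked returns, the ordered chain complexes of $\sMCS_R$ are in simple form and $\sMCS_R$ carries no handleslide marks, so the only inter-strand marks $V$ can ever present at a crossing are those dictated by the switch and marked-return data of Figures~\ref{f:marked-switches-occ} and \ref{f:marked-returns}; the normality hypothesis on $\sgnr$ then guarantees that these marks meet departures and unmarked returns only through the pass-through Moves III and IV, which leave no augmenting mark behind.
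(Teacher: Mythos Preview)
Your overall plan---pass to $A$-form and read off an augmentation---is exactly what the paper does, but you aim for a much stronger intermediate statement than the paper needs, and that stronger statement is where your argument is incomplete.

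The paper's proof is a straightforward \emph{comparison} argument and never attempts to identify the full augmented set of $\saug_R$. It simply observes that two $S\bar{R}$-form MCSs $\sMCS_1,\sMCS_2$ with ruling $\sgnr$ differ only in which graded returns are marked, and then argues: if they differ at a return $p$, then after running the $A$-algorithm the resulting $A$-form MCSs also differ at $p$. Since the $A$-algorithm sweeps left to right and is deterministic, the swept collections $V$ for $\sMCS_1$ and $\sMCS_2$ agree up to the first return $p$ at which the MCSs differ; at that point one of them absorbs the return handleslide and the other does not, so the value of $v_{i+1,i}$ (and hence whether a mark is deposited at $p$) differs. This immediately gives distinct augmentations without ever asking what happens at departures or at the other crossings.

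By contrast, your second paragraph asserts the global claim that $\saug_R$ augments exactly the switches together with the returns in $R$ and nothing else. That is essentially the full Ng--Sabloff correspondence, and your last paragraph correctly flags the difficulty: the companion handleslides attached to type-(2) and type-(3) switches and returns (Figures~\ref{f:marked-switches-occ} and \ref{f:marked-returns}) get absorbed into $V$, propagate forward, and can spawn further marks via MCS move~6. Showing that none of these ever produces $v_{i+1,i}=1$ at a departure requires a genuine inductive analysis of how $V$ interacts with the ruling disks---``normality guarantees pass-through'' is the right intuition but is not an argument. For the purpose of this corollary you do not need it: replace your global identification with the paper's local comparison at the leftmost differing return and the injectivity of $R\mapsto\saug_R$ follows in one line.
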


\begin{proof}
By Corollary~\ref{cor:gnr-SR-form}, $\sgnr$ corresponds to $2^{m}$ MCSs in $S\bar{R}$-form. In fact, two MCSs $\sMCS_1$ and $\sMCS_2$ in $S\bar{R}$-form corresponding to $\sgnr$ differ only by handleslide marks around graded returns. If $\sMCS_1$ and $ \sMCS_2$ differ at the return $p$, then after applying the $\sAugform$-algorithm, the resulting MCSs $\sMCS'_1$ and $\sMCS'_2$ differ at $p$ as well. Thus the augmentations on $\sNgres$ corresponding to $\sMCS_1$ and $\sMCS_2$ will differ on the resolved crossing corresponding to $p$.
\end{proof}

\section{Two-Bridge Legendrian Knots}
\label{ch:two-bridge}

In this section, we prove that in the case of front projections with two left cusps the map $ \widehat{\Psi} : \sDMCSeq \to \sAugNgresch$ is bijective. 

\begin{defn}
	\label{defn:2-bridge-front}
	A front $\sfront$ of a Legendrian knot $\sK$ with exactly 2 left cusps is called a \emph{$2$-bridge front projection}. 
\end{defn} 

In \cite{Ng2001a}, Ng proves that every smooth knot admitting a 2-bridge knot projection is smoothly isotopic to a Legendrian knot admitting a 2-bridge front projection. Thus, the following results apply to an infinite collection of Legendrian knots.

\begin{defn}
	\label{defn:d-r-pair}
Given a graded normal ruling $\sgnr$ on a front $\sfront$, we say two crossings $q_i < q_j$ of $\sfront$, ordered by the $x$-axis, form a \emph{departure-return pair} $(q_i, q_j)$ if $\sgnr$ has a departure at $q_i$ and a return at $q_j$ and the two ruling disks that depart at $q_i$ are the same disks that return at $q_j$.

A departure-return pair $(q_i, q_j)$ is \emph{graded} if both crossings have grading 0. There are five possible arrangements of $q_i$ and $q_j$ in a graded departure-return pair; see, for example, Figure~\ref{f:departure-return-pairs-g}(a). We let $\nu(\sgnr)$ denote the number of graded departure-return pairs of $\sgnr$.
\end{defn}

Given a fixed graded normal ruling $\sgnr$ on any front projection $\sfront$, each unswitched crossing is either a departure or a return and thus part of a departure-return pair. In the case of a 2-bridge front projection, we can say more. 

\begin{prop}
	\label{prop:dr-pairs}
Suppose $\sfront$ is a 2-bridge front projection with graded normal ruling $\sgnr$. For each departure-return pair $(q_i, q_j)$ of $\sgnr$, no crossings or cusps of $\sfront$ may appear between $q_i$ and $q_j$. In terms of the ordering of crossings by the $x$-axis, this says $j = i+1$.
\end{prop}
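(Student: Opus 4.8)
The plan is to exploit the fact that a $2$-bridge front has at most four strands at any vertical slice, so its graded normal ruling $\sgnr$ consists of exactly two ruling disks. First I would argue that both disks must be present throughout the interval $[q_i,q_j]$. A departure at $q_i$ is an unswitched crossing between strands of two distinct disks, so both disks are open at $q_i$; this forces both left cusps of $\sfront$ to lie to the left of $q_i$. Since there are only two left cusps, none can occur between $q_i$ and $q_j$, and over the whole interval there are exactly four strands, two belonging to each disk.

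Next I would analyze the strand configuration immediately to the right of $q_i$. The three possible pairings on four strands are the disjoint pairing $(1\,2)(3\,4)$, the nested pairing $(1\,4)(2\,3)$, and the overlapping pairing $(1\,3)(2\,4)$, and a departure carries a disjoint or nested configuration to the overlapping one. The key observation is that in the overlapping configuration the strands alternate between the two disks from bottom to top (disk $A$, disk $B$, disk $A$, disk $B$), so every pair of adjacent strands belongs to different disks and no two adjacent strands are companions.

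I would then rule out each type of singularity in the overlapping region. A right cusp joins two companion strands, which are never adjacent here, so no right cusp can occur; a left cusp is impossible because both left cusps already lie to the left. By the normality condition, a switch requires the two disks meeting there to be disjoint or nested to its left, which is violated in the overlapping configuration, so no switch can occur either. Hence the only singularity that can appear is an unswitched crossing, and a short case check (interchanging the two adjacent, inter-disk strands) shows that each such crossing returns the overlapping configuration to a disjoint or nested one, so it is a \emph{return}.

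Finally, since $\sgnr$ has only two disks, any return in this interval necessarily involves the same two disks that departed at $q_i$, so the first singularity to the right of $q_i$ is exactly the paired return $q_j$, with nothing strictly between them; thus $j=i+1$. The main obstacle I anticipate is packaging the overlapping-configuration argument cleanly---precisely tracking disk membership of strands across an unswitched crossing and invoking the normal-switch condition to exclude switches---rather than any genuine difficulty; once the alternating structure is established, ruling out cusps and switches and identifying the remaining crossings as returns is routine.
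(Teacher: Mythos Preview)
Your proposal is correct and follows essentially the same approach as the paper's proof: both exploit that a $2$-bridge front has exactly two ruling disks, observe that immediately after a departure these two disks overlap, and then argue that neither a switch nor a right cusp can occur in the overlapping configuration, forcing the very next singularity to be the paired return. The paper compresses this into three sentences, while you spell out the left-cusp count, the explicit $(1\,3)(2\,4)$ pairing, and the case check that any unswitched crossing from the overlapping state is a return---but the underlying idea is identical.
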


\begin{proof}
Since $\sfront$ is a 2-bridge front projection, there are only two ruling disks for $\sgnr$. A switch or right cusp cannot appear after a departure since the two ruling disks overlap. Thus a return must immediately follow a departure.
\end{proof}

\begin{defn}
	\label{defn:SRg-form}
	An MCS $\sMCS \in \sDMCS$ is in \emph{$S \bar{R}_g$-form} if $\sMCS$ is in $S \bar{R}$-form and each marked return is part of a graded departure-return pair.
\end{defn}

\begin{figure}[t]
\labellist
\small\hair 2pt
\pinlabel {(a)} [tl] at 43 8
\pinlabel {(b)} [tl] at 298 8
\endlabellist
\centering
\includegraphics[scale=.5]{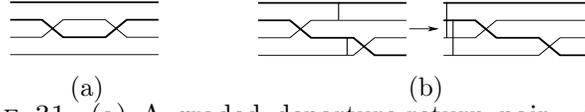}
\caption[Graded departure-return pairs.]{(a) A graded departure-return pair. (b) Unmarking a graded return paired with an ungraded departure.}
\label{f:departure-return-pairs-g}
\end{figure}

\begin{lem}
	\label{lem:srg-existence}
If $\sfront$ is a 2-bridge front projection, every MCS class in $\sDMCSeq$ has a representative in $S \bar{R}_g$-form.
\end{lem}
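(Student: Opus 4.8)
The plan is to begin from an $S\bar{R}$-form representative, whose existence is guaranteed by Theorem~\ref{thm:SR-form}, and then to remove precisely those handleslide marks at returns whose paired departure is ungraded. First I would fix a class $\sbMCS \in \sDMCSeq$ and choose, via Theorem~\ref{thm:SR-form}, a representative $\sMCS$ in $S\bar{R}$-form with graded normal ruling $\sgnr_{\sMCS}$. Every unswitched crossing of $\sfront$ is either a departure or a return and hence lies in a departure-return pair, and since $\sfront$ is a $2$-bridge front, Proposition~\ref{prop:dr-pairs} forces each such pair $(q_i,q_j)$ to be consecutive, $j=i+1$, with no crossing or cusp between $q_i$ and $q_j$. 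A marked return is graded by definition, so a marked return $q_j$ fails to belong to a graded departure-return pair exactly when its partner departure $q_i=q_{j-1}$ satisfies $|q_i|\neq 0$. These are precisely the marks I must eliminate to reach $S\bar{R}_g$-form in the sense of Definition~\ref{defn:SRg-form}.

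Next I would show that each such marked return can be unmarked by a sequence of MCS moves that keeps $\sMCS$ in $S\bar{R}$-form throughout, and hence never leaves the class $\sbMCS$. The crucial point is locality: because $q_j=q_{i+1}$ and nothing lies between $q_i$ and $q_j$ by Proposition~\ref{prop:dr-pairs}, the handleslide mark of the marked return can be swept leftward (using the sweeping moves, i.e.\ MCS moves $1$--$6$) into the immediate neighborhood of the departure $q_i$ without interacting with any other singularity or handleslide mark. The local picture is then exactly Figure~\ref{f:departure-return-pairs-g}(b). The grading hypothesis $|q_i|\neq 0$ is what legitimizes the removal: the mark at the return joins strands of equal Maslov potential $\smu$, while the two strands crossing at the departure $q_i$ have Maslov potentials differing by $|q_i|\neq 0$, and following the two companion strands of the departure-return pair through $q_i$ and $q_j$ shows that the swept mark cancels by an MCS move rather than persisting, as it would be forced to at a graded departure. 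After the cancellation, $q_j$ is an unmarked simple return and no stray handleslides survive, so the resulting diagram is again in $S\bar{R}$-form.

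Finally, I would apply this unmarking to every marked return paired with an ungraded departure, one pair at a time. Since each step is confined to a single consecutive pair $(q_i,q_{i+1})$ and leaves all other switches and returns untouched, the steps do not interfere, and after finitely many of them the MCS is in $S\bar{R}$-form with every remaining marked return lying in a graded departure-return pair; that is, it is in $S\bar{R}_g$-form and equivalent to $\sMCS$ by construction. The hard part will be the verification underlying the second paragraph: one must check, from the simple-form structure on either side of $q_i$ and $q_j$ together with the local configuration in Figure~\ref{f:departure-return-pairs-g}(b), that the swept handleslide genuinely cancels when $|q_i|\neq 0$ and that the cancellation introduces no residual mark elsewhere. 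This is exactly where both the $2$-bridge hypothesis, through the consecutiveness of Proposition~\ref{prop:dr-pairs}, and the grading hypothesis are indispensable.
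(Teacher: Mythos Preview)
Your approach is essentially the same as the paper's: start from an $S\bar{R}$-form representative, locate each marked return whose partner departure is ungraded, slide the handleslide mark(s) leftward to the departure, and remove them. The paper is slightly more explicit about the mechanism than you are: the handleslide mark(s) are pushed \emph{past} the ungraded departure $q_i$ (this uses MCS moves 7--10, not 1--6 as you write), and then the removal is accomplished by MCS move~17, which applies precisely because the chain complex to the left of $q_i$ is already in simple form and the pair of crossing strands at $q_i$ has Maslov potentials differing by $|q_i|\neq 0$. You should also note that a marked return in $S\bar{R}$-form may carry one \emph{or two} handleslide marks (Figure~\ref{f:marked-returns}), so the argument must dispose of both; the paper's phrasing ``handleslide mark(s)'' covers this. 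With those two clarifications, your sketch matches the paper's proof.
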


\begin{proof}

Let $\sbMCS \in \sDMCSeq$ and let $\sMCS$ be an $S \bar{R}$-form representative of $\sbMCS$. Suppose $(q_i, q_{i+1})$ is a departure-return pair of $\sgnr_{\sMCS}$ such that $q_{i+1}$ is a marked graded return with an ungraded departure $q_{i}$. We can push the handleslide mark(s) at $q_{i+1}$ to the left, past the ungraded departure $q_{i}$; see Figure~\ref{f:departure-return-pairs-g}(b). Using MCS move 17, we may remove these handleslide mark(s). In this manner, we eliminate all of the handleslide marks at graded returns that are paired with ungraded departures. Thus, $\sMCS$ is equivalent to an MCS in $S \bar{R}_g$-form. 

\end{proof}

In fact, the $S \bar{R}_g$-form found in the previous proof is unique.

\begin{lem}
	\label{lem:srg-uniqueness}
If $\sfront$ is a 2-bridge front projection, every MCS class in $\sDMCSeq$ has a unique representative in $S \bar{R}_g$-form. Thus, $|\sDMCSeq| = \sum_{N \in \sSgnr} 2^{\nu(\sgnr)}$.
\end{lem}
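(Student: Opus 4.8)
The plan is to combine the existence of an $S\bar{R}_g$-form (Lemma~\ref{lem:srg-existence}) with its uniqueness, so that $S\bar{R}_g$-form becomes a complete set of representatives for $\sDMCSeq$; the count then follows by enumerating these forms. Since the associated ruling $\sgnr_{\sbMCS}$ is an invariant of the class by Proposition~\ref{prop:gnr-uniq}, I first reduce to a fixed graded normal ruling $\sgnr$: any two equivalent $S\bar{R}_g$-forms have the same ruling, so it suffices to show that two $S\bar{R}_g$-forms with ruling $\sgnr$ but different sets of marked returns are inequivalent. In $S\bar{R}_g$-form the marked returns range exactly over subsets of those graded returns whose paired departure is also graded; by Proposition~\ref{prop:dr-pairs} such a pair is adjacent, and there are precisely $\nu(\sgnr)$ of them, so each ruling $\sgnr$ contributes exactly $2^{\nu(\sgnr)}$ distinct $S\bar{R}_g$-forms. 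Thus uniqueness is the only missing ingredient, and once it holds the count $|\sDMCSeq| = \sum_{N \in \sSgnr} 2^{\nu(\sgnr)}$ is immediate.

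To separate two $S\bar{R}_g$-forms sharing a ruling I would use the augmentation map. Applying the $\sAugform$-algorithm (Theorem~\ref{thm:C-form}) to an $S\bar{R}_g$-form $\sMCS$ produces an $\sAugform$-form whose associated augmentation $\saug_{\sMCS}$ on $\sNgres$ satisfies $\widehat{\Psi}(\sbMCS) = [\saug_{\sMCS}]$ and augments exactly the resolved crossings carrying a handleslide mark. By Corollary~\ref{cor:gnr-aug-SR-form}, two such forms differing at a marked return $p$ give augmentations that differ on the resolved crossing at $p$, so distinct $S\bar{R}_g$-forms with ruling $\sgnr$ yield genuinely different augmentations $\saug_1 \neq \saug_2$ on $\sNgres$. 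Since equivalent MCSs map to the same chain homotopy class under $\widehat{\Psi}$ (Lemma~\ref{lem:mcs-equiv-aug-equiv}), inequivalence of the two forms will follow once I show $\saug_1 \not\schequiv \saug_2$.

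The hard part will be this last upgrade from distinct augmentations to distinct chain homotopy classes, and it is where the two-bridge hypothesis must be used. The two augmentations differ only on grading-$0$ crossings coming from the marked returns, while a chain homotopy $\sH$ is supported on crossings of grading $-1$; a priori $\sH \circ \df$ could still alter the augmentation at a return crossing $p$ through the grading-$(-1)$ monomials of $\df p$. My plan is to rule this out by a local computation: by Proposition~\ref{prop:dr-pairs} a graded departure-return pair $(q,p)$ is adjacent and, with only two ruling disks available, no crossing or cusp intervenes, which sharply limits the disks contributing to $\df p$ in $\sNgres$. I expect to deduce from this that $\sH(\df p) = 0$ for every admissible $\sH$, so that the value $\saug(p)$ at each marked return is a chain-homotopy invariant; this forces $\saug_1 \not\schequiv \saug_2$. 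Granting uniqueness, Lemma~\ref{lem:srg-existence} gives a bijection between $\sDMCSeq$ and the set of $S\bar{R}_g$-forms, whose cardinality was computed to be $\sum_{N \in \sSgnr} 2^{\nu(\sgnr)}$ in the first paragraph. As a consistency check, this agrees with the squeeze coming from the surjection $\widehat{\Psi}$ of Theorem~\ref{thm:big-map}, whose target then contains at least $\sum_{N \in \sSgnr} 2^{\nu(\sgnr)}$ non-chain-homotopic augmentations while its source has at most that many classes.
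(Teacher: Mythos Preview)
Your overall strategy is the same as the paper's: reduce to a fixed ruling via Proposition~\ref{prop:gnr-uniq}, then separate two $S\bar{R}_g$-forms with different marked returns by showing that the corresponding augmentations are not chain homotopic, which contradicts Lemma~\ref{lem:mcs-equiv-aug-equiv}. The counting argument in your first paragraph is also exactly right.

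The gap is in your execution of the chain-homotopy step. You propose to work directly on $\sNgres$ and argue that, because the departure--return pair $(q,p)$ is adjacent (Proposition~\ref{prop:dr-pairs}), $\df p$ is ``sharply limited'' and hence $\sH(\df p)=0$ for every admissible $\sH$. This locality claim is false on $\sNgres$: adjacency only says there are no crossings or cusps strictly between $q$ and $p$, but a convex immersed polygon with positive corner at $p$ can certainly extend to the left of $q$ --- its boundary simply continues along a strand through $q$ without turning a corner there. So $\df p$ in $\sNgres$ can involve arbitrarily many crossings to the left of $q$, including grading~$-1$ crossings on which $\sH$ may be nonzero. Your computation of $\sH(\df p)$ on $\sNgres$ is therefore not local, and the conclusion $\sH(\df p)=0$ does not follow from the argument you sketch.

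The paper sidesteps exactly this problem by moving the computation to a sufficiently dipped diagram. It compares the minimal occ-simple augmentations $\saug_{\sMCS_1}$ and (an extension of) $\saug_{\sMCS_2}$ on a common $\sNgres^{d_1}$, where the differential is given by the local matrix formulae of Lemma~\ref{lem:dipped-df}. On the dipped diagram the relevant equations $\sH(\df b_j^{k+1,k})$, $\sH(\df q_{i+1})$, etc., involve only the nearby $a$- and $b$-lattices, and one derives an explicit contradiction (e.g.\ equations (1)--(3) forcing $\sH(a_{j-1}^{k+1,k})$ to be both $0$ and $1$). The passage through dipped diagrams is what makes the ``local computation'' you are hoping for actually local; your proposal would be repaired by inserting this step rather than attempting the computation on $\sNgres$ itself.
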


\begin{proof}
Let $\sMCS_1$ and $\sMCS_2$ be two representatives of $\sbMCS$ in $S \bar{R}_g$-form. 
Since $\sMCS_1 \sMCSeq \sMCS_2$, $\sMCS_1$ and $\sMCS_2$ induce the same graded normal ruling on $\sfront$, which we will denote $\sgnr$. Thus, $\sMCS_1$ and $\sMCS_2$ have the same handleslide marks around switches and only differ on their marked returns. Suppose for contradiction that $\sMCS_1$ and $\sMCS_2$ differ at the graded departure-return pair $(q_{i}, q_{i+1})$. We assume $q_{i+1}$ is a marked return in $\sMCS_1$ and is unmarked in $\sMCS_2$. We will prove that $\Psi(\sMCS_1) \neq \Psi(\sMCS_2)$. Thus, by Lemma~\ref{lem:mcs-equiv-aug-equiv} we will have the desired contradiction.

Recall that $\Psi(\sMCS_1)$ and $ \Psi(\sMCS_2)$ are computed by mapping the augmentations $\saug_{\sMCS_1}$ and $\saug_{\sMCS_2}$ constructed in Lemma~\ref{lem:mcs-aug-dipped} to augmentations in $Aug(\sNgres)$ using a dipping/undipping path. The augmentations $\saug_{\sMCS_1}$ and $\saug_{\sMCS_2}$ occur on sufficiently dipped diagrams $\sNgres^{d_1}$ and $\sNgres^{d_2}$. The dipped diagrams $\sNgres^{d_1}$ and $\sNgres^{d_2}$ differ by one or two dips between the resolved crossings $q_{i}$ and $q_{i+1}$. Add these dip to $\sNgres^{d_2}$ and extend $\saug_{\sMCS_2}$ by 0 using Lemma~\ref{lem:extend-by-0}. We let $\widetilde{\saug}_{\sMCS_2}$ denote the resulting augmentation on $\sNgres^{d_1}$. 

From Lemma~\ref{lem:dip-path}, the definition of $\Psi$ is independent of dipping/undipping paths. Thus $\Psi(\sMCS_1) = \Psi(\sMCS_2)$ if and only if $\saug_{\sMCS_1}$ and $\widetilde{\saug}_{\sMCS_2}$ are chain homotopic as augmentations on $\sNgres^{d_1}$. We will show that they are not chain homotopic. Suppose for contradiction that $H : (\aac(\sNgres^{d_1}), \df) \to \zz_2$ is a chain homotopy between $\saug_{\sMCS_1}$ and $\widetilde{\saug}_{\sMCS_2}$. We will prove a contradiction exists for two of the five possible arrangements of a graded departure-return pair. The arguments for the remaining three cases are essentially identical.

\begin{figure}[t]
\labellist
\small\hair 2pt
\pinlabel {$j$} [tl] at 56 18
\pinlabel {$j$} [tl] at 166 18
\pinlabel {$j$} [tl] at 294 18
\pinlabel {$j$} [tl] at 398 18
\pinlabel {(a)} [tl] at 90 18
\pinlabel {(b)} [tl] at 328 18
\endlabellist
\centering
\includegraphics[scale=.7]{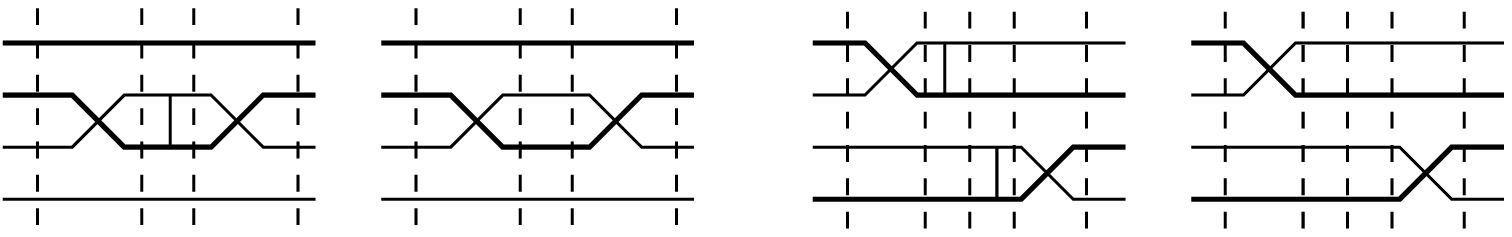}
\caption[A marked graded departure-return pair.]{In both (a) and (b) the MCS $\sMCS_1$ is on the left and $\sMCS_2$ is on the right.}
\label{f:marked-middle}
\end{figure}

{\bf Case 1:} Suppose the graded departure-return pair is arranged as in Figure~\ref{f:marked-middle}(a). The dotted lines in Figure~\ref{f:marked-middle} (a) indicate the location of the dips in $\sNgres^{d_1}$. Let $k+1$ and $k$ denote the strands crossing at $q_{i+1}$. The following calculations use the formulae from Lemma~\ref{lem:dipped-df} and the fact that all of the chain complexes involved are simple in the sense of Definition~\ref{defn:simple-complex} or are only one handleslide away from being simple. The chain homotopy $H$ must satisfy:

\begin{enumerate}
	\item $H ( a_j^{k+1,k} ) = H (\df q_{i+1}) = \saug_{\sMCS_1} - \widetilde{\saug}_{\sMCS_2}(q_{i+1}) = 0$, 
	\item $H ( a_j^{k+1,k} ) + H ( a_{j-1}^{k+1,k} ) = H (\df b_j^{k+1,k}) = \saug_{\sMCS_1} - \widetilde{\saug}_{\sMCS_2}(b_j^{k+1,k}) = 1$, \mbox{ and}
	\item $H ( a_{j-1}^{k+1,k} ) = H (\df b_{j-1}^{k+1,k}) = \saug_{\sMCS_1} - \widetilde{\saug}_{\sMCS_2}(b_{j-1}^{k+1,k}) = 0$.
\end{enumerate}
Combining (1) and (2), we see that $H ( a_{j-1}^{k+1,k} ) = 1$, but this contradicts (3). Thus $\saug_{\sMCS_1}$ and $\widetilde{\saug}_{\sMCS_2}$ are not chain homotopic.

{\bf Case 2:} Suppose the graded departure-return pair is arranged as in Figure~\ref{f:marked-middle}(b). The dotted lines in Figure~\ref{f:marked-middle} (b) indicate the location of the dips in $\sNgres^{d_1}$. Let $k+1$ and $k$ denote the strands crossing at $q_{i}$ and let $l+1$ and $l$ denote the strands crossing at $q_{i+1}$. The chain homotopy $H$ must satisfy:

\begin{enumerate}
	\item $H ( a_j^{l+1,l} ) = H (\df q_{i+1}) = \saug_{\sMCS_1} - \widetilde{\saug}_{\sMCS_2}(q_{i+1}) =0$,
	\item $H(a_j^{k+1,k}) = H (\df a_j^{k+1, l}) = \saug_{\sMCS_1} - \widetilde{\saug}_{\sMCS_2}(a_j^{k+1, l}) = 0$,
	\item $H ( a_j^{k+1,k} ) + H ( a_{j-1}^{k+1,k} ) = H ( \df b_j^{k+1,k}) = \saug_{\sMCS_1} - \widetilde{\saug}_{\sMCS_2}(b_j^{k+1,k}) = 0$,
	\item $H ( a_{j-1}^{k+1,k} ) + H ( a_{j-2}^{k+1,k} ) = H ( \df b_{j-1}^{k+1,k}) = \saug_{\sMCS_1} - \widetilde{\saug}_{\sMCS_2}(b_{j-1}^{k+1,k}) = 1$, \mbox{ and}
	\item $H ( a_{j-2}^{k+1,k} ) = H ( \df b_{j-2}^{k+1,k}) = \saug_{\sMCS_1} - \widetilde{\saug}_{\sMCS_2}(b_{j-2}^{k+1,k}) = 0$.
\end{enumerate}
The second equation follows from $H ( a_j^{l+1,l} )=0$ and the formula for $\df a_j^{k+1, l}$. Combining (2) and (3), we see that $H ( a_{j-1}^{k+1,k} ) = 0$. Combining (4) with $H ( a_{j-1}^{k+1,k} ) = 0$, we see that $H ( a_{j-2}^{k+1,k} ) = 1$. This contradicts (5). Thus $\saug_{\sMCS_1}$ and $\widetilde{\saug}_{\sMCS_2}$ are not chain homotopic.
\end{proof}

Using these two lemmata, we prove Theorem~\ref{thm:intro-two-cusps}.

\begin{thm}
	\label{thm:two-bridge-srg-form}
	If $\sfront$ is a 2-bridge front projection, then $\widehat{\Psi} : \sDMCSeq \to \sAugNgresch$ is a bijection.
\end{thm}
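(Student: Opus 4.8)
The plan is to deduce bijectivity from injectivity, since surjectivity of $\widehat{\Psi}$ holds for every $\sigma$-generic front by Theorem~\ref{thm:big-map}. Lemmata~\ref{lem:srg-existence} and \ref{lem:srg-uniqueness} are tailored to exactly this: in the $2$-bridge case every class in $\sDMCSeq$ has a \emph{unique} representative in $S\bar{R}_g$-form. So I would first reduce the problem to $S\bar{R}_g$-forms. Suppose $\widehat{\Psi}([\sMCS_1]) = \widehat{\Psi}([\sMCS_2])$. Replacing $\sMCS_1,\sMCS_2$ by their $S\bar{R}_g$-form representatives (Lemma~\ref{lem:srg-existence}) does not change their images, because $\widehat{\Psi}$ is well defined on equivalence classes (Lemma~\ref{lem:mcs-equiv-aug-equiv}). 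By uniqueness (Lemma~\ref{lem:srg-uniqueness}) it then suffices to prove that these two $S\bar{R}_g$-forms coincide as marked front projections, whence $[\sMCS_1]=[\sMCS_2]$.

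An $S\bar{R}_g$-form is determined by two pieces of data: its graded normal ruling, and the subset of graded departure-return pairs whose return is marked. I would therefore show that equal images under $\widehat{\Psi}$ force both to agree. For the ruling, the graded normal ruling underlying an augmentation class (the many-to-one correspondence reproved in Corollary~\ref{cor:gnr-aug-SR-form}) is compatible with the assignment $[\sMCS]\mapsto \sgnr_{\sMCS}$ of Lemma~\ref{lem:MCS-ruling}, so $\widehat{\Psi}([\sMCS_1])=\widehat{\Psi}([\sMCS_2])$ forces $\sgnr_{\sMCS_1}=\sgnr_{\sMCS_2}=:\sgnr$. Hence $\sMCS_1$ and $\sMCS_2$ carry identical handleslide data around switches and can differ only at a marked return.

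For the marked returns, I would run exactly the obstruction computation that is the engine of the proof of Lemma~\ref{lem:srg-uniqueness}. If $\sMCS_1$ and $\sMCS_2$ differed at a single graded departure-return pair $(q_i,q_{i+1})$, I would extend $\saug_{\sMCS_1}$ and $\saug_{\sMCS_2}$ by $0$ (Lemma~\ref{lem:extend-by-0}) to augmentations on a common sufficiently dipped diagram $\sNgres^{d_1}$ and, using path-independence (Lemma~\ref{lem:dip-path}), reduce $\widehat{\Psi}([\sMCS_1])=\widehat{\Psi}([\sMCS_2])$ to the existence of a chain homotopy between them on $\sNgres^{d_1}$. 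Writing the chain-homotopy conditions of Lemma~\ref{lem:dipped-ch-htpy} on the $a$- and $b$-crossings sitting between the two resolved crossings then yields a contradictory chain of equations, exactly as in Cases~1 and 2 of Lemma~\ref{lem:srg-uniqueness}, with the remaining three of the five local configurations of a graded departure-return pair handled identically. No such chain homotopy exists, so the two forms cannot differ at any marked return, and the two $S\bar{R}_g$-forms are identical.

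The step I expect to be the main obstacle is the interplay of these two ingredients, and in particular making the obstruction argument genuinely local. Its solvability rests on Proposition~\ref{prop:dr-pairs}: because $\sfront$ is $2$-bridge, the paired crossings are adjacent ($j=i+1$), so no other singularity intervenes and the chain-homotopy equations on the short segment between $q_i$ and $q_{i+1}$ close up into the self-contradictory system above. The real subtlety is to notice that the argument of Lemma~\ref{lem:srg-uniqueness}, though stated for two \emph{equivalent} MCSs, uses only that the two $S\bar{R}_g$-forms share a ruling and differ at a marked return; once the ruling-recovery step supplies that hypothesis, injectivity---and hence the bijection---follows.
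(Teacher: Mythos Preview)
Your overall architecture matches the paper's: surjectivity from Theorem~\ref{thm:big-map}, reduction to $S\bar{R}_g$-representatives via Lemmata~\ref{lem:srg-existence}--\ref{lem:srg-uniqueness}, and then the local obstruction computation from the proof of Lemma~\ref{lem:srg-uniqueness} to rule out a discrepancy at a marked return. That part is fine, and you correctly observe that the contradiction argument in Lemma~\ref{lem:srg-uniqueness} uses only that the two $S\bar{R}_g$-forms share a ruling and differ at some graded departure-return pair.

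The gap is in your ruling-recovery step. You invoke Corollary~\ref{cor:gnr-aug-SR-form} and the Ng--Sabloff many-to-one map to assert that chain-homotopic augmentations must induce the same graded normal ruling. But Corollary~\ref{cor:gnr-aug-SR-form} works at the level of \emph{augmentations}, not augmentation \emph{classes}: it says the $2^m$ $S\bar{R}$-forms with ruling $\sgnr$ yield $2^m$ distinct elements of $Aug(\sNgres)$, and nothing in the paper establishes that the Ng--Sabloff assignment $\saug\mapsto\sgnr$ is constant on chain homotopy classes. So from $\widehat{\Psi}([\sMCS_1])=\widehat{\Psi}([\sMCS_2])$ you cannot yet conclude $\sgnr_{\sMCS_1}=\sgnr_{\sMCS_2}$.

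The paper closes this gap with a direct argument. Extend $\saug_{\sMCS_1}$ and $\saug_{\sMCS_2}$ by $0$ to occ-simple augmentations $\widetilde{\saug}_{\sMCS_1},\widetilde{\saug}_{\sMCS_2}$ on a common sufficiently dipped diagram, and let $H$ be any chain homotopy between them. Part~(2) of Lemma~\ref{lem:dipped-ch-htpy} (which holds for an arbitrary $H$, without the $H(B_j)=0$ hypothesis of Corollary~\ref{cor:dipped-ch-htpy}) gives
\[
\widetilde{\saug}_{\sMCS_1}(A_j)=(I+H(A_j))\,\widetilde{\saug}_{\sMCS_2}(A_j)\,(I+H(A_j))^{-1}
\]
for every dip $D_j$. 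Since $H(A_j)$ is strictly lower triangular, $I+H(A_j)$ is a product of handleslide matrices $E_{k,l}$, so the ordered chain complexes encoded by $\widetilde{\saug}_{\sMCS_1}(A_j)$ and $\widetilde{\saug}_{\sMCS_2}(A_j)$ differ by handleslides and hence have the same Barannikov simple form (Remark~\ref{rem:simple-form}(1)). Their pairings therefore coincide at every $j$, which is exactly $\sgnr_{\sMCS_1}=\sgnr_{\sMCS_2}$. Once you supply this argument in place of the appeal to Corollary~\ref{cor:gnr-aug-SR-form}, your proof goes through and agrees with the paper's.
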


\begin{proof}
The surjectivity of $\widehat{\Psi}$ is the content of Theorem~\ref{thm:big-map}. We need only show injectivity. Suppose $\widehat{\Psi}([\sMCS_1]) = \widehat{\Psi}([\sMCS_2])$ and let $\sMCS_1$ and $\sMCS_2$ be $S \bar{R}_g$-representatives of $[\sMCS_1]$ and $[\sMCS_1]$. Since each MCS class has a unique $S \bar{R}_g$-representative by Lemma~\ref{lem:srg-uniqueness}, we need to show $\sMCS_1 = \sMCS_2$. This is equivalent to showing that $\sMCS_1$ and $\sMCS_2$ induce the same graded normal ruling on $\sfront$ and have the same marked returns.

Recall that $\Psi(\sMCS_1)$ and $ \Psi(\sMCS_2)$ are computed by mapping the augmentations $\saug_{\sMCS_1}$ and $\saug_{\sMCS_2}$ constructed in Lemma~\ref{lem:mcs-aug-dipped} to augmentations in $Aug(\sNgres)$ using a dipping/undipping path. The augmentations $\saug_{\sMCS_1}$ and $\saug_{\sMCS_2}$ occur on sufficiently dipped diagrams $\sNgres^{d_1}$ and $\sNgres^{d_2}$. We may add dips to $\sNgres^{d_1}$ and $\sNgres^{d_2}$ and extend by 0 using Lemma~\ref{lem:extend-by-0} so that the resulting augmentations $\widetilde{\saug}_{\sMCS_1}$ and $\widetilde{\saug}_{\sMCS_2}$ occur on the same sufficiently dipped diagram $\sNgresD$. Then $\widehat{\Psi}([\sMCS_1]) = \widehat{\Psi}([\sMCS_2])$ implies $\widetilde{\saug}_{\sMCS_1}$ and $\widetilde{\saug}_{\sMCS_2}$ are chain homotopic as augmentations on $\sNgresD$. 

Let $\sH : (\aac(\sNgresD), \df) \to \zz_2$ be a chain homotopy between $\widetilde{\saug}_{\sMCS_1}$ and $\widetilde{\saug}_{\sMCS_2}$. Since $\widetilde{\saug}_{\sMCS_1}$ and $\widetilde{\saug}_{\sMCS_2}$ are occ-simple, Corollary~\ref{cor:dipped-ch-htpy} gives $\widetilde{\saug}_{\sMCS_1} (A_j) = (I + \sH (A_j)) \widetilde{\saug}_{\sMCS_2} (I + \sH (A_j))^{-1}$ for all $j$. Recall that $\widetilde{\saug}_{\sMCS_1} (A_j)$ and $\widetilde{\saug}_{\sMCS_2} (A_j)$ encode the differential of a chain complex in $\sMCS_1$ and $\sMCS_2$ respectively. Since $\widetilde{\saug}_{\sMCS_1} (A_j)$ and $\widetilde{\saug}_{\sMCS_2} (A_j)$ are chain isomorphic by a lower triangular matrix, the pairing of the strands of $\sfront$ determined by $\widetilde{\saug}_{\sMCS_1} (A_j)$ and $\widetilde{\saug}_{\sMCS_2} (A_j)$ agree. Thus, $\sMCS_1$ and $\sMCS_2$ determine the same graded normal ruling on $\sfront$.

In the proof of Lemma~\ref{lem:srg-uniqueness} we show two MCSs in $S \bar{R}_g$-form determining the same graded normal ruling on $\sfront$ are mapped to chain homotopic augmentations only if they have the same marked returns. Since $\widetilde{\saug}_{\sMCS_1} (A_j)$ and $\widetilde{\saug}_{\sMCS_2} (A_j)$ are chain homotopic, this implies $\sMCS_1$ and $\sMCS_2$ have the same marked returns and thus $\sMCS_1 = \sMCS_2$ as desired.
\end{proof}

As an immediate corollary, we have:

\begin{cor}
	\label{cor:two-bridge-count}
	If $\sfront$ is a 2-bridge front projection, then $|\sAugNgresch| = |\sDMCSeq| = \sum_{N \in \sSgnr} 2^{\nu(\sgnr)}$ where $\nu(\sgnr)$ denotes the number of graded departure-return pairs of $\sgnr$.
\end{cor}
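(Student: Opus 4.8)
The plan is to obtain this counting formula by assembling three results already in hand, without any new construction. First, Theorem~\ref{thm:two-bridge-srg-form} asserts that $\widehat{\Psi} : \sDMCSeq \to \sAugNgresch$ is a bijection whenever $\sfront$ is a 2-bridge front projection, which immediately yields $|\sAugNgresch| = |\sDMCSeq|$. It then remains only to evaluate $|\sDMCSeq|$, and this is precisely the identity recorded in Lemma~\ref{lem:srg-uniqueness}.

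To justify that $|\sDMCSeq| = \sum_{N \in \sSgnr} 2^{\nu(N)}$, I would argue as follows. By Lemmata~\ref{lem:srg-existence} and \ref{lem:srg-uniqueness}, every class in $\sDMCSeq$ has exactly one representative in $S\bar{R}_g$-form, so enumerating MCS classes reduces to enumerating MCSs in $S\bar{R}_g$-form. Such an MCS is determined by its underlying graded normal ruling $\sgnr$ (which fixes the switched crossings and hence the companion structure) together with the choice of which graded returns are marked. In $S\bar{R}_g$-form a marked return must lie in a graded departure-return pair, and by Proposition~\ref{prop:dr-pairs} the graded departure-return pairs of a 2-bridge front consist of adjacent crossings and exhaust all markable returns. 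Thus the marked returns correspond to an arbitrary subset of the $\nu(\sgnr)$ graded departure-return pairs, so each ruling $\sgnr$ contributes exactly $2^{\nu(\sgnr)}$ classes, and summing over $\sSgnr$ gives the formula. Chaining the two identities produces $|\sAugNgresch| = |\sDMCSeq| = \sum_{N \in \sSgnr} 2^{\nu(\sgnr)}$.

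I expect essentially no obstacle here, since all the substantive input—the bijectivity of $\widehat{\Psi}$, the existence and uniqueness of $S\bar{R}_g$-representatives, and the structural constraint on departure-return pairs—has already been established. The only point demanding care is verifying that the parametrization of $S\bar{R}_g$-forms by subsets of graded departure-return pairs is both well-defined and exhaustive: that each subset genuinely yields a valid $S\bar{R}_g$-form MCS (by adding the prescribed handleslide marks as in the proof of Corollary~\ref{cor:gnr-SR-form}) and that no marked return can fail to belong to such a pair. Both of these follow directly from the definition of $S\bar{R}_g$-form together with Proposition~\ref{prop:dr-pairs}, so the corollary is indeed immediate.
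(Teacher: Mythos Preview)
Your proposal is correct and matches the paper's approach: the paper presents this corollary as immediate from Theorem~\ref{thm:two-bridge-srg-form} and Lemma~\ref{lem:srg-uniqueness}, exactly as you outline. In fact your second paragraph supplies more justification for the counting identity than the paper itself does, since the paper simply states the formula at the end of Lemma~\ref{lem:srg-uniqueness} and then declares the corollary immediate.
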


Corollary~\ref{cor:two-bridge-count} corresponds to Corollary~\ref{cor:intro-two-cusps} in Section~\ref{ch:intro}.

\addcontentsline{toc}{section}{Bibliography} 
\bibliographystyle{amsplain}
\bibliography{MBH-Biblio}

\end{document}